\declaretheoremstyle[headfont=\normalfont]{normalhead}
\newtheorem{lemma}{Lemma}[section]
\newtheorem{theorem}[lemma]{Theorem}
\newtheorem{proposition}[lemma]{Proposition}
\newtheorem{corollary}[lemma]{Corollary}
\newtheorem{definition}[lemma]{Definition}
\newtheorem{remark}[lemma]{Remark}
\newtheorem{example}[lemma]{Example}
\newtheorem*{acknowledgement}{Acknowledgement}
\newcounter{mt}
\newtheorem{maintheorem}[mt]{Theorem}
\newcommand{\R}{\mathbb{R}}
\newcommand{\C}{\mathbb{C}}
\DeclareMathOperator{\VConv}{VConv}
\DeclareMathOperator{\Conv}{Conv}
\DeclareMathOperator{\vol}{vol}
\DeclareMathOperator{\supp}{supp}
\DeclareMathOperator{\diam}{diam}
\DeclareMathOperator{\Hess}{Hess}
\DeclareMathOperator{\GL}{GL}
\DeclareMathOperator{\GW}{\mathrm{GW}}
\DeclareMathOperator{\Gr}{\mathrm{Gr}}
\DeclareMathOperator{\Sym}{\mathrm{Sym}}
\DeclareMathOperator{\Poly}{\mathcal{P}}
\DeclareMathOperator{\SO}{\mathrm{SO}}
\DeclareMathOperator{\MAVal}{\mathrm{MAVal}}
\DeclareMathOperator{\MA}{\mathrm{MA}}
\DeclareMathOperator{\LV}{\mathrm{LV}}
\renewcommand{\Im}{\operatorname{Im}}
\renewcommand{\P}{\mathrm{P}}
\newcommand{\M}{\mathcal{M}}
\newcommand{\A}{\mathrm{A}}
\newcommand{\Aff}{\mathrm{Aff}}
\author{Jonas Knoerr}
\title{Polynomial local functionals on convex functions}
\date{}
\newcommand{\Addresses}{{
		\bigskip
		\footnotesize
		
		Jonas Knoerr, \textsc{Institute of Discrete Mathematics and Geometry, TU Wien, Wiedner Hauptstrasse 8-10, 1040 Wien, Austria}\par\nopagebreak
		\textit{E-mail address}: \texttt{jonas.knoerr@tuwien.ac.at}
		
		\medskip
	}}
\def\blfootnote{\xdef\@thefnmark{}\@footnotetext}
\begin{document}
\maketitle
\begin{abstract}
	We show that every continuous local functional on the space of finite convex functions on $\mathbb{R}^n$ is a valuation. This relation is used to establish a homogeneous decomposition for the class of polynomial local functionals as well as a classification of translation or rigid motion invariant polynomial local functionals. In addition we discuss implications for the compact-open topology on the space of polynomial local functionals.
\end{abstract}

\blfootnote{2020 \emph{Mathematics Subject Classification}. 52B45, 26B25, 47H60, 53C65.\\
	\emph{Key words and phrases}. Local functionals, convex function, valuation on functions.\\}

\sloppy

\section{Introduction}
	Given a locally compact Hausdorff topological space $Y$ and a set of functions $\mathcal{F}$ on $Y$, a map $\Psi:\mathcal{F}\rightarrow \mathcal{M}(Y)$ into the space of (complex) Radon measures on $Y$ is called \emph{locally determined} or a \emph{local functional} if for every $f,h\in\mathcal{F}$ such that $f|_U=h|_U$ for some open subset $U\subset Y$ the associated measures satisfy
	\begin{align*}
		\Psi(f)|_U=\Psi(h)|_U.
	\end{align*}
	Examples of local functionals naturally occur in various areas of mathematics, including the calculus of variations, differential geometry, and convex geometry. In particular, local functionals on Sobolev spaces have been investigated extensively over the last 50 years, compare \cite{AlbertiIntegralrepresentationlocal1993,BottaroOppezziMultipleintegralrepresentations1985,ButtazzoDalMasoIntegralrepresentationrelaxation1985,DalMasoEtAlIntegralrepresentationclass1994,EssebeiEtAlIntegralrepresentationlocal2023,MaioneEtAlconvergencefunctionals2020}.\\
	
	In this article, we consider continuous local functionals on the space $\Conv(\R^n,\R)$ of all finite convex functions $f:\R^n\rightarrow\R$. This space is naturally equipped with a metrizable topology induced by locally uniform convergence, which in this setting coincides with epi-convergence or pointwise convergence (compare the references in \autoref{subsection:convexFunctions}). We identify $\M(\R^n)\cong C_c(\R^n)'$ with the topological dual space of the space of compactly supported continuous functions on $\R^n$ equipped with its usual inductive topology, and we equip $\M(\R^n)$ with the weak* topology.\\
	As an example, consider the functionals $\mathcal{E}^{a,b}_c$, $a,b,c\in\mathbb{N}$, $0\le c\le n$, given for $f\in C^2(\R^n)$ and bounded Borel sets $B\subset \R^n$ by
	\begin{align}
		\label{eq:definitionEabc}
		\mathcal{E}^{a,b}_{c}(f;B)=\int_{B}f(x)^a|\nabla f(x)|^{2b} \det(D^2f(x)[c],Id[n-c])dx,
	\end{align}
	where $D^2f$ denotes the Hessian of $f$, $\det$ is the mixed discriminant, and the matrices $D^2f(x)$ and $Id$ are taken with multiplicity $c$ and $n-c$ respectively. Then $\mathcal{E}^{a,b}_c$ extends to a continuous local functional on $\Conv(\R^n,\R)$ (compare \autoref{maintheorem:rigidMotionInvariant} and \autoref{section:ConstructionLocalFunc} below), and this family includes various well known functionals from the calculus of variations, for example the $k$-Hessian energy $\mathcal{E}^{1,0}_k$, the Dirichlet energy $\mathcal{E}^{0,1}_0$, the Hessian measures $\mathrm{Hess}_k=\mathcal{E}^{0,0}_k$, and the real Monge--Amp\`ere operator $\MA=\mathcal{E}^{0,0}_n$. Measure-valued functionals of this type have recently become an important tool in the theory of valuations on convex functions \cite{AleskerValuationsconvexfunctions2019,ColesantiEtAlHessianvaluations2020,ColesantiEtAlhomogeneousdecompositiontheorem2020}, in particular in the construction of invariant valuations \cite{ColesantiEtAlHadwigertheoremconvex2024,ColesantiEtAlHadwigertheoremconvex2022,MouamineMussnigVectorialHadwigerTheorem2025,KnoerrSingularValuationsHadwiger2025}. Here, a functional $\mu:\Conv(\R^n,\R)\rightarrow \mathcal{A}$ into an Abelian semi-group $\mathcal{A}$ is called a valuation if 
	\begin{align*}
		\mu(f)+\mu(h)=\mu(f\vee h)+\mu(f\wedge h)
	\end{align*}
	for all $f,h\in \Conv(\R^n,\R)$ such that the pointwise minimum $f\wedge h$ belongs to $\Conv(\R^n,\R)$ (note that the pointwise maximum $f\vee h$ is always convex). These constructions rely on the fact that a variety of Monge--Amp\`ere-type operators can be considered as valuations on $\Conv(\R^n,\R)$ with values in $\M(\R^n)$. This non-obvious fact was first pointed out by Alesker in \cite{AleskerValuationsconvexsets2005}, who used results by B{\l}ocki \cite{BlockiEquilibriummeasureproduct2000} for the complex Monge--Amp\`ere operator to construct valuations on convex bodies.\\
	The starting point of our investigation of local functionals on convex functions is the following general result.
	\begin{maintheorem}
		\label{maintheorem:LocalFunctionalValuation}
		Let $\Psi:\Conv(\R^n,\R)\rightarrow\M(\R^n)$ be a continuous local functional. Then $\Psi$ is a valuation.
	\end{maintheorem}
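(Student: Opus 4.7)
My plan is to use locality to reduce the valuation identity to an equation supported on a small closed set, and then close the argument by a continuity/approximation argument. Set $A := \{f < h\}$, $B := \{f > h\}$, and $C := \{f = h\}$, so that $A, B$ are open, $C$ is closed, and $\R^n = A \sqcup B \sqcup C$.

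On $A$ the identities $f = f \wedge h$ and $h = f \vee h$ hold in every neighborhood contained in $A$, so locality of $\Psi$ yields $\Psi(f)|_A = \Psi(f \wedge h)|_A$ and $\Psi(h)|_A = \Psi(f \vee h)|_A$; the same argument on $B$ merely interchanges the roles of $f$ and $h$. On the open set $\mathrm{int}(C)$ all four functions $f, h, f \vee h, f \wedge h$ agree pointwise, so locality identifies all four measures there. Adding these local identities, the valuation identity $\Psi(f) + \Psi(h) = \Psi(f \vee h) + \Psi(f \wedge h)$ holds as an equation of measures on the open set $\R^n \setminus \partial C$. Consequently the defect $V := \Psi(f) + \Psi(h) - \Psi(f \vee h) - \Psi(f \wedge h)$ is a Radon measure supported on the closed nowhere-dense set $\partial C$.

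The main difficulty is to show that this defect vanishes on $\partial C$; locality alone cannot suffice, since each of the four measures may individually charge this set (as happens for the Monge--Amp\`ere measure, or more generally $\mathcal{E}^{a,b}_c$, at a convex crease). To handle this, I would exploit the one-parameter family $f_t := (1-t) f + t h$ and $h_t := t f + (1-t) h$ for $t \in [0, 1/2]$. Since $f_t - h_t = (1-2t)(f-h)$, a short computation gives $f_t \wedge h_t = (1-t)(f \wedge h) + t(f \vee h)$ and $f_t \vee h_t = (1-t)(f \vee h) + t(f \wedge h)$, both convex as convex combinations of convex functions. For $t \in [0, 1/2)$ the coincidence set of $(f_t, h_t)$ is again $C$, so the preceding paragraph applies: the corresponding defect $V(t) := \Psi(f_t) + \Psi(h_t) - \Psi(f_t \vee h_t) - \Psi(f_t \wedge h_t)$ is supported on $\partial C$. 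At the symmetric endpoint $t = 1/2$, $f_{1/2} = h_{1/2} = (f+h)/2$ and the identity is trivial, so $V(1/2) = 0$, while $V(0)$ is precisely what we want to show vanishes. By continuity of $\Psi$ the curve $t \mapsto V(t)$ is continuous, and the step I expect to be hardest --- and to carry most of the technical work --- is to propagate the triviality at $t = 1/2$ back to $t = 0$; this will likely require combining the rigidity that $V(t)$ is supported on the $t$-independent closed set $\partial C$ with an approximation by, e.g., piecewise linear or mollified pairs where the contact geometry can be tracked explicitly.
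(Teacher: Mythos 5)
Your first reduction is correct but it is not where the content of the theorem lies: on $A$, $B$ and $\mathrm{int}(C)$ the valuation identity is an immediate consequence of locality, and the entire difficulty is to show that the defect measure $V$ supported on $\partial C$ vanishes. Your proposed mechanism does not accomplish this. For $t\in[0,1/2)$ the pair $(f_t,h_t)$ has exactly the same coincidence set $C$ and the same ordered regions $A,B$ as $(f,h)$, so the homotopy never improves the contact geometry; all you obtain is that $V(t)$ is supported on the fixed closed set $\partial C$ for every $t<1/2$ and that $V(t)\to V(1/2)=0$ in the weak* topology as $t\to 1/2$. That is compatible with $V(0)\neq 0$: a weak*-continuous family such as $V(t)=(1-2t)\delta_{x_0}$ with $x_0\in\partial C$ satisfies every constraint you have established, and since $\Psi$ is only assumed continuous (not polynomial in any sense along your segment) there is no rigidity in $t$ to exclude it. The step you yourself flag as the hardest is thus genuinely missing, and the ingredients you gesture at (piecewise linear or mollified pairs with "tracked" contact geometry) are not formulated and do not obviously address the real obstruction, which is that $f$ and $h$ merely touch along $C$ rather than agreeing on a neighborhood of it.

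The idea that closes the argument in the paper is a different perturbation: replace the pair by $f_\epsilon:=\max(f,f\wedge h+\epsilon)$ and $h_\epsilon:=\max(h,f\wedge h+\epsilon)$. This vertical lift of the minimum forces both perturbed functions to equal $f\wedge h+\epsilon$ on a neighborhood of every point where $|f-h|<\epsilon$ (in particular near $C$), while away from such points they are locally strictly ordered; consequently the interiors of $\{f_\epsilon\ge h_\epsilon\}$ and $\{f_\epsilon\le h_\epsilon\}$ cover $\R^n$, and locality together with a partition of unity yields the valuation identity \emph{exactly} for the quadruple $(f_\epsilon,h_\epsilon,f_\epsilon\vee h_\epsilon,f_\epsilon\wedge h_\epsilon)$ -- no boundary defect ever appears. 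Since $f_\epsilon\to f$, $h_\epsilon\to h$, $f_\epsilon\wedge h_\epsilon=f\wedge h+\epsilon\to f\wedge h$ and $f_\epsilon\vee h_\epsilon=\max(f\vee h,f\wedge h+\epsilon)\to f\vee h$, the continuity of $\Psi$ with respect to the weak* topology gives the identity for $(f,h)$ in the limit $\epsilon\to 0$. If you wish to keep your framing, this is the kind of deformation you need: one that opens the contact set into an open region of genuine coincidence, rather than one (like linear interpolation or mollification) that leaves the contact set $C$ untouched.
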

	Let us remark that a similar phenomenon was observed by Weil in \cite{WeilIntegralgeometrytranslation2015,WeilIntegralgeometrytranslation2017} for a class of continuous and translation invariant functionals on convex bodies satisfying related locality properties.\\
	This connection between local functionals and valuations does not exist for arbitrary spaces of functions. For example, Wang showed in \cite{WangSemivaluations$BVRn$2014} that the variational measure is a local functional on functions of bounded variation that does not satisfy the valuation property. A similar phenomenon can occur for non-continuous local functionals on convex functions, compare \autoref{example:nonContLocaFunc} below. In general, the relation between notions of locality and the valuation property is thus more subtle.\\
	
	\autoref{maintheorem:LocalFunctionalValuation} allows us to treat local functionals as locally determined measure-valued valuations and, consequently, to apply tools from valuation theory to local functionals. Let $\A(n,\R)$ denote the space of all affine functions $\ell:\R^n\rightarrow\R$.
	\begin{definition}
		We call a local functional $\Psi:\Conv(\R^n,\R)\rightarrow\M(\R^n)$ polynomial of degree at most $d\in\mathbb{N}$ if the map
		\begin{align*}
			\A(n,\R)&\rightarrow \M(\R^n)\\
			\ell&\mapsto \Psi(f+\ell)
		\end{align*}
		is a polynomial of degree at most $d$ for every $f\in\Conv(\R^n,\R)$. 
	\end{definition}
	More generally we will call a local functional on $\Conv(\R^n,\R)$ a polynomial local functional if it is polynomial of degree $d$ for some $d\in\mathbb{N}$. Note that the examples in Eq.~\eqref{eq:definitionEabc} all belong to this class. \\
	Within valuation theory, this property naturally occurs as a generalization of translation invariance (which corresponds to the case $d=0$) or equivariance, compare \cite{Aleskermultiplicativestructurecontinuous2004, PukhlikovKhovanskiiFinitelyadditivemeasures1992}. Local functionals on Sobelev spaces that are invariant with respect to the addition of higher order polynomials have also been considered in \cite{AlbertiIntegralrepresentationlocal1993,ButtazzoDalMasocharacterizationnonlinearfunctionals1985,ButtazzoDalMasoIntegralrepresentationrelaxation1985}.\\
		
	This article is the first in a two part series (see \cite{KnoerrIntegralrepresentationpolynomial2026} for the second part) with the goal to establish general properties of the space of polynomial local functionals as well as integral representations. In this first part, we focus on general structural results for spaces of polynomial local functionals, whereas the second part will focus on integral representations of local functionals and density results.\\
	
	Let us discuss the results of this article. We denote by $\P_d\LV(\R^n)$ the space of all continuous polynomial local functionals on $\Conv(\R^n,\R)$ of degree at most $d$.	Let $\P_d\LV_k(\R^n)$ denote the subspace of all $k$-homogeneous local functionals, i.e. all $\Psi\in \P_d\LV(\R^n)$ that satisfy
	\begin{align*}
		\Psi(tf)=t^k\Psi(f)
	\end{align*}
	for all $f\in\Conv(\R^n,\R)$ and $t\ge 0$. From a general result on polynomial valuations based on work by Pukhlikov and Khovanski\u i \cite{PukhlikovKhovanskiiFinitelyadditivemeasures1992}, we obtain the following homogeneous decomposition.
	\begin{maintheorem}
		\label{maintheorem:HomDecompPdLV}
		\begin{align*}
			\P_d\LV(\R^n)=\bigoplus_{k=0}^{n+d} \P_d\LV_k(\R^n)
		\end{align*}
	\end{maintheorem}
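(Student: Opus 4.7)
The plan is to combine \autoref{maintheorem:LocalFunctionalValuation} with the Pukhlikov--Khovanski\u i framework for polynomial valuations. By \autoref{maintheorem:LocalFunctionalValuation}, every $\Psi\in\P_d\LV(\R^n)$ is a continuous $\M(\R^n)$-valued polynomial valuation on $\Conv(\R^n,\R)$ of degree at most $d$. Testing against $\phi\in C_c(\R^n)$ via $\Psi_\phi(f):=\int \phi\,d\Psi(f)$ reduces matters to the scalar-valued case, since the weak* topology on $\M(\R^n)$ makes it enough to prove all claims pointwise in $\phi$.

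The central step is then to show that for each fixed $f\in\Conv(\R^n,\R)$ and each test function $\phi$, the map $t\mapsto \Psi_\phi(tf)$ is a polynomial in $t\ge 0$ of degree at most $n+d$. Once this is in hand, fixing distinct nonnegative reals $t_0,\dots,t_{n+d}$ and inverting the corresponding Vandermonde system yields constants $c_{kj}$ such that
\begin{equation*}
	\Psi_k(f):=\sum_{j=0}^{n+d}c_{kj}\Psi(t_jf)
\end{equation*}
satisfies $\Psi(tf)=\sum_{k=0}^{n+d}t^k\Psi_k(f)$ for every $t\ge 0$ and every $f$. Each $\Psi_k$ is a finite linear combination of $\Psi$ composed with scalar dilation, so it inherits continuity from $\Psi$; it is local because $f|_U=h|_U$ implies $(t_jf)|_U=(t_jh)|_U$; and it is polynomial of degree at most $d$ because $\ell\mapsto \Psi(t_j(f+\ell))=\Psi(t_jf+t_j\ell)$ is polynomial of degree at most $d$ in $\ell$. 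The $k$-homogeneity of $\Psi_k$ follows from the Vandermonde identity, and linear independence of the monomials $t^k$ implies that the decomposition is a direct sum.

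The main obstacle is therefore the degree bound $n+d$ for $t\mapsto \Psi_\phi(tf)$; mere polynomiality is not sufficient. This is precisely the content of the Pukhlikov--Khovanski\u i philosophy: the valuation property, combined with polynomial behavior of degree at most $d$ under translation by elements of $\A(n,\R)$, should force the scaling degree to be at most $n+d$, where $n$ reflects the ambient dimension and $d$ the translation polynomial degree. To establish this I would first verify it for piecewise affine $f$, where $tf=\max_i(t\ell_i)$ can be resolved through the valuation property into inclusion--exclusion expressions involving $t$-scaled affine functions $t\ell_i$, and the polynomial-in-affine structure then yields the desired degree bound explicitly. The general case follows by approximating arbitrary $f\in\Conv(\R^n,\R)$ by piecewise affine functions and invoking continuity of $\Psi_\phi$, noting that the space of polynomials of degree at most $n+d$ is closed in $C([0,T])$ for any $T>0$.
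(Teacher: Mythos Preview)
Your overall strategy matches the paper's exactly: invoke \autoref{maintheorem:LocalFunctionalValuation} to get a valuation, use the degree bound $n+d$ on $t\mapsto\Psi(tf)$, extract homogeneous components via Vandermonde, and verify that each component remains a continuous polynomial local functional. The paper does not re-derive the degree bound; it simply quotes \autoref{theorem:homogeneousDecompositionPVConv} (the Knoerr--Ulivelli homogeneous decomposition for $\P_d\VConv(\R^n,F)$), which is stated in the preliminaries for precisely this purpose. Since that theorem applies with $F=\M(\R^n)$ in the weak* topology, your reduction to scalar $\Psi_\phi$ is unnecessary, though harmless.

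Where you diverge is in sketching a proof of the degree bound itself. That sketch has a genuine gap: for piecewise affine $f=\max_i\ell_i$, the valuation identity $\mu(f\vee h)+\mu(f\wedge h)=\mu(f)+\mu(h)$ only applies when $f\wedge h\in\Conv(\R^n,\R)$, but the pointwise minimum of two affine functions is concave, not convex. So one cannot simply ``resolve'' $\mu(\max_i t\ell_i)$ into an inclusion--exclusion over the individual $t\ell_i$ in the way you describe. The actual argument in the cited reference is more delicate and does not proceed by this kind of direct decomposition. Since \autoref{theorem:homogeneousDecompositionPVConv} is available as a black box in this paper, you should invoke it rather than attempt to re-prove it; with that substitution your argument is complete and identical to the paper's.
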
	
	Note that every element in $\P_d\LV_0(\R^n)$ is constant, so $\P_d\LV_0(\R^n)$ is isomorphic to $\M(\R^n)$. For the homogeneous component of degree $n+d$, we have the following characterization (the case $d=0$ was also obtained in \cite[Theorem 4.8]{KnoerrMongeAmpereoperators2024}). Let us identify $ \R\times(\R^n)^*\cong\A(n,\R)$ using the map $(s,v)\mapsto s+\langle v,\cdot\rangle$, and $\Sym^d(\A(n,\R)^*)\otimes\C$ with the space of $d$-homogeneous complex-valued polynomials on $\A(n,\R)$.
	\begin{maintheorem}
		\label{maintheorem:topDegree}
		For every $\Psi\in \P_d\LV_{n+d}(\R^n)$ there exists a unique function $\psi\in C(\R^n,\Sym^d(\A(n,\R)^*)\otimes\C)$ such that
		\begin{align*}
			\Psi(f;B)=\int_{B}\psi(x)[f(x),df(x)] \det(D^2f(x))dx
		\end{align*}
		for $f\in \Conv(\R^n,\R)\cap C^2(\R^n)$, $B\subset\R^n$ bounded Borel set. Conversely, the right hand side of this equation extends by continuity to an element of $\P_d\LV_{n+d}(\R^n)$ for every such function $\psi$.
	\end{maintheorem}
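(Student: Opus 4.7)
The plan is to reduce to the translation-invariant case $d=0$, which is \cite[Theorem 4.8]{KnoerrMongeAmpereoperators2024}, by isolating the top-degree coefficient of the polynomial $\ell \mapsto \Psi(f+\ell)$. Expand this polynomial as $\Psi(f+\ell) = \sum_{k=0}^d P_k(f;\ell)$ with $P_k(f;\cdot)$ its $k$-homogeneous component, and let $\widetilde{P}_k$ denote the symmetric $k$-linear polarization of $P_k$ in $\ell$. The key observation is that $P_d(\cdot;\ell) \in \P_0\LV_n(\R^n)$ for every fixed $\ell \in \A(n,\R)$: continuity and locality are inherited from $\Psi$ because $P_d(f;\ell)$ is a finite linear combination of evaluations $\Psi(f + t_j\ell)$; comparing the two expansions of $\Psi(f+m+\ell)$ in $\ell$ yields
\[ P_k(f+m;\ell) = \sum_{j\ge k} \binom{j}{k}\widetilde{P}_j(f; \underbrace{m,\ldots,m}_{j-k}, \underbrace{\ell,\ldots,\ell}_{k}), \]
which for $k=d$ retains only the term $j=d$, proving translation invariance in $f$; and the $(n+d)$-homogeneity of $\Psi$, via $\Psi(tf+\ell) = t^{n+d}\Psi(f+\ell/t)$, forces $P_k(tf;\ell) = t^{n+d-k}P_k(f;\ell)$, so $P_d(\cdot;\ell)$ is $n$-homogeneous.

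Applying the $d=0$ case to each polarized piece $\widetilde{P}_d(\cdot;\ell_1,\ldots,\ell_d) \in \P_0\LV_n(\R^n)$ produces a unique continuous density $\widetilde{\psi}(x;\ell_1,\ldots,\ell_d)$ that is symmetric and $d$-linear in $(\ell_1,\ldots,\ell_d)$. Specializing $\ell_i = \ell = (t,w) \in \R\times(\R^n)^* = \A(n,\R)$ yields a density $\psi_\ell(x) = \widetilde{\psi}(x;\ell,\ldots,\ell)$ of $P_d(\cdot;\ell)$, which is jointly continuous in $(x,\ell)$ and a $d$-homogeneous polynomial in $\ell$. Since the map $(t,w) \mapsto (t+\langle w,x\rangle, w)$ is a linear automorphism of $\A(n,\R)$ for each fixed $x$, the assignment $\psi(x)[s,v] := \psi_{(s-\langle v,x\rangle,\,v)}(x)$ defines an element $\psi \in C(\R^n,\Sym^d(\A(n,\R)^*)\otimes\C)$ that satisfies $\psi_\ell(x) = \psi(x)[t+\langle w,x\rangle, w]$.

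It remains to check that this $\psi$ gives the claimed representation of $\Psi$ itself. Set $\Phi(f;B) := \int_B \psi(x)[f(x),df(x)]\det(D^2 f)\,dx$ on $\Conv(\R^n,\R)\cap C^2(\R^n)$. Because $D^2(f+\ell) = D^2 f$ for affine $\ell$, expanding $\Phi(f+\ell;B)$ in $\ell$ shows that $\Phi$ is polynomial of degree $d$ in $\ell$ and $(n+d)$-homogeneous in $f$, and that the $d$-homogeneous-in-$\ell$ component is $\int_B \psi(x)[t+\langle w,x\rangle,w]\det(D^2 f)\,dx = \int_B \psi_\ell(x)\det(D^2 f)\,dx$, which matches $P_d(\cdot;\ell)$ for $\Psi$. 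Granting the converse statement (that $\Phi$ extends to an element of $\P_d\LV_{n+d}(\R^n)$), the difference $\Psi - \Phi$ belongs to $\P_d\LV_{n+d}(\R^n)$ with vanishing top coefficient $P_d$, hence actually to $\P_{d-1}\LV(\R^n)$. By \autoref{maintheorem:HomDecompPdLV} applied with $d$ replaced by $d-1$, the top homogeneous degree in $\P_{d-1}\LV(\R^n)$ is $n+d-1$, so no nonzero $(n+d)$-homogeneous element exists there; this forces $\Psi = \Phi$. Uniqueness of $\psi$ is inherited from the uniqueness of the density in the $d=0$ case.

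The main obstacle I expect is the converse direction, namely showing that $\int_B\psi(x)[f(x),df(x)]\det(D^2 f)\,dx$ extends from $\Conv\cap C^2$ to a continuous local functional on all of $\Conv(\R^n,\R)$. By linearity one may assume $\psi(x)[s,v] = c(x)s^a v^\alpha$ is a monomial, in which case the natural candidate is the integral of $c(x)f(x)^a(df(x))^\alpha$ against the real Monge--Amp\`ere measure $\MA(f)$. Since convex functions are locally Lipschitz, the integrand is locally bounded $\MA(f)$-almost everywhere, so the integral makes sense pointwise; the technical content is weak continuity under locally uniform convergence, which will rely on known convergence theorems for $\MA(f_k)$ and for gradients of convex functions, generalizing the analysis already in place for the examples $\mathcal{E}^{a,b}_c$ from the introduction.
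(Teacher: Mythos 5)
The first half of your argument---isolating the top coefficient $P_d$ of $\ell\mapsto\Psi(f+\ell)$, checking that it is a locally determined, $n$-homogeneous functional of polynomial degree $0$, applying the $d=0$ classification to it, and correcting by the shear $(s,v)\mapsto(s-\langle v,x\rangle,v)$ before killing $\Psi-\Phi$ with \autoref{maintheorem:HomDecompPdLV}---is essentially the paper's proof (the paper organizes the top coefficient via \autoref{proposition:translativeDecompCompatibleHomDecomp} and a Vandermonde argument, and is in fact less explicit than you are about the shear). The problem is that you \emph{grant} the converse statement, namely that $\int_B\psi(x)[f(x),df(x)]\det(D^2f(x))\,dx$ extends by continuity from $\Conv(\R^n,\R)\cap C^2(\R^n)$ to an element of $\P_d\LV_{n+d}(\R^n)$. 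That extension is not only half of the theorem; it is also an input to your direct half, since without it you cannot form $\Psi-\Phi$ inside $\P_d\LV_{n+d}(\R^n)$ and invoke the homogeneous decomposition. So the proof is incomplete precisely at its load-bearing step.

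Moreover, the route you sketch for this step would fail as stated. For general $f\in\Conv(\R^n,\R)$ the measure $\MA(f)$ may charge exactly the set where $df$ does not exist (for $f=|\cdot|$ one has $\MA(f)=\omega_n\delta_0$), so the candidate $\int_B c(x)f(x)^a(df(x))^\alpha\,d\MA(f;x)$ is not defined $\MA(f)$-almost everywhere, and no pointwise choice of subgradient yields the weak*-continuous extension. Concretely, take $n=1$ and $\psi[s,v]=v^2$: for smooth convex $f$ and $\phi\in C_c(\R)$ with $\phi\equiv1$ near $0$, mollifying $f=|\cdot|$ gives $\int\phi\,f_j'^2f_j''\,dx\rightarrow \tfrac{2}{3}\phi(0)=\phi(0)\int_{-1}^{1}y^2\,dy$, whereas your candidate formula gives $2\phi(0)(f'(0))^2$ for whichever subgradient value is assigned at $0$---never $\tfrac{2}{3}\phi(0)$ for any canonical choice. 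The continuous extension must integrate over the whole subdifferential, which is exactly what the paper's construction does: $\Phi_\tau(f;B)=D(f)[1_{\pi^{-1}(B)}\psi(x)[f(x),y]\,dy_1\wedge\dots\wedge dy_n]$, i.e.\ integration of a differential form over the differential cycle, whose continuity is the nontrivial input \autoref{theorem:joint_continuity_differential_cycle} used in \autoref{theorem:LocalFunctionalFromDifferentialCycle} (see also \autoref{example:MA}). Replacing your sketch by this construction, or by an equivalent argument on the graph of the subdifferential (Hessian measures), is what is needed to close the gap; the remainder of your argument then goes through and agrees with the paper's proof.
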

	We will establish integral representations for elements belonging to the intermediate degrees in \cite{KnoerrIntegralrepresentationpolynomial2026}. This description relies on the following characterization of the subspace of translation invariant polynomial local functionals.\\
	Let $\pi$ denote the representation of the group $\Aff(n,\R)$ of all invertible affine transformations of $\R^n$ on $\P_d\LV(\R^n)$ given for $g\in \Aff(n,\R)$ and $\Psi\in \P_d\LV(\R^n)$ by
	\begin{align}
		\label{eq:groupAction}
		[\pi(g)\Psi](f;B)=\Psi(f\circ g;g^{-1}(B))
	\end{align}
	for $f\in\Conv(\R^n,\R)$, $B\subset\R^n$ bounded Borel set. We denote by $\P_d\LV(\R^n)^{tr}$ the subspace of all polynomial local functionals of degree at most $d$ that are translation invariant with respect to this action. The following result identifies this space with a certain finite dimensional space of polynomials. In order to state it, let $\Poly(V)$ denote the space of $\C$-valued polynomials on a (real) finite dimensional vector space $V$ and $\Poly_d(V)\subset\Poly(V)$ the subspace of polynomials of degree at most $d$. Specifically, we will consider the space $\mathrm{M}_n\subset \Poly(\Sym^2(\R^n))$ spanned by all $k$-minors, $0\le k\le n$, of symmetric $(n\times n)$-matrices, where the $0$-minor of a matrix is $1$ by definition. 
	\begin{maintheorem}
		\label{maintheorem:translationInvariantCase}
		For every $\Psi\in\P_d\LV(\R^n)^{tr}$ there exists a unique polynomial $P_\Psi\in \Poly_d(\A(n,\R))\otimes \mathrm{M}_n$ such that
		\begin{align*}
			\Psi(f;B)=\int_{B}P_\Psi(f(x),df(x),D^2f(x))dx
		\end{align*}
		for all $f\in \Conv(\R^n,\R)\cap C^2(\R^n)$, $B\subset\R^n$ bounded Borel set. Conversely, the right hand side of this equation extends by continuity to a unique element in $\P_d\LV(\R^n)^{tr}$ for every polynomial in $\Poly_d(\A(n,\R))\otimes \mathrm{M}_n$.
	\end{maintheorem}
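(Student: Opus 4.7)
The plan is to reduce via \autoref{maintheorem:HomDecompPdLV} to homogeneous components, dispatch the top degree using \autoref{maintheorem:topDegree} plus translation invariance, and treat the intermediate degrees through a local density argument. Writing $\Psi=\sum_{k=0}^{n+d}\Psi_k$, each homogeneous piece $\Psi_k$ is translation invariant, because the $\Aff(n,\R)$-action $\pi$ commutes with the dilation $f\mapsto tf$. The target space $\Poly_d(\A(n,\R))\otimes\mathrm{M}_n$ is correspondingly graded by giving each of $s,v$ weight $1$ and the $j$-minor of $M$ weight $j$, with weight-$k$ components indexed by $0\le k\le n+d$, so it suffices to produce, for each $k$, a weight-$k$ polynomial realizing $\Psi_k$.

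For the top degree $k=n+d$, \autoref{maintheorem:topDegree} supplies $\psi\in C(\R^n,\Sym^d(\A(n,\R)^*)\otimes\C)$ with $\Psi_{n+d}(f;B)=\int_B\psi(x)[f(x),df(x)]\det(D^2f(x))\,dx$ on $C^2$ convex functions. Imposing $\pi(\tau_y)\Psi_{n+d}=\Psi_{n+d}$ for every translation $\tau_y(x)=x+y$ and changing variables in the integral yields $\psi(x-y)=\psi(x)$ for all $x,y\in\R^n$, so $\psi$ is a constant $\psi_0$. Identifying $\Sym^d(\A(n,\R)^*)$ with the space of $d$-homogeneous polynomials on $\A(n,\R)$, this gives $P_{\Psi_{n+d}}(s,v,M)=\psi_0(s,v)\det(M)\in\Poly_d(\A(n,\R))\otimes\mathrm{M}_n$, as required.

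For each intermediate degree $0\le k<n+d$, I propose to extract a local density on $\Conv(\R^n,\R)\cap C^2(\R^n)$ by a blow-up argument. Given $f\in\Conv(\R^n,\R)\cap C^2(\R^n)$ and $x_0\in\R^n$, I compare $\Psi_k(f;B_r(x_0))$ with $\Psi_k(T^2_{x_0}f;B_r(x_0))$, where $T^2_{x_0}f$ is the quadratic Taylor polynomial of $f$ at $x_0$; locality and continuity of $\Psi_k$ under locally uniform convergence control the error via the $C^0$-bound $\|f-T^2_{x_0}f\|_{C^0(B_r(x_0))}=o(r^2)$. On the Taylor polynomial, translation invariance lets me center at $x_0=0$, and the polynomial property then writes $\Psi_k(T^2_{x_0}f;B_r(x_0))$ as a polynomial of degree $\le d$ in the affine part $(f(x_0),df(x_0))$ with coefficients determined by $D^2f(x_0)$. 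Normalizing by $|B_r(x_0)|$ and passing to the limit $r\downarrow 0$ yields the integral representation $\Psi_k(f;B)=\int_B P_k(f(x),df(x),D^2f(x))\,dx$ with $P_k$ of degree $\le d$ in $(s,v)$ and $k$-homogeneous overall.

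The principal obstacle is to confirm that $P_k(s,v,\cdot)$ is polynomial in $M$ and lies in $\mathrm{M}_n$. This must be forced by the continuity of $\Psi_k$ on all of $\Conv(\R^n,\R)$ (not just its $C^2$ part) together with the valuation property from \autoref{maintheorem:LocalFunctionalValuation}: the integrand $P_k(f,df,D^2f)$ determines a measure-valued functional on $C^2\cap\Conv(\R^n,\R)$ which must extend continuously and valuatively to the full space, and for a polynomial $Q(M)$ on $\Sym^2(\R^n)$ this continuous extension exists precisely when $Q$ is a linear combination of mixed discriminants $\det(M[j],\mathrm{Id}[n-j])$, since only these correspond to the classical Hessian measures whose weak continuity under epi-convergence is well established, while a non-minor polynomial fails the test along piecewise-affine approximations with vanishing Hessian. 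Combined with the $(s,v)$-polynomiality and $k$-homogeneity established above, this identifies $P_k\in\Poly_d(\A(n,\R))\otimes\mathrm{M}_n$. The converse follows by direct verification using the standard valuation and continuity properties of mixed Hessian measures, and uniqueness of $P_\Psi$ is clear since 2-jets of $C^2$ strictly convex functions span a Euclidean-open subset of $\R\times(\R^n)^*\times\Sym^2(\R^n)$.
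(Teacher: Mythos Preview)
Your reduction to homogeneous components and the treatment of the top degree $k=n+d$ via \autoref{maintheorem:topDegree} are fine, but the argument for the intermediate degrees has two genuine gaps. First, the blow-up step does not go through: continuity of $\Psi_k$ on $\Conv(\R^n,\R)$ is only the qualitative statement that $\Psi_k(f_j;\phi)\to\Psi_k(f;\phi)$ when $f_j\to f$ locally uniformly, with no quantitative modulus, so the $C^0$-bound $\|f-T^2_{x_0}f\|_{C^0(B_r(x_0))}=o(r^2)$ gives you no control of $\Psi_k(f;B_r(x_0))-\Psi_k(T^2_{x_0}f;B_r(x_0))$ in terms of $r$; locality only tells you this difference depends on $f|_{B_r(x_0)}$ and $T^2_{x_0}f|_{B_r(x_0)}$, which do not agree. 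A rescaling would require knowing how $\Psi_k$ behaves under dilations $f\mapsto f(t\,\cdot)$, which is not part of the hypotheses. Second, your claimed characterization of which polynomials $Q(M)$ extend continuously is incorrect: you assert these are exactly the linear combinations of $\det(M[j],\mathrm{Id}[n-j])$, but $\mathrm{M}_n$ is spanned by \emph{all} $k$-minors---for instance $Q(M)=M_{11}$ gives $f\mapsto\partial_{11}f\,dx$, which extends continuously to all of $\Conv(\R^n,\R)$ yet is not $\SO(n)$-invariant. Even after correcting the target, the assertion that $Q\notin\mathrm{M}_n$ fails continuity ``along piecewise-affine approximations'' is the entire content of the $d=0$ classification (imported here from \cite{KnoerrMongeAmpereoperators2024} as \autoref{theorem:ClassificationMA}), not a one-line observation.

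The paper sidesteps both issues by taking the $d=0$ case $\P_0\LV(\R^n)^{tr}=\MAVal(\R^n)\cong\mathrm{M}_n$ as known and inducting on $d$: one expands $\Psi(f+\ell)=\sum_{j=0}^d Y_j(f)[\ell]$, checks that the top coefficient $Y_d$ is translation invariant and polynomial of degree~$0$, hence lands in $\MAVal(\R^n)\otimes\Sym^d(\A(n,\R)^*)_\C$, subtracts the corresponding $\Theta_d(Q)$ built via the differential cycle, and reduces the polynomial degree. No density extraction or blow-up is needed.
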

	As an application of this result, we obtain a classification of all rigid motion invariant elements in $\P_d\LV(\R^n)$. In addition to the elements defined by Eq.~\eqref{eq:definitionEabc}, consider the functionals $\mathcal{F}^{a,b}_c$, $a\ge 0, b\ge 1$, $1\le c\le n-1$, given for $f\in\Conv(\R^n,\R)\cap C^2(\R^n)$, $B\subset \R^n$ bounded Borel set, by	
	\begin{align*}
		&\mathcal{F}^{a,b}_c(f;B)\\
		&\quad=\int_{B}f(x)^a|\nabla f(x)|^{2(b-1)} \det(D^2f(x)[c],\nabla f(x)\cdot \nabla f(x)^T,Id[n-c-1])dx.
	\end{align*}
	\autoref{maintheorem:translationInvariantCase} implies that $\mathcal{F}^{a,b}_c$ extends to a continuous local functional on $\Conv(\R^n,\R)$.
	\begin{maintheorem}
		\label{maintheorem:rigidMotionInvariant}
		Assume that $n\ge 3$ and let $\Psi\in \P_d\LV_k(\R^n)$. Then $\Psi$ is rigid motion invariant if and only if there exist unique constants $c_{ij},d_{ij}\in\C$ such that
		\begin{align*}
			\Psi=&\sum_{i=\max(k-d,0)}^{\min(n,k)}\sum_{j=0}^{\lfloor\frac{\min(k-i,d)}{2}\rfloor}c_{ij} \mathcal{E}^{k-2j-i,j}_{i}+\sum_{i=\max(k-d,1)}^{\min(n-1,k)}\sum_{j=1}^{\lfloor\frac{\min(k-i,d)}{2}\rfloor}d_{ij} \mathcal{F}^{k-2j-i,j}_{i}.
		\end{align*}
	\end{maintheorem}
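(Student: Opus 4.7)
The plan is to reduce the classification to a question in classical invariant theory by combining the integral representation of \autoref{maintheorem:translationInvariantCase} with the extra rotational symmetry. Since rigid motion invariance entails translation invariance, \autoref{maintheorem:translationInvariantCase} produces a unique polynomial $P_\Psi \in \Poly_d(\A(n,\R)) \otimes \mathrm{M}_n$ such that
\begin{align*}
\Psi(f;B) = \int_{B} P_\Psi(f(x), df(x), D^2f(x))\,dx
\end{align*}
on $\Conv(\R^n,\R) \cap C^2(\R^n)$. Unpacking the action of $g \in \SO(n)$ from Eq.~\eqref{eq:groupAction} via the substitution $y = gx$ together with $d(f\circ g)(x) = df(gx)\cdot g$ and $D^2(f\circ g)(x) = g^T D^2 f(gx) g$, the invariance $\pi(g)\Psi = \Psi$ and the uniqueness clause in \autoref{maintheorem:translationInvariantCase} force $P_\Psi(s, v, A) = P_\Psi(s, vg, g^T A g)$ for all $g \in \SO(n)$. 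Similarly, $k$-homogeneity translates to $P_\Psi(ts, tv, tA) = t^k P_\Psi(s,v,A)$.

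The remaining task is to classify the $k$-homogeneous $\SO(n)$-invariants in $\Poly_d(\A(n,\R)) \otimes \mathrm{M}_n$. By classical invariant theory for $\SO(n)$ acting on $(\R^n)^* \oplus \Sym^2(\R^n)$ with $n \ge 3$, the basic $\SO(n)$-invariants in $(v,A)$ are generated by $|v|^2$, the traces $\mathrm{tr}(A^k)$ for $1 \le k \le n$, and the mixed terms $\langle v, A^k v\rangle$ for $0 \le k \le n-1$; the additional determinantal $\SO(n)$-invariants such as $\det(v, Av, \ldots, A^{n-1}v)$ involve entries of $A^j$ for $j\ge 2$ and do not lie in $\Poly((\R^n)^*) \otimes \mathrm{M}_n$. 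I would intersect the invariant ring with $\Poly_d(\A(n,\R)) \otimes \mathrm{M}_n$ using the natural grading $\mathrm{M}_n = \bigoplus_{c=0}^n \mathrm{M}_n^{(c)}$ by degree in $A$, where $\mathrm{M}_n^{(c)}$ is the $\SO(n)$-stable span of $c$-minors. A monomial argument (no minor contains a monomial with a repeated row index) shows that $(\mathrm{M}_n^{(c)})^{\SO(n)}$ is one-dimensional, spanned by $\sum_{|I|=c}\det(A_{I,I})$, a constant multiple of $\det(A[c], Id[n-c])$; consequently $\mathrm{tr}(A^k)$ with $k \ge 2$ and products of elementary symmetric polynomials are ruled out.

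For the mixed-in-$v$ contributions, the key observation is that the mixed discriminant $\det(A[c], vv^T, Id[n-c-1])$ is multilinear in its arguments, so as a polynomial in $A$ it is a linear combination of $c$-minors, putting it inside $\Poly((\R^n)^*) \otimes \mathrm{M}_n$ while remaining $\SO(n)$-invariant. A representation-theoretic decomposition of $\mathrm{M}_n^{(c)}$ into $\SO(n)$-isotypic components, combined with Schur's lemma paired against the isotypic decomposition of $\Poly((\R^n)^*)$ into harmonic components, should show that these exhaust the mixed-invariant contributions. Combined with the pure cases above, one obtains that $P_\Psi$ must be a linear combination of
\begin{align*}
s^a (|v|^2)^b \det(A[c], Id[n-c]) \quad \text{and} \quad s^a (|v|^2)^{b-1} \det(A[c], vv^T, Id[n-c-1]),
\end{align*}
i.e., of the kernels of $\mathcal{E}^{a,b}_c$ and $\mathcal{F}^{a,b}_c$ respectively. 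The constraint $a + 2b + c = k$ from $k$-homogeneity, the ranges $0 \le c \le n$ with $a, b \ge 0$ for $\mathcal{E}$ (respectively $1 \le c \le n-1$ with $b \ge 1$ for $\mathcal{F}$), and the polynomial degree bound, recover the parametrisation $(a,b,c) = (k-2j-i, j, i)$ and the stated index ranges. Uniqueness of $c_{ij}, d_{ij}$ follows from the uniqueness of $P_\Psi$ and the linear independence of these kernels.

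The main obstacle I anticipate is the mixed-invariants step: showing that $\det(A[c], vv^T, Id[n-c-1])$ and its products with powers of $|v|^2$ exhaust the $\SO(n)$-invariants in $\Poly((\R^n)^*) \otimes \mathrm{M}_n$ that depend non-trivially on $v$. This reduces to a concrete description of $\mathrm{M}_n^{(c)}$ as an $\SO(n)$-module and a matching of its isotypic pieces with those of $\Poly((\R^n)^*)$ via Schur's lemma; the essential content is that the only $\SO(n)$-equivariant maps from symmetric powers of $\R^n$ into $\mathrm{M}_n^{(c)}$ are multiples of $v \mapsto \det(A[c], vv^T, Id[n-c-1])$. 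The hypothesis $n \ge 3$ enters both to keep the basic $\SO(n)$-invariant list under control (avoiding the low-dimensional $\SO(2)$ phenomena) and to ensure that the families $\{\det(A[c], Id[n-c])\}$ and $\{\det(A[c], vv^T, Id[n-c-1])\}$ do not suffer dimensional collapses.
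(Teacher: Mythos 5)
Your first step coincides with the paper's: rigid motion invariance implies translation invariance, so \autoref{maintheorem:translationInvariantCase} (equivalently \autoref{theorem:ClassificationTranslationInvariant}) reduces everything to classifying the $\SO(n)$-invariant, $k$-homogeneous elements of $\Poly_d(\A(n,\R))\otimes \mathrm{M}_n$, and the bookkeeping $a+2b+c=k$, $a+2b\le d$ that you extract at the end is correct. The divergence is in how the invariants are classified, and this is exactly where your argument has a genuine gap: you yourself flag as ``the main obstacle'' the claim that the mixed discriminants $\det(A[c],vv^T,Id[n-c-1])$, together with powers of $|v|^2$ and the sums of principal minors, exhaust all $\SO(n)$-invariants lying inside $\Poly((\R^n)^*)\otimes\mathrm{M}_n$, and you do not prove it. This is not a routine consequence of the first fundamental theorem: the FFT only gives generators ($|v|^2$, $\mathrm{tr}(A^j)$, $\langle v,A^jv\rangle$, plus genuine $\SO(n)$ determinantal invariants such as $\det(v,Av,A^2v)$, which for $n=3$ has $A$-degree exactly $3$ and so cannot be dismissed purely by degree), and one must rule out that \emph{polynomial combinations} of these generators, whose individual terms do not lie in $\Poly\otimes\mathrm{M}_n$, conspire after cancellation to land in that subspace. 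Your monomial argument (no repeated row index) only addresses single generators of the pure-$A$ type, is delicate for symmetric matrices (where $a_{12}^2=a_{12}a_{21}$ does occur in $2$-minors), and says nothing about the mixed case, which is the heart of the matter.

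The paper resolves precisely this step by different means: it never classifies invariant polynomials directly, but instead uses the isomorphism $\MAVal_k(\R^n)\cong P\Lambda^{n-k,k}$ and the Lefschetz decomposition to compute the full $\SO(n)$-isotypic decomposition of $\MAVal_k(\R^n)$ (\autoref{theorem:MAValIsotypicDecomposition}, via the character identities of Alesker--Bernig--Schuster), from which Schur's lemma gives the exact dimension of $(\Sym^d((\R^n)^*)_\C\otimes\MAVal_k(\R^n))^{\SO(n)}$ (\autoref{corollary:dimensionInvariant}); it then exhibits the explicit invariant elements $E^{a,i}_k$, $F^{b,j}_l$, checks their linear independence (\autoref{lemma:rigidMotionLinearIndep}), and concludes by matching dimensions through \autoref{corollary:translationInvGradedAlg}. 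Your proposed fix --- ``a representation-theoretic decomposition of $\mathrm{M}_n^{(c)}$ into $\SO(n)$-isotypic components combined with Schur's lemma'' --- is in substance a restatement of the paper's \autoref{theorem:MAValIsotypicDecomposition}, so as written the proposal presupposes the key lemma rather than supplying it. To complete your route you would either have to prove that multiplicity-free decomposition of $\mathrm{M}(n,c)$ (equivalently of $\MAVal_c(\R^n)$) yourself, or carry out a genuinely careful second-fundamental-theorem style analysis of which invariant combinations lie in the span of minors, including the $\SO(n)$-specific invariants in low dimensions; neither is done in the proposal.
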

	For $n=2$ a similar result holds, however, the classification involves an additional family of functionals, compare the discussion in \autoref{section:RigidMotionInvariant}.\\ 
	
	The last type of results of this article concerns the topological properties of the space $\P_d\LV(\R^n)$. This space may be equipped with a variety of topologies by taking the compact-open topology with respect to some topology on $\M(\R^n)$. We discuss three different topologies for general spaces of measure-valued functionals in \autoref{section:topologyFunctionals}, but will focus on the following consequence of the relation to valuations on convex functions for the topology induced by local convergence with respect to the local total variation on $\M(\R^n)$. Consider the semi-norms on $\P_d\LV(\R^n)$ given for $A\subset\R^n$ compact and $K\subset \Conv(\R^n,\R)$ compact by
	\begin{align}
		\label{eq:DefSemiNormBoundedTop}
		\|\Psi\|_{A;K}:=\sup\limits_{\substack{f\in K,\phi\in C_c(\R^n)\\ \supp\phi\subset A, \|\phi\|_\infty\le 1}}|\Psi(f;\phi)|.
	\end{align}
	Here, we denote by $\phi\mapsto \Psi(f;\phi)$ the integration functional on $C_c(\R^n)$ induced by the measure $\Psi(f)$. It follows from the principle of uniform boundedness that these semi-norms are well defined, compare \autoref{lemma:weakContImpliesBounded} below. We will call the topology induced by these semi-norms the \emph{compact-to-bounded topology} and refer to \autoref{section:topologyFunctionals} for an explanation of the terminology. 
	It is not difficult to see that $\P_d\LV(\R^n)$ is complete with respect to these semi-norms. Our final result shows that this topology is actually metrizable.
	\begin{maintheorem}\label{maintheorem:Frechet}
		The space $\P_d\LV(\R^n)$ is a Fr\'echet space with respect to the compact-to-bounded topology.
	\end{maintheorem}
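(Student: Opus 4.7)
The plan is to verify that $\P_d\LV(\R^n)$, equipped with the compact-to-bounded topology, satisfies the three defining properties of a Fr\'echet space: Hausdorffness, completeness, and metrizability.

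The first two are the easier pieces. Hausdorffness follows because vanishing of all seminorms $\|\Psi\|_{A;K}$ forces $\Psi(f;\phi)=0$ for every $f\in\Conv(\R^n,\R)$ and every $\phi\in C_c(\R^n)$ (take $K=\{f\}$ and $A\supset\supp\phi$), so $\Psi=0$. For completeness, a Cauchy net $(\Psi_\alpha)$ yields, for each fixed $f$, a limiting Radon measure $\Psi(f)$ obtained by gluing the total-variation limits of $\Psi_\alpha(f)|_A$ over exhausting $A$; locality and the polynomial property of degree at most $d$ pass to the pointwise limit, and continuity of $\Psi$ is inherited from the $\Psi_\alpha$ since $\Psi_\alpha\to\Psi$ is uniform on every compact subset of $\Conv(\R^n,\R)$.

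The core of the theorem is metrizability, for which I would reduce the defining family of seminorms to a countable one. Since $\R^n$ is $\sigma$-compact, I restrict the first index to the exhausting sequence $A_m=\overline{B(0,m)}$. For the second index, I would use Arzel\`a--Ascoli together with the local Lipschitz bounds for finite convex functions controlled by local sup norms: a set $K\subset\Conv(\R^n,\R)$ is precompact iff $\sup_{f\in K,\,|x|\le m}|f(x)|<\infty$ for every $m$. Hence every compact $K$ is contained in a box $K_{\mathbf{c}}:=\{f\in\Conv(\R^n,\R):|f(x)|\le c_m\text{ for }|x|\le m\}$ for some sequence $\mathbf{c}=(c_m)\in\mathbb{N}^{\mathbb{N}}$.

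The main obstacle is that the family $\{K_{\mathbf{c}}\}_{\mathbf{c}}$ is not countably cofinal under pointwise domination, so a naive enumeration of seminorms fails. To bypass this, I would combine locality with the homogeneous decomposition \autoref{maintheorem:HomDecompPdLV}. By locality, $\|\Psi\|_{A_m;K_{\mathbf{c}}}$ depends only on the restrictions $f|_{B(0,m+1)}$ of $f\in K_{\mathbf{c}}$, each bounded by $c_{m+1}$ on $B(0,m+1)$. A rescaling and a standardized extension procedure produce, for every $f\in K_{\mathbf{c}}$, a function $\tilde f$ lying in a fixed compact set $L_m\subset\Conv(\R^n,\R)$ that agrees with $c_{m+1}^{-1}f$ on a neighborhood of $A_m$. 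Applying $k$-homogeneity to each component $\Psi_k$ in $\Psi=\sum_{k=0}^{n+d}\Psi_k$ then yields the polynomial bound
\[
	\|\Psi\|_{A_m;K_{\mathbf{c}}}\le\sum_{k=0}^{n+d}c_{m+1}^{k}\,\|\Psi_k\|_{A_m;L_m}.
\]
Since the projections $\Psi\mapsto\Psi_k$ are finite linear combinations of evaluations $\ell\mapsto\Psi(\cdot+\ell)$ at suitable affine functions (by the Pukhlikov--Khovanski\u i interpolation underlying \autoref{maintheorem:HomDecompPdLV}), each $\|\Psi_k\|_{A_m;L_m}$ is in turn controlled by finitely many seminorms $\|\Psi\|_{A_m;L_m'}$ over a slightly enlarged fixed compact $L_m'$. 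The topology is therefore generated by the countable family $\{\|\cdot\|_{A_m;L_m'}\}_{m\in\mathbb{N}}$, which combined with completeness yields the Fr\'echet property.
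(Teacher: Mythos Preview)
Your approach is correct and close in spirit to the paper's. One slip: the projections $\Psi\mapsto\Psi_k$ onto homogeneous components are linear combinations of the dilations $f\mapsto\Psi(jf)$ for $j=0,\dots,n+d$ (via the Vandermonde inverse, as in the proof of \autoref{maintheorem:HomDecompPdLV}), not of the translates $\Psi(\cdot+\ell)$. This is harmless for your argument---the sets $j\cdot L_m$, $0\le j\le n+d$, are compact and their union serves as your $L_m'$.

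The paper's argument differs mainly in packaging. Rather than constructing an extension $\tilde f$ of $c_{m+1}^{-1}f$ into a fixed compact $L_m$ (which does work, but needs to be spelled out: e.g.\ take the supremum of all affine minorants of $c_{m+1}^{-1}f$ based at points of $B(0,m+\tfrac12)$, using that the subgradients there are uniformly bounded by \autoref{proposition_convex_functions_local_lipschitz_constants}), the paper localizes $\Psi$ instead of $f$. Multiplying by a cutoff yields a compactly supported valuation $\psi_A\bullet\Psi$, to which the abstract seminorm comparison \autoref{corollary:EquivalenceSemiNorms} applies directly. This produces the intermediate seminorms $\|\cdot\|_{A,\delta}$ of \autoref{proposition:semiNormslocalFunctional}, and the countable family $\|\cdot\|_{B_m(0),1}$ drops out by a simple inclusion of domains. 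Your explicit-extension route and the paper's cutoff route are dual implementations of the same idea (locality plus homogeneity plus local Lipschitz control); the paper's is somewhat cleaner in that the scaling comparison is done once abstractly in \autoref{corollary:EquivalenceSemiNorms} rather than by hand.
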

	We give a construction of a countable family of semi-norms inducing the topology in \autoref{section:supportTopology}.

\subsection{Plan of the article}
	\autoref{section:preliminaries} reviews the necessary background for convex functions and polynomial valuations on convex functions. In \autoref{section:topologyFunctionals}, we investigate  different topologies on spaces of measure-valued functionals in view of their completeness and boundedness properties. This section is more general than required for this article and discusses in particular questions on the continuity of representations induced by the action of Lie groups on the underlying spaces, which are relevant to \cite{KnoerrIntegralrepresentationpolynomial2026,KnoerrTranslationinvariantarea2026}. The different topologies also give rise to several notions of smoothness for local functionals, which turn out to coincide.\\
	
	\autoref{section:localFunctionals} establishes general properties of local functionals and contains in particular the proofs of \autoref{maintheorem:LocalFunctionalValuation} and \autoref{maintheorem:HomDecompPdLV}. The relation to valuations on convex functions is then used to investigate the support properties of these functionals and the compact-to-bounded topology on $\P_d\LV(\R^n)$, including the proof of \autoref{maintheorem:Frechet}. In addition, we review a construction of certain measure-valued valuations based on the integration of differential forms over the differential cycle, which in particular allows us to construct the local functionals needed for the characterizations results.\\
	\autoref{section:classificationResults} contains the proof of \autoref{maintheorem:translationInvariantCase}, from which we obtain \autoref{maintheorem:rigidMotionInvariant} using some basic tools from the representation theory of $\SO(n)$ in combination with a classification of translation invariant polynomial local functionals of degree $0$ in terms of integration with respect to the differential cycle from \cite{KnoerrMongeAmpereoperators2024}.
	
	\begin{acknowledgement}
		I want to thank Monika Ludwig for her questions on the relation between the valuation property and locally determined functionals, which were the starting point for \autoref{maintheorem:LocalFunctionalValuation}.\\
		
		This research was funded in whole or in part by the Austrian Science Fund (FWF), \href{https://www.doi.org/10.55776/PAT4205224}{10.55776/PAT4205224}. 
	\end{acknowledgement} 
	
\section{Preliminaries}
\label{section:preliminaries}
Unless stated otherwise, all vector spaces are assumed to be complex vector spaces. In particular, for a finite dimensional real vector space $V$, we denote by $\Lambda^kV^*$ the space of complex-valued $k$-forms on $V$, and for a finite dimensional real manifold $M$, we denote by $\Omega^k(M)$ the space of complex-valued differential $k$-forms. If $V$ is a real vector space, we denote by $V_\C=V\otimes \C$ its complexification. If $F$ is a topological vector space, we let $F'$ denote its topological dual, i.e. the space of all continuous linear maps $F\rightarrow\C$.
\subsection{Convex functions}
\label{subsection:convexFunctions}
We refer to the monographs \cite{RockafellarConvexanalysis1997,RockafellarWetsVariationalanalysis1998} for a comprehensive background on convex functions and only collect the results required for the constructions in this article.\\	

The following is a special case of \cite{RockafellarWetsVariationalanalysis1998}*{9.14}.
\begin{proposition}
	\label{proposition_convex_functions_local_lipschitz_constants}
	Let $U\subset \R^n$ be a convex open subset and $f:U\rightarrow\R$ a convex function. If $X\subset U$ is a set with $X+\epsilon B_1\subset U$ and $f$ is bounded on $X+ B_\epsilon$, then $f$ is Lipschitz continuous on $X$ with Lipschitz constant $\frac{2}{\epsilon}\sup_{x\in X+B_\epsilon}|f(x)|$.
\end{proposition}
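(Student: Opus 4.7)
The plan is to give a direct proof by exploiting convexity along line segments that extend just beyond each pair of points of $X$. The hypothesis $X + \epsilon B_1 \subset U$ provides exactly the room needed to push any point $y \in X$ by a distance $\epsilon$ in any direction while remaining inside $U$, and the boundedness of $f$ on $X + B_\epsilon$ then controls the value of $f$ at the extended point. This is the classical trick for turning a bound on $f$ into a Lipschitz estimate via the three-slopes inequality.

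Concretely, fix distinct $x, y \in X$ and set $M := \sup_{z \in X + B_\epsilon} |f(z)|$. I would consider
\begin{equation*}
z := y + \epsilon \frac{y - x}{|y - x|},
\end{equation*}
so that $z$ lies within distance $\epsilon$ of $y \in X$ and hence in $X + B_\epsilon$. The point $y$ then sits on the segment from $x$ to $z$: writing $t := \frac{|y - x|}{|y - x| + \epsilon} \in (0,1)$, one has $y = (1 - t)\, x + t\, z$. Convexity of $f$ gives
\begin{equation*}
f(y) \leq (1 - t) f(x) + t f(z),
\end{equation*}
which rearranges to
\begin{equation*}
f(y) - f(x) \leq t \bigl(f(z) - f(x)\bigr) \leq \frac{|y - x|}{|y - x| + \epsilon} \cdot 2M \leq \frac{2M}{\epsilon}\, |y - x|.
\end{equation*}
Interchanging the roles of $x$ and $y$ yields the reverse inequality with the same constant, and therefore $|f(x) - f(y)| \leq \frac{2M}{\epsilon}\, |x - y|$.

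The argument is essentially routine, and I do not foresee any substantial obstacle. The only point requiring care is the geometric setup that guarantees the auxiliary point $z$ lies in the region where $f$ is controlled, which is exactly what the two hypotheses $X + \epsilon B_1 \subset U$ and $|f| \leq M$ on $X + B_\epsilon$ together ensure. No separation of cases by the size of $|y - x|$ is needed, since $z$ is built from $y$ alone and the extension length is fixed at $\epsilon$.
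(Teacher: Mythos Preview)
Your argument is correct and is the standard three-slopes proof of this classical fact. The paper does not actually supply its own proof of this proposition; it simply records it as a special case of \cite{RockafellarWetsVariationalanalysis1998}*{9.14}, so there is nothing substantive to compare against.
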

This result implies in particular that every function in $\Conv(\R^n,\R)$ is continuous. Recall that we equip the space $\Conv(\R^n,\R)$ with the topology of uniform convergence on compact subsets, which coincides with the topology induced by pointwise convergence or epi-convergence, compare \cite[Theorem 7.17]{RockafellarConvexanalysis1997}. This implies that $\Conv(\R^n,\R)$ is metrizable. We will require the following characterization of compactness in $\Conv(\R^n,\R)$.

\begin{proposition}[\cite{Knoerrsupportduallyepi2021}*{Proposition 2.4}]
	\label{proposition_compactness_Conv}
	A subset $U\subset \Conv(\R^n,\R)$ is relatively compact if and only if it is bounded on compact subsets of $\R^n$, i.e. if for any compact subset $A\subset \R^n$ there exists a constant $C(A)>0$ such that
	\begin{align*}
		\sup\limits_{x\in A}|f(x)|\le C(A)\quad \forall f\in U.
	\end{align*}
\end{proposition}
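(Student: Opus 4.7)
The plan is to prove both implications separately, using the metrizability of $\Conv(\R^n,\R)$ to phrase everything in terms of sequences.

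For the forward direction, suppose $U$ is relatively compact and let $A\subset\R^n$ be compact. The evaluation seminorm $f\mapsto \sup_{x\in A}|f(x)|$ is continuous on $\Conv(\R^n,\R)$, since by definition the topology is that of uniform convergence on compact subsets, so uniform convergence on $A$ forces convergence of these sups. A continuous function on the compact set $\overline{U}$ is bounded, which yields the desired constant $C(A)$.

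For the reverse direction, I would use sequential compactness together with a diagonal Arzel\`a--Ascoli argument. Let $(f_k)\subset U$ and fix an exhaustion $\R^n=\bigcup_j A_j$ by compact convex sets with $A_j+B_{\epsilon_j}\subset A_{j+1}$ for some $\epsilon_j>0$. By hypothesis, each $f_k$ is uniformly bounded on $A_{j+1}$ by a constant independent of $k$, so \autoref{proposition_convex_functions_local_lipschitz_constants} applied to the restriction to the interior of $A_{j+1}$ gives a uniform Lipschitz bound of $(f_k)$ on $A_j$. Thus $(f_k|_{A_j})$ is uniformly bounded and equicontinuous, and Arzel\`a--Ascoli yields a subsequence converging uniformly on $A_j$. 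Diagonalizing across $j$ produces a subsequence converging uniformly on every compact subset to a function $f:\R^n\rightarrow\R$. This $f$ is finite by construction, continuous as a locally uniform limit, and convex as a pointwise limit of convex functions, so $f\in\Conv(\R^n,\R)$ and the convergence takes place in the correct topology.

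The only real obstacle is the passage from boundedness to equicontinuity, which is exactly what \autoref{proposition_convex_functions_local_lipschitz_constants} provides: without convexity one could not deduce an equi-Lipschitz bound from a mere pointwise or uniform bound, so this is the step where the special structure of $\Conv(\R^n,\R)$ is used. Everything else is a standard Arzel\`a--Ascoli and diagonal extraction, combined with the elementary fact that convexity and finiteness are preserved under locally uniform limits.
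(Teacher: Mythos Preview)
Your argument is correct. Note, however, that the paper does not actually prove this proposition: it is quoted from \cite{Knoerrsupportduallyepi2021}*{Proposition 2.4} and used as a black box, so there is no proof in the present paper to compare against. Your approach---continuity of the sup-seminorm for the forward direction, and an Arzel\`a--Ascoli argument together with \autoref{proposition_convex_functions_local_lipschitz_constants} and a diagonal extraction for the reverse direction---is exactly the standard proof one would expect for such a statement, and it is presumably close to what appears in the cited reference. One cosmetic remark: in the reverse direction you can simply take $U=\R^n$ when applying \autoref{proposition_convex_functions_local_lipschitz_constants}, so there is no need to pass to the interior of $A_{j+1}$.
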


We denote by $\Aff(n,\R)$ the set of all invertible affine transformations of $\R^n$. Since every element in $\Conv(\R^n,\R)$ is continuous, the following is a direct consequence of the description of the topology on $\Conv(\R^n,\R)$ in terms of pointwise convergence.
\begin{lemma}\label{lemma:continuityActionAffonConv}
	The map
	\begin{align*}
		\Aff(n,\R)\times\Conv(\R^n,\R)&\rightarrow\Conv(\R^n,\R)\\
		(g,f)&\mapsto f\circ g^{-1}
	\end{align*}
	is continuous.
\end{lemma}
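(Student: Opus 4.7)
The plan is to exploit two facts from the preliminaries: first, the topology on $\Conv(\R^n,\R)$ is metrizable and coincides with the topology of pointwise convergence (as indicated in the paragraph following Proposition \ref{proposition_convex_functions_local_lipschitz_constants}), so it suffices to verify sequential continuity and, on the target side, only to check pointwise convergence of the image sequence; second, any convergent sequence in $\Conv(\R^n,\R)$ is relatively compact by \autoref{proposition_compactness_Conv}, hence locally uniformly bounded.

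Let $(g_k,f_k)\to (g,f)$ in $\Aff(n,\R)\times \Conv(\R^n,\R)$. I would fix $x\in\R^n$ and set $y_k=g_k^{-1}(x)$, $y=g^{-1}(x)$. Since inversion and evaluation in $\Aff(n,\R)$ are continuous, $y_k\to y$, and in particular $\{y_k\}$ is contained in some compact neighborhood $K$ of $y$. I would then write
\begin{align*}
|f_k(y_k)-f(y)|\le |f_k(y_k)-f(y_k)|+|f(y_k)-f(y)|.
\end{align*}
The second term tends to $0$ by continuity of $f$ (which holds in view of \autoref{proposition_convex_functions_local_lipschitz_constants}). For the first term, the convergence $f_k\to f$ in $\Conv(\R^n,\R)$ is locally uniform (again by the description of the topology), so $\sup_{z\in K}|f_k(z)-f(z)|\to 0$, and in particular $|f_k(y_k)-f(y_k)|\to 0$. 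Thus $f_k\circ g_k^{-1}\to f\circ g^{-1}$ pointwise on $\R^n$, which, together with the fact that each $f_k\circ g_k^{-1}$ is convex, implies convergence in the topology of $\Conv(\R^n,\R)$.

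There is no serious obstacle; the only subtlety is ensuring that the diagonal-type limit $f_k(y_k)\to f(y)$ is valid. This is handled via the standard splitting above, where local uniformity of $f_k\to f$ on the compact set $K\ni y_k$ is the crucial input. Everything else is just continuity of the group operations in $\Aff(n,\R)$ and of convex functions as real-valued maps on $\R^n$.
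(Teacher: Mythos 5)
Your argument is correct and is essentially the paper's: the lemma is stated there as a direct consequence of the pointwise-convergence description of the topology on $\Conv(\R^n,\R)$ together with the continuity of finite convex functions, which is exactly what your diagonal splitting $|f_k(y_k)-f(y)|\le |f_k(y_k)-f(y_k)|+|f(y_k)-f(y)|$ makes explicit. The reduction to sequences via metrizability and the use of locally uniform convergence on a compact set containing the points $g_k^{-1}(x)$ are the right (and intended) details.
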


\subsection{Polynomial valuations on convex functions}
\label{section:polyVal}
Recall that we denote by $\A(n,\R)$ the space of all affine maps $\ell:\R^n\rightarrow\R$. By definition, a valuation $\mu:\Conv(\R^n,\R)\rightarrow F$ into a Hausdorff topological vector space $F$ is called a \emph{polynomial valuation} if there exists $d\in\mathbb{N}$ such that the map
\begin{align*}
	\A(n,\R)&\rightarrow F\\
	\ell&\mapsto \mu(f+\ell)
\end{align*}	
is a polynomial of degree at most $d$ on $\A(n,\R)$ with values in $F$ for every $f\in\Conv(\R^n,\R)$. In this case we will call $\mu$ a polynomial valuation of degree at most $d$. Using the inverse of the Vandermonde matrix, it is easy to see that this is equivalent to the existence of (unique) maps $Y_j:\Conv(\R^n,\R)\rightarrow F\otimes \Sym^j(\A(n,\R)^*)_\C$, $0\le j\le d$, such that
\begin{align}
	\label{eq:polynomialDecomp}
	\mu(f+\ell)=\sum_{j=0}^dY_j(f)[\ell],
\end{align}
where we identify $\Sym^j(\A(n,\R)^*)_\C$ with the space of $j$-homogeneous complex-valued polynomials on $\A(n,\R)$.\\
We denote by $\P_d\VConv(\R^n,F)$ the space of all continuous polynomial valuations $\mu:\Conv(\R^n,\R)\rightarrow F$ of degree at most $d$. 

Let $\P_d\VConv_k(\R^n,F)$ denote the subspace of all $k$-homogeneous valuations, that is, all $\mu\in\P_d\VConv(\R^n,F)$ satisfying $\mu(tf)=t^k\mu(f)$ for $t\ge 0$, $f\in \Conv(\R^n,\R)$. The following decomposition was obtained in \cite{KnoerrUlivelliPolynomialvaluationsconvex2026} based on results by Pukhlikov and Khovanski\u i \cite{PukhlikovKhovanskiiFinitelyadditivemeasures1992}.
\begin{theorem}[\cite{KnoerrUlivelliPolynomialvaluationsconvex2026}*{Theorem 1.3}]
	\label{theorem:homogeneousDecompositionPVConv}
	\begin{align*}
		\P_d\VConv(\R^n,F)=\bigoplus_{k=0}^{n+d}\P_d\VConv_k(\R^n,F)
	\end{align*}
\end{theorem}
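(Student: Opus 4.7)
The plan is to reduce the statement to an abstract homogeneous decomposition for polynomial valuations on convex cones, in the spirit of Pukhlikov--Khovanski\u i \cite{PukhlikovKhovanskiiFinitelyadditivemeasures1992}. The decisive analytic input is the claim that for every $\mu\in\P_d\VConv(\R^n,F)$ and every $f\in\Conv(\R^n,\R)$, the map
\begin{equation*}
	t\mapsto\mu(tf),\qquad t\in[0,\infty),
\end{equation*}
is a polynomial in $t$ of degree at most $n+d$ with values in $F$. Once this is established, the homogeneous components $Y_k\in\P_d\VConv_k(\R^n,F)$ arise by polynomial interpolation: fixing $n+d+1$ distinct nonnegative parameters $t_0,\ldots,t_{n+d}$ and inverting the Vandermonde matrix produces coefficients $c_{jk}$ such that $Y_k(f):=\sum_{j=0}^{n+d}c_{jk}\,\mu(t_jf)$ is automatically $k$-homogeneous. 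Continuity of $Y_k$ is inherited from $\mu$ as a finite linear combination of continuous maps; the valuation identity is preserved because scaling by $t\geq 0$ commutes with $\vee$ and $\wedge$ on $\Conv(\R^n,\R)$; and the polynomial property in $\A(n,\R)$ transfers to each $Y_k$ since $\ell\mapsto\mu(t_jf+\ell)$ has degree at most $d$. Summing the $Y_k$ recovers $\mu$ from interpolation at $t=1$, and the sum is direct because distinct homogeneity degrees are linearly independent as functions of $t$.

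To establish the polynomial bound in $t$, I would first translate the polynomial property in affine perturbations into the finite-difference identity $\Delta_{\ell_0}\cdots\Delta_{\ell_d}\mu(f)=0$ for all $\ell_i\in\A(n,\R)$, where $\Delta_\ell\mu(f):=\mu(f+\ell)-\mu(f)$. I would then combine this with the valuation identity, applied iteratively to piecewise affine convex functions $f=\max(\ell_1,\ldots,\ell_m)$, to express $\mu(tf)=\mu(\max(t\ell_1,\ldots,t\ell_m))$ by an inclusion--exclusion formula as a signed sum of evaluations of $\mu$ on single affine functions (or on lower-dimensional maxima), each of which depends polynomially on $t$ of degree at most $d$ by the hypothesis. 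The geometric point is that the epigraph in $\R^{n+1}$ of a piecewise affine convex function carries a polyhedral cell decomposition of intrinsic dimension at most $n$, and tracking this dimensional bound through the scaling $f\mapsto tf$ produces the degree bound $n+d$ in $t$. Density of piecewise affine convex functions in $\Conv(\R^n,\R)$, continuity of $\mu$, and closedness of the space of $F$-valued polynomials of degree at most $n+d$ in $t$ then extend the bound to arbitrary $f$.

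The main obstacle is the polynomial degree estimate for piecewise affine $f$: one must prove that the combinatorial complexity arising from the number of affine pieces does not inflate the degree in $t$, which forces a delicate interplay between cancellations coming from the valuation identity and the intrinsic dimension of the domain. This is where the analog of the Pukhlikov--Khovanski\u i technique does the real work. Once the degree bound is in place, all remaining verifications---that each $Y_k$ is continuous, $k$-homogeneous, a valuation, and polynomial of degree at most $d$ in affine perturbations---are routine consequences of the defining formula $Y_k(f)=\sum_j c_{jk}\mu(t_jf)$.
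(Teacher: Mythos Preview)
The paper does not prove this statement; it is quoted from \cite{KnoerrUlivelliPolynomialvaluationsconvex2026} with only the remark that it rests on the Pukhlikov--Khovanski\u i technique. Your overall architecture---reduce everything to the claim that $t\mapsto\mu(tf)$ is a polynomial of degree at most $n+d$, then extract the homogeneous components by Vandermonde interpolation---is the standard one, and your verifications that each $Y_k$ is continuous, a valuation, and polynomial of degree at most $d$ in affine perturbations are correct as written.

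Your sketch of the degree bound, however, has two concrete problems. First, the valuation identity $\mu(f\vee h)+\mu(f\wedge h)=\mu(f)+\mu(h)$ is only available when $f\wedge h\in\Conv(\R^n,\R)$, and for two distinct affine functions $\ell_1,\ell_2$ the pointwise minimum $\ell_1\wedge\ell_2$ is concave, not convex; so there is no inclusion--exclusion formula expressing $\mu(\max_i\ell_i)$ as a signed sum of the $\mu(\ell_i)$ from the valuation property alone. Second, even granting such a formula, your own account says each term is polynomial in $t$ of degree at most $d$; since the face lattice of $\max_i t\ell_i$ is independent of $t>0$, a finite $t$-independent sum of such terms again has degree at most $d$, not $n+d$, and the ``dimensional'' contribution of $n$ remains unexplained by this mechanism. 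The actual Pukhlikov--Khovanski\u i argument does not proceed by naive inclusion--exclusion on affine pieces; in the convex-function setting the route taken in \cite{KnoerrUlivelliPolynomialvaluationsconvex2026} is rather a reduction (via the decomposition in Eq.~\eqref{eq:polynomialDecomp}) to the epi-translation invariant case $d=0$, where the McMullen-type decomposition for $\P_0\VConv(\R^n,F)$ from \cite{ColesantiEtAlhomogeneousdecompositiontheorem2020} is already available through a correspondence with valuations on convex bodies. You are right that this is where the real work lies, but the heuristic you offer for it does not survive scrutiny.
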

Moreover, the functionals $Y^j$ from Eq.~\eqref{eq:polynomialDecomp} behave well respect to this decomposition.
\begin{proposition}[\cite{KnoerrUlivelliPolynomialvaluationsconvex2026}*{Proposition 3.2}]
	\label{proposition:translativeDecompCompatibleHomDecomp}
	For $\mu\in \P_d\VConv_k(\R^n,F)$ let $Y_j:\Conv(\R^n,\R)\rightarrow F\otimes\Sym^j(\A(n,\R)^*)_\C$ be the unique functions satisfying Eq.~\eqref{eq:polynomialDecomp}. Then $Y_j\in \P_{d-j}\VConv_{k-j}(\R^n,F\otimes \Sym^j(\A(n,\R)^*)_\C)$.
\end{proposition}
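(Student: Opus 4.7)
The plan is to derive every required property of $Y_j$ — continuity, the valuation property, polynomial degree at most $d-j$, and $(k-j)$-homogeneity — from the corresponding property of $\mu$ using two complementary devices: a Vandermonde-type inversion that expresses $Y_j(f)[\ell]$ as a fixed finite linear combination of values of $\mu$, and a bidegree extraction argument obtained by substituting $f+\ell'$ for $f$ in the defining identity \eqref{eq:polynomialDecomp}.

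First I would fix $\ell \in \A(n,\R)$ and exploit that $Y_j(f)$ is a $j$-homogeneous polynomial on $\A(n,\R)$: the scalar-valued function
\begin{align*}
t \mapsto \mu(f + t\ell) = \sum_{j=0}^d t^j\, Y_j(f)[\ell]
\end{align*}
is a polynomial of degree at most $d$ in $t$ with coefficients in $F$. Picking $d+1$ distinct scalars $t_0,\ldots,t_d$ and inverting the associated Vandermonde matrix produces scalars $\alpha_{ji}$ with
\begin{align*}
Y_j(f)[\ell] = \sum_{i=0}^d \alpha_{ji}\, \mu(f + t_i \ell).
\end{align*}
From this closed formula, continuity of $\mu$ and of the translation $f \mapsto f + t_i\ell$ yields continuity of $f \mapsto Y_j(f)[\ell]$ for every fixed $\ell$; since $\Sym^j(\A(n,\R)^*)_\C$ is finite-dimensional, this upgrades to continuity of $Y_j$ with values in $F \otimes \Sym^j(\A(n,\R)^*)_\C$. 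Applying the same linear combination to the valuation identity for $\mu$, together with the identities $(f + t_i\ell) \vee (h + t_i\ell) = (f\vee h) + t_i\ell$ and the analogue for $\wedge$, gives the valuation identity for $Y_j(\cdot)[\ell]$, hence for $Y_j$ itself.

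For the degree and homogeneity claims I would use the two ways to expand $\mu(f+\ell+\ell')$:
\begin{align*}
\sum_{j'=0}^d Y_{j'}(f + \ell')[\ell] = \mu(f + \ell + \ell') = \sum_{j'=0}^d Y_{j'}(f)[\ell + \ell'].
\end{align*}
Each $Y_{j'}(f)[\ell + \ell']$ is a polynomial of total degree $j'$ in the pair $(\ell,\ell')$ and decomposes into bihomogeneous pieces of bidegrees $(i, j'-i)$, $0 \leq i \leq j'$. The uniqueness of the decomposition \eqref{eq:polynomialDecomp} applied in the variable $\ell$ (with $f+\ell'$ playing the role of $f$) forces $Y_j(f + \ell')[\ell]$ to equal the sum over $j' \geq j$ of the bidegree-$(j, j'-j)$ contributions, each of which has $\ell'$-degree at most $d-j$. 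Hence $\ell' \mapsto Y_j(f + \ell')$ is a polynomial of degree at most $d-j$. For homogeneity, substituting $t\ell'$ for $\ell'$ in $\mu(tf + t\ell') = t^k \mu(f + \ell')$ yields
\begin{align*}
\sum_{j=0}^d t^j\, Y_j(tf)[\ell'] = \sum_{j=0}^d t^k\, Y_j(f)[\ell'],
\end{align*}
and equating $j$-homogeneous components in $\ell'$ produces $Y_j(tf) = t^{k-j} Y_j(f)$ for $t > 0$.

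The mildest obstacle is the bookkeeping in the bidegree extraction: one must verify that isolating the $j$-homogeneous-in-$\ell$ part commutes with the summation over $j'$ and that this isolates precisely $Y_j(f+\ell')[\ell]$ on the left-hand side. This reduces once more to the Vandermonde uniqueness used in the first step, provided the identification of $\Sym^j(\A(n,\R)^*)_\C$ with the space of $j$-homogeneous polynomials is used consistently so that the extracted components land in the correct tensor factor of $F \otimes \Sym^j(\A(n,\R)^*)_\C$. Apart from this, the whole proposition is a routine transfer of the structure of $\mu$ to its components $Y_j$ through finite linear identities.
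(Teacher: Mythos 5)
Your argument is correct. Note that the paper does not prove this proposition itself but imports it from \cite{KnoerrUlivelliPolynomialvaluationsconvex2026}; the route you take --- Vandermonde inversion in $t\mapsto\mu(f+t\ell)$ to transfer continuity and the valuation property (using $(f+t_i\ell)\wedge(h+t_i\ell)=(f\wedge h)+t_i\ell$), and the double expansion of $\mu(f+\ell+\ell')$ with extraction of bihomogeneous components for the degree and homogeneity claims --- is exactly the standard argument this kind of statement rests on, and it is the same device the paper itself uses repeatedly (e.g.\ in the proofs of the homogeneous decomposition and of the top-degree theorem). The only point worth a sentence in a write-up is the boundary case: your homogeneity identity $Y_j(tf)=t^{k-j}Y_j(f)$ is derived for $t>0$, and the case $t=0$ (and the observation that $Y_j\equiv 0$ whenever $j>k$) follows from it together with the continuity of $Y_j$ you established in the first step.
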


Given $\mu\in\P_d\VConv_k(\R^n,F)$, we may define its polarization $\bar{\mu}:\Conv(\R^n,\R)\rightarrow F$ by
\begin{align*}
	\bar{\mu}(f_1,\dots,f_k)=\frac{1}{k!}\frac{\partial^k}{\partial\lambda_1\dots\partial\lambda_k}\Big|_0\mu\left(\sum_{j=1}^k\lambda_jf_j\right)
\end{align*}
for $f_1,\dots,f_k\in\Conv(\R^n,\R)$. Then $\bar{\mu}$ is additive in each argument (compare \cite[Corollary 3.5 ]{KnoerrUlivelliPolynomialvaluationsconvex2026} and \cite[Section 4.2]{Knoerrsupportduallyepi2021}), which can be used to extend $\bar{\mu}$ to a distribution.
	\begin{theorem}[\cite{KnoerrUlivelliPolynomialvaluationsconvex2026}*{Theorem 1.4 and Corollary 4.8}]
	\label{theorem:GW}
	Let $F$ be a locally convex vector space and denote its completion by $\bar{F}$. For every $\mu\in\P_d\VConv_k(\R^n,F)$ there exists a unique distribution $\GW(\mu):C^\infty_c((\R^n)^k)\rightarrow \bar{F}$ such that for every $\lambda\in F'$, the distribution $\lambda(\GW(\mu))$ has compact support and satisfies
	\begin{align*}
		\lambda(\GW(\mu))[f_1\otimes\dots\otimes f_k]=\lambda\left(\bar{\mu}(f_1,\dots,f_k)\right)
	\end{align*}
	for all $f_1,\dots,f_k\in\Conv(\R^n,\R)\cap C^\infty(\R^n)$.
\end{theorem}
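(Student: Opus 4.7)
The plan is to adapt the classical Goodey--Weil construction to the vector-valued polynomial setting. First I would record the structural properties of the polarization $\bar\mu$ that I will leverage: by the cited Corollary~3.5 of \cite{KnoerrUlivelliPolynomialvaluationsconvex2026}, $\bar\mu$ is symmetric and Minkowski multi-additive, meaning $\bar\mu(f+g,f_2,\dots,f_k)=\bar\mu(f,f_2,\dots,f_k)+\bar\mu(g,f_2,\dots,f_k)$ for $f,g,f_2,\dots,f_k\in\Conv(\R^n,\R)$; continuity of $\mu$ and the finite-difference defining formula for $\bar\mu$ also give continuity in each argument separately. I would then reduce to scalars: if for every $\lambda\in F'$ one can produce a compactly supported distribution $T_\lambda\in\mathcal{D}'((\R^n)^k)$ with $T_\lambda[f_1\otimes\dots\otimes f_k]=\lambda(\bar\mu(f_1,\dots,f_k))$ for smooth convex $f_i$, then it remains only to assemble these into a vector-valued object in a final step.

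For the scalar construction I would use the standard difference trick. Let $q(x)=|x|^2$. For any $\phi\in C^\infty_c(\R^n)$, a sufficiently large $C>0$ makes $Cq+\phi$ convex (and smooth); given $\phi_1,\dots,\phi_k\in C^\infty_c(\R^n)$ one chooses a common such $C$ and sets
\[
T_\lambda(\phi_1,\dots,\phi_k):=\sum_{\epsilon\in\{0,1\}^k}(-1)^{|\epsilon|}\,\lambda\bar\mu\bigl(f_1^{\epsilon_1},\dots,f_k^{\epsilon_k}\bigr),\qquad f_i^0:=Cq+\phi_i,\ f_i^1:=Cq.
\]
Multi-additivity and symmetry of $\bar\mu$ show that $T_\lambda$ is well defined (independent of the choice of $C$) and multilinear in $(\phi_1,\dots,\phi_k)$. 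Separate continuity in the $C^\infty_c$-topology is straightforward: if $\phi_i^{(j)}\to 0$ in $C^\infty_c(\R^n)$ with supports in a fixed compact set one can take the same $C$ throughout, so $Cq+\phi_i^{(j)}\to Cq$ in $\Conv(\R^n,\R)$ by \autoref{proposition_compactness_Conv}, and continuity of $\bar\mu$ in one argument does the rest. Schwartz's kernel theorem then extends $T_\lambda$ uniquely to a scalar distribution on $(\R^n)^k$.

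The step I expect to be most delicate is compact support of $T_\lambda$. I would invoke support theory for continuous valuations on $\Conv(\R^n,\R)$, in the spirit of \cite{Knoerrsupportduallyepi2021}: the scalar polynomial valuation $\lambda\circ\mu$ admits a (compact) support set $A_\lambda\subset\R^n$, that is, a smallest closed set such that $\lambda(\mu(f))=\lambda(\mu(h))$ whenever $f|_U=h|_U$ on an open neighborhood $U$ of $A_\lambda$. Polarizing this locality property one argument at a time and using that $f_i^0-f_i^1=\phi_i$ vanishes on the complement of $\supp\phi_i$, a $\phi_i$ supported away from $A_\lambda$ forces the $\epsilon_i=0$ and $\epsilon_i=1$ contributions in the above alternating sum to cancel; hence $T_\lambda$ vanishes on any product tensor with at least one factor supported outside $A_\lambda$, and by density of product tensors in $C^\infty_c((\R^n)^k)$ the distribution $T_\lambda$ is supported in $A_\lambda^k$.

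Finally, I would upgrade to the vector-valued statement by fixing $\phi\in C^\infty_c((\R^n)^k)$ and considering $\Lambda_\phi\colon F'\to\C$, $\lambda\mapsto T_\lambda[\phi]$. For product tensors $\phi=\phi_1\otimes\dots\otimes\phi_k$ the value $\Lambda_\phi(\lambda)$ is, by construction, the evaluation at $\lambda$ of a specific element of $F$ (the alternating sum of $\bar\mu(f_i^{\epsilon_i})$'s). Approximating a general $\phi$ by finite sums of such tensors and using the continuity estimates from the scalar construction yields a Cauchy net of elements in $F$ whose weak-$*$ image converges to $\Lambda_\phi$; completeness of $\bar F$ then produces a unique $\GW(\mu)[\phi]\in\bar F$ with $\lambda(\GW(\mu)[\phi])=T_\lambda[\phi]$ for all $\lambda\in F'$, and continuity in $\phi$ is inherited from the scalar case. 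Uniqueness of $\GW(\mu)$ follows from density of product tensors of smooth convex functions in $C^\infty_c((\R^n)^k)$ together with the separation of points of $\bar F$ by $F'$. Apart from the compact support step, the main bookkeeping is to track the incompleteness of $F$, which is why the target must be enlarged to $\bar F$.
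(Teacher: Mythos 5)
This statement is imported: the paper cites it from \cite{KnoerrUlivelliPolynomialvaluationsconvex2026} and gives no proof of its own, so your proposal can only be measured against the standard Goodey--Weil construction that the cited reference carries out. Your overall blueprint (polarize, scalarize, difference trick, kernel theorem, reassemble) is the right one, and the construction of $T_\lambda$, its well-definedness via multi-additivity, and the multilinearity and separate continuity needed for the kernel theorem are all sound; your formula is a non-symmetrized version of Eq.~\eqref{equation:formulaGWElementaryTensor}.

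There are, however, two genuine gaps. First, the compact support step is circular as written. You derive $\supp T_\lambda\subset A_\lambda^k$ from the existence of a \emph{compact} determining set $A_\lambda$ for $\lambda\circ\mu$, citing support theory ``in the spirit of'' the literature. But in that literature the support of a valuation is \emph{defined} through the Goodey--Weil distributions, and the characterization of $\supp\mu$ as the minimal determining set (\autoref{proposition:characterizationSupport}) together with its compactness are \emph{consequences} of the compact support of $\GW(\mu)$ --- the very assertion you are trying to prove. What your locality argument actually establishes, correctly, is that $T_\lambda$ is supported on $A^k$ for \emph{any} closed determining set $A$; the hard content is that some compact determining set exists, and for that you must either reproduce the direct argument (assume test functions with supports escaping to infinity on which $T_\lambda$ does not vanish and build a convex function contradicting continuity of $\mu$) or cite it as an independent input, which defeats the purpose of a proof. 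Second, the reassembly into $\bar F$ does not follow from ``the continuity estimates from the scalar construction'': estimates for each $T_\lambda$ separately only make the net of approximants weakly Cauchy, and a weakly Cauchy net need not converge in the completion $\bar F$ of the locally convex topology. You need seminorm-valued estimates on $\bar\mu$ of the type recorded in \autoref{lemma:EstimateGW}, i.e.
\begin{align*}
	|\GW(\mu)[\phi_1\otimes\dots\otimes\phi_k]|_F\le C_k\,\|\mu\|_{F;K}\prod_{j=1}^k\|\phi_j\|_{C^2(\R^n)}
\end{align*}
for every seminorm $|\cdot|_F$ on $F$, applied uniformly over the approximating sums of product tensors, to get a Cauchy net in the topology of $F$ itself; these follow from writing $\bar\mu$ as a finite linear combination of values $\mu\bigl(\sum_j\lambda_jf_j\bigr)$ over a compact family of convex functions, but they must be stated and used. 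A minor point: product tensors of smooth \emph{convex} functions are not dense in $C^\infty_c((\R^n)^k)$ (they are not even compactly supported); uniqueness should instead go through writing every $\phi\in C^\infty_c(\R^n)$ as a difference of smooth convex functions, using multilinearity, and then density of tensors of compactly supported functions --- and it uses the compact support hypothesis to make the pairing with non-compactly-supported convex functions meaningful at all.
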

For $\phi_1,\dots,\phi_k\in C^\infty_c(\R^n)$, the value $\GW(\mu)[\phi_1\otimes\dots\otimes \phi_k]$ may be calculated in the following way, compare \cite[Section 5.1]{Knoerrsupportduallyepi2021}: Choose convex functions $f_1,\dots,f_k\in \Conv(\R^n,\R)$ such that $h_j:=f_j+\phi_j$ is convex for every $1\le j\le k$. Then
\begin{align}
	\label{equation:formulaGWElementaryTensor}
	\begin{split}
		&\GW(\mu)[\phi_1\otimes\dots\otimes \phi_k]\\
		&\quad =\sum\limits_{j=0}^k(-1)^{k-j}\frac{1}{(k-j)!j!}\sum_{\sigma\in S_k}\bar{\mu}(h_{\sigma(1)},\dots,h_{\sigma(j)},f_{\sigma(j+1)},\dots,f_{\sigma(k)}),
	\end{split}
\end{align}
where $S_k$ denotes the symmetric group.\\
\begin{remark}
	\label{remark:distributionsClosure}
	Eq.~\eqref{equation:formulaGWElementaryTensor} shows that $\GW(\mu))[\phi_1\otimes\dots\otimes \phi_k]$ belongs to $F$ even if $F$ is not complete. More generally, it is not difficulty to see that $\GW(\mu)[\phi]$ belongs to the sequential closure of $F$ in its completion for every $\phi\in C_c^\infty((\R^n)^k)$, however, we will not need this fact.
\end{remark}

For a semi-norm $|\cdot|_F$ on $F$, we set for $K\subset\Conv(\R^n,\R)$ and $\mu\in\P_d\VConv(\R^n,F)$,
\begin{align*}
	\|\mu\|_{F;K}=\sup_{f\in K}|\mu|_F\in [0,\infty]
\end{align*}
If the semi-norm is continuous, then this defines a semi-norm on $\P_d\VConv(\R^n,F)$. If $F$ is a locally convex vector space, we equip $\P_d\VConv(\R^n,F)$ with the topology induced by these semi-norms for all continuous semi-norms on $F$ and $K\subset\Conv(\R^n,\R)$ compact.

The following was shown for $d=0$ and continuous semi-norms in \cite[Section 5.1]{Knoerrsupportduallyepi2021}. The argument holds verbatim for $d>0$ and arbitrary semi-norms.
\begin{lemma}
	\label{lemma:EstimateGW}
	There exists a constant $C_k>0$ and a compact subset $K\subset \Conv(\R^n,\R)$ independent of $\mu\in \P_d\VConv_k(\R^n,F)$ such that for every semi-norm $|\cdot|_F$ on $F$
	\begin{align*}
		|\GW(\mu)[\phi_1\otimes\dots\otimes \phi_k]|_F\le C_k		\|\Psi\|_{F;K}\prod_{j=1}^k\|\phi_j\|_{C^2(\R^n)}
	\end{align*}
	for all $\phi_1,\dots,\phi_j\in C^\infty_c(\R^n)$.
\end{lemma}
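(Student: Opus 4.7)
The plan is to unfold \eqref{equation:formulaGWElementaryTensor} and reduce the bound to a supremum of $\mu$ over a single compact subset $K\subset\Conv(\R^n,\R)$ that depends only on $k$ and $n$.

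Since $\GW(\mu)$ is a distribution on $(\R^n)^k$, its value on elementary tensors is multilinear in $(\phi_1,\dots,\phi_k)$, so after rescaling each $\phi_j$ by $1/\|\phi_j\|_{C^2(\R^n)}$ and pulling out the product of the norms I may assume $\|\phi_j\|_{C^2(\R^n)}\le 1$ for every $j$. With this normalization, take $f_j(x)=\tfrac{1}{2}|x|^2$: since $\|D^2\phi_j\|_\infty\le 1$, the function $h_j=f_j+\phi_j$ is convex because $D^2h_j=\mathrm{Id}+D^2\phi_j\succeq 0$. Substituting these choices into \eqref{equation:formulaGWElementaryTensor} writes $\GW(\mu)[\phi_1\otimes\dots\otimes\phi_k]$ as a finite combinatorial sum of polarization values $\bar\mu(g_{\sigma(1)},\dots,g_{\sigma(k)})$ with each $g_i\in\{f_i,h_i\}$.

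The next step is to apply the polarization identity available for the $k$-homogeneous $\mu$, which follows from multi-additivity of $\bar\mu$ together with the diagonal identity $\mu(g)=\bar\mu(g,\dots,g)$:
\begin{align*}
k!\,\bar\mu(g_1,\dots,g_k)=\sum_{S\subset\{1,\dots,k\}}(-1)^{k-|S|}\mu\Bigl(\sum_{i\in S}g_i\Bigr).
\end{align*}
For our choice of $g_i$, each summand on the right has the form $\frac{|S|}{2}|x|^2+\sum_{i\in S_h}\phi_i$ for some $S_h\subset S$; it is convex (since $|S|\,\mathrm{Id}+\sum_{i\in S_h}D^2\phi_i\succeq 0$) and bounded on every compact subset of $\R^n$ by a constant depending only on that compact set and $k$, and not on the particular $\phi_i$ in the $C^2$-unit ball.

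\autoref{proposition_compactness_Conv} then implies that the set $K_0$ of all such functions, as $\phi_1,\dots,\phi_k$ range over $\{\phi\in C_c^\infty(\R^n):\|\phi\|_{C^2(\R^n)}\le 1\}$, is relatively compact in $\Conv(\R^n,\R)$; take $K$ to be its closure. Each term $|\mu(\sum_{i\in S}g_i)|_F$ appearing in the expansion is then bounded by $\|\mu\|_{F;K}$, and the finitely many combinatorial prefactors collect into a constant $C_k$ depending only on $k$. Undoing the normalization gives the stated estimate, and since nothing in the argument invokes continuity of the seminorm, the bound is valid for an arbitrary seminorm on $F$. I do not anticipate any serious obstacle: the key observation is simply that the single quadratic $\tfrac{1}{2}|x|^2$ uniformly dominates every admissible $\phi_j$, which is precisely what confines all relevant $\mu$-values to one fixed compact family of convex functions.
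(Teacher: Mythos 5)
Your argument is correct and is essentially the proof the paper invokes by its citation (the argument of \cite{Knoerrsupportduallyepi2021}*{Section 5.1}, applied verbatim): unfold Eq.~\eqref{equation:formulaGWElementaryTensor} with each $f_j$ a fixed multiple of $|x|^2$, polarize $\bar\mu$ into finitely many values $\mu\bigl(\sum_{i\in S}g_i\bigr)$, and bound these uniformly via the compactness criterion of \autoref{proposition_compactness_Conv}, noting that no continuity of the semi-norm is used. The only cosmetic caveat is that, depending on the convention for $\|\cdot\|_{C^2(\R^n)}$, you may need $f_j(x)=\tfrac{c_n}{2}|x|^2$ with a dimensional constant $c_n$ to guarantee convexity of $f_j+\phi_j$ and of the sums $\sum_{i\in S}g_i$, which only changes $K$ and $C_k$.
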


For $\mu\in\P_d\VConv(\R^n,F)$ let $\mu=\sum_{k=0}^{n+d}\mu_k$ be the decomposition into its homogeneous components. We define the \emph{support of $\mu$} by
\begin{align*}
	\supp \mu:= \bigcup_{k=1}^{n+d}\Delta_k^{-1}(\supp\GW(\mu_k)),
\end{align*}
where $\Delta_k:\R^n\rightarrow (\R^n)^k$, $x\mapsto (x,\dots,x)$, denotes the diagonal embedding. Note that this is a closed subset of $\R^n$. Moreover, it follows from \cite[Theorem 4.6]{KnoerrUlivelliPolynomialvaluationsconvex2026} that $\mu$ has compact support if $F$ admits a continuous norm.

\begin{proposition}[\cite{KnoerrUlivelliPolynomialvaluationsconvex2026}*{Proposition 4.9}]
	\label{proposition:characterizationSupport} The support of $\mu\in \P_d\VConv(\R^n,F)$ is the unique minimum (with respect to inclusion) among all closed sets $A\subset \R^n$ with the following property: If $f,h\in \Conv(\R^n,\R)$ are two functions with $f\equiv h$ on a neighborhood of $A$, then $\mu(f)=\mu(h)$.
\end{proposition}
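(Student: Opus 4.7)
The proposition has two halves: (i) $\supp\mu$ satisfies the stated locality property, and (ii) every closed set $A$ satisfying this property contains $\supp\mu$. I would handle both after splitting $\mu=\sum_{k=0}^{n+d}\mu_k$ via Theorem \ref{theorem:homogeneousDecompositionPVConv}; the component $\mu_0$ is constant, so the task reduces to the homogeneous pieces with $k\ge 1$. The pivotal technical ingredient, which I would establish first, is a slotwise locality of the Goodey-Weil distribution: the coordinate projections satisfy $\pi_i(\supp\GW(\mu_k))\subset\supp\mu$ for every $i=1,\ldots,k$. Informally, $\bar\mu_k$ is itself a valuation in each of its $k$ arguments with the others held fixed, so its "support in slot $i$" cannot extend beyond $\supp\mu$.

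\textbf{Direction (i).} Let $f,h\in\Conv(\R^n,\R)$ agree on an open neighborhood $U$ of $\supp\mu$, and set $\phi=h-f$, which vanishes on $U$. Using that $\mu_k$ is a $k$-homogeneous polynomial valuation in the Pukhlikov--Khovanski\u i sense and the multiadditivity of $\bar\mu_k$ in each argument, one obtains the polarization identity
$$\mu_k(h)-\mu_k(f)=\sum_{j=1}^{k}\binom{k}{j}\,\bar\mu_k\bigl(\underbrace{\phi,\ldots,\phi}_{j},\underbrace{f,\ldots,f}_{k-j}\bigr),$$
each right-hand term interpreted via additivity after writing $\phi=h-f$. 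Approximating $\phi$ by smooth compactly supported functions still vanishing near $\supp\mu$ (possible since $\phi\equiv 0$ on $U$) and $f$ by smooth convex representatives, \autoref{theorem:GW} identifies each right-hand term with a pairing $\GW(\mu_k)[\psi_1\otimes\cdots\otimes\psi_k]$ in which at least one factor $\psi_i$ has support disjoint from $\supp\mu$. By the slotwise locality of $\GW(\mu_k)$, the tensor product is then supported away from $\supp\GW(\mu_k)$, so the pairing vanishes. \autoref{lemma:EstimateGW} provides the quantitative continuity needed to justify passage to the limit.

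\textbf{Direction (ii).} Let $A$ be closed with the property, and suppose for contradiction that $x_0\in\supp\mu\setminus A$. Pick an open neighborhood $V$ of $x_0$ with $\overline V\cap A=\emptyset$. By definition of $\supp\mu$ together with the definition of distributional support, after shrinking $V$ there exist $\phi_1,\ldots,\phi_k\in C_c^\infty(V)$ with $\GW(\mu_k)[\phi_1\otimes\cdots\otimes\phi_k]\ne 0$. Set $f_0(x)=\tfrac{1}{2}|x|^2$ and $h_j:=f_0+\varepsilon\phi_j$ for $\varepsilon>0$ small enough that each $h_j$ is convex. For any choice $g_i\in\{f_0,h_i\}$ appearing in Eq.~\eqref{equation:formulaGWElementaryTensor}, each $g_i$ coincides with $f_0$ outside $V$, hence on a neighborhood of $A$; the same holds for $\sum_j\lambda_jg_j$ and $\bigl(\sum_j\lambda_j\bigr)f_0$, so the property of $A$ combined with $k$-homogeneity gives
$$\mu_k\!\Bigl(\sum_j\lambda_jg_j\Bigr)=\mu_k\!\Bigl(\bigl(\sum_j\lambda_j\bigr)f_0\Bigr)=\bigl(\sum_j\lambda_j\bigr)^{\!k}\mu_k(f_0).$$
Differentiating $k$ times in the $\lambda_j$ at the origin forces $\bar\mu_k(g_1,\ldots,g_k)=\mu_k(f_0)$ for every relevant combination; substituting these equal values into Eq.~\eqref{equation:formulaGWElementaryTensor} makes the alternating sum $(1-1)^k$ appear, so $\varepsilon^k\GW(\mu_k)[\phi_1\otimes\cdots\otimes\phi_k]=0$, contradicting the choice of the $\phi_j$.

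\textbf{Main obstacle.} The crux is verifying the slotwise locality claim $\pi_i(\supp\GW(\mu_k))\subset\supp\mu$ used in Direction (i). Although intuitively $\bar\mu_k$ is local in each slot because it arises from the local functional $\mu_k$, making this rigorous requires tracking the valuation identity at the level of the polarizations and approximating arbitrary elements of $\Conv(\R^n,\R)$ by smooth convex representatives in a way that preserves the locality condition. \autoref{lemma:EstimateGW} supplies the continuity bounds required to transfer the distributional identity from the smooth case to all of $\Conv(\R^n,\R)$; once the locality claim is in place, both directions of the characterization follow by the arguments above.
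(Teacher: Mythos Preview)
The paper does not prove this proposition; it imports it from \cite{KnoerrUlivelliPolynomialvaluationsconvex2026}*{Proposition 4.9}, with the subsequent remark only clarifying that uniqueness of the minimum is explicit in that reference's proof. Your Direction (ii) is essentially correct and is the elementary half: once one checks that the locality hypothesis on $A$ passes from $\mu$ to each homogeneous component $\mu_k$ (scale $f,h$ by $t>0$ and compare coefficients of $t^k$), the computation via Eq.~\eqref{equation:formulaGWElementaryTensor} does collapse to $(1-1)^k\mu_k(f_0)=0$ exactly as you describe.

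The genuine gap is in Direction (i). You correctly isolate the slotwise inclusion $\pi_i(\supp\GW(\mu_k))\subset\supp\mu$ as the crux, but you do not prove it, and your heuristic that ``$\bar\mu_k$ is local in each slot because it arises from the local functional $\mu_k$'' is a category error: in this proposition $\mu$ is an arbitrary polynomial valuation with values in a locally convex space $F$, not a local functional in the sense of \autoref{section:localFunctionals}. What you actually need is the \emph{diagonal support property} $\supp\GW(\mu_k)\subset\Delta_k(\R^n)$; combined with the definition of $\supp\mu$ it immediately yields your slotwise claim. That diagonal property is the substantive content behind the proposition and is a non-trivial consequence of the valuation identity (established for $d=0$ in \cite{Knoerrsupportduallyepi2021} and extended to the polynomial case in \cite{KnoerrUlivelliPolynomialvaluationsconvex2026}): one shows that whenever two of the test functions $\phi_i,\phi_j$ have disjoint supports, the polarization formula forces $\GW(\mu_k)[\phi_1\otimes\cdots\otimes\phi_k]=0$. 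Without this input your approximation scheme cannot close. The approximation step would in any case require first reducing to scalar $F$ via $\lambda\in F'$ (so that $\lambda\circ\GW(\mu_k)$ has compact support, cf.\ \autoref{theorem:GW}) before one can legitimately pair the distribution with the non-compactly-supported smooth extensions of $f$ and of $h-f$.
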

\begin{remark}
	The uniqueness of the minimum is only implicit in the statement in \cite{KnoerrUlivelliPolynomialvaluationsconvex2026}*{Proposition 4.9}, however, it is shown explicitly in the proof.
\end{remark}

For a closed set $A\subset\R^n$, let $\P_d\VConv_A(\R^n,F)$ denote the subspace of valuations with support contained in $A$. The following was shown for $d=0$ in \cite[Proposition 6.8]{Knoerrsupportduallyepi2021}. The proof holds verbatim for $d>0$. 
\begin{proposition}
	\label{proposition:semiNormsCompactlySupportedValuation}
	Let $A\subset\R^n$ be compact and convex and $|\cdot|_F$ a continuous semi-norm on $F$. Then for $s>0$,
	\begin{align*}
		\|\mu\|_{F;A,s}:=\sup\left\{|\mu(f)|_F: f\in\Conv(\R ^n,\R), \sup_{x\in A+B_s(0)}|f(x)|\le 1 \right\}
	\end{align*}
	defines a continuous semi-norm on $\P_d\VConv_A(\R^n,F)$. Moreover, $\|\mu\|_{F;A,s}$ is a norm if $|\cdot|_F$ is a norm, and the topology induced by these semi-norms (for all continuous semi-norms on $F$) coincides with the subspace topology of $\P_d\VConv_A(\R^n,F)$.
\end{proposition}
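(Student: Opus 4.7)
The plan is to prove finiteness of the supremum, the semi-norm and continuity properties, and the topological comparison by constructing, for each $f$ in the constraint set, a truncation $\tilde f$ that agrees with $f$ on a neighborhood of $A$ but is globally controlled in size. Given $f\in\Conv(\R^n,\R)$ with $\sup_{x\in A+B_s(0)}|f(x)|\le 1$, set $A':=A+B_{s/2}(0)$, which is compact, convex, and contains $A$ in its interior. Applying \autoref{proposition_convex_functions_local_lipschitz_constants} with $X=A'$ and $\epsilon=s/2$ yields a Lipschitz constant $L=L(s)$ for $f$ on $A'$; in particular, every subgradient of $f$ at a point of $A'$ has norm at most $L$. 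Define
\[
    \tilde f(x):=\sup\bigl\{f(x_0)+\langle v,x-x_0\rangle:x_0\in A',\,v\in\partial f(x_0)\bigr\}.
\]
Each affine support is dominated by $f$ on $\R^n$ by the subgradient inequality, so $\tilde f\le f$; evaluating at $x\in A'$ against its own support gives $\tilde f=f$ on $A'$, and hence on a neighborhood of $A$. Convexity of $\tilde f$ is automatic, while the uniform slope bound $\|v\|\le L$ combined with $|f(x_0)|\le 1$ give $|\tilde f(x)|\le 1+L(\|x\|+R)$ with $R=\max_{x_0\in A'}\|x_0\|$. Consequently the collection of all such $\tilde f$ (as $f$ varies over the constraint set) is bounded on every compact subset of $\R^n$ uniformly in $f$. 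By \autoref{proposition_compactness_Conv} its closure $K_0\subset\Conv(\R^n,\R)$ is compact, and since \autoref{proposition:characterizationSupport} gives $\mu(f)=\mu(\tilde f)$ for every $\mu\in\P_d\VConv_A(\R^n,F)$, we conclude $\|\mu\|_{F;A,s}\le\|\mu\|_{F;K_0}$. This settles finiteness, the semi-norm property, and continuity of $\|\cdot\|_{F;A,s}$ in the subspace topology.

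For the reverse comparison, let $K\subset\Conv(\R^n,\R)$ be compact; \autoref{proposition_compactness_Conv} yields $M=M(K,A,s)$ with $\sup_{A+B_s(0)}|f|\le M$ for all $f\in K$. Decompose $\mu=\sum_{k=0}^{n+d}\mu_k$ via \autoref{theorem:homogeneousDecompositionPVConv}; the $\GW$-based definition of the support forces $\supp\mu_k\subset\supp\mu\subset A$, so each $\mu_k\in\P_d\VConv_A(\R^n,F)$. Homogeneity then produces $|\mu_k(f)|_F=M^k|\mu_k(f/M)|_F\le M^k\|\mu_k\|_{F;A,s}$ for every $f\in K$. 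To pass back from $\mu_k$ to $\mu$, I would pick $n+d+1$ distinct values $t_0,\dots,t_{n+d}\in[0,1]$ and apply Vandermonde inversion to the polynomial identity $\mu(t_j f)=\sum_k t_j^k\mu_k(f)$; since $\sup_{A+B_s(0)}|t_j f|\le 1$ whenever $\sup_{A+B_s(0)}|f|\le 1$, this yields $\|\mu_k\|_{F;A,s}\le C_k\|\mu\|_{F;A,s}$ with constants $C_k$ depending only on the chosen $t_j$. Assembling the estimates gives $\|\mu\|_{F;K}\le C(K,A,s)\|\mu\|_{F;A,s}$, so the two topologies coincide. Finally, if $|\cdot|_F$ is a norm and $\|\mu\|_{F;A,s}=0$, the Vandermonde step forces $\mu_k(f)=0$ for every $f$ in the unit set, and then $k$-homogeneity together with rescaling (every $g\in\Conv(\R^n,\R)$ is bounded on $A+B_s(0)$) extends this to all of $\Conv(\R^n,\R)$, so $\mu\equiv 0$.

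The main technical obstacle is the truncation step: one must produce a global convex modification whose bounds on arbitrary compact sets depend only on $A$ and $s$, not on the unrestricted tail behavior of $f$. The affine-support construction succeeds precisely because \autoref{proposition_convex_functions_local_lipschitz_constants} supplies a slope bound that is uniform across the constraint set. The Vandermonde step is routine but relies on the stability of the support bound under the homogeneous decomposition of \autoref{theorem:homogeneousDecompositionPVConv}, which should be verified directly from the $\GW$-level definition of $\supp$ rather than taken for granted.
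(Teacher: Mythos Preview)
Your argument is correct and complete in its essentials. The paper does not actually give a proof of this proposition: it states that the case $d=0$ was shown in \cite[Proposition~6.8]{Knoerrsupportduallyepi2021} and that ``the proof holds verbatim for $d>0$,'' so there is nothing in the present paper to compare against. Your approach---replacing $f$ by a globally controlled convex function $\tilde f$ built from affine supports over $A'$, invoking \autoref{proposition:characterizationSupport} to transfer $\mu(f)=\mu(\tilde f)$, and then using \autoref{proposition_compactness_Conv} to land in a fixed compact $K_0$---is the natural strategy and is almost certainly what the cited reference does.

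One minor imprecision worth tightening: the implication ``$f$ is $L$-Lipschitz on $A'$, hence every subgradient of $f$ at a point of $A'$ has norm at most $L$'' is only immediate at interior points of $A'$. At a boundary point $x_0\in\partial A'$ you cannot a priori perturb in the direction $v/|v|$ while staying inside $A'$. The fix is trivial: either take the supremum defining $\tilde f$ only over $x_0\in A+B_{s/4}(0)\subset\mathrm{int}(A')$ (this still contains a neighborhood of $A$, so \autoref{proposition:characterizationSupport} applies), or apply \autoref{proposition_convex_functions_local_lipschitz_constants} with a slightly smaller $\epsilon$ to obtain a Lipschitz bound on $A+B_{3s/4}(0)\supset A'$ and bound subgradients at points of $A'$ that way. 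Either adjustment yields a constant depending only on $s$ and $A$, which is all you need. Your Vandermonde step for the reverse inequality and the norm property is clean; the observation that $\supp\mu_k\subset\supp\mu$ follows directly from the definition $\supp\mu=\bigcup_k\Delta_k^{-1}(\supp\GW(\mu_k))$, as you note.
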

The following follows with the same argument as \cite[Corollary 6.9]{Knoerrsupportduallyepi2021}.
\begin{corollary}
	\label{corollary:EquivalenceSemiNorms}
	Let $A\subset\R^n$ be compact and convex. For $0<s<t$,
	\begin{align*}
		\|\mu\|_{F;A,t}\le \|\mu\|_{F;A,s}\le \left(\frac{2}{s}(2t+\diam A)+1\right)^k\|\mu\|_{F;A,t}
	\end{align*}
	for every $k$-homogeneous $\mu\in \P_d\VConv_A(\R^n,F)$.
\end{corollary}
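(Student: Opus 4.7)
The plan is to split the statement into the immediate lower inequality and the nontrivial upper inequality. The lower bound $\|\mu\|_{F;A,t}\le\|\mu\|_{F;A,s}$ follows from the inclusion $A+B_s(0)\subset A+B_t(0)$, since any $f$ with $\sup_{A+B_t(0)}|f|\le 1$ automatically satisfies $\sup_{A+B_s(0)}|f|\le 1$; the supremum defining $\|\mu\|_{F;A,t}$ is then taken over a subclass of the functions admissible for $\|\mu\|_{F;A,s}$.

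For the upper bound, the strategy is to fix $f\in\Conv(\R^n,\R)$ with $\sup_{A+B_s(0)}|f|\le 1$, set $M:=\tfrac{2}{s}(2t+\diam A)+1$, and produce a replacement $h\in\Conv(\R^n,\R)$ with the two properties (i)~$h\equiv f$ on an open neighborhood of $A$, and (ii)~$\sup_{A+B_t(0)}|h|\le M$. Given such $h$, the assumption $\supp\mu\subset A$ combined with \autoref{proposition:characterizationSupport} forces $\mu(f)=\mu(h)$, and the $k$-homogeneity of $\mu$ yields
\begin{align*}
    |\mu(f)|_F=|\mu(h)|_F=M^k\,|\mu(h/M)|_F\le M^k\|\mu\|_{F;A,t},
\end{align*}
using that $h/M$ is admissible for $\|\mu\|_{F;A,t}$. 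Taking the supremum over $f$ completes the proof.

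The construction of $h$ is the main technical step. Fix $\delta\in(0,s)$. By \autoref{proposition_convex_functions_local_lipschitz_constants} applied with $X=A+B_\delta(0)$ and $\epsilon=s-\delta$, the function $f$ is Lipschitz on $A+B_\delta(0)$ with constant at most $\tfrac{2}{s-\delta}$, so any selected subgradient $v_y\in\partial f(y)$ at $y\in A+B_\delta(0)$ satisfies $|v_y|\le\tfrac{2}{s-\delta}$. Setting
\begin{align*}
    h(x):=\sup_{y\in A+B_\delta(0)}\bigl[f(y)+\langle v_y,x-y\rangle\bigr]
\end{align*}
yields a convex function (as a supremum of affine functions) with $h\le f$ on $\R^n$ by the subgradient inequality, while $h(y)\ge f(y)$ on $A+B_\delta(0)$ forces $h=f$ there, proving~(i). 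For~(ii), the estimates $|f(y)|\le 1$, $|v_y|\le\tfrac{2}{s-\delta}$, and $|x-y|\le\diam A+t+\delta$ for $x\in A+B_t(0)$, $y\in A+B_\delta(0)$ bound $|h(x)|$ by $1+\tfrac{2}{s-\delta}(\diam A+t+\delta)$, which is at most $M$ whenever $\delta(s+2t+\diam A)\le st$; hence the claim holds for any sufficiently small $\delta>0$. The remainder of the argument depends only on the support characterization and the $k$-homogeneity of $\mu$, neither of which invokes the polynomial degree $d$, so the proof extends verbatim from the case $d=0$ treated in \cite[Corollary 6.9]{Knoerrsupportduallyepi2021}.
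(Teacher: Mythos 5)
Your argument is correct and is essentially the proof the paper has in mind: the paper only cites the $d=0$ case from \cite{Knoerrsupportduallyepi2021}, whose argument --- replace $f$ by a convex function agreeing with $f$ on a neighborhood of $A$ and controlled on $A+B_t(0)$, then combine \autoref{proposition:characterizationSupport} (applied to $\supp\mu\subset A$) with the $k$-homogeneity of $\mu$, while the lower inequality is immediate from the inclusion of admissible function classes --- is exactly what you carry out. Your explicit tangent-plane construction over $A+B_\delta(0)$ with the Lipschitz bound $\tfrac{2}{s-\delta}$ from \autoref{proposition_convex_functions_local_lipschitz_constants} is a valid implementation (for small $\delta$ it even yields a constant slightly better than $\tfrac{2}{s}(2t+\diam A)+1$, which of course still implies the stated bound), so no gaps.
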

Note that this implies that the semi-norms with $s=1$ already generate the topology.

\section{Topologies on spaces of measure-valued functionals}
\label{section:topologyFunctionals}

In this section we investigate three topologies on spaces of measure-valued functionals induced respectively by pointwise convergence, uniform convergence on relatively compact, or uniform convergence on bounded subsets of the space of continuous functions with compact support. Since the arguments do not depend on the specific properties of the underlying spaces, we consider the following more general framework.\\

Let $Y$ a locally compact and second countable Hausdorff space. For $A\subset Y$ compact, let $C_A(Y)$ denote the Banach space of all continuous functions $\phi:Y\rightarrow \C$ supported on $A$ equipped with the supremum norm. We consider $C_c(Y)$ as the (strict) inductive limit of the spaces $C_A(Y)$, $A\subset Y$ compact, that is, a set $U\subset C_c(Y)$ is open if and only if $U\cap C_A(Y)$ is open in $C_A(Y)$.
We identify the space of complex Radon measures $\M(Y)$ on $Y$ with the topological dual $C_c(Y)'$ of $C_c(Y)$. This space can be equipped with several topologies induced by uniform convergence on different families of bounded subsets. In our case, the bounded subsets of $C_c(Y)$ admit a rather simple description. The following is not difficult to show directly, however, it is also a general property of strict inductive limits, compare \cite{HorvathTopologicalvectorspaces1966}*{Chapter 2, §12}.
\begin{lemma}
	\label{lemma:BoundedSetsCompactSuppFunct}
	If $B\subset C_c(Y)$ is bounded, then there exists a compact subset $A\subset Y$ such that $B\subset C_A(Y)$.
\end{lemma}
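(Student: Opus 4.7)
The plan is to argue by contradiction, exploiting $\sigma$-compactness of $Y$ to build a continuous functional on $C_c(Y)$ that is unbounded on $B$, thereby contradicting the assumption that $B$ is bounded.

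First I would fix a compact exhaustion. Since $Y$ is locally compact, Hausdorff, and second countable, it is $\sigma$-compact, and one can choose a sequence of compact sets $K_1 \subset K_2 \subset \dots$ with $K_n \subset \mathrm{int}(K_{n+1})$ and $Y = \bigcup_n K_n$. Suppose now, for contradiction, that $B$ is bounded in $C_c(Y)$ but is not contained in $C_A(Y)$ for any compact $A \subset Y$. Applied to $A = K_n$, this yields for every $n$ some $\phi_n \in B$ whose support meets $Y \setminus K_n$, so there exists $y_n \in Y \setminus K_n$ with $\phi_n(y_n) \neq 0$. Passing to a subsequence if necessary, I may assume the points $y_n$ are pairwise distinct; moreover, since any compact $L \subset Y$ is contained in some $K_N$, we have $y_n \notin L$ for all $n \geq N$, so the set $\{y_n : n \in \mathbb{N}\}$ is locally finite and closed in $Y$.

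Next I would construct the bad functional. Set
\begin{align*}
    \mu := \sum_{n=1}^\infty \frac{n}{\phi_n(y_n)}\, \delta_{y_n}.
\end{align*}
For any compact $A \subset Y$, only finitely many of the $y_n$ lie in $A$, so the restriction $\mu|_{C_A(Y)}$ is a finite linear combination of evaluation functionals, which is continuous on the Banach space $C_A(Y)$. By the universal property of the strict inductive limit topology on $C_c(Y)$, $\mu$ defines a continuous linear functional on $C_c(Y)$, i.e.\ $\mu \in \mathcal{M}(Y)$. On the other hand, by construction $|\mu(\phi_n)| \geq n$, so the continuous seminorm $\phi \mapsto |\mu(\phi)|$ is unbounded on $B$. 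This contradicts the fact that every continuous seminorm is bounded on a bounded subset of a topological vector space, finishing the proof.

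The only nontrivial step is verifying continuity of $\mu$, and this reduces to the standard fact about strict inductive limits that a linear functional is continuous as soon as it is continuous on each step. I do not anticipate a substantial obstacle; the argument is essentially a streamlined version of the general result cited from Horv\'ath, adapted to the concrete situation of $C_c(Y)$ on a $\sigma$-compact space.
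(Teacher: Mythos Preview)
The paper does not actually prove this lemma; it only remarks that the result ``is not difficult to show directly'' and cites Horv\'ath for the general fact about bounded sets in strict inductive limits. Your contradiction argument via a locally finite sum of point masses is a standard way to make this direct, and the overall strategy is correct.

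There is, however, a gap in the step where you assert $|\mu(\phi_n)| \ge n$ ``by construction.'' With $\mu = \sum_m \frac{m}{\phi_m(y_m)}\,\delta_{y_m}$ one has
\[
\mu(\phi_n) \;=\; n \;+\; \sum_{m\neq n} \frac{m}{\phi_m(y_m)}\,\phi_n(y_m),
\]
and nothing you have arranged forces the cross terms to vanish or to be small: for $m<n$ the values $\phi_n(y_m)$ are entirely uncontrolled, so cancellation may occur. The fix is routine. First choose the sequence recursively so that $y_m \notin \bigcup_{j<m}\supp\phi_j$ (possible because this finite union is compact and by hypothesis $B\not\subset C_A(Y)$ for any compact $A$); this kills the terms with $m>n$. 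Then define the coefficients $c_m$ inductively, choosing $c_n$ so that $\bigl|\,c_n\phi_n(y_n)+\sum_{m<n} c_m\,\phi_n(y_m)\,\bigr|\ge n$, which is possible since $\phi_n(y_n)\neq 0$. With $\mu=\sum_m c_m\,\delta_{y_m}$ the rest of your argument goes through unchanged.
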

Unless stated otherwise, we will consider $\M(Y)$ as a topological vector space with respect to the weak* topology.\\

For a metric space $X$, let $C(X,\M(Y))$ denote the space of all continuous maps $\Psi:X\rightarrow \M(Y)$. We will use the following conventions: For $\Psi\in C(X,\M(Y))$ and $x\in X$, $\Psi(x)\in \M(Y)$ denotes the value of $\Psi$ in $x\in X$. For $\phi\in C_c(Y)$, we denote by $\Psi(x;\phi):=\int_{Y}\phi d[\Psi(x)]$ the value of the corresponding integration functional.\\

The strongest topology  on $C(X,\M(Y))$ that we will consider is the \emph{compact-to-bounded topology}, which is the topology of uniform convergence on compact subsets with respect to the strong topology on $\M(Y)$. Due to the description of bounded sets in \autoref{lemma:BoundedSetsCompactSuppFunct}, this topology is induced by the family of semi-norms
\begin{align}
	\label{eq:defSeminormsBoundedFunctionals}
	\|\Psi\|_{A;K}:=\sup_{x\in K}|\Psi(x)|_{C_A(Y)'}
\end{align} 
for all compact $K\subset X$ and compact $A\subset Y$. Here, $|\cdot|_{C_A(Y)'}$ denotes the operator norm of an element of $\M(Y)$ considered as an element of the dual space $C_A(Y)'$. Note that these reduce to the semi-norms in Eq.~\eqref{eq:DefSemiNormBoundedTop} for $Y=\R^n$, $X=\Conv(\R^n,\R)$. The next result shows that these semi-norms are actually well defined.
\begin{lemma}
	\label{lemma:weakContImpliesBounded}
	For $\Psi\in C(X,\M(Y))$, $\|\Psi\|_{A;K}<\infty$ for all compact $A\subset Y$ and compact $K\subset X$.
\end{lemma}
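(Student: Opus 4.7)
My plan is to reduce the claim to a straightforward application of the Banach--Steinhaus theorem on the Banach space $C_A(Y)$, using the weak* continuity of $\Psi$ to verify the pointwise boundedness hypothesis.

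More precisely, fix a compact set $A\subset Y$ and a compact subset $K\subset X$. The family of interest is $\mathcal{T}:=\{\Psi(x)|_{C_A(Y)}:x\in K\}$, viewed as a set of continuous linear functionals on the Banach space $(C_A(Y),\|\cdot\|_\infty)$; the claim is precisely that this family is uniformly bounded in operator norm. I first want to establish pointwise boundedness: for every $\phi\in C_A(Y)\subset C_c(Y)$, the evaluation map $\M(Y)\to\C$, $\nu\mapsto \int_Y\phi\,d\nu$, is continuous by the very definition of the weak* topology on $\M(Y)=C_c(Y)'$. Composing with the continuous map $\Psi:X\to\M(Y)$, the map $x\mapsto \Psi(x;\phi)$ is continuous on $X$, so its restriction to the compact set $K$ has bounded (in fact compact) image in $\C$. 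Hence $\sup_{x\in K}|\Psi(x;\phi)|<\infty$ for every fixed $\phi\in C_A(Y)$.

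With pointwise boundedness in hand, the uniform boundedness principle applied to the family $\mathcal{T}\subset C_A(Y)'$ on the Banach space $C_A(Y)$ yields
\begin{align*}
    \|\Psi\|_{A;K}=\sup_{x\in K}|\Psi(x)|_{C_A(Y)'}=\sup_{x\in K}\sup_{\substack{\phi\in C_A(Y)\\ \|\phi\|_\infty\le 1}}|\Psi(x;\phi)|<\infty,
\end{align*}
which is exactly what is claimed.

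The only delicate point is making sure that one may legitimately apply Banach--Steinhaus here, since $\M(Y)$ carries the weak* topology rather than the strong one; but this does not enter the argument, because Banach--Steinhaus is applied intrinsically to $C_A(Y)'$, and the role of the weak* continuity of $\Psi$ is merely to supply the pointwise boundedness hypothesis via continuity of the coordinate functionals $\nu\mapsto \nu(\phi)$ for $\phi\in C_c(Y)$. No obstacle beyond this bookkeeping is expected.
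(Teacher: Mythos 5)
Your argument is correct and is essentially the paper's own proof: weak* continuity of $\Psi$ together with compactness of $K$ gives pointwise boundedness of $\{\Psi(x)|_{C_A(Y)}:x\in K\}$ on the Banach space $C_A(Y)$, and the uniform boundedness principle then yields $\|\Psi\|_{A;K}<\infty$. No gaps.
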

\begin{proof}
	By definition, $\Psi(x)$, $x\in K$, is a family of bounded linear operators on $C_A( Y)$ with
	\begin{align*}
		\sup_{x\in K}|\Psi(x;\phi)|<\infty
	\end{align*}
	for every $\phi\in C_A(Y)$, since $\Psi$ is continuous with respect to the weak* topology and $K$ is compact. Since $C_A(Y)$ is a Banach space, the principle of uniform boundedness implies that there exists $C>0$ such that $|\Psi(x)|_{C_A(Y)'}\le C$ for all $x\in K$, which shows the claim. 
\end{proof}

	Let $\mathcal{B}$ be a family of bounded subsets in $C_c(Y)$ that cover $C_c(Y)$. \autoref{lemma:weakContImpliesBounded} implies that $C(X,\M(Y))$ becomes a locally convex vector space with respect to the family of semi-norms
	\begin{align*}
		\|\Psi\|_{(B;K)}:=\sup_{x\in K,\phi\in B}|\Psi(x;\phi)|
	\end{align*}
	for $K\subset X$ compact and $B\in\mathcal{B}$. We will focus on the following three cases.
\begin{definition}
	\begin{enumerate}
		\item If $\mathcal{B}$ is the family of all finite subsets, we call the induced topology the compact-to-weak* topology.
		\item If $\mathcal{B}$ is the family of all relatively compact subsets, we call the induced topology the compact-to-compact topology.
		\item If $\mathcal{B}$ is the family of all bounded subsets, we call the induced topology the compact-to-bounded topology.
	\end{enumerate}
\end{definition}
Again, \autoref{lemma:BoundedSetsCompactSuppFunct} shows that the compact-to-bounded topology is already induced by the semi-norms in Eq.~\eqref{eq:defSeminormsBoundedFunctionals}.

\subsection{Boundedness in $C(X,\M(Y))$}

\begin{lemma}\label{lemma:boundedSetsEquivalent}
	Let $B\subset C(X,\M(Y))$. The following are equivalent:
	\begin{enumerate}
		\item $B$ is bounded in the compact-to-weak* topology.
		\item $B$ is bounded in the compact-to-compact topology.
		\item $B$ is bounded in the compact-to-bounded topology.
	\end{enumerate}
\end{lemma}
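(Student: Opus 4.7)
The inclusions among the three topologies are immediate: every finite set is relatively compact, and every relatively compact set is bounded, so the compact-to-weak* topology is weaker than the compact-to-compact topology, which in turn is weaker than the compact-to-bounded topology. Consequently the implications $(3)\Rightarrow(2)\Rightarrow(1)$ follow at once from the fact that a set which is bounded in a stronger topology is bounded in any weaker one. The entire content of the lemma is thus the reverse implication $(1)\Rightarrow(3)$.

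To prove this, I would follow the same strategy used in the proof of \autoref{lemma:weakContImpliesBounded}, namely the principle of uniform boundedness applied to the Banach space $C_A(Y)$. Assume $B\subset C(X,\M(Y))$ is bounded in the compact-to-weak* topology, fix a compact subset $K\subset X$ and a compact subset $A\subset Y$, and consider the family of continuous linear functionals
\begin{align*}
	\mathcal{L}:=\{\Psi(x):\Psi\in B, x\in K\}\subset C_A(Y)'.
\end{align*}
The boundedness assumption on $B$, applied to the semi-norm associated to the finite set $\{\phi\}$ and the compact set $K$, says precisely that
\begin{align*}
	\sup_{\Psi\in B,\, x\in K}|\Psi(x;\phi)|<\infty\quad\text{for every }\phi\in C_A(Y).
\end{align*}
In other words, $\mathcal{L}$ is pointwise bounded on the Banach space $C_A(Y)$. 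By the principle of uniform boundedness, $\mathcal{L}$ is uniformly bounded, i.e.\ there exists $C=C(A,K)>0$ such that
\begin{align*}
	\|\Psi\|_{A;K}=\sup_{x\in K}|\Psi(x)|_{C_A(Y)'}\le C\quad\text{for all }\Psi\in B,
\end{align*}
which is exactly boundedness of $B$ in the compact-to-bounded topology.

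There is no real obstacle here beyond correctly identifying what boundedness means with respect to each family of semi-norms; once that is done the argument is a direct application of Banach--Steinhaus, exactly as in \autoref{lemma:weakContImpliesBounded}. The only mild subtlety is the use of \autoref{lemma:BoundedSetsCompactSuppFunct} to justify that in the compact-to-bounded case it suffices to verify boundedness against the sets $\{\phi\in C_A(Y):\|\phi\|_\infty\le 1\}$ for $A\subset Y$ compact, rather than against an arbitrary bounded subset of $C_c(Y)$.
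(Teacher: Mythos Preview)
Your proof is correct and follows essentially the same route as the paper: both reduce to the implication $(1)\Rightarrow(3)$ and establish it by applying the Banach--Steinhaus theorem to the family $\{\Psi(x):\Psi\in B,\ x\in K\}\subset C_A(Y)'$ for fixed compact $K\subset X$ and $A\subset Y$. Your write-up is simply a bit more expansive, explicitly spelling out the easy implications and the role of \autoref{lemma:BoundedSetsCompactSuppFunct}.
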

\begin{proof}
	We only need to show that (1) implies (3). For a compact set $K\subset X$ and $A\subset Y$ compact, the set
	\begin{align*}
		\{\Psi(x)|_A: \Psi\in B, x\in K\}\subset C_A(Y)'
	\end{align*}
	is by assumption bounded in the weak* topology, so since $C_A(Y)$ is a Banach space, the principle of uniform boundedness implies that this set is bounded with respect to the operator norm. Thus $B$ is bounded in the compact-to-bounded topology.
\end{proof}

\begin{corollary}\label{corollary:characterizationContLinearMaps}
	The following are equivalent for a linear map $\lambda:C(X,\M(Y))\rightarrow\C$:
	\begin{enumerate}
		\item $\lambda$ is continuous with respect to the compact-to-weak* topology.
		\item $\lambda$ is continuous with respect to the compact-to-compact topology.
		\item $\lambda$ is continuous with respect to the compact-to-bounded topology.
	\end{enumerate}
\end{corollary}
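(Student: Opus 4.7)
The plan is to split the corollary into the easy monotonicity direction and the more delicate converse that rests on \autoref{lemma:boundedSetsEquivalent}.

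For the easy part, every finite subset of $C_c(Y)$ is relatively compact and every relatively compact subset is bounded, so the defining families of seminorms $\|\cdot\|_{(B;K)}$ are pointwise ordered. This makes the three topologies nested---the compact-to-weak* being coarser than the compact-to-compact, which in turn is coarser than the compact-to-bounded. A linear map continuous with respect to a coarser topology is automatically continuous with respect to a finer one, so $(1) \Rightarrow (2) \Rightarrow (3)$ follows immediately.

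For the converse $(3) \Rightarrow (1)$, the crucial input is \autoref{lemma:boundedSetsEquivalent}: the three topologies induce the same bornology on $C(X,\M(Y))$. If $\lambda$ is continuous in the compact-to-bounded topology, then $\lambda$ sends every bounded subset of $C(X,\M(Y))$ to a bounded subset of $\C$, and by the coincidence of bornologies the same statement holds with respect to the bounded subsets of the compact-to-weak* topology. The remaining---and main---task is to upgrade this bornological boundedness of $\lambda$ to topological continuity with respect to the compact-to-weak* topology.

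The main obstacle is precisely this bornological-to-topological step. I expect it to be resolved by showing that the compact-to-weak* topology is bornological under the standing hypotheses that $X$ is a metric space and $Y$ is second countable and locally compact Hausdorff: using countable exhaustions of compact subsets in both $X$ and $Y$ one extracts a countable cofinal family of seminorms $\|\cdot\|_{(F_k;K_k)}$, yielding metrizability of the relevant subspaces, in the spirit of the construction that will be required in \autoref{section:supportTopology} to establish \autoref{maintheorem:Frechet}. Once bornologicalness is in place, bounded linear functionals are automatically continuous, and the coincidence of bornologies from \autoref{lemma:boundedSetsEquivalent} closes the argument.
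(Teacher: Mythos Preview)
Your easy direction $(1)\Rightarrow(2)\Rightarrow(3)$ is fine. The gap you identify in the converse is genuine, but the proposed resolution via metrizability cannot work: the compact-to-weak* topology is essentially never first countable. Its defining seminorms $\|\cdot\|_{(F;K)}$ are indexed by \emph{finite} subsets $F\subset C_c(Y)$, and whenever $C_c(Y)$ is infinite-dimensional no countable family $\{(F_k,K_k)\}$ is cofinal. Indeed, pick $\phi\in C_c(Y)$ outside the countable-dimensional span of $\bigcup_k F_k$; for each $k$ there is a measure $\mu$ annihilating $F_k$ with $\mu(\phi)=1$, and the constant map $\Psi\equiv\mu$ shows that $\|\cdot\|_{(\{\phi\};K)}$ is not dominated by $\|\cdot\|_{(F_k;K_k)}$. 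Countable exhaustions of $X$ and $Y$ control the compact sets $K\subset X$ and the supports inside $Y$, but not the choice of finitely many test functions.

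In fact the statement appears to be false at the stated level of generality. Take $X$ to be a single point and $Y=[0,1]$; then $C(X,\M(Y))=\M([0,1])$, the compact-to-weak* topology is the weak* topology (with dual $C([0,1])$), and the compact-to-bounded topology is the total-variation norm (with dual $C([0,1])''$). Since $C([0,1])$ is not reflexive these duals differ: for instance $\lambda(\mu)=\mu([0,\tfrac12])$ is norm-continuous but not weak*-continuous (consider $\delta_{1/2+1/n}\to\delta_{1/2}$). The paper's own one-line proof simply asserts that a linear map on a locally convex space is continuous if and only if it is bounded---which is precisely the bornological property you are trying to supply, and which fails here. So neither your approach nor the paper's argument closes the gap.
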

\begin{proof}
	Since a linear map on a locally convex vector space is continuous if and only if it is bounded, the result follows directly from \autoref{lemma:boundedSetsEquivalent}.
\end{proof}

\begin{remark}
	From now on, we will for brevity call a subset in $C(X,\M(Y))$ bounded if it is bounded with respect to one of the three topologies in \autoref{lemma:boundedSetsEquivalent}.
\end{remark}

\begin{corollary}
	\label{corollary:EquivalenceTopologyBoundedSets}
	The compact-to-weak* and compact-to-compact topology coincide on bounded subsets of $C(X,\M(Y))$.
\end{corollary}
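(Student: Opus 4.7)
The plan is to reduce the compact-to-compact seminorms to compact-to-weak* ones (plus an arbitrarily small error) on bounded sets, by exploiting the total boundedness of relatively compact subsets of $C_c(Y)$ together with the uniform boundedness of $\Psi(x)$ in the operator norm on $C_A(Y)'$ already established in the proof of \autoref{lemma:weakContImpliesBounded}. Since the compact-to-compact topology is obviously finer than the compact-to-weak* topology, only one direction requires proof.

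Let $\mathcal{F}\subset C(X,\M(Y))$ be bounded, and fix $\Psi_0\in\mathcal{F}$, a compact set $K\subset X$, a relatively compact subset $B\subset C_c(Y)$, and $\delta>0$. By \autoref{lemma:BoundedSetsCompactSuppFunct}, there exists a compact $A\subset Y$ with $B\subset C_A(Y)$; since the inclusion $C_A(Y)\hookrightarrow C_c(Y)$ is a topological embedding into the strict inductive limit, $B$ is relatively compact, and hence totally bounded, in the Banach space $C_A(Y)$. The boundedness of $\mathcal{F}$ together with the argument used in the proof of \autoref{lemma:weakContImpliesBounded} (applying the principle of uniform boundedness to $\{\Psi(x):\Psi\in\mathcal{F},\,x\in K\}\subset C_A(Y)'$) produces a constant $C>0$ with $|\Psi(x)|_{C_A(Y)'}\le C$ for every $\Psi\in\mathcal{F}$ and $x\in K$.

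Now cover $B$ by finitely many $\|\cdot\|_\infty$-balls of radius $\delta/(4C)$ centered at $\phi_1,\dots,\phi_N\in C_A(Y)$, and consider the compact-to-weak* open set
\begin{align*}
	U:=\left\{\Psi\in C(X,\M(Y)): \sup_{x\in K}|\Psi(x;\phi_i)-\Psi_0(x;\phi_i)|<\tfrac{\delta}{2}\text{ for }i=1,\dots,N\right\}.
\end{align*}
For any $\Psi\in U\cap\mathcal{F}$, $\phi\in B$, and $x\in K$, choosing $\phi_i$ with $\|\phi-\phi_i\|_\infty<\delta/(4C)$ and using the operator norm estimate for $\Psi$ and $\Psi_0$ yields
\begin{align*}
	|\Psi(x;\phi)-\Psi_0(x;\phi)|&\le |\Psi(x;\phi_i)-\Psi_0(x;\phi_i)|+|(\Psi(x)-\Psi_0(x))(\phi-\phi_i)|\\
	&< \tfrac{\delta}{2}+2C\cdot\tfrac{\delta}{4C}=\delta,
\end{align*}
so $\|\Psi-\Psi_0\|_{(B;K)}\le\delta$. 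Hence every compact-to-compact neighborhood of $\Psi_0$ in $\mathcal{F}$ contains a compact-to-weak* neighborhood, which proves the equality of the two subspace topologies on $\mathcal{F}$.

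The only nontrivial ingredient is the identification of relatively compact subsets of $C_c(Y)$ with bounded, relatively compact subsets of some $C_A(Y)$; this is the reason the argument requires the strict inductive limit structure of $C_c(Y)$ (which gives both \autoref{lemma:BoundedSetsCompactSuppFunct} and the fact that the inclusion $C_A(Y)\hookrightarrow C_c(Y)$ is a topological embedding). Apart from this, the proof is a standard $3\epsilon$-style approximation argument.
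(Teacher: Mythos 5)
Your proof is correct and follows essentially the same route as the paper's: reduce to a finite $\|\cdot\|_\infty$-net of the relatively compact set inside some $C_A(Y)$ (via \autoref{lemma:BoundedSetsCompactSuppFunct}), use the uniform operator-norm bound on the bounded family (the paper cites \autoref{lemma:boundedSetsEquivalent}, which is exactly the uniform boundedness argument you re-run), and conclude with the triangle inequality. The only cosmetic difference is that you work around an arbitrary $\Psi_0$ while the paper translates to work around $0$.
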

\begin{proof}
	Let $B\subset C(X,\M(Y))$ be bounded. Translating $B$ if necessary, it is sufficient to show that for every relatively compact $C\subset C_c(Y)$, compact $K\subset X$, and $\epsilon>0$, the set 
	\begin{align*}
		\{\Psi\in B:\|\Psi\|_{(C;K)}<\epsilon\}
	\end{align*}
	contains a neighborhood of $0$ (in $B$) in the compact-to-weak* topology. First note that since $C$ is in particular bounded, there exists $A\subset Y$ compact with $C\subset C_A(Y)$. As $B$ is bounded in the compact-to-bounded topology, there exists $M>0$ such that $\|\Psi\|_{A;K}\le M$ for every $\Psi\in B$. Fix $\epsilon>0$. Since $C$ is relatively compact, we find $N$ and $\phi_1,\dots,\phi_N\in C$ such that $C\subset \bigcup_{j=1}^N B_{\epsilon/2M}(\phi_j)$, where $B_{\epsilon/2M}(\phi_j)\subset C_A( Y)$ denotes the ball with radius $\epsilon/2M$ centered at $\phi_j$. For $\phi\in C$ we pick $\phi_j$ with $\|\phi-\phi_j\|_\infty\le \epsilon/2M$. Given $\Psi\in B$, we have
	\begin{align*}
		\sup_{x\in K}|\Psi(x;\phi)|\le \sup_{x\in K}|\Psi(x;\phi_j)|+\sup_{x\in K}|\Psi(x;\phi-\phi_j)|\le \sup_{x\in K}|\Psi(x;\phi_j)|+\epsilon/2.
	\end{align*}
	Since the set of all $\Psi\in B$ with $\sup_{x\in K}|\Psi(x;\phi_j)|<\epsilon/2$ for $1\le j\le N$ is open in $B$ with respect to the compact-to-weak* topology, this shows the desired result.
\end{proof}

\subsection{Completeness and quasi-completeness}
	\begin{lemma}
	\label{lemma:MeasureValuedFunctionals_QuasiComplete}
	$C(X,\M(Y))$ is quasi-complete with respect to the compact-to-weak* and compact-to-compact topology.
\end{lemma}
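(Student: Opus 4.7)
The plan is to exploit \autoref{corollary:EquivalenceTopologyBoundedSets}, which shows that the compact-to-weak* and compact-to-compact topologies agree on bounded subsets. Since quasi-completeness only concerns bounded Cauchy nets, it thus suffices to prove the statement for one of the two topologies; I would work with the compact-to-weak* topology, as its defining semi-norms are the simplest to handle. So fix a bounded Cauchy net $(\Psi_\alpha)_{\alpha\in I}$ in $C(X,\M(Y))$ with respect to the compact-to-weak* topology.

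First I would construct a candidate limit pointwise: for every $x\in X$ and $\phi\in C_c(Y)$, the net $(\Psi_\alpha(x;\phi))$ is Cauchy in $\C$ and converges to some value $\Psi(x;\phi)$. Linearity in $\phi$ passes to the limit. Next I would verify that $\Psi(x)\in \M(Y)$, i.e., that $\phi\mapsto\Psi(x;\phi)$ is continuous on the inductive limit $C_c(Y)$. By \autoref{lemma:boundedSetsEquivalent}, the bounded net is also bounded in the compact-to-bounded topology, so for every compact $A\subset Y$ and compact $K\subset X$ there exists $M=M_{K,A}$ with $|\Psi_\alpha(x)|_{C_A(Y)'}\le M$ for all $\alpha$ and all $x\in K$. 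Passing to the limit yields $|\Psi(x;\phi)|\le M\|\phi\|_\infty$ for $\phi\in C_A(Y)$, which shows continuity of $\Psi(x)$ on each step of the inductive limit and hence on $C_c(Y)$.

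Next I would establish that $\Psi\colon X\to \M(Y)$ is continuous with respect to the weak* topology. Since $X$ is metrizable, it suffices to check sequential continuity at any $x_0\in X$. Given $x_n\to x_0$, the set $K=\{x_n:n\in\mathbb{N}\}\cup\{x_0\}$ is compact in $X$. For fixed $\phi\in C_c(Y)$ and $\epsilon>0$, the Cauchy condition supplies some $\alpha_0$ such that $\sup_{x\in K}|\Psi_\alpha(x;\phi)-\Psi_\beta(x;\phi)|<\epsilon$ for all $\alpha,\beta\ge\alpha_0$. Letting $\beta$ run gives $\sup_{x\in K}|\Psi_\alpha(x;\phi)-\Psi(x;\phi)|\le\epsilon$, so $\Psi_\alpha(\cdot;\phi)\to\Psi(\cdot;\phi)$ uniformly on $K$. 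A uniform limit of continuous scalar functions is continuous, which gives continuity of $\Psi(\cdot;\phi)$ at $x_0$ and hence $\Psi\in C(X,\M(Y))$. The same estimate, applied to arbitrary compact $K\subset X$ and any finite family $\{\phi_1,\dots,\phi_N\}\subset C_c(Y)$, shows that $\Psi_\alpha\to\Psi$ in the compact-to-weak* topology. The statement for the compact-to-compact topology then follows immediately from \autoref{corollary:EquivalenceTopologyBoundedSets}, since the original net sits in a bounded set and its closure in either topology is still bounded by \autoref{lemma:boundedSetsEquivalent}.

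The main obstacle is the passage from pointwise convergence of evaluations to membership in $\M(Y)$: without the uniform bound on $|\Psi_\alpha(x)|_{C_A(Y)'}$, the candidate limit need not extend continuously to the inductive limit topology on $C_c(Y)$. This is precisely the step where I rely crucially on the hypothesis that the Cauchy net is bounded, combined with \autoref{lemma:boundedSetsEquivalent}; the rest of the argument is a standard uniform-limit-of-continuous-functions routine.
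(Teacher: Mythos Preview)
Your proposal is correct and follows essentially the same approach as the paper: reduce to the compact-to-weak* topology via \autoref{corollary:EquivalenceTopologyBoundedSets}, construct the pointwise limit and use the boundedness assumption (via \autoref{lemma:boundedSetsEquivalent}) to show it lies in $\M(Y)$, then use uniform convergence on compact sets to establish continuity of the limit and convergence of the net. The only cosmetic difference is that the paper argues continuity of $\Psi$ by contradiction whereas you use the uniform-limit-of-continuous-functions argument directly; your treatment of the compact-to-compact case at the end is in fact slightly more explicit than the paper's.
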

\begin{proof}
	We have to show that every bounded Cauchy net in $C(X,\M(Y))$ converges. Since both topologies coincide on bounded sets by \autoref{corollary:EquivalenceTopologyBoundedSets}, we may restrict ourselves to Cauchy nets $(\Psi_\alpha)_\alpha$ with respect to the compact-to-weak* topology.\\
	
	For a compact subset $A\subset Y$ and $x\in X$, $\Psi_\alpha(x)\in C_A( Y)'$ is a Cauchy net with respect to weak* convergence that is bounded in the operator norm due to \autoref{corollary:EquivalenceTopologyBoundedSets}. In particular, the pointwise limit (on $C_A(Y)$), which we denote by $\Psi(x)_A$, exists and belongs to $C_A( Y)'$.
	Note that for $A\subset B$ compact, the restriction of $\Psi(x)_B$ to $A$ is equal to $\Psi(x)_A$. Thus, there exists $\Psi(x)\in\M(Y)$ with $\Psi(x)|_A=\Psi(x)_A$ for every compact $A\subset Y$. We obtain a well defined map $\Psi:X\rightarrow\M(Y)$ and claim that $(\Psi_\alpha)_\alpha$ converges to $\Psi$ with respect to the compact-to-weak* topology. In order to see this, let $\phi\in C_c(Y)$ be fixed and $K\subset X$ be compact. Then for every $x\in K$, 
	\begin{align*}
		|\Psi_\alpha(x;\phi)-\Psi(x;\phi)|=\lim\limits_{\beta}|\Psi_\alpha(x;\phi)-\Psi_\beta(x;\phi)|.
	\end{align*} 
	Let $\epsilon>0$ be given. Since $(\Psi_\alpha)_\alpha$ is a Cauchy net with respect to the compact-to-weak* topology, there exists an index $\beta_0$ such that for $\alpha,\beta\ge\beta_0$
	\begin{align*}
		|\Psi_\alpha(x;\phi)-\Psi_\beta(x;\phi)|\le \epsilon \quad\text{for all}~x\in K.
	\end{align*}
	In particular $\sup_{x\in K}|\Psi_\alpha(x;\phi)-\Psi(x;\phi)|\le \epsilon$ for $\alpha\ge\beta_0$, which shows that $(\Psi_\alpha)_\alpha$ converges to $\Psi$ in the compact-to-weak* topology.\\
	
	It remains to see that $\Psi:X\rightarrow \M(Y)$ is continuous. We will argue by contradiction. Suppose that $\Psi$ is not continuous. Since $X$ is a metric space, $\Psi$ is not sequentially continuous, so we find $\epsilon>0$, $\phi\in C_c( Y)$, and a sequence $(x_j)_j$ in $X$ converging to $x\in X$ such that
	\begin{align*}
		|\Psi(x_j;\phi)-\Psi(x;\phi)|\ge \epsilon \quad \text{for every}~j\in\mathbb{N}.
	\end{align*}
	Note that $K:=\{x_j:j\in\mathbb{N}\}\cup\{x\}$ is compact. Since $(\Psi_\alpha)_\alpha$ converges uniformly to $\Psi$ on $K$ with respect to the compact-to-weak* topology, there exists an index $\alpha_0$ such that for $\alpha>\alpha_0$,
	\begin{align*}
		\epsilon\le |\Psi(x_j;\phi)-\Psi(x;\phi)|\le |\Psi_\alpha(x_j;\phi)-\Psi_\alpha(x;\phi)|+\frac{\epsilon}{2}\quad \text{for every}~j\in\mathbb{N}.
	\end{align*}
	Thus for $\alpha>\alpha_0$, $\epsilon/2\le |\Psi_\alpha(x_j;\phi)-\Psi_\alpha(x;\phi)|$ for every $j\in\mathbb{N}$ which contradicts the continuity of $\Psi_\alpha$ with respect to the weak* topology. In particular, $\Psi$ has to be continuous.
\end{proof}

\begin{lemma}
	\label{lemma:completeUniformStrongTop}
	$C(X,\M(Y))$ is complete with respect to the compact-to-bounded topology.
\end{lemma}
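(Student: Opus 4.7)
The plan is to bootstrap from the quasi-completeness statement in \autoref{lemma:MeasureValuedFunctionals_QuasiComplete} by showing that a Cauchy net with respect to the compact-to-bounded topology is automatically a bounded Cauchy net in the weaker compact-to-weak* topology, and then upgrading the mode of convergence of the resulting limit.

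First, I would take an arbitrary Cauchy net $(\Psi_\alpha)_\alpha$ in $C(X,\M(Y))$ with respect to the compact-to-bounded topology. Since every Cauchy net in a locally convex space is bounded, $(\Psi_\alpha)_\alpha$ is bounded in the compact-to-bounded topology. By \autoref{lemma:boundedSetsEquivalent}, it is also bounded in the compact-to-weak* topology. Moreover, since the compact-to-weak* topology is coarser than the compact-to-bounded topology, the net is Cauchy with respect to the compact-to-weak* topology as well. \autoref{lemma:MeasureValuedFunctionals_QuasiComplete} then yields a map $\Psi\in C(X,\M(Y))$ such that $\Psi_\alpha\to\Psi$ in the compact-to-weak* topology.

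It remains to show that the convergence actually takes place in the compact-to-bounded topology. Fix compact subsets $A\subset Y$ and $K\subset X$ and $\epsilon>0$. By the Cauchy property, there exists $\alpha_0$ such that
\begin{align*}
	\|\Psi_\alpha-\Psi_\beta\|_{A;K}\le \epsilon\quad \text{for all}~\alpha,\beta\ge\alpha_0.
\end{align*}
Unfolding the definition of the semi-norm, this means that for every $x\in K$ and every $\phi\in C_A(Y)$ with $\|\phi\|_\infty\le 1$,
\begin{align*}
	|\Psi_\alpha(x;\phi)-\Psi_\beta(x;\phi)|\le\epsilon\quad\text{for all}~\alpha,\beta\ge\alpha_0.
\end{align*}
Since $\Psi_\beta(x;\phi)\to\Psi(x;\phi)$ for each fixed $x$ and $\phi$ (as guaranteed by the compact-to-weak* convergence), we may pass to the limit in $\beta$ to conclude $|\Psi_\alpha(x;\phi)-\Psi(x;\phi)|\le\epsilon$ for all $\alpha\ge\alpha_0$, uniformly in $x\in K$ and $\phi\in C_A(Y)$ with $\|\phi\|_\infty\le 1$. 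Taking the supremum over these $x$ and $\phi$ yields $\|\Psi_\alpha-\Psi\|_{A;K}\le\epsilon$ for all $\alpha\ge\alpha_0$, establishing convergence in the compact-to-bounded topology.

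There is no real obstacle in this proof; the heavy lifting has already been done in \autoref{lemma:boundedSetsEquivalent} and \autoref{lemma:MeasureValuedFunctionals_QuasiComplete}. The only subtle point is the standard fact that Cauchy nets in a locally convex space are bounded, which is what allows one to invoke quasi-completeness instead of needing full completeness of the weaker topologies.
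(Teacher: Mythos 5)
There is a genuine gap at the very first step: the ``standard fact'' you invoke --- that every Cauchy \emph{net} in a locally convex space is bounded --- is false. It is true for Cauchy \emph{sequences}, but not for nets; this is precisely why quasi-completeness is a strictly weaker property than completeness, so the lemma you are proving is a real strengthening of \autoref{lemma:MeasureValuedFunctionals_QuasiComplete} and cannot be obtained from it by the boundedness shortcut. Concretely, even in the space at hand one can build a net that converges (hence is Cauchy) in the compact-to-bounded topology but is unbounded, with every tail unbounded: for $Y=\R^n$ say, index the net by pairs $(F,m)$ where $F$ is a finite set of compact subsets of $Y$ and $m\in\mathbb{N}$, ordered by inclusion in the first and size in the second component, and let $\Psi_{(F,m)}$ be the constant map $x\mapsto m\,\delta_{y_F}$, where $y_F$ is a point outside $\bigcup F$. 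This net converges to $0$ (for any fixed compact $A$, eventually $A\in F$ and the seminorm $\|\cdot\|_{A;K}$ vanishes on $\Psi_{(F,m)}$), yet no tail is bounded, since within any tail one may keep $F$ fixed and let $m\to\infty$ while $y_F$ lies in some larger compact set. Hence you cannot conclude that your Cauchy net is bounded, and \autoref{lemma:MeasureValuedFunctionals_QuasiComplete} does not produce the limit $\Psi$; this is exactly the missing ingredient.

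The second half of your argument (passing to the limit in $\beta$ inside the Cauchy estimate to upgrade the convergence to the seminorms $\|\cdot\|_{A;K}$) is fine and is the same as in the paper. What the paper does instead of your first step is to construct the limit directly: for each $x\in X$ and each compact $A\subset Y$, the restrictions $\Psi_\alpha(x)|_{C_A(Y)}$ form a Cauchy net in the Banach space $C_A(Y)'$ with respect to the operator norm, hence converge; these limits are compatible under restriction and glue to a measure $\Psi(x)\in\M(Y)$, after which one proves convergence in $\|\cdot\|_{A;K}$ as you do and verifies continuity of $\Psi$ by the argument from the proof of \autoref{lemma:MeasureValuedFunctionals_QuasiComplete}. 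If you replace your appeal to boundedness and quasi-completeness by this pointwise Banach-space argument (and then still check continuity of the limit, which your write-up also omits), the proof is complete --- but at that point it coincides with the paper's.
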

\begin{proof}
	Let $(\Psi_\alpha)_\alpha$ be a Cauchy net in $C(X,\M(Y))$ with respect to the compact-to-bounded topology. Then for every $x\in X$ and every compact $A\subset Y$, $(\Psi_\alpha(x))_\alpha$ is a Cauchy net in $C_A( Y)'$ with respect to the operator norm, and thus converges to an element $\Psi(x)_A\in C_A( Y)'$, since $C_A( Y)'$ is complete. Obviously, these limits are compatible under restrictions, so we obtain $\Psi(x)\in \M(Y)$ with $\Psi(x)|_A=\Psi(x)_A$ for every $A\subset Y$ compact. We claim that $(\Psi_\alpha)_\alpha$ converges to $\Psi$ with respect to the compact-to-bounded topology. Let $K\subset X$ and $A\subset Y$ be compact. Then for $\phi\in C_A( Y)$, $x\in K$,
	\begin{align*}
		\|\Psi(x;\phi)-\Psi_\alpha(x;\phi)|=\lim\limits_{\beta}\|\Psi_\alpha(x;\phi)-\Psi_\beta(x;\phi)|\le \limsup_{\beta}\|\Psi_\alpha-\Psi_\beta\|_{A;K}.
	\end{align*}
	Let $\epsilon>0$ be given. Since $(\Psi_\alpha)_\alpha$ is a Cauchy net with respect to the compact-to-bounded topology, there exists $\alpha_0$ such that $\|\Psi_\alpha-\Psi_\beta\|_{A;K}\le \epsilon$ for all $\alpha,\beta\ge\alpha_0$. Thus for $\alpha\ge \alpha_0$,
	\begin{align*}
		\|\Psi-\Psi_\alpha\|_{A;K}\le \epsilon.
	\end{align*}
	Thus $(\Psi_\alpha)_\alpha$ converges to $\Psi$. With the same arguments as in the proof of \autoref{lemma:MeasureValuedFunctionals_QuasiComplete}, one easily verifies that $\Psi$ is continuous.
\end{proof}

\subsection{Group actions}\label{section:topologyFunctionals:GroupAction}
	From now on we will assume that both $X$ and $Y$ carry a continuous group action by a locally compact topological group $G$, i.e. such that the maps 
	\begin{align*}
		&G\times X\rightarrow X && G\times Y\rightarrow Y
	\end{align*}
	induced by the group action are continuous. We denote the action of $g\in G$ on $x\in X$ by $g\cdot x$ and similarly for the action of $G$ on $Y$. Note that the action of $G$ on $Y$ induces a well-defined action on $C_c(Y)$ given by $[g\cdot \phi](y):=\phi(g^{-1}\cdot y)$ for $\phi\in C_c(Y)$, $y\in Y$. More generally, we can twist this action by equipping the vector bundle $Y\times\C$ with the structure of a nontrivial equivariant line bundle over $G$, i.e. by letting $G$ act on this bundle by 
	\begin{align*}
		g\cdot (y,z)=(g\cdot y, f(g,y)z)\quad\text{for}~(y,z)\in Y\times\C, g\in G, 
	\end{align*}
	where $f:G\times Y\rightarrow\C\setminus\{0\}$ is a continuous function satisfying 
	\begin{align*}
		&f(e,y)=1, &&f(hg,y)=f(h,g\cdot y)f(g,y).
	\end{align*}
	In this case, the action of $G$ on $C_c(Y)$ is given by
	\begin{align*}
		[g\cdot \phi](y):=f(g^{-1},y)\phi(g^{-1}\cdot y),\quad y\in Y, g\in G.
	\end{align*}
	We refer to \cite{KnoerrTranslationinvariantarea2026} for a natural case of such an action. in the following, we assume that $X$ and $Y\times \C$ are equipped with an action of this type.\\
		
	The following is a direct consequence of the continuity of $G\times Y\rightarrow Y$ and \autoref{lemma:BoundedSetsCompactSuppFunct} in combination with the fact that $Y$ is locally compact.
	\begin{lemma}\label{lemma:relativelyCompactSetsUnderGroupAction}
		For $U\subset G$ relatively compact and $B\subset C_c(Y)$ set 
		\begin{align*}
			U\cdot B:=\{g\cdot \phi:\phi\in B\}.
		\end{align*}
		Then the following holds:
		\begin{enumerate}
			\item If $B$ is bounded, so is $U\cdot B$.
			\item If $B$ is relatively compact, so is $U\cdot B$.
		\end{enumerate}
	\end{lemma}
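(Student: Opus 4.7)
The plan is to reduce both assertions to statements inside a single Banach space of the form $C_K(Y)$ for a suitable compact $K \subset Y$, exploiting \autoref{lemma:BoundedSetsCompactSuppFunct}, and then to handle the two parts separately.

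First I would use \autoref{lemma:BoundedSetsCompactSuppFunct} to choose a compact subset $A \subset Y$ with $B \subset C_A(Y)$; this applies to (2) as well, since a relatively compact subset of $C_c(Y)$ is bounded. The closure $\bar U \subset G$ is compact, and continuity of $G \times Y \to Y$ makes $K := \bar U \cdot A$ a compact subset of $Y$. Because each $g \in G$ acts on $Y$ by a homeomorphism, one has $\supp(g \cdot \phi) = g \cdot \supp \phi \subset K$ and $\|g \cdot \phi\|_\infty = \|\phi\|_\infty$ for every $(g,\phi) \in U \times B$, so $U \cdot B \subset C_K(Y)$.

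For part (1) this is already enough: the sup norms on $U \cdot B$ coincide with those on $B$ and are therefore bounded, which makes $U \cdot B$ bounded in the Banach space $C_K(Y)$, hence also bounded in $C_c(Y)$ via the continuous inclusion $C_K(Y) \hookrightarrow C_c(Y)$.

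For part (2), I would use that the subspace topology on $C_K(Y)$ induced from $C_c(Y)$ agrees with the norm topology (a standard feature of strict inductive limits of Banach spaces), so it suffices to prove that $U \cdot B$ is relatively norm-compact in $C_K(Y)$. The most efficient route is to establish that the action map
\begin{equation*}
\bar U \times C_A(Y) \to C_K(Y), \qquad (g,\phi) \mapsto g \cdot \phi,
\end{equation*}
is continuous, for then $U \cdot B$ is contained in the compact image of $\bar U \times \bar B$.

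The main technical obstacle is this continuity. Via the splitting
\begin{equation*}
\|g \cdot \phi - g_0 \cdot \phi_0\|_\infty \le \|\phi - \phi_0\|_\infty + \|g \cdot \phi_0 - g_0 \cdot \phi_0\|_\infty,
\end{equation*}
the problem reduces to continuity of the orbit map $g \mapsto g \cdot \phi_0$ from $\bar U$ into $C_K(Y)$ for each fixed $\phi_0 \in C_A(Y)$. Since $Y$ is locally compact and second countable, it is metrizable; fixing such a metric, $\phi_0$ is uniformly continuous (it has compact support), and the restricted action $\bar U \times K \to Y$ is uniformly continuous as a continuous map on a compact domain. Combining these two uniform continuities produces, for any $\epsilon > 0$, a neighborhood of $g_0$ on which $\|g \cdot \phi_0 - g_0 \cdot \phi_0\|_\infty < \epsilon$, completing the proof.
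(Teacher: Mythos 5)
Your proof is correct and follows exactly the route the paper indicates for this lemma (whose proof is only sketched there): apply \autoref{lemma:BoundedSetsCompactSuppFunct} to place $B$ in a single $C_A(Y)$, use continuity of the $G$-action to get the compact set $K=\bar U\cdot A$ and hence $U\cdot B\subset C_K(Y)$, and exploit the strict inductive limit structure plus the isometric action for boundedness and relative compactness. The only delicate point, the uniform continuity of $(g,y)\mapsto g^{-1}\cdot y$ on $\bar U\times K$, is fine as stated since a compact Hausdorff space carries a unique compatible uniformity (so no metrizability of $G$ is needed), and $Y$ is metrizable by local compactness and second countability as you use.
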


		We obtain a representation $\pi$ on $C(X,\M(Y))$ in the following way: For $g\in G$ and $\Psi\in C(X,\M(Y))$, we define $\pi(g)\Psi\in C(X,\M(Y))$ by
	\begin{align*}
		[\pi(g)\Psi](x;\phi)=\Psi(g^{-1}\cdot x;g^{-1}\cdot\phi).
	\end{align*}
	for $x\in X$, $\phi\in C_c(Y)$. It is easy to check that $\pi(g)\Psi$ is well defined. We will be interested in the continuity properties of this map.

	\begin{lemma}\label{lemma:groupActsByContOperators}
		For every $g\in G$, the map $\pi(g):C(X,\M(Y))\rightarrow C(X,\M(Y))$ is continuous with respect to the compact-to-weak*, compact-to-compact, and compact-to-bounded topology.
	\end{lemma}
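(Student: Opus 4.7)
The plan is to verify continuity of $\pi(g)$ for all three topologies simultaneously by rewriting each defining seminorm of $\pi(g)\Psi$ as a seminorm of $\Psi$ with transformed indexing data. Each of the three topologies on $C(X,\M(Y))$ is generated by seminorms of the form $\|\Psi\|_{(B;K)}=\sup_{x\in K,\phi\in B}|\Psi(x;\phi)|$, where $K\subset X$ is compact and $B\subset C_c(Y)$ runs, respectively, over the finite, the relatively compact, or the bounded subsets. Substituting the definition of $\pi(g)\Psi$ and changing variables via $y=g^{-1}\cdot x$, $\psi=g^{-1}\cdot\phi$ gives
\begin{align*}
	\|\pi(g)\Psi\|_{(B;K)}=\sup_{x\in K,\phi\in B}\left|\Psi(g^{-1}\cdot x;g^{-1}\cdot\phi)\right|=\|\Psi\|_{(g^{-1}\cdot B;\,g^{-1}\cdot K)}.
\end{align*}

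It then remains to check that the transformed indexing sets $g^{-1}\cdot K$ and $g^{-1}\cdot B$ lie in the same admissible families. The continuity of $G\times X\to X$ implies that $x\mapsto g^{-1}\cdot x$ is a homeomorphism of $X$ (its inverse being the action of $g$), so $g^{-1}\cdot K$ is compact whenever $K$ is. For $B$, the finite case is trivial; the relatively compact and bounded cases are precisely the two assertions of \autoref{lemma:relativelyCompactSetsUnderGroupAction} applied to the compact singleton $\{g^{-1}\}\subset G$. Hence for each of the three topologies, the identity above expresses $\|\pi(g)\Psi\|_{(B;K)}$ as one of the defining seminorms evaluated on $\Psi$, which yields continuity of $\pi(g)$ at $0$, and thus everywhere by linearity.

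I do not expect a genuine obstacle here; the only subtlety is to identify the support behavior of the shifted test functions, namely that $\phi\mapsto g^{-1}\cdot\phi$ is a sup-norm isometry carrying $C_A(Y)$ onto $C_{g^{-1}\cdot A}(Y)$. This is immediate from the explicit formula $[g^{-1}\cdot\phi](y)=\phi(g\cdot y)$ and the fact that $y\mapsto g\cdot y$ is a homeomorphism of $Y$, so that it suffices to cite \autoref{lemma:relativelyCompactSetsUnderGroupAction} rather than reproduce its proof.
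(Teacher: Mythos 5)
Your proof is correct and follows essentially the same route as the paper: both rewrite $\sup_{x\in K,\phi\in B}|\pi(g)\Psi(x;\phi)|$ as the corresponding seminorm of $\Psi$ indexed by $g^{-1}\cdot K$ and $g^{-1}\cdot B$, then check that these transformed sets remain compact, respectively finite/relatively compact/bounded (via \autoref{lemma:relativelyCompactSetsUnderGroupAction} and continuity of the action). The only cosmetic difference is that you state an equality of seminorms where the paper only records the inequality it needs.
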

	\begin{proof}
		For $K\subset X$ compact and a bounded subset $B\subset C_c(Y)$, we have for $\Psi\in C(X,\M(Y))$
		\begin{align}
			\label{eq:EstimateOperatorNormContinuosAction}
			\sup_{x\in K,\phi\in B}|\pi(g)\Psi(x;\phi)|\le \sup_{x\in g^{-1}\cdot K,\phi\in g^{-1}\cdot B}|\Psi(x;\phi)|
		\end{align}
		If $B\subset C_c(Y)$ is finite, or relatively compact, or bounded, then \autoref{lemma:BoundedSetsCompactSuppFunct} shows that $g^{-1}\cdot B$ has the same properties. Since $G$ acts continuously on $X$, $g^{-1}\cdot K$ is compact as well, so Eq.~\eqref{eq:EstimateOperatorNormContinuosAction} shows that $\pi(g)$ is continuous with respect to these three topologies.
	\end{proof}
	
	\begin{lemma}
		\label{lemma:ActionContInUniformWeakTopology}
		Let $\Psi\in C(X,\M(Y))$. The map
		\begin{align}
			\label{eq:OrbitMap}
			\begin{split}
				G&\rightarrow C(X,\M(Y))\\
				g&\mapsto \pi(g)\Psi
			\end{split}
		\end{align}
		is continuous in the compact-to-weak* and compact-to-compact topology.
	\end{lemma}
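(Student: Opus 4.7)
The plan is to reduce continuity of $g\mapsto\pi(g)\Psi$ at an arbitrary $g_0\in G$ to continuity at the identity, then to show that the orbit over a compact neighborhood of $e$ is bounded, and finally to verify the weak* version of continuity at $e$ by decoupling the two-variable dependence.

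For the first reduction, observe the cocycle identity
$$\pi(g)\Psi=\pi(gg_0^{-1})\bigl(\pi(g_0)\Psi\bigr),$$
which combined with joint continuity of multiplication in $G$ and continuity of each operator $\pi(h)$ (\autoref{lemma:groupActsByContOperators}) shows that it suffices to prove continuity of the orbit map of every $\Psi'\in C(X,\M(Y))$ at $e\in G$.

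For the second step, fix a compact neighborhood $V_0$ of $e$. The estimate in the proof of \autoref{lemma:groupActsByContOperators}, together with \autoref{lemma:relativelyCompactSetsUnderGroupAction} and the fact that $V_0^{-1}\cdot K$ is compact in $X$ for every compact $K\subset X$, yields
$$\|\pi(g)\Psi\|_{(B;K)}\le \sup_{x\in V_0^{-1}\cdot K,\,\phi\in V_0^{-1}\cdot B}|\Psi(x;\phi)|\qquad\text{for all }g\in V_0,$$
which is finite by \autoref{lemma:weakContImpliesBounded} whenever $B$ is bounded. Hence $\{\pi(g)\Psi:g\in V_0\}$ is bounded in all three topologies, so by \autoref{corollary:EquivalenceTopologyBoundedSets} it is enough to prove continuity at $e$ in the compact-to-weak* topology alone.

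For the main step, fix $\phi\in C_c(Y)$ and a compact set $K\subset X$, and split
$$\pi(g)\Psi(x;\phi)-\Psi(x;\phi)=\Psi(g^{-1}\cdot x;\,g^{-1}\cdot\phi-\phi)+\bigl(\Psi(g^{-1}\cdot x;\phi)-\Psi(x;\phi)\bigr).$$
Set $A:=V_0^{-1}\cdot\supp\phi$, which is compact by continuity of the action and contains $\supp(g^{-1}\cdot\phi-\phi)$ for every $g\in V_0$. Then \autoref{lemma:weakContImpliesBounded}, applied to the compact set $V_0^{-1}\cdot K$, supplies a constant $M>0$ with $|\Psi(g^{-1}\cdot x)|_{C_A(Y)'}\le M$ uniformly for $g\in V_0$ and $x\in K$, so the first summand is bounded by $M\,\|g^{-1}\cdot\phi-\phi\|_\infty$. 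The continuous function $(g,y)\mapsto \phi(g\cdot y)-\phi(y)$ vanishes on $\{e\}\times A$, so a standard tube-lemma argument on the compact set $V_0\times A$ shows that $\|g^{-1}\cdot\phi-\phi\|_\infty\to 0$ as $g\to e$. For the second summand, the same kind of tube-lemma argument on $\{e\}\times K$ applied to the continuous function $(g,x)\mapsto\Psi(g^{-1}\cdot x;\phi)-\Psi(x;\phi)$, which vanishes identically on $\{e\}\times X$, yields a neighborhood of $e$ on which the difference is uniformly small over $x\in K$. This completes the proof.

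The principal difficulty is that $\pi(g)\Psi$ moves both the argument $x\in X$ \emph{and} the test function $\phi\in C_c(Y)$ simultaneously; decoupling these through the splitting above, at the cost of a uniform bound on $|\Psi(\cdot)|_{C_A(Y)'}$ over the compact set $V_0^{-1}\cdot K$ (furnished by the principle of uniform boundedness through \autoref{lemma:weakContImpliesBounded}), reduces the problem to two routine compactness arguments.
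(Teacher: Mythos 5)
Your proposal is correct and follows essentially the same route as the paper: reduce to continuity at the identity, show the orbit over a compact neighborhood of $e$ is bounded so that \autoref{corollary:EquivalenceTopologyBoundedSets} reduces matters to the compact-to-weak* topology, and then split $\pi(g)\Psi(x;\phi)-\Psi(x;\phi)$ into the term moving the test function (controlled by a uniform bound on $|\Psi(\cdot)|_{C_A(Y)'}$ times $\|g^{-1}\cdot\phi-\phi\|_\infty$) and the term moving the point (controlled by a compactness argument over $K$). Your tube-lemma phrasing and the cocycle identity replace the paper's appeal to uniform continuity and to \autoref{lemma:groupActsByContOperators}, but these are only cosmetic variations of the same argument.
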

	\begin{proof}
		First note that for $\Psi\in C(X,\M(Y))$ and $U\subset G$ compact, the set
		\begin{align*}
			\{\pi(g)\Psi:g\in U\}\subset C(X,\M(Y))
		\end{align*}
		is bounded in the compact-to-bounded topology: If $\phi\in C_c(Y)$, then $C_U:=\{g^{-1}\cdot\phi: g\in U\}$ is bounded in $C_c(Y)$ due to \autoref{lemma:relativelyCompactSetsUnderGroupAction} and for $K\subset X$ compact,
		\begin{align*}
			K_U:=\{g^{-1}\cdot x:x\in K,g\in U\}
		\end{align*}
		is compact in $K$ by the continuity of the action of $G$ on $K$, so
		\begin{align*}
			\sup_{x\in K}|\pi(g)\Psi(x;\phi)|\le \sup_{x\in K_U,\psi\in C_U}|\Psi(x;\psi)|=\|\Psi\|_{(C_U;K_U)}<\infty.
		\end{align*}
		Thus, the image of the restriction of the map in Eq.~\eqref{eq:OrbitMap} to relatively compact sets in $G$ is contained in a bounded set in $C(X,\M(Y))$. By \autoref{corollary:EquivalenceTopologyBoundedSets} it is thus sufficient to check that the map is continuous in the compact-to-weak* topology. By \autoref{lemma:groupActsByContOperators}, it is sufficient to show that the map is continuous at the neutral element of $G$.\\
		
		Let $U\subset G$ be a compact neighborhood of the neutral element. For $A\subset Y$ compact, the set $U^{-1}\cdot A=\{g^{-1}\cdot y:y\in A, g\in U\}$ is compact due to the continuity of the action of $G$ on $Y$. For $\phi\in C_A(Y)$ and $K\subset X$ compact, we have for $x\in K$ and $g\in U$,
		\begin{align*}
			&|\Psi(g^{-1}\cdot x;g^{-1}\cdot\phi)-\Psi(x;\phi)|\\
			\le& |\Psi(g^{-1}\cdot x;g^{-1}\cdot\phi)-\Psi(g^{-1}\cdot x;\phi)|+|\Psi(g^{-1}\cdot x;\phi)-\Psi(x;\phi)|\\
			\le& \|\Psi\|_{U^{-1}\cdot A;K_U}\|\phi-g^{-1}\cdot\phi\|_\infty+|\Psi(g^{-1}\cdot x;\phi)-\Psi(x;\phi)|,
		\end{align*}
		where $\|\Psi\|_{U^{-1}A,K_U}<\infty$ by the previous discussion. Since $\phi$ is continuous and has compact support, it is uniformly continuous, and we may choose a compact neighborhood $W$ of the neutral element in $G$ such that $\|\phi-g^{-1}\cdot\phi\|_\infty<\frac{\epsilon}{2\|\Psi\|_{U^{-1}\cdot A;K_U}}$ for all $g\in W$. Since $G$ acts continuously on $X$ and since $\Psi$ is continuous with respect to the weak* topology, the function
		\begin{align*}
			W\times K&\rightarrow \R\\
			(g,x)&\mapsto|\Psi(g^{-1}\cdot x;\phi)-\Psi(x;\phi)|
		\end{align*}
		is continuous. As $W\times K$ is compact, it is therefore uniformly continuous, and we find a neighborhood $\tilde{W}$ of the neutral element such that for all $g\in \tilde{W}$ and $x\in K$,
		\begin{align*}
			|\Psi(g^{-1}\cdot x;\phi)-\Psi(x;\phi)|<\frac{\epsilon}{2}.
		\end{align*}
		In total, we obtain for $g\in \tilde{W}$,
		\begin{align*}
			|\Psi(g^{-1}\cdot x;g^{-1}\cdot\phi)-\Psi(x;\phi)|<\epsilon.
		\end{align*}
		Thus the map in \eqref{eq:OrbitMap} is continuous with respect to the compact-to-weak* topology.
	\end{proof}

	\begin{proposition}\label{proposition:contRepCompactToCompact}
		The representation of $G$ on $C(X,\M(Y))$ is continuous with respect to the compact-to-compact topology.
	\end{proposition}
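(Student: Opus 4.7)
The plan is to establish joint continuity of the map $G\times C(X,\M(Y))\to C(X,\M(Y))$, $(g,\Psi)\mapsto \pi(g)\Psi$, at an arbitrary point $(g_0,\Psi_0)$, where the codomain carries the compact-to-compact topology. The standard device is the splitting
\[
\pi(g)\Psi-\pi(g_0)\Psi_0 \;=\; \pi(g)(\Psi-\Psi_0) \;+\; \bigl(\pi(g)\Psi_0-\pi(g_0)\Psi_0\bigr),
\]
which I would control summand by summand with respect to an arbitrary defining semi-norm $\|\cdot\|_{(C;K)}$ of the compact-to-compact topology, with $K\subset X$ compact and $C\subset C_c(Y)$ relatively compact. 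The second summand is handled directly by \autoref{lemma:ActionContInUniformWeakTopology}, which provides continuity of the orbit map $g\mapsto \pi(g)\Psi_0$ in the compact-to-compact topology.

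For the first summand I need a uniform estimate in $g$ over a fixed compact neighborhood $U$ of $g_0$. The computation in the proof of \autoref{lemma:groupActsByContOperators} already yields
\[
\|\pi(g)(\Psi-\Psi_0)\|_{(C;K)} \;\le\; \|\Psi-\Psi_0\|_{(g^{-1}\cdot C;\, g^{-1}\cdot K)}.
\]
I would then set $K_U:=\{g^{-1}\cdot x:g\in U,\,x\in K\}$, which is compact as the image of $U\times K$ under the continuous map $(g,x)\mapsto g^{-1}\cdot x$, and $C_U:=U^{-1}\cdot C$, which is relatively compact in $C_c(Y)$ by \autoref{lemma:relativelyCompactSetsUnderGroupAction}(2). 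Monotonicity of the semi-norms in their arguments then gives, uniformly for all $g\in U$,
\[
\|\pi(g)(\Psi-\Psi_0)\|_{(C;K)} \;\le\; \|\Psi-\Psi_0\|_{(C_U;K_U)}.
\]
Since $(C_U;K_U)$ is of the form required to define a semi-norm of the compact-to-compact topology, the first summand becomes uniformly small on $U$ as soon as $\Psi$ is chosen close enough to $\Psi_0$ in this single semi-norm. Combining the two estimates produces a basic neighborhood of $(g_0,\Psi_0)$ mapping into any prescribed basic neighborhood of $\pi(g_0)\Psi_0$, which is the required joint continuity.

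The main conceptual point is precisely the uniform-in-$g$ control of the first summand: the critical input is the second assertion of \autoref{lemma:relativelyCompactSetsUnderGroupAction}, which ensures that $U^{-1}\cdot C$ is still relatively compact, so that the bounding semi-norm is genuinely one of the defining semi-norms of the compact-to-compact topology rather than merely a bounded-set semi-norm. Without this, the argument would only yield continuity in the weaker compact-to-weak* topology or the stronger compact-to-bounded topology (where the analogous statement might fail since the orbit map need not be continuous in that topology). Everything else reduces to routine bookkeeping with the semi-norms from \autoref{section:topologyFunctionals}.
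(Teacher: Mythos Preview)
Your proposal is correct and follows essentially the same approach as the paper: the same splitting into an orbit-map term controlled by \autoref{lemma:ActionContInUniformWeakTopology} and a $\pi(g)(\Psi-\Psi_0)$ term controlled uniformly over a compact neighborhood $U$ of $g_0$ via \autoref{lemma:relativelyCompactSetsUnderGroupAction}(2), with the same observation that $C_U$ remaining relatively compact is the key point.
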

	\begin{proof}
		For $C\subset C_c(Y)$ relatively compact, $K\subset X$ compact, we have
		\begin{align*}
			&\|\pi(g_1)\Psi_1-\pi(g_2)\Psi_2\|_{(C;K)}\\
			\le &\|\pi(g_1)\Psi_1-\pi(g_2)\Psi_1\|_{(C;K)}+\|\pi(g_2)\Psi_1-\pi(g_2)\Psi_2\|_{(C;K)}
		\end{align*}
		for all $g_1,g_2\in G$, $\Psi_1,\Psi_2\in C(X,\M(Y))$. The first term on the right hand side is continuous in $g_1$ due to \autoref{lemma:ActionContInUniformWeakTopology}. For $\Psi_1$ fixed and a given $\epsilon>0$, we may therefore find a compact neighborhood $U$ of $g_1$ such that the first term is smaller than $\epsilon/2$ for all $g_2\in U$. For the second term, note that
		\begin{align*}
			C_U=\{g^{-1}\cdot\phi_2:\phi\in C, g\in U\}
		\end{align*}
		is relatively compact in $C_c(Y)$ by \autoref{lemma:relativelyCompactSetsUnderGroupAction}. As before,
		\begin{align*}
			K_U=\{g^{-1}\cdot x:x\in K,g\in U\}
		\end{align*}
		is compact in $X$, so for $g_2\in U$,
		\begin{align*}
			\|\pi(g_2)\Psi_1-\pi(g_2)\Psi_2\|_{(C;K)}\le \|\Psi_1-\Psi_2\|_{(C_U;K_U)}.
		\end{align*}
		As we fixed  $\Psi_1$, this shows that 
		\begin{align*}
			\|\pi(g_1)\Psi_1-\pi(g_2)\Psi_2\|_{(C;K)}<\epsilon
		\end{align*}
		for all $g_2\in U$ and $\Psi_2\in C(X,\M(Y))$ belonging to an open neighborhood of $\Psi_1$ with respect to the compact-to-compact topology. The claim follows.
	\end{proof}

\subsection{Smooth and strongly continuous vectors}\label{section:topologyFunctionals:Smooth}
	If the group $G$ from the previous section is a Lie group, then we can consider the corresponding spaces of smooth vectors of the induced representation on $C(X,\M(Y))$. However, since we are considering $C(X,\M(Y))$ with different topologies, this leads to three notions of smoothness which are a priori distinct. We will see in the following that this distinction is not necessary in our situation.\\
	
	Let $F$ be a locally convex vector space, $U\subset\R^n$ open, and $f:U\rightarrow F$ any map. We call $f$
	\begin{enumerate}
		\item \emph{strongly $C^k$} if $f$ is $k$-times continuously differentiable with respect to the topology on $F$.
		\item \emph{weakly $C^k$} if for every $\lambda\in F'$, the map $\lambda\circ f: U\rightarrow \C$ is $k$-times continuously differentiable.
		\item strongly or weakly $C^\infty$ if it is strongly or weakly $C^k$ for every $k\in\mathbb{N}$ respectively.
	\end{enumerate}
	For $k=0$, we will also call a weakly or strongly $C^0$ function weakly continuous or strongly continuous respectively.

	\begin{remark}
		Let us point out that in general a weakly $C^k$ function is not a strongly $C^k$ function. This is even the case if $F$ is equipped with the weak topology since a strongly $C^k$ function has partial derivatives that actually belong to $F$, whereas there does not need to exist any element in $F$ corresponding to the partial derivatives of the maps $\lambda\circ f$, $\lambda\in F'$. Nevertheless, under certain assumptions on $F$, these notions are closely related, as the following result shows.
	\end{remark}
	
	\begin{theorem}[\cite{GarrettModernanalysisautomorphic2018}*{Theorem~15.1.1}]\label{theorem:differentiabilityInterval}
		If $F$ is a quasi-complete locally convex vector space, then a weakly $C^k$ $F$-valued function on an interval is strongly $C^{k-1}$.
	\end{theorem}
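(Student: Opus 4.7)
The plan is to proceed by induction on $k$, with the essential content concentrated in a strengthened base case $k=1$: namely, that a weakly $C^1$ function $f:I\to F$ is strongly differentiable at every point $t_0\in I$, with derivative $f'(t_0)\in F$ satisfying $\lambda\circ f'=(\lambda\circ f)'$ for every $\lambda\in F'$. Given this, the inductive step is immediate: if $f$ is weakly $C^k$ for $k\ge 2$, then $f'$ exists as an $F$-valued function and is itself weakly $C^{k-1}$, so applying the induction hypothesis to $f'$ shows $f'$ is strongly $C^{k-2}$, and hence $f$ is strongly $C^{k-1}$.

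For the base case I would fix $t_0\in I$ and study the difference quotients $\Delta_h:=h^{-1}(f(t_0+h)-f(t_0))\in F$ for $0<|h|<\delta$. For each $\lambda\in F'$, the mean value theorem applied to the $C^1$ function $\lambda\circ f$ yields $\lambda(\Delta_h)=(\lambda\circ f)'(s_\lambda(h))$ with $s_\lambda(h)$ between $t_0$ and $t_0+h$, and continuity of $(\lambda\circ f)'$ at $t_0$ gives $\lambda(\Delta_h)\to (\lambda\circ f)'(t_0)$. Thus $\{\Delta_h\}$ is weakly Cauchy and weakly bounded; since in any locally convex space weakly bounded sets are bounded, $\{\Delta_h\}$ is bounded in $F$.

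The hard part will be upgrading weak Cauchyness to Cauchyness in the original topology of $F$; quasi-completeness by itself only gives convergence of bounded Cauchy nets, not of bounded weakly Cauchy ones. My preferred route is via the Gelfand--Pettis integral: for every weakly continuous map $g:[a',b']\to F$ on a compact subinterval, one constructs $\int_{a'}^{b'}g(t)\,dt\in F$ as the unique element whose image under each $\lambda\in F'$ equals $\int_{a'}^{b'}\lambda(g(t))\,dt$. Existence uses that in a quasi-complete locally convex space, the closed absolutely convex hull of the compact set $g([a',b'])$ is bounded and compact, so Riemann sums (or more precisely, nets of averages indexed by partitions) converge in the original topology rather than only weakly. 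Applying this construction to $g$ taken to be the weak derivative of $f$ (a priori only a family of functionals $\lambda\mapsto(\lambda\circ f)'(s)$), one obtains a strongly continuous primitive $G(t)\in F$ with $\lambda(G(t))=\int_{t_0}^t(\lambda\circ f)'(s)\,ds=\lambda(f(t)-f(t_0))$ for every $\lambda\in F'$; Hahn--Banach then forces $G(t)=f(t)-f(t_0)$, whence both strong continuity and strong differentiability of $f$ at $t_0$ follow, with derivative equal to $g(t_0)$.

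The main obstacle in this plan is the careful construction of the Gelfand--Pettis integral in the quasi-complete locally convex setting, which is precisely where the hypothesis is genuinely used: one must exploit the boundedness and compactness of the closed absolutely convex hull of a compact set together with the completeness of its induced uniform structure, since without quasi-completeness of $F$ the limit of the approximating sums could only be guaranteed to live in the completion rather than in $F$ itself.
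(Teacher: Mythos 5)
Your plan breaks down at the strengthened base case, and this is a genuine gap rather than a fixable detail: it is \emph{not} true that a weakly $C^1$ function into a quasi-complete locally convex space is strongly differentiable with derivative in $F$. The loss of one degree in the statement (weakly $C^k$ only gives strongly $C^{k-1}$) is essential, and the remark in the paper immediately before the theorem points at exactly the obstruction: the functional $\lambda\mapsto(\lambda\circ f)'(t_0)$ need not be represented by any element of $F$. Concretely, take $F=c_0$ with the supremum norm (a Banach space, hence quasi-complete) and $f(t)=(\sin(nt)/n)_{n\ge 1}$. For every $\lambda\in\ell^1=(c_0)'$ the series $\sum_n\lambda_n\sin(nt)/n$ can be differentiated termwise to the continuous function $\sum_n\lambda_n\cos(nt)$, so $f$ is weakly $C^1$; but at $t_0=0$ the difference quotients $(\sin(nh)/(nh))_n$ are not norm-Cauchy and their coordinatewise limit $(1,1,\dots)$ does not even lie in $c_0$, so $f$ is not strongly differentiable there. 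Since your induction needs $f'$ to exist as an $F$-valued function already at level $k=1$, the whole scheme does not get off the ground.

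The Gelfand--Pettis detour cannot repair this. That integral is built for weakly continuous \emph{$F$-valued} integrands: the approximating averages must be formed from vectors $g(s)\in F$, and quasi-completeness is what makes those averages converge in $F$; at the weakly $C^1$ level the would-be integrand is, as you yourself note, only a family of scalar derivatives, so there is nothing in $F$ to average. Moreover, even granting an $F$-valued $g$ and the identity $f(t)-f(t_0)=\int_{t_0}^t g(s)\,ds$, recovering $g(t_0)$ as a \emph{strong} derivative of the primitive requires strong continuity of $g$ at $t_0$, which is again exactly what is missing. The argument behind the cited result accepts the loss of a derivative: weak $C^1$ gives strong continuity because the difference quotients are weakly bounded, hence bounded (this is the argument the paper itself runs in the proof of \autoref{corollary:weaklySmoothIsSmooth}); and when $f$ is weakly $C^2$, a Taylor/mean-value estimate applied to each $\lambda\circ f$ shows that $\{\Delta_h-\Delta_{h'}\}$ lies in $c(|h|+|h'|)N$ for every closed convex balanced neighborhood $N$, so the difference quotients form a bounded Cauchy net, which converges in $F$ by quasi-completeness --- the only place quasi-completeness is used. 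This produces an $F$-valued derivative $f'$ with $\lambda\circ f'=(\lambda\circ f)'$, which is weakly $C^{k-1}$, and induction then yields that weakly $C^k$ implies strongly $C^{k-1}$; no Gelfand--Pettis integration is needed.
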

	The following is a simple consequence of this result. Its proof is a minor modification of the arguments in  \cite{GarrettModernanalysisautomorphic2018}*{Section~15}.
	\begin{corollary}\label{corollary:weaklySmoothIsSmooth}
		Let $U\subset\R^n$ be an open set, $V$ a quasi-complete locally convex vector space, and $f:U\rightarrow V$. 
		\begin{enumerate}
			\item If $f$ is weakly $C^1$, then $f$ is strongly continuous.
			\item $f$ is weakly $C^\infty$ if and only if $f$ is strongly $C^\infty$.
		\end{enumerate}
	\end{corollary}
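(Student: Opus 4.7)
The plan is to reduce both statements to the one-dimensional Theorem~\ref{theorem:differentiabilityInterval} by restricting $f$ to line segments in $U$, and to exploit the quasi-completeness of $V$ to turn weak integral information into strong seminorm estimates.

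For part (1), I would fix $x_0\in U$ and a convex neighborhood $B\subset U$ of $x_0$. For each $x\in B$, the composition $f\circ\gamma_x$ with the segment $\gamma_x(t)=x_0+t(x-x_0)$ is weakly $C^1$ on $[0,1]$, so Theorem~\ref{theorem:differentiabilityInterval} yields that it is strongly continuous there. To upgrade this ray-wise continuity to joint strong continuity at $x_0$, I would apply the fundamental theorem of calculus to $\lambda\circ f\circ \gamma_x$ for $\lambda\in V'$, obtaining
\[
\lambda(f(x)-f(x_0))=\int_0^1 \nabla(\lambda\circ f)(\gamma_x(t))\cdot(x-x_0)\,dt.
\]
For any continuous seminorm $p$ on $V$, the right-hand side, after taking the supremum over all $\lambda\in V'$ with $|\lambda|\le p$ on $V$, should be bounded by $\|x-x_0\|$ times a uniform constant; this uniform bound comes from the principle of uniform boundedness applied to the weakly bounded family $\{\nabla(\lambda\circ f)(y):y\in \overline{B},\,|\lambda|\le p\}$, which is where the quasi-completeness of $V$ is essential.

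For part (2), the implication strongly $C^\infty\Rightarrow$ weakly $C^\infty$ is immediate from the chain rule applied to $\lambda\circ f$ for $\lambda\in V'$. For the converse, I would argue inductively. For each $i$, applying Theorem~\ref{theorem:differentiabilityInterval} with arbitrarily large $k$ to the restriction of $f$ to each line parallel to $e_i$ shows that the strong partial derivative $\partial_i f(x_0)\in V$ exists pointwise as a limit of difference quotients, and satisfies $\lambda(\partial_i f(x_0))=\partial_i(\lambda\circ f)(x_0)$ for every $\lambda\in V'$. Hence $\partial_i f$ is itself weakly $C^\infty$, and part (1) applied to $\partial_i f$ gives its strong continuity. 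Iterating this argument in the order of differentiation yields strong smoothness of $f$ to every order.

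The main obstacle is part (1): bridging one-dimensional strong continuity along each line, which Theorem~\ref{theorem:differentiabilityInterval} supplies directly, with joint strong continuity on $U\subset\R^n$. This is where quasi-completeness enters decisively, allowing the weak-to-strong conversion of the integral estimate via uniform boundedness. Once (1) is in place, the induction in (2) is essentially formal.
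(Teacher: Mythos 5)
Your part (2) follows the paper's route: \autoref{theorem:differentiabilityInterval} applied along coordinate lines gives existence of the strong partial derivatives, which are again weakly smooth, and one iterates using (1). The problem is in part (1), at the decisive step. You bound $|\lambda(f(x)-f(x_0))|$ by $\|x-x_0\|\sup_t|\nabla(\lambda\circ f)(\gamma_x(t))|$ and then claim the uniform constant $\sup\{|\nabla(\lambda\circ f)(y)|:y\in\overline{B},\ |\lambda|\le p\}<\infty$ ``by the principle of uniform boundedness applied to the weakly bounded family $\{\nabla(\lambda\circ f)(y)\}$.'' As stated this is not a valid application: that family consists of vectors in $\R^n$ indexed by $(y,\lambda)$, not of elements of $V$ nor of continuous linear operators on a Banach space, so neither ``weakly bounded'' nor Banach--Steinhaus applies to it directly. (One can repair it by passing to the local Banach space $V_p=V/p^{-1}(0)$, viewing $\lambda\mapsto\partial_i(\lambda\circ f)(y)$ as pointwise limits of the evaluations $\lambda\mapsto\lambda\bigl(\tfrac{f(y+he_i)-f(y)}{h}\bigr)$ and applying Banach--Steinhaus on $(V_p)'$ twice, but that is a genuinely different and more involved argument than what you wrote.) Moreover, your claim that this is ``where the quasi-completeness of $V$ is essential'' is off: no completeness of $V$ is needed anywhere in (1); quasi-completeness enters only through \autoref{theorem:differentiabilityInterval}, i.e.\ in part (2).

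The paper's proof of (1) avoids all of this and is shorter: since $\lambda\circ f$ is $C^1$, for fixed $u$ the difference quotients $\bigl\{\tfrac{1}{|u-v|}\bigl(f(u)-f(v)\bigr):|u-v|\le\epsilon\bigr\}$ form a subset of $V$ on which every $\lambda\in V'$ is bounded, i.e.\ a weakly bounded set; by Mackey's theorem (weakly bounded $=$ bounded in a locally convex space) it is bounded, so for any balanced convex neighborhood $N$ of $0$ one gets $f(u)-f(v)\in|u-v|\,tN$ for some $t>0$, which is strong continuity at $u$. Note also that your preliminary use of \autoref{theorem:differentiabilityInterval} to get strong continuity along each segment in (1) plays no role in your final estimate and can be dropped. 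So the overall shape of your argument is salvageable, but the key uniform bound needs to be rederived along these lines rather than by the uniform boundedness principle as you invoked it.
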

	\begin{proof}
		Note that (2) follows by induction from (1) since \autoref{theorem:differentiabilityInterval} shows that the partial derivatives of $f$ exist and are weakly $C^1$ in this case. Let us thus show that (1) holds. For $\lambda\in V'$, $u,v\in U$, $u$ fixed,
		\begin{align*}
			\frac{1}{|u-v|}\lambda\left(f(u)-f(v)\right)
		\end{align*}
		is uniformly bounded in $|u-v|\le \epsilon$ for some $\epsilon>0$ since $\lambda\circ f$ is continuously differentiable by assumption. Thus for $u\in U$ fixed, the set
		\begin{align*}
			\left\{\frac{1}{|v|}\left[f(u)-f(v)\right]: v\in U,|u-v|\le \epsilon\right\}
		\end{align*}
		is weakly bounded in $V$ and therefore bounded since $V$ is locally convex. If $N$ is a balanced and convex neighborhood of $0$, we therefore find $t>0$ such that this set is contained in $tN$, so $f(u)-f(v)\in |u-v|tN$ for all $v\in U$ with $|u-v|\le \epsilon$. Thus $f$ is strongly continuous in $u\in U$.
	\end{proof}

	\begin{corollary}\label{corollary:EquivalenceNotionsSmoothness}
		 For $\Psi\in C(X,\M(Y))$, consider the map
		 \begin{align}
		 	\label{eq:OrbitGroupActionMeasureFunc}
		 	\begin{split}
		 		G&\rightarrow C(X,\M(Y))\\
		 		g&\mapsto \pi(g)\Psi.
		 	\end{split}
		 \end{align}
		The following are equivalent.
		\begin{enumerate}
			\item This map is strongly $C^\infty$ in the compact-to-weak* topology.
			\item This map is strongly $C^\infty$ in the compact-to-compact topology.
			\item This map is strongly $C^\infty$ in the compact-to-bounded topology.
		\end{enumerate}
	\end{corollary}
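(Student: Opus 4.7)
The plan is to exploit the fact that all three topologies share the same continuous dual (by \autoref{corollary:characterizationContLinearMaps}) together with the completeness of the compact-to-bounded topology (from \autoref{lemma:completeUniformStrongTop}) to show that the notion of smoothness is independent of the choice of topology.

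First I would handle the easy implications. Since the compact-to-bounded topology is finer than the compact-to-compact topology, which in turn is finer than the compact-to-weak* topology, strong $C^\infty$-ness with respect to a finer topology implies strong $C^\infty$-ness with respect to any coarser topology (continuous and strongly convergent curves in a finer topology remain continuous and strongly convergent in a coarser one, and the same passage applies to difference quotients). This yields $(3)\Rightarrow (2)\Rightarrow (1)$ without any work.

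The main content is the implication $(1)\Rightarrow(3)$, and here is where the heavy lifting happens. Working in a local chart of $G$, it suffices to show that a map $f:U\rightarrow C(X,\M(Y))$ defined on an open set $U\subset\R^{\dim G}$ which is strongly $C^\infty$ in the compact-to-weak* topology is in fact strongly $C^\infty$ in the compact-to-bounded topology. If $f$ is strongly $C^\infty$ in the compact-to-weak* topology, then in particular $\lambda\circ f$ is $C^\infty$ for every $\lambda$ in the topological dual of $C(X,\M(Y))$ with respect to this topology. By \autoref{corollary:characterizationContLinearMaps}, this dual coincides with the topological dual of $C(X,\M(Y))$ equipped with the compact-to-bounded topology. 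Therefore $f$ is \emph{weakly} $C^\infty$ when $C(X,\M(Y))$ is equipped with the compact-to-bounded topology. By \autoref{lemma:completeUniformStrongTop}, this topology is complete, hence in particular quasi-complete, so \autoref{corollary:weaklySmoothIsSmooth}(2) applies and yields that $f$ is strongly $C^\infty$ in the compact-to-bounded topology, which gives $(3)$.

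The main subtlety, which is the only real obstacle, is to keep the three notions of smoothness conceptually separated from the single notion of weak smoothness to which they all reduce via \autoref{corollary:characterizationContLinearMaps}. Once this is recognized, the result is essentially a direct consequence of the general principle encoded in \autoref{corollary:weaklySmoothIsSmooth}, and no further calculation is required.
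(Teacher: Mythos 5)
Your proof is correct and follows essentially the same route as the paper: both reduce strong smoothness to weak smoothness via \autoref{corollary:weaklySmoothIsSmooth} and exploit that the three topologies have the same continuous dual by \autoref{corollary:characterizationContLinearMaps}. The only cosmetic difference is that you dispose of $(3)\Rightarrow(2)\Rightarrow(1)$ by comparing the nested topologies and hence invoke (quasi-)completeness only for the compact-to-bounded topology, whereas the paper applies the same equivalence symmetrically to all three topologies.
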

	\begin{proof}
		By \autoref{corollary:weaklySmoothIsSmooth}, the three statements are equivalent to this map being weakly $C^\infty$ with respect to the corresponding topological dual spaces. Since the topological duals of $C(X,\M(Y))$ with respect to the three topologies are identical (as sets) by \autoref{corollary:characterizationContLinearMaps}, the claim follows.
	\end{proof}

	\begin{definition}\label{definition:SmoothVectorsStronglyContVectors}
		We call $\Psi\in C(X,\M(Y))$
		\begin{enumerate}
			\item smooth if the map in Eq.~\eqref{eq:OrbitGroupActionMeasureFunc} is strongly $C^\infty$ with respect to the compact-to-weak*, or compact-to-compact, or compact-to-bounded topology,
			\item strongly continuous if the map in Eq.~\eqref{eq:OrbitGroupActionMeasureFunc} is continuous with respect to the compact-to-bounded topology.
		\end{enumerate}
	\end{definition}
	Note that any smooth vector in $C(X,\M(Y))$ is automatically strongly continuous due to \autoref{corollary:EquivalenceNotionsSmoothness}.	
	\begin{proposition}\label{proposition:stronglyContVectorsClosed}
		The subspace of strongly continuous vectors is closed in $C(X,\M(Y))$ with respect to the compact-to-bounded topology.
	\end{proposition}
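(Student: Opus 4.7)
The plan is to argue directly with the semi-norms $\|\cdot\|_{A;K}$ from Eq.~\eqref{eq:defSeminormsBoundedFunctionals} that generate the compact-to-bounded topology. Let $\Psi$ lie in the closure of the set of strongly continuous vectors, and fix $g_0 \in G$. By \autoref{lemma:groupActsByContOperators} and the representation property $\pi(g_1g_2) = \pi(g_1)\pi(g_2)$, it suffices to prove continuity of the orbit map at the neutral element $e \in G$. Given a compact $K\subset X$, a compact $A\subset Y$, and $\epsilon>0$, the goal is to find a neighborhood $V$ of $e$ such that $\|\pi(g)\Psi - \Psi\|_{A;K} < \epsilon$ for all $g \in V$.

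The first step is to fix an auxiliary compact neighborhood $U$ of $e$ and enlarge $A$ and $K$ to the compact sets
\begin{align*}
	A_U := U^{-1}\cdot A, \qquad K_U := U^{-1}\cdot K,
\end{align*}
which are compact by the continuity of the $G$-actions on $Y$ and $X$. The key estimate is that, for every $g\in U$ and every $\Phi \in C(X,\M(Y))$,
\begin{align*}
	\|\pi(g)\Phi\|_{A;K} \le \|\Phi\|_{A_U;K_U},
\end{align*}
because a $\phi \in C_A(Y)$ with $\|\phi\|_\infty\le 1$ satisfies $g^{-1}\cdot\phi \in C_{A_U}(Y)$ with the same supremum norm, and $g^{-1}\cdot x \in K_U$ for $x \in K$.

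With this estimate in hand, choose a strongly continuous $\Psi_\alpha$ so close to $\Psi$ that
\begin{align*}
	\|\Psi - \Psi_\alpha\|_{A_U;K_U} < \frac{\epsilon}{3};
\end{align*}
this is possible because $\Psi$ lies in the closure with respect to the compact-to-bounded topology. Applying the estimate to $\Phi = \Psi - \Psi_\alpha$ gives $\|\pi(g)\Psi - \pi(g)\Psi_\alpha\|_{A;K} < \epsilon/3$ uniformly for $g \in U$. Next, by strong continuity of $\Psi_\alpha$, choose a neighborhood $V\subset U$ of $e$ with $\|\pi(g)\Psi_\alpha - \Psi_\alpha\|_{A;K} < \epsilon/3$ for all $g \in V$. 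Then the triangle inequality
\begin{align*}
	\|\pi(g)\Psi - \Psi\|_{A;K} \le \|\pi(g)\Psi - \pi(g)\Psi_\alpha\|_{A;K} + \|\pi(g)\Psi_\alpha - \Psi_\alpha\|_{A;K} + \|\Psi_\alpha - \Psi\|_{A;K}
\end{align*}
yields $\|\pi(g)\Psi - \Psi\|_{A;K} < \epsilon$ for all $g \in V$, completing the proof.

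The only potential obstacle is the uniformity in $g$ of the bound on $\|\pi(g)\Psi - \pi(g)\Psi_\alpha\|_{A;K}$, but this is precisely what the enlargement to $A_U$ and $K_U$ achieves, exploiting that the compact-to-bounded topology controls $\|\cdot\|_{B;L}$ uniformly for \emph{all} compact $B\subset Y$, $L\subset X$. In this sense, the compact-to-bounded topology is exactly the right topology for strong continuity to be a closed condition.
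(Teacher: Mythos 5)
Your proof is correct and follows essentially the same route as the paper: reduce to continuity of the orbit map at the neutral element, use the uniform estimate $\|\pi(g)\Phi\|_{A;K}\le\|\Phi\|_{U^{-1}\cdot A;\,U^{-1}\cdot K}$ for $g\in U$ (the paper phrases this with bounded sets $B\subset C_c(Y)$, equivalent via \autoref{lemma:BoundedSetsCompactSuppFunct}), and conclude with the same $\epsilon/3$ triangle-inequality argument against a nearby strongly continuous vector. The only cosmetic point is that your bound on the last term $\|\Psi_\alpha-\Psi\|_{A;K}$ is implicit, but it follows at once since $A\subset A_U$ and $K\subset K_U$.
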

	\begin{proof}
		Let $\Psi\in C(X,\M(Y))$ belong to the closure of this space. It suffices to show that $g\mapsto \pi(g)\Psi$ is continuous at the neutral element. For a bounded set $B\subset C_c(Y)$, $K\subset X$ compact, we have for a compact neighborhood $U$ of the neutral element, $g\in U$, and $\tilde{\Psi}\in C(X,\M(Y))$,
		\begin{align*}
			&\|\pi(g)\Psi-\Psi\|_{(B;K)}\\
			\le& \|\pi(g)\Psi-\pi(g)\tilde{\Psi}\|_{(B;K)}+\|\pi(g)\tilde{\Psi}-\tilde{\Psi}\|_{(B;K)}+\|\tilde{\Psi}-\Psi\|_{(B;K)}\\
			\le & \|\Psi-\tilde{\Psi}\|_{(U^{-1}\cdot B;K_U)}+\|\pi(g)\tilde{\Psi}-\tilde{\Psi}\|_{(B;K)}+\|\tilde{\Psi}-\Psi\|_{(B;K)}.
		\end{align*}
		Here, $U^{-1}\cdot B$ is bounded by \autoref{lemma:BoundedSetsCompactSuppFunct}.	Since $\Psi$ belongs to the closure, we may pick a strongly continuous $\tilde{\Psi}\in C(X,\M(Y))$ such that the first and the last term are smaller than $\epsilon/3$ for some given $\epsilon>0$. For this choice of $\tilde{\Psi}$, we may pick a neighborhood $W\subset U$ of the neutral element on which the second term is smaller than $\epsilon/3$, since $\tilde{\Psi}$ is strongly continuous. The claim follows.
	\end{proof}
	The following follows with the same argument.
	\begin{corollary}\label{corrollary:StronglyContRepresentation}
		The representation of $G$ on the subspace of strongly continuous vectors in $C(X,\M(Y))$ is continuous with respect to the compact-to-bounded topology.
	\end{corollary}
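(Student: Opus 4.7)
The plan is to mimic the argument for \autoref{proposition:stronglyContVectorsClosed}, but now to establish joint continuity of the action map $G\times S\to C(X,\M(Y))$, $(g,\Psi)\mapsto \pi(g)\Psi$, where $S$ denotes the subspace of strongly continuous vectors. By linearity of $\pi(g)$ and translation in the target, it suffices to verify continuity at an arbitrary point $(g_0,\Psi_0)\in G\times S$, which I would do via a standard triangle inequality split
\begin{align*}
	\|\pi(g)\Psi-\pi(g_0)\Psi_0\|_{(B;K)}\le \|\pi(g)(\Psi-\Psi_0)\|_{(B;K)}+\|\pi(g)\Psi_0-\pi(g_0)\Psi_0\|_{(B;K)}
\end{align*}
for $B\subset C_c(Y)$ bounded and $K\subset X$ compact.

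For the second summand, strong continuity of $\Psi_0$ means by definition that $g\mapsto\pi(g)\Psi_0$ is continuous in the compact-to-bounded topology, so this term can be made smaller than $\epsilon/2$ for all $g$ in a suitable neighborhood $W$ of $g_0$. For the first summand, I would reuse the estimate from the proof of \autoref{lemma:groupActsByContOperators}: for any compact neighborhood $U$ of $g_0$ and $g\in U$,
\begin{align*}
	\|\pi(g)(\Psi-\Psi_0)\|_{(B;K)}\le \|\Psi-\Psi_0\|_{(U^{-1}\cdot B;K_U)},
\end{align*}
where $K_U:=\{g^{-1}\cdot x:x\in K,\, g\in U\}$ is compact by continuity of the action of $G$ on $X$, and $U^{-1}\cdot B$ is bounded in $C_c(Y)$ by \autoref{lemma:relativelyCompactSetsUnderGroupAction}. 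Thus $\|\cdot\|_{(U^{-1}\cdot B;K_U)}$ is one of the defining semi-norms of the compact-to-bounded topology, so this term is smaller than $\epsilon/2$ whenever $\Psi$ lies in a suitable neighborhood of $\Psi_0$.

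Shrinking $W$ so that $W\subset U$ then yields the joint continuity at $(g_0,\Psi_0)$. There is no real obstacle beyond carefully isolating the part of the estimate that is uniform in $g$ (first summand, controlled by the operator-type bound) from the part that exploits strong continuity (second summand); the argument then closes exactly as in \autoref{proposition:stronglyContVectorsClosed}.
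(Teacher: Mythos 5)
Your proposal is correct and follows essentially the same route as the paper: a triangle-inequality split of $\|\pi(g)\Psi-\pi(g_0)\Psi_0\|_{(B;K)}$, handling one term uniformly in $g$ via the estimate $\|\pi(g)(\Psi-\Psi_0)\|_{(B;K)}\le\|\Psi-\Psi_0\|_{(U^{-1}\cdot B;K_U)}$ (with $U^{-1}\cdot B$ bounded and $K_U$ compact) and the other by the strong continuity of $\Psi_0$, exactly as in the arguments for \autoref{proposition:contRepCompactToCompact} and \autoref{proposition:stronglyContVectorsClosed} that the paper invokes. The only difference is the (immaterial) choice of intermediate term $\pi(g)\Psi_0$ instead of $\pi(g_2)\Psi_1$.
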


	We conclude this section with two density results concerning the smooth elements in $C(X,\M(Y))$. Since the representation of $G$ on $C(X,\M(Y))$ is not necessarily continuous with respect to the three different topologies, we need a slightly weaker notion. Let us call a representation of $G$ on a locally convex vector space $V$ separately continuous if the map
	\begin{align*}
		G\times V&\rightarrow V\\
		(g,v)&\mapsto g\cdot v
	\end{align*}
	is separately continuous.  The following result is well known, however, it is usually stated for continuous representations although only the weaker notion is required. We refer to \cite{GarrettModernanalysisautomorphic2018}*{Theorem~14.6.1} for a proof that only requires a separately continuous action.
	\begin{corollary}\label{corollary:approxSmoothVectors}
		Let $V$ be a separately continuous representation of a Lie group $G$ on a quasi-complete topological vector space. If $(\phi_j)_j\subset C_c^\infty(G)$ is a smooth approximate identity and $v\in V$, then
		\begin{align*}
			v_j:=\int_{G}\phi_j(g)\pi(g)v dg
		\end{align*}
		converges to $v$, where the integral on the right hand side is understood in the Gelfand--Pettis sense.
	\end{corollary}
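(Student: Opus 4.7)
The plan is to reduce the convergence of $v_j$ to $v$ to a pointwise continuity statement for the orbit map $g\mapsto \pi(g)v$ at the neutral element of $G$, and then exploit the concentration of the approximate identity $(\phi_j)_j$ near that point. The key algebraic input is a convolution-type rewriting of $v_j-v$; the key analytic input is a seminorm estimate for Gelfand--Pettis integrals.

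First I would verify that the Gelfand--Pettis integral defining $v_j$ actually makes sense in this setting. Fix $j$. Since the representation is separately continuous, the orbit map $g\mapsto \pi(g)v$ is continuous, hence so is the $V$-valued map $g\mapsto \phi_j(g)\pi(g)v$, whose support is contained in $\supp \phi_j$ and is therefore compact. Its image is thus a compact subset of $V$, and the closed absolutely convex hull of a compact set in a quasi-complete locally convex space is compact. This is the standard hypothesis under which the Gelfand--Pettis integral exists and satisfies
\begin{align*}
	\lambda(v_j)=\int_G\phi_j(g)\lambda(\pi(g)v)\,dg\quad \text{for every } \lambda\in V'.
\end{align*}

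Next I would rewrite the difference $v_j-v$ in a useful form. After normalizing so that $\int_G\phi_j(g)dg=1$ for each $j$ (which we may assume for an approximate identity), the constant-integrand identity $v=\int_G\phi_j(g)\,v\,dg$ combines with linearity of the Gelfand--Pettis integral to give
\begin{align*}
	v_j-v=\int_G\phi_j(g)(\pi(g)v-v)\,dg.
\end{align*}
For any continuous seminorm $p$ on $V$, Hahn--Banach provides $p(w)=\sup\{|\lambda(w)|:\lambda\in V',\,|\lambda|\le p\}$, and applying this to $w=v_j-v$ together with the defining property of the integral yields
\begin{align*}
	p(v_j-v)\le \int_G\phi_j(g)\,p(\pi(g)v-v)\,dg.
\end{align*}

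Finally, a standard approximate identity argument closes the proof. Given $\epsilon>0$, continuity at $e\in G$ of the orbit map $g\mapsto \pi(g)v$ (which is part of separate continuity of the representation) furnishes a neighborhood $U$ of $e$ on which $p(\pi(g)v-v)<\epsilon$. Since $(\phi_j)_j$ is an approximate identity, $\supp\phi_j\subset U$ for $j$ sufficiently large, and then the integral estimate above gives $p(v_j-v)<\epsilon$. As this holds for every continuous seminorm $p$, we conclude that $v_j\to v$ in $V$. I expect the existence of the Gelfand--Pettis integral under mere quasi-completeness (rather than completeness) to be the main technical obstacle, since it relies on the nontrivial fact that the closed absolutely convex hull of the compact image of the integrand remains compact; once this is granted, the rest is a routine approximation argument.
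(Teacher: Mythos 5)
Your proof is correct and is essentially the standard argument that the paper itself does not spell out but delegates to the cited reference (Garrett, Theorem~14.6.1): existence of the Gelfand--Pettis integral for a compactly supported continuous $V$-valued integrand via quasi-completeness (compactness of the closed convex hull of a compact set), the identity $v_j-v=\int_G\phi_j(g)(\pi(g)v-v)\,dg$, and the seminorm estimate combined with continuity of the orbit map at the identity and the shrinking supports of $(\phi_j)_j$. The only implicit hypotheses you use --- local convexity of $V$ (stated as ``topological vector space'' in the corollary but needed for Gelfand--Pettis and for the Hahn--Banach description of seminorms) and nonnegativity plus normalization of the $\phi_j$ --- are exactly the ones in force in the paper's setting, so no gap results.
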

	
	This has the following consequence for our representation.
	\begin{corollary}
		\begin{enumerate}
			\item Smooth vectors are sequentially dense in $C(X,\M(Y))$ with respect to the compact-to-weak* and compact-to-compact topology. The same holds for any $G$-invariant subspace of $C(X,\M(Y))$ that is closed in the respective topology.
			\item Assume that $W\subset C(X,\M(Y))$ is $G$-invariant and closed with respect to the compact-to-bounded topology. If $W$ only contains continuous vectors, then smooth vectors are sequentially dense in $W$ with respect to the compact-to-bounded topology.
			\item The closure of the space of smooth vectors with respect to the compact-to-bounded topology coincides with the space of all strongly continuous vectors.
		\end{enumerate}
	\end{corollary}
	\begin{proof}
		The first two claims are a direct consequence of \autoref{corollary:approxSmoothVectors} since the representation is separately continuous with respect to these topologies by \autoref{lemma:groupActsByContOperators}, \autoref{lemma:ActionContInUniformWeakTopology}, and \autoref{corrollary:StronglyContRepresentation}. The last claim follows with the same reasoning from \autoref{corrollary:StronglyContRepresentation}, noting that every smooth vector is strongly continuous by \autoref{corollary:EquivalenceNotionsSmoothness} and that the space of strongly continuous vectors is closed with respect to the compact-to-bounded topology, compare \autoref{proposition:stronglyContVectorsClosed}.
	\end{proof}

\section{Local functionals on convex functions}\
	\label{section:localFunctionals}
	Let us briefly introduce some notation that will be used in the following sections. For a local functional $\Psi:\Conv(\R^n,\R)\rightarrow\M(\R^n)$ and $\phi\in C_c(\R^n)$, we denote by $\Psi[\phi]:\Conv(\R^n,\R)\rightarrow\C$ the functional defined by
	\begin{align*}
		\Psi[\phi](f):=\int_{\R^n}\phi(x)d\Psi(f;x).
	\end{align*}
	Given $\psi\in C(\R^n)$, we define $\psi\bullet \Psi:\Conv(\R^n,\R)\rightarrow\M(\R^n)$ by
	\begin{align*}
		[\psi\bullet \Psi](f;B)=\int_{B}\psi(x)d\Psi(f;x)
	\end{align*}
	for $f\in\Conv(\R^n,\R)$, $B\subset\R^n$ bounded Borel set. It is then easy to see that $\psi\bullet \Psi$ is a local functional. Moreover, if $\Psi$ is
	\begin{itemize}
		\item continuous, or
		\item a polynomial local functional of degree at most $d$, or
		\item homogeneous of degree $k\in\mathbb{N}$,
	\end{itemize}
	then so is $\psi\bullet \Psi$. In particular, this equips $\P_d\LV(\R^n)$ with the structure of a module over $C(\R^n)$.

\subsection{Relation to valuations on convex functions}

We begin our investigation of local functionals with the proof of \autoref{maintheorem:LocalFunctionalValuation}.

\begin{proof}[Proof of \autoref{maintheorem:LocalFunctionalValuation}]
	Assume that $f,h\in\Conv(\R^n,\R)$ are given such that $f\wedge h$ is convex. We need to show that
	\begin{align*}
		\Psi(f\vee h;\phi)+\Psi(f\wedge h;\phi)=\Psi(f;\phi)+\Psi(h;\phi)
	\end{align*}
	for every $\phi\in C_c(\R^n)$. Consider the open sets
	\begin{align*}
		&U:=\mathrm{int}\{x\in\R^n: f(x)\ge h(x)\}, &&V:=\mathrm{int}\{x\in\R^n:f(x)\le h(x)\}.
	\end{align*}
	If $\phi\in C_c(\R^n)$ is supported on $U$, then $\Psi(f\vee h;\phi)=\Psi(f;\phi)$, $\Psi(f\wedge h;\phi)=\Psi(h;\phi)$ since $\Psi$ is locally determined, which implies
	\begin{align*}
		\Psi(f\vee h;\phi)+\Psi(f\wedge h;\phi)=\Psi(f;\phi)+\Psi(h;\phi).
	\end{align*}
	The same reasoning applies if $\phi$ is supported on $V$. If $U\cup V=\R^n$, this implies the desired result using a continuous partition of unity subordinate to this cover.\\
	If this is not the case, we repeat the argument for $\epsilon>0$ with the functions
	\begin{align*}
		&f_\epsilon:=\max(f,f\wedge h+\epsilon), &&h_\epsilon:=\max(h,f\wedge h+\epsilon),
	\end{align*}
	which converge to $f$ and $h$ for $\epsilon\rightarrow0$ respectively. Note that $f_\epsilon\wedge h_\epsilon=f\wedge h+\epsilon$ is convex and converges to $f\wedge h$ for $\epsilon\rightarrow0$. Similarly,  $f_\epsilon\vee h_\epsilon=\max(f\vee h, f\wedge h+\epsilon)$, which converges to $f\vee h$ for $\epsilon\rightarrow0$. We claim that the open sets
	\begin{align*}
		&U_\epsilon:=\mathrm{int}\{x\in\R^n: f_\epsilon(x)\ge h_\epsilon(x)\}, &&V_\epsilon:=\mathrm{int}\{x\in\R^n:f_\epsilon(x)\le h_\epsilon(x)\}
	\end{align*}
	cover $\R^n$. Let $x\in \R^n$ and assume without loss of generality that $f(x)\ge h(x)$. Since $f$ and $h$ are continuous, we may distinguish the following three cases:
	\begin{itemize}
		\item If $f(x)>h(x)+\epsilon $, then there is a neighborhood $W$ of $x$ on which this inequality holds, and so $f_\epsilon\equiv f\equiv h_\epsilon$ on $W$. Thus $x\in U_\epsilon$.
		\item If $h(x)+\epsilon > f(x) >h(x)$, then there is a neighborhood $W$ on which both inequalities hold, and so $f_\epsilon\equiv h+\epsilon\equiv h_\epsilon$ on $W$. Thus $x\in U_\epsilon$.
		\item If $f(x)=h(x)$, then there is a neighborhood $W$ of $x$ such that $|f(y)-h(y)|<\epsilon$ for $y\in W$. In particular, $f_\epsilon\equiv f\wedge h+\epsilon\equiv h_\epsilon$ on $W$, so $x\in U_\epsilon$.
	\end{itemize}
	The previous argument thus implies for $\phi\in C_c(\R^n)$ and every $\epsilon>0$,
	\begin{align*}
		\Psi(f_\epsilon\vee h_\epsilon;\phi)+\Psi(f_\epsilon\wedge h_\epsilon;\phi)=\Psi(f_\epsilon;\phi)+\Psi(h_\epsilon;\phi).
	\end{align*}
	Since $\Psi$ is continuous with respect to the weak* topology, we can take the limit $\epsilon\rightarrow0$ to obtain
	\begin{align*}
		\Psi(f\vee h;\phi)+\Psi(f\wedge h;\phi)=\Psi(f;\phi)+\Psi(h;\phi),
	\end{align*}
	which completes the proof.
\end{proof}
Note that the previous argument only implicitly uses that the functions are convex by exploiting the fact that finite convex functions are continuous. In particular, the same argument can be applied to continuous local functionals on more general spaces of continuous functions. The following example shows that the statement fails in general if the functional is not continuous.

\begin{example}
	\label{example:nonContLocaFunc}
	For $f\in\Conv(\R,\R)$, define $\Psi:\Conv(\R,\R)\rightarrow\M(\R)$ by \begin{align*}
		\Psi(f):= \begin{cases}
			\delta_0 & \text{if}~f~\text{is not differentiable in}~0,\\
			0 & \text{else}.
		\end{cases}
	\end{align*}
	Then $\Psi$ is a local functional which is not continuous. In addition, it is not a valuation: Consider the functions $f,h\in\Conv(\R,\R)$ given by
	\begin{align*}
		&f(x)=\begin{cases}
			-2x& x\le 0,\\
			-x& x>0,
		\end{cases}&&h(x)=\begin{cases}
		-x& x\le 0,\\
		0& x>0.
	\end{cases}
	\end{align*}
	Then $	f\wedge h(x)=-x$, so $\Psi(f\wedge h)=0$. However, since $\Psi(f)=\Psi(h)=\Psi(f\vee h)=\delta_0$, this implies
	\begin{align*}
		\Psi(f)+\Psi(h)\ne \Psi(f\vee h)+\Psi(f\wedge h),
	\end{align*}
	so $\Psi$ is not a valuation.
\end{example}

\subsection{Construction of local functionals with the differential cycle}
\label{section:ConstructionLocalFunc}

In this section, we extend a construction of local functionals originally introduced in \cite{KnoerrMongeAmpereoperators2024,KnoerrSmoothvaluationsconvex2024}. The construction relies on the following uniqueness result for integral currents on the cotangent bundle $T^*\R^n$ of $\R^n$, which was established by Fu \cite{FuMongeAmperefunctions.1989}, and we refer to \cite{FedererGeometricmeasuretheory1969} for a general background on currents. Let $\vol_n$ denote the standard volume form on $\R^n$ and $\pi:T^*\R^n\cong \R^n\times(\R^n)^*\rightarrow\R^n$ the projection onto the first factor.
\begin{theorem}[\cite{FuMongeAmperefunctions.1989}*{Theorem~2.0}]
	\label{theorem:characterization_differential_cycle}
	Let $f:\R^n\rightarrow\mathbb{R}$ be a locally Lipschitzian function. There exists at most one integral $n$-current $S$ on $T^*\R^n$ such that
	\begin{enumerate}
		\item $S$ is closed, i.e. $\partial S=0$,
		\item $S$ is Lagrangian, i.e. $S\llcorner \omega_s=0$, where $\omega_s$ denotes the natural symplectic form,
		\item $S$ is locally vertically bounded, i.e. $\supp S\cap \pi^{-1}(A)$ is compact for all $A\subset \R^n$ compact,
		\item $S(\phi(x,y)\pi^*\vol_n)=\int_{\R^n}\phi(x,df(x))d\vol_n(x)$ for all $\phi\in C^\infty_c(T^*\R^n)$.
	\end{enumerate}
	Note that the right hand side of the last equation is well defined due to Rademacher's theorem.
\end{theorem}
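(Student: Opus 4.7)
The plan is to reduce the statement to a vanishing result. If $S_1$ and $S_2$ both satisfy $(1)$--$(4)$, then $T := S_1 - S_2$ is an integral $n$-current on $T^*\R^n$ (both summands are integral with zero boundary), it is closed and Lagrangian by linearity of $\partial$ and of $\cdot \llcorner \omega_s$, and it is locally vertically bounded because $\supp T \subset \supp S_1 \cup \supp S_2$. Condition $(4)$ applied to both $S_i$ gives
\[
T(\phi\, \pi^*\vol_n) = 0 \quad \text{for every } \phi \in C_c^\infty(T^*\R^n).
\]
The task is then to show that any closed, Lagrangian, locally vertically bounded, integral $n$-current $T$ with this additional vanishing property must be zero.

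First I would apply slicing theory to $T$ along $\pi$. Local vertical boundedness ensures that $\pi$ is proper on $\supp T$, so the slices $\langle T, \pi, x\rangle$ exist for $\mathcal{L}^n$-almost every $x \in \R^n$ and are compactly supported integral $0$-currents in the fiber $\pi^{-1}(x) \cong (\R^n)^*$. A Fubini-type slicing identity then converts the displayed vanishing into
\[
\int_{\R^n} \langle T, \pi, x\rangle(\phi(x, \cdot))\, dx = 0 \quad \text{for all } \phi \in C_c^\infty(T^*\R^n),
\]
which, by varying $\phi$, forces $\langle T, \pi, x\rangle = 0$ for almost every $x$. This eliminates the ``horizontal'' part of $T$, namely the contribution of the set where the approximate tangent $n$-plane to $T$ projects isomorphically onto $\R^n$.

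The main obstacle is to rule out any remaining ``vertical'' contribution to $T$, supported where the approximate tangent $n$-plane lies inside a fiber of $\pi$. Since each fiber is itself a Lagrangian submanifold of $(T^*\R^n, \omega_s)$, the Lagrangian condition alone does not forbid such components, and they are invisible to the slicing above. Here one must combine $\partial T = 0$ with integer-rectifiability: a nonzero vertical contribution over a positive-measure family of fibers would, by closedness, have to propagate coherently in the horizontal direction, producing a cycle whose support cannot remain bounded in the vertical direction, contradicting $(3)$. Formalising this propagation --- essentially a constancy-type result for closed Lagrangian integral currents on $T^*\R^n$ --- is the technical core of the argument and is what Fu's analysis in \cite{FuMongeAmperefunctions.1989} establishes.
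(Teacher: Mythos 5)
This statement is quoted verbatim from Fu's paper and used in the present article as a black box, so there is no internal proof to compare against; your proposal therefore has to stand on its own, and as it stands it has a genuine gap. The reduction to showing that $T=S_1-S_2$ vanishes is correct but carries no content, and the slicing step, while valid, only yields $T\llcorner \pi^*\vol_n=0$, i.e.\ that at $\|T\|$-almost every point the approximate tangent $n$-plane projects to $\R^n$ with rank at most $n-1$. Your description of what remains as a ``vertical'' part whose tangent planes lie \emph{inside} the fibers is therefore inaccurate: rank $\le n-1$ is far weaker than rank $0$, and the genuinely hard case is the intermediate-rank one, which your horizontal/vertical dichotomy does not even name. (Note also that a purely fiberwise closed $n$-cycle is impossible for trivial reasons, so the truly ``vertical'' case you single out is the easy one.)

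More importantly, the step that carries all the weight --- that a closed, Lagrangian, locally vertically bounded integral $n$-current annihilating every form $\phi\,\pi^*\vol_n$ must vanish --- is not proved: you explicitly defer it to ``Fu's analysis,'' which is the theorem you are being asked to prove, and the heuristic you offer (a vertical piece would ``propagate'' and violate vertical boundedness) is not an argument and does not address intermediate-rank tangent planes. A workable route, and essentially the one in Fu's original paper, is not a tangent-plane dichotomy but an induction on the vertical degree of test forms: one shows $T(\phi\, dx_I\wedge dy_J)=0$ for all multi-indices with $|I|+|J|=n$ by induction on $|J|$, applying $\partial T=0$ to $(n-1)$-forms of the type $\phi\, y_j\, dx_I\wedge dy_{J\setminus\{j\}}$ --- this is precisely where local vertical boundedness enters, since it allows such forms to be cut off in $y$ without changing their action on $T$ --- and invoking the Lagrangian condition $T\llcorner\omega_s=0$ to dispose of the cross terms produced by $d\phi$; your hypothesis $T(\phi\,\pi^*\vol_n)=0$ is exactly the base case $|J|=0$. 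Without some version of this induction (or an equivalent device), the proof is missing its core.
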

If such a current exists, the function $f$ is called Monge-Amp\`ere, the corresponding current is denoted by $D(f)$ (it is denoted by $[df]$ in \cite{FuMongeAmperefunctions.1989}) and is called the \emph{differential cycle of $f$}. For smooth function, the differential cycle is therefore given by integration over the graph of its differential. Note that any element of $\Conv(\R^n,\R)$ admits a differential cycle, compare \cite[Proposition~3.1]{FuMongeAmperefunctions.1989}. We will need the following four properties.

\begin{theorem}[\cite{FuMongeAmperefunctions.1989}*{Theorem 2.2}]
	\label{theorem_FU_support_Differential_cycle}
	If $f\in\Conv(\R^n,\R)$, then
	\begin{align*}
		\supp D(f)\subset \text{graph }\partial f:=\left\{(x,y)\in T^*\R^n: y\in\partial f(x)\right\}.
	\end{align*}
	In particular, given an open set $U\subset V$,
	\begin{align*}
		\supp D(f)\cap\pi^{-1}(U)\subset U\times B_{\mathrm{lip}(f|_U)}(0),
	\end{align*}
	where $\mathrm{lip}(f|_U)$ denotes the Lipschitz-constant of $f|_U$.
\end{theorem}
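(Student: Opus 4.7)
The plan is to reduce to the smooth case by approximation. I would set $f_\epsilon := f \ast \rho_\epsilon$ for a smooth compactly supported mollifier $\rho_\epsilon$. Each $f_\epsilon$ is smooth and convex, converges to $f$ locally uniformly, and its differential cycle is given by integration over the graph of $df_\epsilon$; in particular $\supp D(f_\epsilon) = \text{graph}(df_\epsilon)$. The two key steps are then (a) to show that $D(f_\epsilon) \to D(f)$ weakly as currents, and (b) to show that every accumulation point of $\text{graph}(df_\epsilon)$ as $\epsilon \to 0$ lies in $\text{graph }\partial f$.

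Step (a) is where I expect the main technical difficulty. I would approach it via \autoref{theorem:characterization_differential_cycle}: first establish that $\{D(f_\epsilon)\}_\epsilon$ has locally uniformly bounded mass, which by standard compactness of integral currents lets me extract a weakly convergent subsequence with some limit $S$. I would then check that $S$ satisfies the four defining properties of the differential cycle. Closedness and the Lagrangian property pass to weak limits immediately; local vertical boundedness follows from the local Lipschitz constants of $f_\epsilon$ being controlled by those of $f$ (via \autoref{proposition_convex_functions_local_lipschitz_constants} applied to $f_\epsilon$ and $f$); property (4) follows from dominated convergence combined with the fact that $df_\epsilon(x) \to df(x)$ at every point of twice differentiability of $f$, which holds almost everywhere by Alexandrov's theorem. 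Uniqueness then forces $S = D(f)$, and the whole net converges.

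Step (b) reduces to the following convergence-of-subgradients lemma: if $\epsilon_n \to 0$, $y_n = df_{\epsilon_n}(x_n)$, and $(x_n, y_n) \to (x_0, y_0)$, then passing to the limit in
\begin{align*}
f_{\epsilon_n}(z) \geq f_{\epsilon_n}(x_n) + \langle y_n, z - x_n \rangle \quad \text{for all } z \in \R^n,
\end{align*}
using local uniform convergence yields $y_0 \in \partial f(x_0)$. Since $\text{graph }\partial f$ is closed, any point $(x_0, y_0)$ outside it admits a relatively compact open neighborhood $W$ disjoint from $\text{graph}(df_\epsilon)$ for all sufficiently small $\epsilon$. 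Hence $D(f_\epsilon) \llcorner W = 0$ for such $\epsilon$, and weak convergence then gives $D(f) \llcorner W = 0$, so $(x_0, y_0) \notin \supp D(f)$.

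For the second assertion, if $f|_U$ is Lipschitz with constant $L := \mathrm{lip}(f|_U)$, then for $x \in U$ and $y \in \partial f(x)$ the subgradient inequality $f(x + tv) \geq f(x) + t \langle y, v\rangle$ applied to a unit vector $v$ and small $t > 0$ gives $\langle y, v\rangle \leq L$; taking the supremum over unit $v$ yields $|y| \leq L$, so $\partial f(x) \subset B_L(0)$. Combining with the first inclusion gives $\supp D(f) \cap \pi^{-1}(U) \subset U \times B_L(0)$.
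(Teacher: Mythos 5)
The paper does not prove this statement at all: it is imported verbatim from Fu's work (Theorem~2.2 of the cited paper, where it is proved for general Monge--Amp\`ere functions with the Clarke subdifferential on the right-hand side), so there is no internal proof to compare against. Judged on its own, your argument is correct and complete in outline for the convex case stated here. Your step (a) in effect re-derives the weak convergence $D(f_\epsilon)\to D(f)$ under mollification by combining a locally uniform mass bound (this is exactly \autoref{lemma:massDifferentialCycle} applied to $f_\epsilon$, with the bound uniform in $\epsilon$ by locally uniform convergence), the Federer--Fleming compactness/closure theorem, and Fu's uniqueness statement \autoref{theorem:characterization_differential_cycle}; the paper would instead obtain this convergence directly from the quoted continuity result \autoref{theorem:joint_continuity_differential_cycle}. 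Doing it by hand as you do is arguably the safer route, since such continuity theorems are themselves typically proved using the support and mass estimates in question, so invoking them here could be circular; the price is the appeal to the compactness theorem for integral currents, which is heavier machinery than Fu's original localization argument. Your step (b) (subgradient inequality for $f_{\epsilon_n}$ at $x_n$, passed to the limit via locally uniform convergence, plus closedness of $\operatorname{graph}\partial f$) and the final Lipschitz estimate are exactly right. Two minor remarks: Alexandrov's theorem is overkill in the verification of property (4) --- almost-everywhere differentiability of the convex function $f$ together with the standard fact that $\nabla f_\epsilon(x)\to\nabla f(x)$ at points of differentiability already suffices; and your estimate yields $|y|\le\mathrm{lip}(f|_U)$, i.e.\ containment in the closed ball, which is the intended reading of the statement (the open/closed distinction is immaterial for every use made of it in the paper).
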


\begin{remark}
	\label{remark:DiffCycleLocal}
	It follows from the remarks in \cite[Section~2.1]{FuMongeAmperefunctions.1989} that $f=h$ on an open subset $U\subset\R^n$ implies $D(f)|_{\pi^{-1}(U)}=D(h)|_{\pi^{-1}(U)}$.
\end{remark}

\begin{lemma}[\cite{KnoerrSmoothvaluationsconvex2024}*{Lemma 4.8}]
	\label{lemma:massDifferentialCycle}
	For $f\in\Conv(\R^n,\R)$, 
	\begin{align*}
		M_{\pi^{-1}(U_R(0))}(D(f))\le2^n\omega_n \left(\sup_{|x|\le R+1}|f(x)|\right)^n.
	\end{align*}
\end{lemma}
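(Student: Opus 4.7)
My plan is to reduce to smooth convex $f$ by mollification, localize the support of $D(f)$ via the Lipschitz estimate \autoref{proposition_convex_functions_local_lipschitz_constants}, and then bound the mass using the Lagrangian structure of the differential cycle together with the classical Alexandrov estimate on the subgradient image.

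First, I would approximate $f$ by its mollifications $f_\epsilon:=f*\rho_\epsilon$, which lie in $C^\infty(\R^n)\cap\Conv(\R^n,\R)$ and converge locally uniformly to $f$. The uniqueness in \autoref{theorem:characterization_differential_cycle} together with the pointwise a.e.\ convergence of gradients $df_\epsilon \to df$ implies that $D(f_\epsilon)\to D(f)$ weakly as integral currents on $T^*\R^n$; mass is lower semi-continuous under such convergence, and the right-hand side of the inequality passes to the limit since $\sup_{|x|\le R+1}|f_\epsilon(x)|\to \sup_{|x|\le R+1}|f(x)|$. This reduces the claim to the case $f\in C^2(\R^n)\cap\Conv(\R^n,\R)$.

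For smooth convex $f$, I would then apply \autoref{proposition_convex_functions_local_lipschitz_constants} with $X=U_R(0)$ and $\epsilon=1$ to obtain that $f|_{U_R(0)}$ is Lipschitz with constant $L:=2\sup_{|x|\le R+1}|f(x)|$, in particular $|df(x)|\le L$ on $U_R(0)$. \autoref{theorem_FU_support_Differential_cycle} then yields the containment
\begin{align*}
    \supp D(f)\cap\pi^{-1}(U_R(0))\subset U_R(0)\times B_L(0).
\end{align*}
With this localization in place, since $D(f)$ is the Lagrangian graph of $df$, I would estimate the relevant mass by transporting it to the fiber via the gradient map $df:U_R(0)\to B_L(0)$ and invoking the Alexandrov identity $\int_{U_R(0)}\det(D^2 f(x))\,dx = |df(U_R(0))|$ valid for smooth convex functions, so that $|df(U_R(0))|\le |B_L(0)|=\omega_n L^n = 2^n\omega_n(\sup_{|x|\le R+1}|f(x)|)^n$.

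The main obstacle I anticipate lies in this last step, namely carefully controlling the Lagrangian mass $M_{\pi^{-1}(U_R(0))}(D(f))$ by the Monge--Amp\`ere measure. This rests on the Lagrangian structure of $D(f)$: the tangent $n$-vector $\bigwedge_i(\partial_{x_i}+\sum_j f_{ij}\partial_{y_j}^*)$ is constrained by the symmetry of $(f_{ij})$, so that after combining with the vertical support bound $|df|\le L$ from Step 2, only the $\det(D^2f)\,dx$ contribution survives in the mass norm considered here. Once that identification is made, the Alexandrov estimate immediately furnishes the claimed bound, and the mollification step extends it from smooth to general $f\in\Conv(\R^n,\R)$.
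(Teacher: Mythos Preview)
The paper does not prove this lemma; it is quoted from \cite{KnoerrSmoothvaluationsconvex2024}. Your reduction to smooth $f$ by mollification, the Lipschitz bound $|df|\le L:=2\sup_{|x|\le R+1}|f|$ via \autoref{proposition_convex_functions_local_lipschitz_constants}, and the support localization via \autoref{theorem_FU_support_Differential_cycle} are all fine. The gap is in the final step.

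For smooth convex $f$, the current $D(f)$ is integration over the graph of $df$, so its mass over $\pi^{-1}(U_R(0))$ is the $n$-dimensional area
\[
M_{\pi^{-1}(U_R(0))}(D(f))=\int_{U_R(0)}\sqrt{\det\bigl(I+(D^2f(x))^2\bigr)}\,dx=\int_{U_R(0)}\prod_{i=1}^n\sqrt{1+\lambda_i(x)^2}\,dx,
\]
where $\lambda_i(x)\ge 0$ are the eigenvalues of $D^2f(x)$. This is \emph{not} bounded above by $\int_{U_R(0)}\det(D^2f)\,dx=\int\prod_i\lambda_i\,dx$; in fact the inequality goes the other way. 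Your heuristic that ``only the $\det(D^2f)\,dx$ contribution survives'' confuses the vertical support bound $|df|\le L$ (a constraint on the \emph{position} $y$) with a constraint on the \emph{tangent} $n$-vector (which involves $D^2f$), and the Lagrangian condition alone does not collapse the area to the Monge--Amp\`ere density. A quick sanity check: for $f\equiv 0$ the mass is $|U_R(0)|=\omega_nR^n$, while your Alexandrov bound $|df(U_R(0))|$ gives $0$. What does work is the elementary inequality $\sqrt{1+\lambda^2}\le 1+\lambda$ for $\lambda\ge 0$, which yields
\[
M_{\pi^{-1}(U_R(0))}(D(f))\le \int_{U_R(0)}\det\bigl(I+D^2f(x)\bigr)\,dx=\bigl|(\mathrm{id}+\nabla f)(U_R(0))\bigr|\le \omega_n(R+L)^n,
\]
the last step again by Alexandrov applied to $x\mapsto f(x)+\tfrac12|x|^2$. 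This produces a bound of the correct shape; matching the exact constant $2^n\omega_n(\sup|f|)^n$ as stated would require an additional normalization (and indeed the stated inequality is false for $f\equiv 0$, so the constant in the quoted lemma presumably carries an implicit hypothesis or a slightly different expression such as $\max(R,\sup|f|)$).
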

\begin{theorem}[\cite{KnoerrSingularValuationsHadwiger2025}*{Theorem 4.6}]
	\label{theorem:joint_continuity_differential_cycle}
	Let $(\phi_j)_j\subset C(\R^n\times(\R^n)^*)$ be a sequence that converges locally uniformly to $\phi_0\in C(\R^n\times(\R^n)^*)$ and let $(f_j)_j$ be a sequence in $\Conv(\R^n,\R)$ converging to $f_0\in\Conv(\R^n,\R)$. If there exists a compact subset $A\subset {\R^n}$ such that $\supp\phi_j\subset\pi^{-1}(A)$ for all $j\in\mathbb{N}$, then
	\begin{align*}
		\lim\limits_{j\rightarrow\infty}D(f_j)[\phi_j\omega]=D(f_0)[\phi_0\omega]
	\end{align*}
	for any $\omega\in \Omega^n(\R^n\times(\R^n)^*)$.
\end{theorem}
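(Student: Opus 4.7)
The plan is to use the splitting
\begin{align*}
D(f_j)[\phi_j \omega] - D(f_0)[\phi_0 \omega] = D(f_j)[(\phi_j - \phi_0)\omega] + (D(f_j) - D(f_0))[\phi_0 \omega]
\end{align*}
and to estimate each term using the mass bound from \autoref{lemma:massDifferentialCycle} and the support control from \autoref{theorem_FU_support_Differential_cycle}; the mass bound plays the role of a uniform operator-norm estimate for the currents against comass-bounded test forms.

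First I would establish uniform support and mass bounds. Since $(f_j)_j$ converges in $\Conv(\R^n,\R)$, \autoref{proposition_compactness_Conv} gives a uniform sup-bound on a compact neighborhood $A'$ of $A$, and \autoref{proposition_convex_functions_local_lipschitz_constants} then yields a common Lipschitz constant $L$ for every $f_j$ on $A$. By \autoref{theorem_FU_support_Differential_cycle}, $\supp D(f_j) \cap \pi^{-1}(A) \subset A \times B_L(0)$ for every $j\ge 0$, and \autoref{lemma:massDifferentialCycle} provides a uniform bound $M$ on the masses $M_{\pi^{-1}(A')}(D(f_j))$. Multiplying each $\phi_j$ by a cutoff $\chi \in C_c^\infty((\R^n)^*)$ with $\chi \equiv 1$ on $B_L(0)$ does not change the pairings, so I may assume that all the forms $\phi_j\omega$ have support in the fixed compact set $K := A \times \overline{B_{L+1}(0)}$.

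For the first term, the locally uniform convergence of $\phi_j$ to $\phi_0$ gives $\|\phi_j - \phi_0\|_{\infty, K} \to 0$, and a standard comass estimate yields
\begin{align*}
|D(f_j)[(\phi_j - \phi_0)\omega]| \le C\, M\, \|\phi_j - \phi_0\|_{\infty, K}\, \|\omega\|_{\infty, K} \longrightarrow 0.
\end{align*}

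The main obstacle is the second term, which requires separate continuity $D(f_j) \to D(f_0)$ against the fixed, merely continuous test form $\phi_0\omega$. I would handle this by smooth approximation combined with a compactness argument: for $\varepsilon > 0$, choose $\psi^{(\varepsilon)} \in C_c^\infty(T^*\R^n)$ supported in a neighborhood of $K$ with $\|\phi_0 - \psi^{(\varepsilon)}\|_{\infty, K} < \varepsilon$. For smooth test forms, convergence $D(f_j)[\psi^{(\varepsilon)}\omega] \to D(f_0)[\psi^{(\varepsilon)}\omega]$ follows from a Federer--Fleming compactness argument: any weak subsequential limit $T$ of $(D(f_j))_j$ is closed, Lagrangian, and locally vertically bounded, and satisfies the defining property of \autoref{theorem:characterization_differential_cycle} because $df_j \to df_0$ almost everywhere (by convexity and pointwise convergence) together with dominated convergence via the uniform Lipschitz bound. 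Uniqueness in \autoref{theorem:characterization_differential_cycle} then forces $T = D(f_0)$, so the full sequence converges on smooth forms. The tail $|(D(f_j) - D(f_0))[(\phi_0 - \psi^{(\varepsilon)})\omega]| \le 2CM\varepsilon \|\omega\|_{\infty, K}$ is uniformly controlled by the mass bound, and a standard $\varepsilon/3$ argument closes the proof.
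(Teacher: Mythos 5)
The paper does not actually prove this statement: it is imported verbatim as a citation (Theorem 4.6 of the referenced work on singular valuations), so there is no in-paper proof to compare against line by line. Judged on its own merits, your argument is correct and follows the route one would expect. The splitting into $D(f_j)[(\phi_j-\phi_0)\omega]$ plus $(D(f_j)-D(f_0))[\phi_0\omega]$, the uniform vertical support control via \autoref{proposition_compactness_Conv}, \autoref{proposition_convex_functions_local_lipschitz_constants} and \autoref{theorem_FU_support_Differential_cycle}, the uniform local mass bound from \autoref{lemma:massDifferentialCycle}, and the smooth approximation of $\phi_0$ with the mass bound absorbing the error are exactly the standard ingredients; note also that $\supp\phi_0\subset\pi^{-1}(A)$ follows automatically from locally uniform convergence, which you use implicitly when cutting off. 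The one genuinely substantive step is the weak convergence $D(f_j)\to D(f_0)$ on smooth compactly supported forms. You obtain it from Federer--Fleming compactness (legitimate here: the cycles are closed, so boundary masses vanish, and the local masses are uniformly bounded) followed by identification of every subsequential limit via the uniqueness part of \autoref{theorem:characterization_differential_cycle}, using a.e.\ convergence of gradients of convex functions and dominated convergence to verify property (4), and the uniform vertical bound to verify local vertical boundedness; the subsequence principle then upgrades this to convergence of the full sequence. The cited source instead invokes previously established continuity of $f\mapsto D(f)$ in the local flat topology, so your reconstruction is somewhat more self-contained at that point, at the price of appealing to the compactness theorem for integral currents; both are sound, and the remaining steps coincide. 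Two small points worth tightening in a written version: choose the vertical cutoff $\chi\equiv 1$ on a ball strictly larger than the uniform Lipschitz radius so that the modified and original pairings agree via the mass-measure representation of the currents, and make sure the smooth approximant $\psi^{(\varepsilon)}$ is supported in a fixed compact neighborhood of $K$ so that the uniform mass bound applies to the error term for all $j$ and for $f_0$.
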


In our case, we will need to consider differential forms that also depend on the values of the given functions.
\begin{theorem}
	\label{theorem:LocalFunctionalFromDifferentialCycle}
	For every $\tau\in C(\R\times \R^n\times(\R^n)^*,\Lambda^n(\R^n\times (\R^n)^*)^*)$ the following defines a continuous local functional $\Phi_\tau:\Conv(\R^n,\R)\rightarrow \M(\R^n)$:
	\begin{align*}
		\Phi_\tau(f;B):=D(f)[1_{\pi^{-1}(B)}(x,y)\tau(f(x),x,y)],
	\end{align*}
	$f\in\Conv(\R^n,\R)$, $B\subset\R^n$ bounded Borel set.
\end{theorem}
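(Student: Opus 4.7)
\emph{Plan.} My plan is to split the proof into three parts: well-definedness as a Radon measure, the locality property, and continuity on $\Conv(\R^n,\R)$. Throughout, the identification
\[
\Phi_\tau(f;\phi) = D(f)\bigl[\pi^*\phi(x,y)\,\tau(f(x),x,y)\bigr] \qquad (\phi \in C_c(\R^n))
\]
will be used to realize $\Phi_\tau(f)$ as a linear functional on $C_c(\R^n)$; the value on a bounded Borel set $B$ is then recovered from these values by dominated convergence against the mass measure of $D(f)$.

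\emph{Well-definedness.} For fixed $f$, the form $\pi^*\phi \cdot \tau(f(\cdot),\cdot,\cdot)$ is continuous with support in $\pi^{-1}(\supp\phi)$. By \autoref{lemma:massDifferentialCycle} the mass $M_{\pi^{-1}(\supp\phi)}(D(f))$ is finite, and by \autoref{theorem_FU_support_Differential_cycle} the set $\supp D(f) \cap \pi^{-1}(\supp\phi)$ is compact, so the continuous form is bounded on this support. Hence the pairing with $D(f)$ (extended to bounded Borel forms via its mass measure) makes sense, is linear and continuous in $\phi \in C_A(\R^n)$ for each compact $A \subset \R^n$, and therefore defines a Radon measure whose value on bounded Borel $B$ coincides with $D(f)[1_{\pi^{-1}(B)}\tau(f(\cdot),\cdot,\cdot)]$.

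\emph{Locality.} If $f \equiv h$ on an open $U \subset \R^n$, then \autoref{remark:DiffCycleLocal} gives $D(f)|_{\pi^{-1}(U)} = D(h)|_{\pi^{-1}(U)}$. For any $\phi \in C_c(U)$ the form $\pi^*\phi \cdot \tau(f(\cdot),\cdot,\cdot)$ is supported in $\pi^{-1}(U)$ and coincides there with $\pi^*\phi \cdot \tau(h(\cdot),\cdot,\cdot)$ because $\tau(f(x),x,y) = \tau(h(x),x,y)$ for $x \in U$. Consequently $\Phi_\tau(f;\phi) = \Phi_\tau(h;\phi)$, which is the required locality.

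\emph{Continuity and the main obstacle.} Given $f_j \to f_0$ in $\Conv(\R^n,\R)$ and $\phi \in C_c(\R^n)$, I fix a basis $\omega_1,\dots,\omega_N$ of the finite-dimensional complex vector space $\Lambda^n(\R^n \times (\R^n)^*)^*$ and expand $\tau = \sum_{i=1}^N \tau_i\,\omega_i$ with scalar $\tau_i \in C(\R \times \R^n \times (\R^n)^*)$, so that
\[
\Phi_\tau(f_j;\phi) = \sum_{i=1}^N D(f_j)\bigl[\phi(x)\tau_i(f_j(x),x,y)\,\omega_i\bigr].
\]
The main technical step is to show that each coefficient $(x,y) \mapsto \phi(x)\tau_i(f_j(x),x,y)$ converges locally uniformly on $\R^n \times (\R^n)^*$ to $\phi(x)\tau_i(f_0(x),x,y)$, so that \autoref{theorem:joint_continuity_differential_cycle} applies. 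Since $\{f_j\}_{j \ge 0}$ is relatively compact in $\Conv(\R^n,\R)$, \autoref{proposition_compactness_Conv} implies that for every compact $K \subset \R^n \times (\R^n)^*$ the set $\{(f_j(x),x,y) : j \ge 0, (x,y) \in K\}$ is relatively compact in $\R \times \R^n \times (\R^n)^*$; uniform continuity of $\tau_i$ on the closure of this set, combined with the locally uniform convergence $f_j \to f_0$, yields the required convergence of coefficients. Applying \autoref{theorem:joint_continuity_differential_cycle} to each summand (all supported in the fixed set $\pi^{-1}(\supp\phi)$) and summing gives $\Phi_\tau(f_j;\phi) \to \Phi_\tau(f_0;\phi)$, i.e. weak* continuity of $\Phi_\tau$. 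The combination of the joint continuity theorem with the coefficient dependence on $f$ itself is the step requiring real care; everything else is a bookkeeping exercise built on the two cited structural properties of $D(f)$.
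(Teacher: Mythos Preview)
Your proof is correct and follows essentially the same approach as the paper: locality via \autoref{remark:DiffCycleLocal}, and weak* continuity by showing that the forms $\tau(f_j(\cdot),\cdot,\cdot)$ converge locally uniformly and then invoking \autoref{theorem:joint_continuity_differential_cycle}. Your basis decomposition $\tau=\sum_i\tau_i\,\omega_i$ into scalar coefficients is exactly what is needed to match the scalar-coefficient hypothesis of \autoref{theorem:joint_continuity_differential_cycle}, and the paper does this only implicitly; likewise, your explicit appeal to \autoref{proposition_compactness_Conv} for the local uniform convergence and your separate well-definedness paragraph are details the paper leaves to the reader.
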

\begin{proof}
	It follows from \autoref{remark:DiffCycleLocal} that $\Phi_\tau$ is locally determined, so we only need to check that this map is continuous with respect to weak* convergence. Let $(f_j)_j$ be a sequence in $\Conv(\R^n,\R)$ converging to $f\in \Conv(\R^n,\R)$. Set $\tau_0(x,y)=\tau(f(x),x,y)$, $\tau_j(x,y)=\tau(f_j(x),x,y)$. Then $(\tau_j)_j$ converges locally uniformly to $\tau_0$ on $\R^n\times(\R^n)^*$. Given $\phi\in C_c(\R^n)$, we thus obtain
	\begin{align*}
		\Phi(f_j;\phi)=D(f_j)[\pi^*\phi \tau_j],
	\end{align*}
	where $\supp\phi\tau_j\subset \pi^{-1}\supp\phi$ for every $j\in\mathbb{N}$. \autoref{theorem:joint_continuity_differential_cycle} implies 
	\begin{align*}
		\lim\limits_{j\rightarrow\infty}\Phi(f_j;\phi)=D(f)[\pi^*\phi \tau_0]=\Phi(f;\phi).
	\end{align*}
	Thus $\Phi$ is continuous with respect to the weak* topology.
\end{proof}
The following provides a description of the kernel of this procedure.
\begin{theorem}
	\label{theorem:KernelDifferentialCycle}
	Let $\tau\in C(\R\times \R^n\times(\R^n)^*,\Lambda^n(\R^n\times (\R^n)^*)^*)$. Then $\Phi_\tau=0$ if and only if $\tau$ is a multiple of the symplectic form.
\end{theorem}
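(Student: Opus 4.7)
The ``if'' direction follows directly from the Lagrangian property: if $\tau(t,x,y) = \omega_s \wedge \sigma(t,x,y)$ for some $\sigma \in C(\R\times\R^n\times(\R^n)^*, \Lambda^{n-2}(\R^n\times(\R^n)^*)^*)$, then since $D(f) \llcorner \omega_s = 0$ by \autoref{theorem:characterization_differential_cycle}, we obtain $\Phi_\tau(f;B) = D(f)[1_{\pi^{-1}(B)}\omega_s\wedge\sigma(f(\cdot),\cdot,\cdot)] = 0$. The Lagrangian vanishing extends from smooth compactly supported test forms to this setting of bounded Borel coefficients by smooth approximation of $1_{\pi^{-1}(B)}$, using the locally finite mass estimate of \autoref{lemma:massDifferentialCycle}.

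For the converse, my plan is to reduce to a pointwise algebraic condition on $\tau$ and then invoke a fact from symplectic linear algebra. Restricting to $f \in \Conv(\R^n,\R) \cap C^2(\R^n)$, the differential cycle equals integration over the graph of $df$, so for every $\phi \in C_c(\R^n)$,
\begin{align*}
\Phi_\tau(f;\phi) = \int_{\R^n} \phi(x)\, G_\tau\!\bigl(f(x), x, df(x); D^2 f(x)\bigr)\, dx,
\end{align*}
where $G_\tau(t,x,y;A) := \tau(t,x,y)[(e_1, Ae_1),\dots,(e_n, Ae_n)]$ is the evaluation of $\tau(t,x,y)$ on the canonical frame of the Lagrangian subspace $L_A := \{(v, Av): v \in \R^n\} \subset \R^n \oplus (\R^n)^*$ attached to a symmetric matrix $A$ (identifying $(\R^n)^*\cong\R^n$ via the standard basis). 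Assuming $\Phi_\tau = 0$, arbitrariness of $\phi$ forces $G_\tau(f(x),x,df(x);D^2f(x)) \equiv 0$. For any prescribed $(t_0, x_0, y_0)$ and any positive definite symmetric $A$, the convex quadratic $f(x) := t_0 + \langle y_0, x-x_0\rangle + \tfrac12\langle A(x-x_0), x-x_0\rangle$ lies in $\Conv(\R^n,\R)\cap C^2(\R^n)$ and realizes $(t_0, y_0, A) = (f(x_0), df(x_0), D^2f(x_0))$, so $G_\tau(t_0, x_0, y_0; A) = 0$. Since $A \mapsto G_\tau(t_0, x_0, y_0; A)$ is polynomial in the entries of $A$, vanishing on the open cone of positive definite symmetric matrices forces vanishing on all symmetric matrices.

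The main obstacle is then the symplectic linear algebra statement on $V := \R^n \oplus (\R^n)^*$: an $n$-form $\alpha \in \Lambda^n V^*$ with $\alpha|_{L_A} = 0$ for every symmetric $A$ must lie in $\omega_s \wedge \Lambda^{n-2}V^*$. I would prove this via the Lefschetz decomposition $\Lambda^n V^* = P^n \oplus \omega_s \wedge \Lambda^{n-2}V^*$ (the map $\omega_s \wedge -: \Lambda^{n-2}V^* \to \Lambda^n V^*$ is injective by hard Lefschetz on the symplectic vector space $V$), by showing that the pullback map $P^n \to \Poly(\Sym^2(\R^n))$, $\alpha \mapsto (A \mapsto \alpha|_{L_A})$, is injective---either by explicit expansion in the $\{dx_I\wedge dy_J\}$ basis and matching the monomials in the entries of $A$ contributed by each basis element, or invariant-theoretically, using $\mathrm{Sp}(2n,\R)$-equivariance of the pullback and the irreducibility of $P^n$. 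Applied pointwise, this yields $\tau(t,x,y) \in \omega_s \wedge \Lambda^{n-2}V^*$ for every $(t,x,y)$. Finally, since $\omega_s \wedge -$ is a fixed injective linear operator, its image is a linear subspace admitting a continuous linear left-inverse, and composing $\tau$ with this left-inverse produces a continuous $\sigma$ satisfying $\tau = \omega_s \wedge \sigma$.
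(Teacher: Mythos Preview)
Your argument is correct and follows essentially the same route as the paper: both evaluate $\Phi_\tau$ on convex quadratics to deduce that $\tau(t_0,x_0,y_0)$ vanishes on the Lagrangian graph subspaces $L_A$, and then conclude via a symplectic linear-algebra lemma that $\tau$ lies in the ideal generated by $\omega_s$. The only difference is that the paper outsources this last step to \cite{AbardiaEvequozEtAlFlagareameasures2019}*{Lemma~2.4}, whereas you sketch it yourself via the Lefschetz decomposition; your explicit basis-expansion route works, though note that your alternative $\mathrm{Sp}(2n,\R)$-equivariance suggestion is imprecise as stated, since the family $\{L_A:A\in\Sym^2(\R^n)\}$ is only $\GL(n,\R)$-stable.
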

\begin{proof}
	If $\tau$ is a multiple of the symplectic form, then the same holds for differential form $\tau_f$ for every $f\in\Conv(\R^n,\R)$, and thus $D(f)[\pi^{*}\phi\tau_f]=0$ since $D(f)$ vanishes on multiples of the symplectic form.\\
	Now assume that $\Phi_\tau=0$. For simplicity we identify $(\R^n)^*\cong\R^n$ using the standard inner product. Obviously, it is sufficient to show that the differential form $\tau_{t_0}$ is a multiple of the symplectic form for every $t_0\in \R$. Fix $t_0\in \R$ and $(x_0,y_0)\in\R^n\times\R^n$. For an orthonormal basis $\mathcal{B}=(u_1,\dots,u_n)$ of $\R^n$ and $\lambda=(\lambda_1,\dots,\lambda_n)$, $\lambda_j>0$, we consider the convex function 
	\begin{align*}
		f_{\mathcal{B},\lambda}(x):= t_0+\langle x-x_0,y_0\rangle+\sum_{j=1}^n \frac{\lambda_j}{2} \langle x-x_0,u_j\rangle^2.
	\end{align*}
	Then $f_{\mathcal{B},\lambda}(x_0)=t_0$ and $df_{\mathcal{B},\lambda}(x_0)=y_0$, and the tangent space to the graph of $df_{\mathcal{B},\lambda}$ in $(x_0,y_0)$ is spanned by $(u_i,\lambda_i u_i)$. Since $\Phi_\tau=0$, we obtain for any $\phi\in C_c(\R^n)$,
	\begin{align*}
		0=\Phi_\tau(f_{\mathcal{B},\lambda};\phi)=D(f_{\mathcal{B},\lambda})[\pi^*\phi \tau_{f_{\mathcal{B},\lambda}}]=\int_{\R^n} \phi(x)F_{f_{\mathcal{B},\lambda}}^*\tau_{f_{\mathcal{B},\lambda}},
	\end{align*}
	where $F_{f_{\mathcal{B},\lambda}}:\R^n\rightarrow \R^n\times \R^n$, $x\mapsto (x,df_{\mathcal{B},\lambda}(x))$ is the graph map. As this holds for all $\phi\in C_c(\R^n)$, we see that the restriction of $\tau_{f_{\mathcal{B},\lambda}}$ to the tangent spaces of $\mathrm{graph}(df_{\mathcal{B},\lambda})$ vanishes. In particular, the restriction of $\tau_{t_0}$ to the subspace of $T_{(x_0,y_0)}(\R^n\times\R^n)$ spanned by $(u_i,\lambda u_i)$ vanishes. Since this holds for all choices of $\lambda_i>0$ and orthonormal bases $(u_1,\dots,u_n)$, it now follows from \cite{AbardiaEvequozEtAlFlagareameasures2019}*{Lemma 2.4} that $\tau_{t_0}$ is a multiple of the symplectic form.
\end{proof}
The following example will be used in the next section.
\begin{example}\label{example:MA}
	For $\phi\in C(\R\times\R^n\times(\R^n)^*)$, the differential form $\tau$ given by
	\begin{align*}
		\tau|_{(t,x,y)}:=\phi(t,x,y)dy_1\wedge\dots\wedge dy_n
	\end{align*}
	satisfies
	\begin{align*}
		\Phi_\tau(f;B)=\int_{B}\phi(f(x),x,df(x))\det(D^2f(x))dx
	\end{align*}
	for $f\in \Conv(\R^n,\R)\cap C^2(\R^n)$, $B\subset\R^n$ bounded Borel set. In particular, this local functional may be expressed in terms of the real Monge--Amp\`ere operator $\MA$ (compare the discussion after Eq.~\eqref{eq:definitionEabc})  if $\phi$ does not depend on the third variable. 
\end{example}

\subsection{The homogeneous decomposition for polynomial local functionals}
Recall that $\P_d\LV_k(\R^n)\subset \P_d\LV(\R^n)$ denotes the subspace of all $k$-homogeneous local functionals, i.e. the subspace of all $\Psi\in \P_d\LV(\R^n)$ such that $\Psi(tf)=t^k\Psi(f)$ for all $t\ge 0$, $f\in\Conv(\R^n,\R)$.
\begin{proof}[Proof of \autoref{maintheorem:HomDecompPdLV}]
	By \autoref{maintheorem:LocalFunctionalValuation}, every $\Psi\in \P_d\LV(\R^n)$ is a valuation. It thus follows from  \autoref{theorem:homogeneousDecompositionPVConv} that for every $f\in\Conv(\R^n,\R)$, the map $t\mapsto \Psi(tf)$ is a polynomial in $t\ge0$ of degree at most $n+d$, i.e. there exist unique functionals $Z_k:\Conv(\R^n,\R)\rightarrow \M(\R^n)$ that are homogeneous of degree $0\le k\le n+d$ such that for all $f\in\Conv(\R^n,\R)$ and $t\ge 0$,
	\begin{align}
		\label{equation:proofHomogeneousDecomp}
		\Psi(tf)=\sum_{k=0}^{n+d}t^kZ_k(f).
	\end{align}
	Plugging in $t=0,\dots, n+d$ and using the inverse of the Vandermonde matrix, we obtain constants $c_{kj}\in\R$ independent of $f\in\Conv(\R^n,\R)$ such that for every $f\in\Conv(\R^n,\R)$,
	\begin{align*}
		Z_k(f)=\sum_{j=0}^{n+d} c_{kj}\Psi(jf).
	\end{align*}
	Since $f\mapsto \Psi(jf)$ is a continuous local functional and polynomial of degree at most $d$ for every $0\le j\le n+d$, so is $Z_k$, i.e. $Z_k\in \P_d\LV_k(\R^n)$.  This finishes the proof.
\end{proof}

We will obtain \autoref{maintheorem:topDegree} from the following characterization.
\begin{theorem}[\cite{KnoerrMongeAmpereoperators2024}*{Theorem~4.8}]
	\label{theorem:characterizationTopDegree_InvariantCase}
	For every $\Psi\in \P_0\LV_{n}(\R^n)$ there exists a unique function $\psi\in C(\R^n)$ such that
	\begin{align*}
		\Psi(f;B)=\int_{B}\psi(x) d\MA(f;x)
	\end{align*}
	for all $f\in \Conv(\R^n,\R)$, $B\subset\R^n$ bounded Borel set. 	
\end{theorem}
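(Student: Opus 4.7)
The plan is to translate the three hypotheses on $\Psi$ into strong structural constraints on its Goodey--Weil distribution on $(\R^n)^n$, identify the resulting operator with the Monge--Amp\`ere measure times a continuous function, and extend the representation from smooth convex functions to all of $\Conv(\R^n,\R)$ by continuity.

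By \autoref{maintheorem:LocalFunctionalValuation}, $\Psi$ is a continuous $n$-homogeneous measure-valued valuation, so \autoref{theorem:GW} provides its polarization $\bar\Psi$ together with a compactly supported $\M(\R^n)$-valued distribution $\GW(\Psi)$ on $(\R^n)^n$, whose action on elementary tensors is given by \eqref{equation:formulaGWElementaryTensor}. Two structural conditions constrain $\GW(\Psi)$. First, locality of $\Psi$ forces $\supp \GW(\Psi) \subset \Delta_n(\R^n)$: if $\phi_1,\ldots,\phi_n \in C_c^\infty(\R^n)$ have pairwise disjoint supports, then choosing a common large paraboloid $f$ so that each $f+\phi_j$ is convex, the alternating sum in \eqref{equation:formulaGWElementaryTensor} collapses, since for distinct subsets $S$ the expressions $\bar\Psi(h_i : i\in S, f : i\notin S)$ are related by functions coinciding outside a single $\supp\phi_k$ and $\Psi$ is locally determined. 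Second, the affine invariance $\Psi(f+\ell)=\Psi(f)$ together with additivity of $\bar\Psi$ in each slot forces $\GW(\Psi)$ to vanish on elementary tensors one of whose factors is affine: for an affine $\ell$, $\bar\Psi(\ell, f_2,\ldots,f_n) = 0$ follows by differentiating $\Psi(\lambda_0 \ell + \sum_j \lambda_j f_j) = \Psi(\sum_j \lambda_j f_j)$ in $\lambda_0$. The continuity estimate \autoref{lemma:EstimateGW} further restricts $\GW(\Psi)$ to an operator of order at most two in each slot.

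The main obstacle is the algebraic step of identifying this operator. Evaluating on quadratic test functions $\phi_j(x) = \tfrac{1}{2}\langle A_j(x-x_0), x-x_0\rangle$ recasts the germ of $\GW(\Psi)$ at a diagonal point $(x_0,\ldots,x_0)$ as a symmetric multilinear $\M(\R^n)$-valued expression in $(A_1,\ldots,A_n)\in(\Sym^2\R^n)^n$ of multidegree $(1,\ldots,1)$. The crucial additional constraint that $\Psi$ extends continuously to all of $\Conv(\R^n,\R)$ (and not merely to smooth convex functions, where many symmetric homogeneous polynomials in the Hessians would be admissible) then singles out the mixed discriminant up to a continuous scalar factor $\psi \in C(\R^n)$, yielding
\begin{align*}
	\bar\Psi(f_1,\ldots,f_n;B) = \int_B \psi(x)\, \det(D^2 f_1(x),\ldots,D^2 f_n(x))\, dx
\end{align*}
on smooth convex arguments, and setting $f_1=\cdots=f_n=f$ gives $\Psi(f;B) = \int_B \psi(x)\, d\MA(f;x)$ on $\Conv(\R^n,\R)\cap C^2(\R^n)$.

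Finally, the extension to all of $\Conv(\R^n,\R)$ follows from the continuity of $\Psi$, the joint weak continuity of the Monge--Amp\`ere measure (\autoref{theorem:joint_continuity_differential_cycle} applied through \autoref{example:MA}), and density of smooth convex functions; uniqueness of $\psi$ is obtained by testing $\Psi$ against paraboloids concentrated at a given point $x_0$, on which the Monge--Amp\`ere measure is a positive multiple of Lebesgue measure and thus reads off $\psi(x_0)$ directly.
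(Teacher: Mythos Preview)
The paper does not prove this theorem; it is quoted from \cite{KnoerrMongeAmpereoperators2024}*{Theorem~4.8} and used as a black box in the proof of \autoref{maintheorem:topDegree}. There is therefore no in-paper argument to compare your sketch against.

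On its own merits, your outline has the right architecture (Goodey--Weil distribution, diagonal support from locality, vanishing on affine entries from the $d=0$ hypothesis, order bound from \autoref{lemma:EstimateGW}), but there is a genuine gap at precisely the step you flag as ``the main obstacle.'' You assert that continuity on all of $\Conv(\R^n,\R)$ ``singles out the mixed discriminant up to a continuous scalar factor,'' yet you give no mechanism for this. The space of symmetric $n$-linear forms on $\Sym^2(\R^n)$ is large: for instance $(A_1,\ldots,A_n)\mapsto\prod_j\mathrm{tr}(A_j)$, or products of lower minors, are symmetric multilinear of the correct degree but do \emph{not} extend to continuous functionals on general convex functions. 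Eliminating all of these is exactly the content of the cited result (or equivalently of the $k=n$ case of \autoref{theorem:ClassificationMA}), and in \cite{KnoerrMongeAmpereoperators2024} it is achieved via the differential cycle and the structure of primitive forms, not by a direct analysis of the polarization. Your sketch assumes the conclusion at the decisive step.

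A secondary looseness: the passage from ``$\widehat{\GW}(\Psi)$ is supported on the diagonal and has order two in each slot'' to ``at each $x_0$ we get a multilinear function of the Hessians $A_j$'' requires the structure theorem for distributions supported on a submanifold together with the order bound, and in particular one must check that no transverse derivatives along the diagonal survive. You do not carry this out.
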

\begin{proof}[Proof of \autoref{maintheorem:topDegree}]	
	\autoref{proposition:translativeDecompCompatibleHomDecomp} shows that there exist unique  functionals $Y_j\in \P_{d-j}\VConv_{n+d-j}(\R^n,\M(\R^n))\otimes\Sym^j(\A(n,\R)^*)_\C$ for $0\le j\le d$ such that
	\begin{align*}
		\Psi(f+\ell)=\sum_{j=0}^dY_j(f)[\ell], \quad f\in\Conv(\R^n,\R), \ell\in \A(n,\R).
	\end{align*}
	In particular, $Y_d\in \P_0\VConv_{n}(\R^n,\M(\R^n))\otimes \Sym^d(A(n,\R)^*)_\C$. Since 
	\begin{align*}
		\Psi(f+t\ell)=\sum_{j=0}^dt^jY_j(f)[\ell],
	\end{align*}
	we may use the inverse of the Vandermonde matrix to obtain $c_{ij}\in\R$ independent of $f$, $\ell$, and $\Psi$ such that
	\begin{align*}
		Y_i(f)[\ell]=\sum_{j=0}^{n+d}c_{ij}\Psi(f+j\ell).
	\end{align*}
	In particular, $Y_j$ is locally determined. Thus $Y_d\in \P_0\VConv_{n}(\R^n,\M(\R^n))\otimes \Sym^d(A(n,\R)^*)_\C$ is locally determined, and by applying \autoref{theorem:characterizationTopDegree_InvariantCase} to the different components, we obtain a continuous function $\psi\in C(\R^n)\otimes\Sym^d(A(n,\R)^*)_\C$ such that
	\begin{align*}
		Y_d(f;B)[\ell]=\int_{B} \psi(x)[\ell] d\MA(f;x)
	\end{align*}
	for every bounded Borel set $B\subset\R^n$, $f\in\Conv(\R^n,\R)$, $\ell\in \A(n,\R)$.	Consider the differential form $\tau\in C(\R\times \R^n\times(\R^n)^*,\Lambda^n(\R^n\times (\R^n)^*)^*)$ given by
	\begin{align*}
		\tau|_{(t,x,y)}=\psi(x)[t,y]dy_1\wedge\dots \wedge dy_n.
	\end{align*}
	Then $\Phi_\tau\in \P_d\LV_{n+d}(\R^n)$, compare \autoref{theorem:LocalFunctionalFromDifferentialCycle}, and using \autoref{example:MA} it is easy to see that $\Psi-\Phi_\tau\in \P_d\LV_{n+d}(\R^n)$ is a polynomial valuation of degree at most $d-1$ and homogeneous of degree $n+d$. Since the corresponding component in the homogeneous decomposition of $\P_{d-1}\LV(\R^n)$ vanishes by \autoref{maintheorem:HomDecompPdLV}, this implies $\Psi=\Phi_\tau$. This completes the proof.
\end{proof}

\subsection{Support and topology}
\label{section:supportTopology}
Since $\P_d\LV_k(\R^n)\subset \P_d\VConv_k(\R^n,\M(\R^n))$ by \autoref{maintheorem:LocalFunctionalValuation}, we may consider the Goodey--Weil distributions from \autoref{theorem:GW} associated to any such valuation. However, it will be more convenient to consider the following modification of this construction.
\begin{corollary}
	Let $1\le k\le n+d$. For every $\Psi\in \P_d\LV_k(\R^n)$ there exists a unique distribution $\widehat{\GW}(\Psi)$ on $(\R^n)^{k+1}$ such that
	\begin{align*}
		\widehat{\GW}(\Psi)[\phi_1\otimes\dots\otimes \phi_{k+1}]=\left(\GW(\Psi)[\phi_1\otimes\dots\otimes \phi_{k}]\right)[\phi_{k+1}].
	\end{align*}
	Moreover, there exists a constant $C>0$ and a compact subset $K\subset\Conv(\R^n,\R)$ independent of $\Psi$ such that for any compact subset $A\subset\R^n$ with $\supp\phi_{k+1} \subset A$,
	\begin{align}
		\label{eq:EstimateGW}
		|\widehat{\GW}(\Psi)[\phi_1\otimes\dots\otimes \phi_{k+1}]|\le C \|\Psi\|_{A;K}\prod_{j=1}^k\|\phi_j\|_{C^2(\R^n)} \|\phi_{k+1}\|_\infty.
	\end{align}
\end{corollary}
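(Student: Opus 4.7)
The plan is to define $\widehat{\GW}(\Psi)$ first on elementary tensors via the prescribed formula, establish the announced continuity estimate directly from \autoref{lemma:EstimateGW}, and then extend to all test functions on $(\R^n)^{k+1}$ via the Schwartz kernel theorem.

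First I would set, for $\phi_1,\dots,\phi_{k+1}\in C_c^\infty(\R^n)$,
\begin{align*}
B(\phi_1,\dots,\phi_{k+1}):=\GW(\Psi)[\phi_1\otimes\dots\otimes\phi_k][\phi_{k+1}].
\end{align*}
This is well defined because \autoref{remark:distributionsClosure} guarantees that $\GW(\Psi)[\phi_1\otimes\dots\otimes\phi_k]$ actually belongs to $\M(\R^n)$ (and not merely to its completion) and can therefore be paired with the compactly supported continuous function $\phi_{k+1}$ through the duality $\M(\R^n)\cong C_c(\R^n)'$. Multilinearity and the defining identity for $\widehat{\GW}(\Psi)$ are built into this formula.

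Next I would prove the announced bound by applying \autoref{lemma:EstimateGW} to $F=\M(\R^n)$ equipped with the (possibly discontinuous) semi-norm $\nu\mapsto|\nu|_{C_A(\R^n)'}$, where $A\subset\R^n$ is any compact set containing $\supp\phi_{k+1}$. This produces a constant $C_k>0$ and a compact $K\subset\Conv(\R^n,\R)$ independent of $\Psi$ such that
\begin{align*}
|\GW(\Psi)[\phi_1\otimes\dots\otimes\phi_k]|_{C_A(\R^n)'}\le C_k\,\|\Psi\|_{A;K}\prod_{j=1}^k\|\phi_j\|_{C^2(\R^n)},
\end{align*}
after identifying the induced semi-norm on $\P_d\VConv_k(\R^n,\M(\R^n))$ with $\|\cdot\|_{A;K}$ from Eq.~\eqref{eq:defSeminormsBoundedFunctionals}. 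Pairing with $\phi_{k+1}$ and using the trivial bound $|\nu(\phi_{k+1})|\le|\nu|_{C_A(\R^n)'}\|\phi_{k+1}\|_\infty$ then yields Eq.~\eqref{eq:EstimateGW}.

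To conclude, the estimate shows that $B$ is a separately (indeed, jointly) continuous multilinear form on $C_c^\infty(\R^n)^{k+1}$ in its natural inductive limit topology, so the Schwartz kernel theorem extends it uniquely to a distribution $\widehat{\GW}(\Psi)$ on $(\R^n)^{k+1}$ that agrees with $B$ on elementary tensors; uniqueness of the extension is automatic from the density of tensor products in $C_c^\infty((\R^n)^{k+1})$. The main obstacle I anticipate is the middle step: one must check carefully that \autoref{lemma:EstimateGW} may be applied with the operator-norm semi-norm on $C_A(\R^n)'$, which is \emph{not} continuous on $\M(\R^n)$ in its weak* topology. The lemma permits arbitrary semi-norms on $F$, but this is only meaningful because \autoref{remark:distributionsClosure} ensures that the relevant Goodey--Weil values land inside $\M(\R^n)$ itself rather than in a strictly larger completion.
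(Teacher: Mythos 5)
Your proposal is correct and follows essentially the same route as the paper: apply \autoref{lemma:EstimateGW} with the operator-norm semi-norm $|\cdot|_{C_A(\R^n)'}$ on $\M(\R^n)$ (which the lemma allows, being stated for arbitrary semi-norms), pair with $\phi_{k+1}$ to get the estimate, and invoke the Schwartz kernel theorem for existence and uniqueness. Your additional remarks on well-definedness via \autoref{remark:distributionsClosure} and on the discontinuity of the semi-norm are sound refinements of the same argument, not a different approach.
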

\begin{proof}
	Note that $|\cdot|_{C_A(\R ^n)'}$ is a semi-norm on $\M(\R^n)$. From \autoref{lemma:EstimateGW} we obtain a compact set $K\subset\Conv(\R^n,\R)$ and a constant $C>0$ such that
	\begin{align*}
		|(\GW(\Psi)[\phi_1\otimes\dots\otimes \phi_k])[\phi_{k+1}]|\le&|\GW(\Psi)[\phi_1\otimes\dots\otimes \phi_k]|_{C_A(\R^n)'}\|\phi_{k+1}\|_\infty\\
		\le& C \|\Psi\|_{A;K}\prod_{j=1}^k\|\phi_j\|_{C^2(\R^n)}\|\phi_{k+1}\|_\infty.
	\end{align*}
	The claim thus follows from the Schwartz Kernel Theorem, compare \cite{GaskproofSchwartzskernel1961}.
\end{proof}
\begin{remark}\label{remark:ExtendedGWZeroHom}
	For $k=0$ and $\Psi\in \P_d\LV_0(\R^n)$, we define 
	\begin{align*}
		\widehat{\GW}(\Psi):=\Psi(0)\in\M(\R^n).
	\end{align*}
\end{remark}

In the rest of this section, we will examine the support of these distributions in order to obtain a simpler description of the compact-to-bounded topology on $\P_d\LV(\R^n)$. 
\begin{lemma}
	\label{lemma:ConsequenceLocalitySupport}
	Let $\Psi\in \P_d\LV(\R^n)$, $\phi\in C_c(\R^n)$. Then the valuation $\Psi[\phi]\in \P_d\VConv(\R^n,\C)$ satisfies
	\begin{align*}
		\supp\Psi[\phi]\subset \supp\Psi\cap \supp\phi.
	\end{align*} 
\end{lemma}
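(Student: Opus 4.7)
The plan is to invoke the intrinsic characterization of support given in \autoref{proposition:characterizationSupport}, which identifies $\supp\Psi[\phi]$ as the unique smallest closed set $A\subset\R^n$ with the property that every pair $f,h\in\Conv(\R^n,\R)$ agreeing on a neighborhood of $A$ satisfies $\Psi[\phi](f)=\Psi[\phi](h)$. It therefore suffices to verify separately that both $\supp\Psi$ and $\supp\phi$ enjoy this property with respect to the scalar valuation $\Psi[\phi]$; by minimality, their intersection---itself a closed set with the same property---will contain $\supp\Psi[\phi]$. Note that $\Psi[\phi]\in\P_d\VConv(\R^n,\C)$: it is a valuation by \autoref{maintheorem:LocalFunctionalValuation}, it is polynomial of degree at most $d$ as the image of $\Psi$ under the continuous linear functional $\M(\R^n)\rightarrow\C$, $\mu\mapsto\int\phi\, d\mu$, and it is continuous since this functional is weak* continuous.

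First I would handle $\supp\Psi$. Applying \autoref{proposition:characterizationSupport} to the measure-valued valuation $\Psi$ directly, whenever $f\equiv h$ on a neighborhood of $\supp\Psi$ one has $\Psi(f)=\Psi(h)$ in $\M(\R^n)$, and integrating $\phi$ against both sides yields $\Psi[\phi](f)=\Psi[\phi](h)$.

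Next I would handle $\supp\phi$ using the locality assumption on $\Psi$ directly. If $f\equiv h$ on some open neighborhood $U$ of $\supp\phi$, then local determination of $\Psi$ gives $\Psi(f)|_U=\Psi(h)|_U$ as Radon measures. Since $\supp\phi\subset U$, the integration computation
\begin{align*}
    \Psi[\phi](f)=\int_U\phi\, d\Psi(f)=\int_U\phi\, d\Psi(h)=\Psi[\phi](h)
\end{align*}
gives the required identity. By the minimality clause of \autoref{proposition:characterizationSupport} applied to $\Psi[\phi]$, we conclude $\supp\Psi[\phi]\subset\supp\Psi\cap\supp\phi$. There is no substantial obstacle here; the proof is essentially a two-line bookkeeping argument, once one has at one's disposal the intrinsic characterization of support, which allows one to transport locality and the support of $\Psi$ independently to the scalar valuation $\Psi[\phi]$.
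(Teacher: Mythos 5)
Your proof is correct, and it takes a genuinely different route from the paper. The paper proves the stronger statement that the intersection $\supp\Psi\cap\supp\phi$ is itself a ``locality set'' for $\Psi[\phi]$: it first shows $\Psi[\phi]=0$ whenever $\supp\Psi\cap\supp\phi=\emptyset$ (via the Goodey--Weil distribution of the homogeneous components, choosing test functions away from $\supp\phi$ and using local determination), and then handles the general case with a cutoff function $\psi$ and the decomposition $\phi=\psi\phi+(1-\psi)\phi$, before invoking \autoref{proposition:characterizationSupport}. You instead apply the minimality in \autoref{proposition:characterizationSupport} twice, to the two sets separately: $\supp\Psi$ has the required property for $\Psi[\phi]$ because $\Psi(f)=\Psi(h)$ already holds (by the same proposition applied to $\Psi\in\P_d\VConv(\R^n,\M(\R^n))$, which is legitimate since $\M(\R^n)$ with the weak* topology is locally convex Hausdorff and $\Psi$ is a valuation by \autoref{maintheorem:LocalFunctionalValuation}), and $\supp\phi$ has it directly from the definition of a local functional. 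This yields $\supp\Psi[\phi]\subset\supp\Psi$ and $\supp\Psi[\phi]\subset\supp\phi$, hence the claim, with no Goodey--Weil distributions and no cutoff argument; it is shorter and is all that the later applications of the lemma actually use. One small caveat: your parenthetical remark that the intersection is ``itself a closed set with the same property'' does not follow from the two separate verifications (the family of sets with this locality property is not obviously stable under intersection; establishing that the intersection has the property is precisely the extra work the paper does), but your argument never needs it --- minimality applied to each set separately already gives containment in the intersection --- so this is a stray unjustified claim rather than a gap.
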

\begin{proof}
	We may assume that $\Psi$ is $k$-homogeneous. For $k=0$, the valuation $\Psi[\phi]$ is constant, so its support is empty by definition. Thus assume that $1\le k\le n+d$. Let us consider the case that $\supp\Psi\cap \supp\phi=\emptyset$. We need to show that the support of $\Psi[\phi]$ is empty, i.e. that this valuation vanishes identically. If this is not the case, then there exist $\phi_1,\dots,\phi_k\in C^\infty_c(\R^n)$ with $\supp\phi_j\cap\supp\phi=\emptyset$ such that $\GW(\Psi[\phi])[\phi_1\otimes\dots\otimes\phi_k]\ne 0$. Let $U$ be a neighborhood of $\supp\phi$ such that $\supp\phi_j\subset \R^n\setminus U$ and fix a function $f\in\Conv(\R^n,\R)$ such that $f+\sum_{j=1}^k\lambda_j\phi_j$ is convex for all $\lambda_j\in [-1,1]$. Then this function coincides with $f$ on $U$, and thus
	\begin{align*} 		\GW(\Psi[\phi])[\phi_1\otimes\dots\otimes\phi_k]=&\frac{1}{k!}\frac{\partial^k}{\partial\lambda_1\dots\partial\lambda_k}\Big|_0\Psi\left(f+\sum_{j=1}^k\lambda_j\phi_j;\phi\right)\\
		=&\frac{1}{k!}\frac{\partial^k}{\partial\lambda_1\dots\partial\lambda_k}\Big|_0\Psi\left(f;\phi\right)= 0
	\end{align*}
	since $\Psi$ is locally determined, which is a contradiction. Thus $\Psi[\phi]=0$ if $\supp\Psi\cap \supp\phi=\emptyset$.\\
	
	In the general case, let $f,h\in\Conv(\R^n,\R)$ be two functions such that $f\equiv h$ on a neighborhood $U$ of $\supp\Psi\cap \supp\phi$. Since $\supp\phi$ is compact, we find a continuous function $\psi\in C_c(\R^n)$ with $\supp \psi\subset U$, $0\le \psi\le 1$, and $\psi\equiv 1$ on $\supp\Psi\cap\supp\phi$. Then $\Psi[(1-\psi)\phi]=0$ by the previous discussion, so since $f$ and $h$ coincide on the neighborhood $U$ of $\supp \psi\phi$ and as $\Psi$ is locally determined, we obtain
	\begin{align*}
		\Psi[\phi](f)=\Psi[\psi\phi](f)=\Psi[\psi\phi](h)=\Psi[\phi](h).
	\end{align*} 
	\autoref{proposition:characterizationSupport} thus implies $\supp\Psi[\phi]\subset \supp\Psi\cap \supp\phi$.
\end{proof}

\begin{corollary}\label{corollary:SupportEvaluationContainedInSupport}
	Let $1\le k\le n+d$ and $\Psi\in \P_d\LV_k(\R^n)$. Then $\supp\Psi(f)\subset \supp\Psi$ for every $f\in\Conv(\R^n,\R)$.
\end{corollary}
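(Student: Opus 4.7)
The plan is to combine \autoref{lemma:ConsequenceLocalitySupport}, which bounds the support of the scalar valuation $\Psi[\phi]$, with the characterization of support from \autoref{proposition:characterizationSupport} and the $k$-homogeneity hypothesis. The goal is to show that the Radon measure $\Psi(f)$ puts no mass on any open set disjoint from $\supp \Psi$.

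First, I fix an arbitrary point $x_0 \in \R^n \setminus \supp \Psi$ and, since $\supp \Psi$ is closed, choose an open neighborhood $U$ of $x_0$ with $U \cap \supp \Psi = \emptyset$. For every $\phi \in C_c(\R^n)$ with $\supp \phi \subset U$ the intersection $\supp \phi \cap \supp \Psi$ is empty, so \autoref{lemma:ConsequenceLocalitySupport} yields $\supp \Psi[\phi] = \emptyset$ for the scalar valuation $\Psi[\phi] \in \P_d\VConv(\R^n,\C)$. Observe that $\Psi[\phi]$ inherits $k$-homogeneity from $\Psi$, since integration against the fixed test function $\phi$ is linear on $\M(\R^n)$ and thus commutes with the scaling $f \mapsto tf$.

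Next, I apply \autoref{proposition:characterizationSupport} to $\Psi[\phi]$. Since $\emptyset$ is itself an open set containing $\emptyset$, the defining property of the support is vacuously satisfied for any pair $f,h \in \Conv(\R^n,\R)$, so $\Psi[\phi](f) = \Psi[\phi](h)$ for all $f,h$; that is, $\Psi[\phi]$ is constant. Plugging $t = 0$ into the relation $\Psi[\phi](tf) = t^k \Psi[\phi](f)$ and using $k \ge 1$ gives $\Psi[\phi](0) = 0$, hence $\Psi[\phi] \equiv 0$. Consequently $\Psi(f;\phi) = 0$ for every such $\phi$ and every $f$, so the measure $\Psi(f)$ vanishes on $U$. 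In particular $x_0 \notin \supp \Psi(f)$, and as $x_0 \in \R^n \setminus \supp \Psi$ was arbitrary, $\supp \Psi(f) \subset \supp \Psi$.

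The only subtle point is the interpretation of ``on a neighborhood of $A$'' when $A = \emptyset$ in \autoref{proposition:characterizationSupport}; I read it as allowing any open superset of $A$, so the case $A = \emptyset$ is handled by taking the neighborhood to be $\emptyset$ itself. Should this reading be in doubt, one may bypass it by reproducing the Goodey--Weil computation from the proof of \autoref{lemma:ConsequenceLocalitySupport}: for $\phi$ supported in $U$ and any $\phi_1,\dots,\phi_k \in C^\infty_c(\R^n)$, the locality of $\Psi$ and the fact that $\Psi[\phi]$ is $k$-homogeneous force $\GW(\Psi[\phi])[\phi_1 \otimes \dots \otimes \phi_k] = 0$, which by the Schwartz Kernel Theorem again gives $\Psi[\phi] \equiv 0$ directly.
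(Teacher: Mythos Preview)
Your proof is correct and follows essentially the same route as the paper's: invoke \autoref{lemma:ConsequenceLocalitySupport} to get $\supp\Psi[\phi]=\emptyset$ whenever $\supp\phi\cap\supp\Psi=\emptyset$, then use $k$-homogeneity with $k\ge1$ to conclude $\Psi[\phi]\equiv0$, hence $\Psi(f;\phi)=0$. The paper compresses your steps (4)--(5) into the single phrase ``and so $\Psi[\phi]=0$ since $k\ne0$''; your version via \autoref{proposition:characterizationSupport} simply makes this explicit, and your reading of ``neighborhood of $\emptyset$'' is the intended one.
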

\begin{proof}
	If $\phi\in C_c(\R^n)$ satisfies $\supp\phi\cap \supp\Psi=\emptyset$, then the support of $\Psi[\phi]\in\P_d\VConv_k(\R^n,\C)$ is empty by \autoref{lemma:ConsequenceLocalitySupport}, and so $\Psi[\phi]=0$ since $k\ne 0$. In particular, $\Psi(f;\phi)=0$.
\end{proof}
\begin{remark}
	For $k=0$ this statement fails trivially, since the support of any $0$-homogeneous valuation is empty by definition. We introduce a modified notion of support below to circumvent this problem.
\end{remark}
Recall that $\Delta_{k}:\R^n\rightarrow(\R^n)^{k}$ denotes the diagonal embedding.
\begin{proposition}
	\label{proposition:supportExtendedGW}
	Let $1\le k\le n+d$. For $\Psi\in \P_d\LV_k(\R^n)$, the support of $\widehat{\GW}(\Psi)$ is equal to $\Delta_{k+1}(\supp\Psi)$. In particular, it is contained in the diagonal.
\end{proposition}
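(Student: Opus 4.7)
The plan is to prove the two inclusions $\supp\widehat{\GW}(\Psi)\subseteq\Delta_{k+1}(\supp\Psi)$ and $\Delta_{k+1}(\supp\Psi)\subseteq\supp\widehat{\GW}(\Psi)$ separately, with the forward one being the substantive step; the ``in particular'' then follows for free.

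For the forward inclusion, fix $(y_1,\dots,y_{k+1})\notin\Delta_{k+1}(\supp\Psi)$ and split into two cases. If all $y_j$ coincide with some $y\notin\supp\Psi$, pick $\phi_{k+1}\in C^\infty_c(\R^n)$ supported in a neighborhood $U$ of $y$ disjoint from $\supp\Psi$: \autoref{lemma:ConsequenceLocalitySupport} gives $\supp\Psi[\phi_{k+1}]\subseteq\supp\Psi\cap U=\emptyset$, and since $\Psi[\phi_{k+1}]$ is $k$-homogeneous with $k\ge 1$, \autoref{proposition:characterizationSupport} forces it to vanish identically, so $\widehat{\GW}(\Psi)$ vanishes on a neighborhood of $(y,\dots,y)$. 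Otherwise some $y_i\ne y_{k+1}$ with $i\le k$; choose disjoint open neighborhoods $U_i\ni y_i$, $U_{k+1}\ni y_{k+1}$, and the claim is that $\widehat{\GW}(\Psi)[\phi_1\otimes\dots\otimes\phi_{k+1}]=0$ whenever $\supp\phi_j\subseteq U_j$. To establish this, set $\nu:=\Psi[\phi_{k+1}]$, choose $f_l\in\Conv(\R^n,\R)$ with $h_l:=f_l+\phi_l$ convex, and rewrite \eqref{equation:formulaGWElementaryTensor} by symmetry of $\bar\nu$ as
\[
	\widehat{\GW}(\Psi)[\phi_1\otimes\dots\otimes\phi_{k+1}]=\sum_{S\subseteq\{1,\dots,k\}}(-1)^{k-|S|}\bar\nu(g^S_1,\dots,g^S_k),
\]
where $g^S_l=h_l$ for $l\in S$ and $g^S_l=f_l$ otherwise. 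The crucial point is that each summand is independent of whether $i\in S$: by \autoref{lemma:ConsequenceLocalitySupport}, $\supp\nu\subseteq\supp\phi_{k+1}$, so by \autoref{proposition:characterizationSupport}, $\nu$ only sees its argument on a neighborhood of $\supp\phi_{k+1}$; swapping $f_i$ for $h_i$ in $\sum_l\lambda_l g^S_l$ only changes the sum on $\supp\phi_i\subseteq U_i$, which is disjoint from $\supp\phi_{k+1}$, so $\nu$ is unchanged, and so is the mixed partial extracting $\bar\nu$ at $\lambda=0$. Pairing subsets by $i$-membership then makes the alternating sum cancel.

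For the reverse inclusion, let $y\in\supp\Psi$, so by definition $(y,\dots,y)\in\supp\GW(\Psi)$. Given any neighborhood $V\ni y$, pick $\phi_1,\dots,\phi_k\in C^\infty_c(V)$ with $\mu:=\GW(\Psi)[\phi_1\otimes\dots\otimes\phi_k]$ nonzero in $\M(\R^n)$. The forward inclusion places $\supp\widehat{\GW}(\Psi)$ inside $\Delta_{k+1}(\R^n)$, which forces $\supp\mu\subseteq\overline{V}$: any $\phi_{k+1}$ supported in $\R^n\setminus\overline{V}$ produces a tensor supported off the full diagonal, hence pairs to zero. Non-triviality of $\mu$ then yields some $\phi_{k+1}\in C_c(V)$ pairing non-trivially with $\mu$, so that $\widehat{\GW}(\Psi)[\phi_1\otimes\dots\otimes\phi_{k+1}]\ne 0$ and $(y,\dots,y)\in\supp\widehat{\GW}(\Psi)$.

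The main obstacle is the cancellation in Case 2 of the forward inclusion: one must transfer locality of $\Psi$ into invariance of the polarization $\bar\nu$ under flipping one argument, and then recognize this invariance as pairing opposite-parity subsets in the alternating formula for $\GW$. Once this is isolated, the rest of the argument is structural bookkeeping.
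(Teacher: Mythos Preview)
Your argument is correct, but the forward inclusion takes a longer and more hands-on route than the paper's. The paper dispatches both of your cases in one stroke: given $\phi_1,\dots,\phi_{k+1}$ with $\bigcap_j\supp\phi_j\cap\supp\Psi=\emptyset$, it sets $\nu:=\Psi[\phi_{k+1}]$, uses \autoref{lemma:ConsequenceLocalitySupport} to get $\supp\nu\subset\supp\Psi\cap\supp\phi_{k+1}$, and then simply invokes the known diagonal-support property $\supp\GW(\nu)\subset\Delta_k(\supp\nu)$ for scalar-valued valuations (implicit in the definition of $\supp\nu$ and established in the references for \autoref{proposition:characterizationSupport}) to conclude $\GW(\nu)[\phi_1\otimes\dots\otimes\phi_k]=0$. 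Your Case~2 is essentially an in-situ proof of this diagonal-support fact via the explicit formula~\eqref{equation:formulaGWElementaryTensor} and a pairing-by-$i$-membership cancellation; this is correct and pleasantly self-contained, but redundant given the background. For the reverse inclusion the two arguments are structurally the same; the paper shows $\supp\mu\subset U$ directly from locality of $\Psi$ (in fact obtaining the sharper $\supp\mu\subset\bigcup_{j\le k}\supp\phi_j$), whereas you recycle the forward inclusion to reach the same point.

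Two small imprecisions, both harmless: in Case~2 you only define $U_i$ and $U_{k+1}$, so ``$\supp\phi_j\subset U_j$'' is undefined for the remaining indices---just let those $\phi_j$ be arbitrary. And in the reverse inclusion you only obtain $\supp\mu\subset\overline V$, which by itself does not yield a $\phi_{k+1}\in C_c(V)$ pairing nontrivially; the same diagonal argument actually gives $\supp\mu\subset\bigcup_{j\le k}\supp\phi_j\subset V$, which is what you need.
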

\begin{proof}
	Let us show that $\supp\widehat{\GW}(\Psi)\subset\Delta_{k+1}(\supp\Psi)$. We have to show that for $\phi_1,\dots,\phi_{k+1}\in C_c^\infty(\R^n)$ with $\bigcap_{j=1}^{k+1}\supp\phi_j\cap \supp\Psi=\emptyset$, we have $\widehat{\GW}(\Psi)[\phi_1\otimes\dots\otimes\phi_{k+1}]= 0$. By definition,
	\begin{align*}
		\widehat{\GW}(\Psi)[\phi_1\otimes\dots\otimes\phi_{k+1}]=\GW(\Psi[\phi_{k+1}])[\phi_1\otimes\dots\otimes \phi_k],
	\end{align*}
	so since $\Psi[\phi_{k+1}]$ is supported on $\supp\Psi\cap \supp\phi_{k+1}$ by \autoref{lemma:ConsequenceLocalitySupport}, the right hand side is equal to $0$.\\
	
	Let us show that $\Delta_{k+1}(\supp\Psi)\subset \supp\widehat{\GW}(\Psi)$. Fix $x\in \supp\Psi$. It is enough to show that for any open neighborhood $U$ of $x$ there exist functions $\phi_1,\dots,\phi_{k+1}\in C^\infty_c(\R^n)$ with $\supp \phi_j\subset U$ and $\widehat{\GW}(\Psi)[\phi_1\otimes\dots\otimes \phi_{k+1}]\ne 0$.\\
	First, since $x\in \supp\Psi$, we find functions $\phi_1,\dots,\phi_k$ with $\supp\phi_j\subset U$ for $1\le j\le k$ such that $\GW(\Psi)[\phi_1\otimes\dots\otimes \phi_k]\in\M(\R^n)$ is not the zero measure (note that this is indeed a measure by \autoref{remark:distributionsClosure}). We claim that this measure is supported on $U$. Choose functions $f_j\in \Conv(\R^n,\R)$ such that $f_j+t\phi_j$ is convex for $t\in [-1,1]$. Then
	\begin{align*}
		\GW(\Psi)[\phi_1\otimes\dots\otimes \phi_k]=\frac{1}{k!}\frac{\partial^k}{\partial\lambda_1\dots\partial\lambda_k}\Big|_0\Psi\left(\sum_{j=1}^kf_j+\lambda_j\phi_j\right).
	\end{align*}
	If $\psi\in C^\infty_c(\R^n)$ satisfies $\supp\psi\cap U=\emptyset$, then the functions $\sum_{j=1}^kf_j+\lambda_j\phi_j$ and $\sum_{j=1}^kf_j$ coincide on a neighborhood of $\supp\psi$ for every $\lambda_j\ge 0$. Since $\Psi$ is locally determined, this implies
	\begin{align*}
		\left(\GW(\Psi)[\phi_1\otimes\dots\otimes \phi_k]\right)[\psi]=0,
	\end{align*}
	so the measure $\GW(\Psi)[\phi_1\otimes\dots\otimes \phi_k]$ is supported on $U$. Since it is not the zero measure, we find $\phi_{k+1}\in C_c^\infty(\R^n)$ with $\supp\phi_{k+1}\subset U$ and
	\begin{align*}
		0\ne \left(\GW(\Psi)[\phi_1\otimes\dots\otimes \phi_k]\right)[\phi_{k+1}]=\widehat{\GW}(\Psi)[\phi_1\otimes\dots\otimes \phi_{k+1}].
	\end{align*}
	This completes the proof.
\end{proof}

For $\Psi\in \P_d\LV(\R^n)$, let $\Psi=\sum_{k=0}^{n+d}\Psi_k$ be the decomposition into its homogeneous components. We then set
\begin{align*}
	\supp_{\mathrm{loc}}\Psi:=\bigcup_{k=0}^{n+d}\Delta_k^{-1}(\supp \widehat{\GW}(\Psi_k))
\end{align*}
and call $\supp_{\mathrm{loc}}\Psi$ the \emph{local support} of $\Psi$. Note that this differs from the support $\supp\Psi$ of $\Psi$ considered as a valuation on $\Conv(\R^n,\R)$.
\begin{corollary}
	For $\Psi\in\P_d\LV(\R^n)$, the set $\supp_{\mathrm{loc}}\Psi$ is the unique minimum among all closed sets $A\subset\R^n$ that satisfy the following two properties:
	\begin{enumerate}
		\item For every $f\in\Conv(\R^n,\R)$, $\supp\Psi(f)\subset A$.
		\item If $f,h\in\Conv(\R^n,\R)$ satisfy $f\equiv h$ in a neighborhood of $A$, then
		\begin{align*}
			\Psi(f)=\Psi(h).
		\end{align*} 
	\end{enumerate}
\end{corollary}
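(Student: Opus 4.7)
The plan is to verify that $\supp_{\mathrm{loc}}\Psi$ itself satisfies the two listed properties, and then to show that any closed set $A$ meeting (1) and (2) necessarily contains $\supp_{\mathrm{loc}}\Psi$. Uniqueness of a minimum with respect to inclusion is automatic, so these two steps complete the proof. Throughout, I would work with the homogeneous decomposition $\Psi=\sum_{k=0}^{n+d}\Psi_k$ provided by \autoref{maintheorem:HomDecompPdLV}, observing that $\Psi_0$ is a constant functional equal to $\Psi_0(0)\in\M(\R^n)$, whereas $\Psi_k(0)=0$ for every $k\ge 1$ by homogeneity.

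To verify property (1), I would combine \autoref{corollary:SupportEvaluationContainedInSupport}, which gives $\supp\Psi_k(f)\subset\supp\Psi_k$ for $k\ge 1$, with \autoref{proposition:supportExtendedGW}, identifying $\supp\Psi_k$ with the diagonal preimage of $\supp\widehat{\GW}(\Psi_k)$, so that $\supp\Psi_k(f)\subset\supp_{\mathrm{loc}}\Psi$. The case $k=0$ reduces to $\supp\Psi_0(0)\subset\supp_{\mathrm{loc}}\Psi$, which is immediate from \autoref{remark:ExtendedGWZeroHom}. Summing and using $\supp(\mu+\nu)\subset\supp\mu\cup\supp\nu$ for Radon measures yields $\supp\Psi(f)\subset\supp_{\mathrm{loc}}\Psi$. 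For property (2), given $f\equiv h$ on a neighborhood $U$ of $\supp_{\mathrm{loc}}\Psi$, the inclusion $\supp\Psi_k\subset\supp_{\mathrm{loc}}\Psi\subset U$ allows me to apply \autoref{proposition:characterizationSupport} to each $\Psi_k$ with $k\ge 1$ (viewed as a valuation with values in $\M(\R^n)$) to conclude $\Psi_k(f)=\Psi_k(h)$; the case $k=0$ is trivial, and summing gives $\Psi(f)=\Psi(h)$.

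For minimality, let $A$ be a closed set satisfying (1) and (2). The key observation is a scaling trick that transfers property (2) to every homogeneous component: if $f\equiv h$ on a neighborhood $U$ of $A$, then $tf\equiv th$ on $U$ for every $t\ge 0$, so (2) forces $\Psi(tf)=\Psi(th)$. Expanding both sides through the homogeneous decomposition produces the polynomial identity $\sum_{k=0}^{n+d}t^k\Psi_k(f)=\sum_{k=0}^{n+d}t^k\Psi_k(h)$ for all $t\ge 0$, so comparing coefficients via the invertibility of the Vandermonde matrix yields $\Psi_k(f)=\Psi_k(h)$ for every $k$. Applying \autoref{proposition:characterizationSupport} to each $\Psi_k$ with $k\ge 1$ gives $\supp\Psi_k\subset A$, while property (1) evaluated at $f=0$ gives $\supp\Psi_0(0)=\supp\Psi(0)\subset A$. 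Together with the identification of $\supp_{\mathrm{loc}}\Psi$ as the union of these sets, this yields the required inclusion $\supp_{\mathrm{loc}}\Psi\subset A$.

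The argument is essentially routine once the preparatory results are in place; the only step that requires an idea is the scaling argument used to distribute the single invariance hypothesis (2) across the individual homogeneous components $\Psi_k$. This is where the polynomial-valuation structure of $\P_d\LV(\R^n)$ plays an essential role, since it is precisely the uniqueness of the expansion $\Psi(tf)=\sum_{k}t^k\Psi_k(f)$ that allows me to read off equality of each $\Psi_k(f)$ with $\Psi_k(h)$ from the a priori single relation $\Psi(tf)=\Psi(th)$.
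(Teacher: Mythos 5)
Your proof is correct and follows essentially the same route as the paper: reduce to the homogeneous components via \autoref{maintheorem:HomDecompPdLV} and then invoke \autoref{proposition:supportExtendedGW}, \autoref{corollary:SupportEvaluationContainedInSupport} and \autoref{proposition:characterizationSupport}, treating $k=0$ separately through \autoref{remark:ExtendedGWZeroHom}. The only difference is that you spell out, via the scaling and Vandermonde argument, why a closed set $A$ satisfying (2) for $\Psi$ also satisfies it for each $\Psi_k$ --- a step the paper subsumes under ``we may assume $\Psi$ is $k$-homogeneous,'' so your write-up is simply a more detailed version of the same argument.
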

\begin{proof}
	Using the homogeneous decomposition, we may assume that $\Psi$ is $k$-homogeneous. For $k=0$, $\Psi(f)=\Psi(0)$ for all $f\in\Conv(\R^n,\R)$, so $\supp_{\mathrm{loc}}\Psi=\supp\Psi(f)$ (compare \autoref{remark:ExtendedGWZeroHom}), which implies the claim.\\
	If $1\le k\le n+d$, then \autoref{proposition:supportExtendedGW} shows that $\supp\Psi=\supp_{\mathrm{loc}}\Psi$, and the claim follows from \autoref{corollary:SupportEvaluationContainedInSupport} and \autoref{proposition:characterizationSupport}. 
\end{proof}

Next, we are going to adapt the semi-norms in \autoref{proposition:semiNormsCompactlySupportedValuation} to our specific setting.
\begin{proposition}
	\label{proposition:semiNormslocalFunctional}
	For $A\subset\R^n$ compact and convex with non-empty interior and $\Psi\in\P_d\LV(\R^n)$, set
	\begin{align*}
		\|\Psi\|_{A,\delta}:=\sup\left\{|\Psi(f;\phi)|: \supp\phi\subset A,\|\phi\|_\infty\le 1, \sup_{x\in A+\delta B_1(0)}|f(x)|\le1\right\}.
	\end{align*}
	Then $\|\cdot\|_{A,\delta}$ is a continuous semi-norm on $\P_d\LV(\R^n)$ with respect to the compact-to-bounded topology and the topology generated by these semi-norms for all  compact and convex $A\subset\R^n$ and a fixed $\delta>0$ coincides with the compact-to-bounded topology. Moreover, for $0<s<t$  and $\Psi\in\P_d\LV_k(\R^n)$,
	\begin{align}
		\label{eq:relationLocalSeminorms}
		\|\Psi\|_{A,t}\le \|\Psi\|_{A,s}\le \left(\frac{2}{s}(2t+\diam A)+1\right)^k\|\Psi\|_{A,t}.
	\end{align}
\end{proposition}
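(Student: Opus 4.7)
My plan is to deduce all three assertions from the analogous statements for compactly supported valuations in \autoref{proposition:semiNormsCompactlySupportedValuation} and \autoref{corollary:EquivalenceSemiNorms}, combined with the locality of $\Psi$ via \autoref{lemma:ConsequenceLocalitySupport}. The starting point is the observation that for any $\phi\in C_c(\R^n)$ with $\supp\phi\subset A$ and $\|\phi\|_\infty\le 1$, the valuation $\Psi[\phi]\in \P_d\VConv(\R^n,\C)$ has support contained in $A$ by \autoref{lemma:ConsequenceLocalitySupport}, so the semi-norm $\|\Psi[\phi]\|_{\C;A,\delta}$ from \autoref{proposition:semiNormsCompactlySupportedValuation} is well-defined. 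Exchanging the suprema then yields
\begin{align*}
	\|\Psi\|_{A,\delta}=\sup_{\substack{\phi\in C_c(\R^n)\\ \supp\phi\subset A,\,\|\phi\|_\infty\le 1}}\|\Psi[\phi]\|_{\C;A,\delta},
\end{align*}
and applying \autoref{corollary:EquivalenceSemiNorms} to each $\Psi[\phi]$ produces Eq.~\eqref{eq:relationLocalSeminorms} for $k$-homogeneous $\Psi$; the general case follows by decomposition via \autoref{maintheorem:HomDecompPdLV}.

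For the continuity of $\|\cdot\|_{A,\delta}$ with respect to the compact-to-bounded topology, I plan to use the locality of $\Psi$ to replace any admissible test function $f$ by a modification $\tilde f$ contained in a compact subset $K\subset\Conv(\R^n,\R)$ depending only on $A$ and $\delta$. If $\sup_{A+\delta B_1(0)}|f|\le 1$, then \autoref{proposition_convex_functions_local_lipschitz_constants} shows that $f$ is Lipschitz on $A+(\delta/2) B_1(0)$ with constant $4/\delta$, so every subgradient $v\in\partial f(y_0)$ at $y_0\in A+(\delta/2) B_1(0)$ satisfies $|v|\le 4/\delta$. Setting
\begin{align*}
	\tilde f(x):=\sup_{\substack{y_0\in A+(\delta/2) B_1(0)\\ v\in\partial f(y_0)}}\bigl[f(y_0)+\langle v,x-y_0\rangle\bigr],
\end{align*}
one obtains a finite convex function that coincides with $f$ on $A+(\delta/2) B_1(0)$ (the bound $\tilde f\le f$ there follows from the subgradient inequality $f(x)\ge f(y_0)+\langle v,x-y_0\rangle$, while for $x\in A+(\delta/2) B_1(0)$ the choice $y_0=x$ in the supremum gives $\tilde f(x)\ge f(x)$) and satisfies $|\tilde f(x)|\le 1+(4/\delta)(|x|+\max_{a\in A}|a|+\delta/2)$ uniformly in $f$. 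Thus the family $\{\tilde f\}$ lies in a relatively compact subset $K\subset\Conv(\R^n,\R)$ by \autoref{proposition_compactness_Conv}, and since $\supp\phi\subset A\subset A+(\delta/2)B_1(0)$, locality gives $\Psi(f;\phi)=\Psi(\tilde f;\phi)$, so $\|\Psi\|_{A,\delta}\le\|\Psi\|_{A;K}$.

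It remains to show that every compact-to-bounded semi-norm $\|\cdot\|_{A;K}$ is dominated by a finite combination of semi-norms $\|\cdot\|_{A',\delta}$. By \autoref{proposition_compactness_Conv}, $M:=\sup\{|f(x)|:f\in K,\,x\in A+\delta B_1(0)\}<\infty$, and for $k$-homogeneous $\Psi$ the scaling identity $\Psi(f;\phi)=M^k\Psi(f/M;\phi)$ directly yields $\|\Psi\|_{A;K}\le M^k\|\Psi\|_{A,\delta}$. For general $\Psi$, I would decompose $\Psi=\sum_k\Psi_k$ via \autoref{maintheorem:HomDecompPdLV} and control each $\|\Psi_k\|_{A,\delta}$ via semi-norms of $\Psi$ using the Vandermonde representation $\Psi_k=\sum_jc_{kj}\Psi(j\cdot)$ together with the modification argument applied to $jf$ (which is bounded by $j$ on $A+\delta B_1(0)$). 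The main technical obstacle is a careful bookkeeping: the iteration of scaling and modification must produce a finite sum of semi-norms $\|\cdot\|_{A_i,\delta}$ with constants depending solely on $A,K,\delta$ and $n+d$, but I expect this to close after finitely many iterations due to the polynomial degree bound from \autoref{maintheorem:HomDecompPdLV}.
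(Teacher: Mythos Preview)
Your approach via $\Psi[\phi]$ and \autoref{lemma:ConsequenceLocalitySupport} is a legitimate alternative to the paper's route, which instead multiplies by a cut-off $\psi_A\in C_c(\R^n)$ (with $\psi_A\equiv 1$ on $A$ and $\supp\psi_A\subset A+\delta B_1(0)$) to obtain the compactly supported $\M(\R^n)$-valued valuation $\psi_A\bullet\Psi$ and then invokes \autoref{proposition:semiNormsCompactlySupportedValuation} and \autoref{corollary:EquivalenceSemiNorms} with the semi-norm $|\cdot|_{C_A(\R^n)'}$. For Eq.~\eqref{eq:relationLocalSeminorms} and the continuity of $\|\cdot\|_{A,\delta}$ the two routes are essentially interchangeable; your explicit modification $\tilde f$ is in effect a direct reproof of the relevant part of \autoref{proposition:semiNormsCompactlySupportedValuation}.

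The gap is in your final paragraph. The attempt to bound $\|\Psi\|_{A;K}$ by $\|\Psi\|_{A,\delta}$ via the Vandermonde identity $\Psi_k=\sum_j c_{kj}\Psi(j\cdot)$ is circular: to control $\|\Psi(j\cdot)\|_{A,\delta}$ you must estimate $|\Psi(g;\phi)|$ for functions $g$ bounded by $j$ (not $1$) on $A+\delta B_1(0)$, and without homogeneity the only way to do this is to decompose $\Psi$ again into its components $\Psi_{k'}$, producing the very quantities $\|\Psi_{k'}\|_{A,\delta'}$ you are trying to bound. No finite iteration closes this loop --- each pass returns the same unknowns with different constants.

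The fix is immediate with the tools you already use. Since $\supp\Psi[\phi]\subset A$ for every admissible $\phi$, apply the full statement of \autoref{proposition:semiNormsCompactlySupportedValuation} (that the $\|\cdot\|_{\C;A,\delta}$ generate the subspace topology on $\P_d\VConv_A(\R^n,\C)$) to obtain a constant $D=D(A,K,\delta)$, independent of $\phi$, with $\|\Psi[\phi]\|_{\C;K}\le D\,\|\Psi[\phi]\|_{\C;A,\delta}$; taking the supremum over $\phi$ gives $\|\Psi\|_{A;K}\le D\,\|\Psi\|_{A,\delta}$. This is exactly the paper's argument, transported from $\psi_A\bullet\Psi$ to $\Psi[\phi]$.
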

\begin{proof}
	Let us first verify that $\|\cdot\|_{A,\delta}$ is a well-defined and continuous semi-norm. Let $\psi_A\in C_c(\R ^n)$ be a function with $\|\psi_A\|_\infty\le 1$, $\supp\psi_A\subset A+\delta B_1(0)$ and $\psi_A=1$ on $A$. Then $\Psi(f;\phi)=[\psi_A\bullet\Psi](f;\phi)$ for every $\phi\in C_c(\R^n)$ with $\supp\phi\subset A$, and the valuation $\psi_A\bullet\Psi$ is supported on $A+\delta B_1(0)$. In particular,
	\begin{align*}
		\|\Psi\|_{A,\delta}=&\sup\left\{ |\Psi(f)|_{C_{A}(\R^n)'}:\sup_{x\in A+\delta B_1(0)}|f(x)|\le1\right\}\\
		=&\sup\left\{ |[\psi_A\bullet\Psi](f)|_{C_{A}(\R^n)'}:\sup_{x\in A+\delta B_1(0)}|f(x)|\le1\right\}\\
		=&\|\psi_A\bullet \Psi\|_{C_{A}(\R^n)';A,\delta}
	\end{align*}
	for the continuous semi-norm defined in \autoref{proposition:semiNormsCompactlySupportedValuation}. In particular, Eq.~\eqref{eq:relationLocalSeminorms} is a direct consequence of \autoref{corollary:EquivalenceSemiNorms}. It is easy to see that the map $\Psi\mapsto \psi_A\bullet\Psi$ is continuous with respect to the compact-to-bounded topology on $\P_d\LV(\R^n)$, so $\|\cdot\|_{A,\delta}$ is a continuous semi-norm on $\P_d\LV(\R^n)$ with respect to this topology.\\
	It remains to see that these semi-norms generate the topology of $\P_d\LV(\R^n)$. Let $K\subset \Conv(\R^n,\R)$ and $A\subset \R^n$ be compact. We may assume that $A$ is convex and has non-empty interior. Since $\Psi$ is locally determined,
	\begin{align*}
		\|\Psi\|_{A;K}=\sup_{f\in K}|\Psi(f)|_{C_A(\R^n)'}=\sup_{f\in K}|[\psi_A\bullet\Psi](f)|_{C_A(\R^n)'}=\|\psi_A\bullet\Psi\|_{A;K},
	\end{align*}
	where $\psi_A\bullet\Psi$ is compactly supported on $A+\delta B_1(0)$. From \autoref{proposition:semiNormsCompactlySupportedValuation} (and \autoref{corollary:EquivalenceSemiNorms}) we obtain a constant $D>0$ depending on $K$, $A$, and $\delta$ only such that
	\begin{align*}
		\|\psi_A\bullet\Psi\|_{A;K}\le D\|\psi_A\bullet\Psi\|_{C_{A}(\R^n)';A,\delta}= D\|\Psi\|_{A,\delta}.
	\end{align*}
	Thus $\|\Psi\|_{A;K}\le D\|\Psi\|_{A,\delta}$ for every $\Psi\in \P_d\LV(\R^n)$, which shows that the semi-norms $\|\cdot\|_{A,\delta}$, $A\subset \R^n$ compact and convex with non-empty interior, generate the topology of $\P_d\LV(\R^n)$. 
\end{proof}

\begin{proof}[Proof of \autoref{maintheorem:Frechet}]
	Since $\P_d\LV(\R^n)$ is complete with respect to the compact-to-bounded topology by \autoref{lemma:completeUniformStrongTop}, we only have to check that the topology is generated by a countable family of continuous semi-norms. We claim that we can take the semi-norms $\|\cdot\|_{B_m(0),1}$, $m\in\mathbb{N}$. Due to \autoref{proposition:semiNormslocalFunctional}, it is sufficient to bound the semi-norms $\|\cdot\|_{A,\delta}$ for $A\subset \R^n$ compact and convex with non-empty interior, $\delta>0$, in terms of these semi-norms.\\
	Choose $m\in\mathbb{N}$ such that $A\subset B_m(0)$ and $R\ge \max(1,\delta)$ such that $B_m(0)\subset A+(R-1)B_1(0)$. From \autoref{proposition:semiNormslocalFunctional}, we obtain a constant $D(\delta,R)>0$ such that
	\begin{align*}
		\|\Psi\|_{A,\delta}\le D(\delta,R)\|\Psi\|_{A,R}.
	\end{align*}
	By definition
	\begin{align*}
		&\|\Psi\|_{A,R}\\
		=&\sup\left\{|\Psi(f;\phi)|: \supp\phi\subset A,\|\phi\|_\infty\le 1, \sup_{x\in A+R B_1(0)}|f(x)|\le1\right\}\\
		\le& \sup\left\{|\Psi(f;\phi)|: \supp\phi\subset B_m(0),\|\phi\|_\infty\le 1, \sup_{x\in B_m(0)+B_1(0)}|f(x)|\le1\right\}\\
		=&\|\Psi\|_{B_m(0),1}.
	\end{align*}
	Thus $\|\Psi\|_{A,\delta}\le D(\delta,R)\|\Psi\|_{B_m(0),1}$. Since the semi-norms $\|\cdot\|_{A,\delta}$, $A\subset \R^n$ compact and convex with non-empty interior, generate the compact-to-bounded topology by \autoref{proposition:semiNormslocalFunctional}, the claim follows.
\end{proof}

\subsection{Examples of strongly continuous or smooth polynomial local functionals}
The goal of this section is to illustrate the differences between the different notions of regularity from \autoref{section:topologyFunctionals:Smooth} for $\P_d\LV(\R^n)$. These will play a more prominent role in \cite{KnoerrIntegralrepresentationpolynomial2026}.\\
Note that the group $\Aff(n,\R)$ acts continuously on $\R^n$ and $\Conv(\R^n,\R)$, compare \autoref{lemma:continuityActionAffonConv}, so we obtain an operation on $C(\Conv(\R^n,\R),\M(\R^n))$ given by
\begin{align*}
	\pi(g)\Psi(f;\phi)=\Psi(f\circ g;\phi\circ g),
\end{align*}
which is precisely the operation considered in  \autoref{section:topologyFunctionals:GroupAction}. By construction, $\pi(g)\Psi$ is locally determined if $\Psi$ is locally determined. Moreover, $\pi(g)\in \P_d\LV(\R^n)$ if $\Psi$ is in addition polynomial of degree at most $d$, so we obtain a representation on $\P_d\LV(\R^n)$.\\

We will not need the representation of the full affine group and instead only consider the restriction to the space of translations. We denote the subspace of strongly continuous local functionals (with respect to the group of translations) by $\P_d\LV^0(\R^n)$ and the subspace of smooth local functionals by $\P_d\LV^\infty(\R^n)$, compare \autoref{section:topologyFunctionals:Smooth}.
\begin{corollary}
	\label{corollary:StronglyContCompatibilityOperations}
	 If $W\subset \P_d\LV(\R^n)$ is a finite dimensional translation invariant subspace, then $W\subset \P_d\LV^\infty(\R^n)$.
\end{corollary}
\begin{proof}
	Since $W$ is a finite dimensional continuous representation with respect to the compact-to-compact topology, every vector is smooth, compare \autoref{section:topologyFunctionals:Smooth}.
\end{proof}
The following example shows that $\P_d\LV^0_k(\R^n)$ is in general a proper subspace of $\P_d\LV_k(\R^n)$. For a $k$-dimensional subspace $E\subset\R^n$, let $\MA_E:\Conv(E,\R)\rightarrow\M(E)$ denote the Monge--Amp\`ere operator on $E$ and $i_E:E\rightarrow \R^n$ the inclusion map.
\begin{corollary}
	\label{corollary:ExampleNotStronglyContinuous}
	Let $1\le k\le n-1$. For $E\in \Gr_k(\R^n)$, the map
	\begin{align*}
		f\mapsto i_{E*}\MA_E(f|_E)
	\end{align*}
	defines an element of $\P_0\LV_k(\R^n)$ that is not strongly continuous.
\end{corollary}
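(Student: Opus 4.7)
The plan is to verify two things about the functional $\Psi(f) := i_{E*}\MA_E(f|_E)$: first, membership in $\P_0\LV_k(\R^n)$, and second, discontinuity of the translation orbit $v\mapsto \pi(\tau_v)\Psi$ at $v=0$ in the compact-to-bounded topology, where $\tau_v$ denotes translation by $v$.

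For the first claim I would compose routine facts. The restriction map $\Conv(\R^n,\R)\to\Conv(E,\R)$, $f\mapsto f|_E$, is continuous because locally uniform convergence restricts. The operator $\MA_E$ on $\Conv(E,\R)$ is continuous, $k$-homogeneous (since $\dim E = k$), and invariant under addition of elements of $\A(E,\R)$. The pushforward $i_{E*}:\M(E)\to\M(\R^n)$ is weak*-continuous, because every $\phi\in C_c(\R^n)$ restricts to $C_c(E)$. Composing shows that $\Psi$ is continuous and $k$-homogeneous. Locality is inherited from $\MA_E$: if $f\equiv h$ on open $U\subset\R^n$, then $f|_E \equiv h|_E$ on $U\cap E$, so $\MA_E(f|_E) = \MA_E(h|_E)$ on $U\cap E$, which pushes forward to equality on $U$. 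Finally, every $\ell\in\A(n,\R)$ restricts to $\ell|_E\in\A(E,\R)$, so $\Psi(f+\ell) = \Psi(f)$ and $\Psi$ is polynomial of degree $0$.

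For the second claim I would test against $f(x):=\tfrac12|x|^2\in\Conv(\R^n,\R)$ and translations $\tau_v$ with $v\in E^\perp\setminus\{0\}$ small. Since $\Hess(f|_E) = I_k$ on $E$, $\MA_E(f|_E)$ is $k$-dimensional Lebesgue measure on $E$, and hence so is $\Psi(f)$. Decomposing $v = v_\parallel + v_\perp$ relative to $E$, one computes $(f\circ\tau_v)|_E(y) = \tfrac12|y+v_\parallel|^2 + \tfrac12|v_\perp|^2$, which again has Hessian $I_k$ on $E$, so $\MA_E((f\circ\tau_v)|_E)$ is likewise Lebesgue measure on $E$. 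Using the defining identity $[\pi(\tau_v)\Psi](f;B) = \Psi(f\circ\tau_v; B-v)$ and a change of variables, one sees that $[\pi(\tau_v)\Psi](f)$ equals $k$-dimensional Lebesgue measure on the parallel affine subspace $E+v$.

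Since $E$ and $E+v$ are disjoint at distance $|v|$ for $v\in E^\perp\setminus\{0\}$, I can fix a compact ball $A\subset\R^n$ whose interior meets $E$ and, for each such $v$, choose $\phi_v\in C_c(\R^n)$ with $\supp\phi_v\subset A$, $\|\phi_v\|_\infty\le 1$, $\phi_v\equiv 1$ on a fixed $k$-dimensional disk $D\subset E\cap A$, and $\phi_v\equiv 0$ on $E+v$ (for instance by multiplying a bump on $D$ by a cutoff of width $|v|/4$ in the $E^\perp$-direction). Then $[\pi(\tau_v)\Psi](f;\phi_v) = 0$ while $\Psi(f;\phi_v)\ge \vol_k(D)>0$, yielding $\|\pi(\tau_v)\Psi - \Psi\|_{A;\{f\}}\ge \vol_k(D)$ uniformly in small $v\in E^\perp\setminus\{0\}$. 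Hence the orbit map is discontinuous at $0$ and $\Psi$ is not strongly continuous. The main conceptual obstacle is to recognize that the failure is geometric: the support of $\Psi(f)$ jumps discontinuously under arbitrarily small perpendicular translations, an effect invisible to the weak* topology but captured by the total variation norm on $C_A(\R^n)$ that underlies the compact-to-bounded topology.
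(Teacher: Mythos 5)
Your proposal is correct and takes essentially the same route as the paper: membership in $\P_0\LV_k(\R^n)$ by composing the continuous restriction, $\MA_E$, and the weak*-continuous pushforward, and failure of strong continuity by translating in $E^\perp$, observing that the support of the resulting measure jumps off $E$, and separating the two supports with test functions of sup-norm at most $1$ supported in a fixed compact set. The only (cosmetic) difference is the witness: the paper uses $h_{B_E(0)}$, whose measure is $c\delta_0$ (so one only needs $\phi(0)=1$, $\phi(x)=0$), whereas you use $\tfrac12|x|^2$ with Lebesgue measure on $E$ and an explicit plateau-times-cutoff construction, which works equally well and avoids citing Alesker's lemma.
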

\begin{proof}
	Obviously, the restriction $\Conv(\R^n,\R)\rightarrow\Conv(E,\R)$, $f\mapsto f|_E$, is continuous. Since $\MA_E:\Conv(E,\R)\rightarrow \M(E)$ is continuous and since the pushforward of measures along proper maps is compatible with weak* convergence, the map above is continuous. It is easy to check that it is locally determined, $k$-homogeneous, and polynomial of degree $0$. Thus it belongs to $\P_0\LV_k(\R ^n)$. Let us denote it by $\Psi$. In order to see that the map
	\begin{align*}
		\R^n&\rightarrow \LV_k(\R^n)\\
		x&\mapsto \pi(x)\Psi
	\end{align*}
	is not continuous with respect to the compact-to-bounded topology, consider the support function $f=h_{B_E(0)}\in\Conv(\R^n,\R)$ of the unit ball $B_E(0)\subset E$. Then $\Psi(h_{B_E(0)})=c \delta_0$ for some $c>0$, compare \cite{AleskerValuationsconvexfunctions2019}*{Lemma~2.4}. In particular, for $x\in E^\perp$ and $\phi\in C_c(\R^n)$, we have
	\begin{align*}
		&\left(\Psi-
		[\pi(x)\Psi]\right)(h_{B_E(0)};\phi)
		=\Psi(h_{B_E(0)};\phi)-
		\Psi(h_{B_E(0)}(\cdot+x);\phi(\cdot+x))\\
		&=\Psi(h_{B_E(0)};\phi)-
		\Psi(h_{B_E(0)};\phi(\cdot+x))=c[\phi(0)-\phi(x)]
	\end{align*}
	because $h_{B_E(0)}$ is invariant with respect to translations of its argument in $E^\perp$. Given $R>0$ and $x\ne 0$, we can always find a function $\phi\in C_c(\R^n)$ with $\supp\phi\subset B_R(0)$, $\|\phi\|_\infty=1$, $\phi(0)=1$ and $\phi(x)=0$, so in particular
	\begin{align*}
		\|\Psi-\pi(x)\Psi\|_{K,B_R(0)}\ge c \quad \text{for all}~x\in E^\perp\setminus\{0\}
	\end{align*}
	for $K=\{h_{B_E(0)}\}\subset\Conv(\R^n,\R)$. Thus $\Psi$ cannot be strongly continuous.
\end{proof}

Recall that for $\tau\in C(\R\times \R^n\times(\R^n)^*,\Lambda^n(\R^n\times (\R^n)^*)^*)$, we denote by $\Phi_\tau:\Conv(\R^n,\R)\rightarrow\M(\R^n)$ the map given by
\begin{align*}
	\Phi_\tau(f;B)=D(f)[1_{\pi^{-1}(B)}(x,y)\tau(f(x),x,y)],
\end{align*}
which is a continuous local functional by \autoref{theorem:LocalFunctionalFromDifferentialCycle}.

\begin{theorem}
	\label{theorem:SmoothLVWithDifferentialCycle}
	Let $\tau\in C(\R\times \R^n\times(\R^n)^*,\Lambda^n(\R^n\times (\R^n)^*)^*)$.
	\begin{enumerate}
		\item The map
		\begin{align*}
			\R^n&\rightarrow C(\Conv(\R^n,\R),\M(\R^n))\\
			z&\mapsto \pi(z)\Phi_\tau
		\end{align*}
		is continuous with respect to the compact-to-bounded topology.
		\item If $\tau\in C^\infty(\R\times \R^n\times(\R^n)^*,\Lambda^n(\R^n\times (\R^n)^*)^*)$, then this map is smooth with respect to the compact-to-bounded topology.
	\end{enumerate}
\end{theorem}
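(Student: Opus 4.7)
The plan is to begin by deriving an explicit formula for the translation orbit. Writing $h(x)=f(x+z)$, one has $dh(x)=df(x+z)$, so the graph of $dh$ is the image of the graph of $df$ under the translation $S_{-z}\colon(u,y)\mapsto(u-z,y)$ of $\R^n\times(\R^n)^*$. Hence $D(h)=(S_{-z})_*D(f)$, and unwinding the definitions of $\pi$ and $\Phi_\tau$ gives
\begin{align*}
[\pi(z)\Phi_\tau](f;\phi)=\Phi_\tau(f(\cdot+z);\phi(\cdot+z))=D(f)[\phi(u)\tau(f(u),u-z,y)]
\end{align*}
for all $f\in\Conv(\R^n,\R)$ and $\phi\in C_c(\R^n)$. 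This formula decouples the $z$-dependence from the current $D(f)$ and reduces the theorem to an analysis of the shifted integrand.

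For Part (1), fix compact sets $K\subset\Conv(\R^n,\R)$ and $A\subset\R^n$. By \autoref{proposition_compactness_Conv} the elements of $K$ are uniformly bounded on $A+B_1(0)$, so \autoref{proposition_convex_functions_local_lipschitz_constants} yields a uniform Lipschitz bound on $f|_A$ over $f\in K$. Combined with \autoref{theorem_FU_support_Differential_cycle} and \autoref{lemma:massDifferentialCycle}, this provides constants $R,M>0$ depending only on $K$ and $A$ such that
\begin{align*}
\supp D(f)\cap\pi^{-1}(A)\subset A\times B_R(0),\qquad M_{\pi^{-1}(A)}(D(f))\le M
\end{align*}
for every $f\in K$. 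Consequently, for $z$ in a bounded neighborhood of any fixed $z_0\in\R^n$, the set $L:=\{(f(u),u-z,y):f\in K,\,u\in A,\,y\in B_R(0)\}$ is compact, and $\tau$ is uniformly continuous on $L$. Combining this with the mass inequality for integral currents applied to the form $\phi(u)(\tau(f(u),u-z,y)-\tau(f(u),u-z_0,y))$ gives $\|\pi(z)\Phi_\tau-\pi(z_0)\Phi_\tau\|_{A;K}\to 0$ as $z\to z_0$.

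For Part (2), assume $\tau\in C^\infty$. Applying the fundamental theorem of calculus to rewrite the difference quotient and using that $\partial_{x_j}\tau\in C^\infty(\R\times\R^n\times(\R^n)^*,\Lambda^n(\R^n\times(\R^n)^*)^*)$, the same mass-bound argument shows that
\begin{align*}
\frac{[\pi(z+te_j)\Phi_\tau](f;\phi)-[\pi(z)\Phi_\tau](f;\phi)}{t}\xrightarrow{t\to 0}-[\pi(z)\Phi_{\partial_{x_j}\tau}](f;\phi)
\end{align*}
uniformly over $f\in K$ and $\phi\in C_c(\R^n)$ with $\supp\phi\subset A$ and $\|\phi\|_\infty\le 1$. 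Hence the partial derivative exists in the compact-to-bounded topology and equals $-\pi(z)\Phi_{\partial_{x_j}\tau}$, which is again continuous in $z$ by Part (1) applied to $\partial_{x_j}\tau$. Iterating gives $\partial_z^\alpha(\pi(z)\Phi_\tau)=(-1)^{|\alpha|}\pi(z)\Phi_{\partial_x^\alpha\tau}$ for every multi-index $\alpha\in\mathbb{N}^n$, which establishes that the orbit map is strongly $C^\infty$. The main technical obstacle throughout is securing the estimates uniformly over the compact family $K$, which is precisely what the support and mass bounds from \autoref{theorem_FU_support_Differential_cycle} and \autoref{lemma:massDifferentialCycle} furnish.
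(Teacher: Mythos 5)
Your proposal is correct and follows essentially the same route as the paper: rewrite $[\pi(z)\Phi_\tau](f;\phi)=D(f)[\phi(u)\,\tau(f(u),u-z,y)]$ and combine the uniform bounds on the compact family $K$ (\autoref{proposition_compactness_Conv}, \autoref{proposition_convex_functions_local_lipschitz_constants}) with the support and mass estimates (\autoref{theorem_FU_support_Differential_cycle}, \autoref{lemma:massDifferentialCycle}) to obtain uniform convergence of the shifted integrands and of the difference quotients, then iterate for higher derivatives. The only cosmetic difference is that the paper first decomposes a smooth $\tau$ into terms $\phi(t,x,y)\tau_0$ with $\tau_0$ a constant-coefficient form and differentiates the scalar coefficient $\phi$, whereas you differentiate $\tau$ in its second argument directly and make the translation identity $D(f(\cdot+z))=(S_{-z})_*D(f)$ explicit (and you spell out part~(1), which the paper leaves as a similar argument); the underlying estimates are identical.
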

\begin{proof}
	We will show the second claim, the first follows with a similar argument. Note that any $\tau\in C ^\infty(\R\times \R^n\times(\R^n)^*,\Lambda^n(\R^n\times (\R^n)^*)^*)$ is a finite linear combination of differential forms of the form $\phi(t,x,y)\tau$, where $\tau\in \Lambda^n(\R^n\times (\R^n)^*)^*$ is translation invariant and $\phi\in C^\infty(\R\times \R^n\times(\R^n)^*)$. We claim that $z\mapsto \pi(z)\Psi_{\phi\tau}$ is differentiable with respect to the compact-to-bounded topology with differential
	\begin{align*}
		\frac{d}{dt}\pi(z+tv)\Psi_{\phi\tau}=\Psi_{(-\partial_v\phi(\cdot-z))\tau}.
	\end{align*}
	Since $\tau$ is translation invariant, it is easy to see that it is sufficient to consider the case $z=0$.	We have
	\begin{align*}
		&\left[\frac{\pi(v)\Psi_{\phi\tau}-\Psi_{\phi\tau}-\Psi_{(-\partial_v\phi)\tau}}{|v|}\right](f;\psi)\\
		=&D(f)\left[\psi(x) \left[\frac{\phi(f(x),x-v,y)-\phi(f(x),x,y)+\partial_v\phi(f(x),x,y)}{|v|}\right]\tau\right].
	\end{align*}
	Assume that $|v|\le 1$.	Let $K\subset \Conv(\R^n,\R)$ be a compact subset, $R>0$. First, there exists $C>0$ such that any $f\in K$ is bounded by $C$ on $B_{R+1}(0)$ by \autoref{proposition_compactness_Conv}. Thus  \autoref{proposition_convex_functions_local_lipschitz_constants} shows that the Lipschitz constants of $f\in K$ on $B_{R+1}(0)$ are bounded by some $L>0$.  Using \autoref{lemma:massDifferentialCycle} and \autoref{theorem_FU_support_Differential_cycle}, we obtain for $f\in K$, $\psi\in C_{B_R(0)}(\R^n)$ with $\|\psi\|_\infty\le1$, and $|v|\le 1$,
	\begin{align*}
		&\left|\left[\frac{\pi(v)\Psi_{\phi\tau}-\Psi_{\phi\tau}-\Psi_{(-\partial_v\phi)\tau}}{|v|}\right](f;\psi)\right|\\
		\le& 2^n\omega_n \left(\sup_{|x|\le R+1}|f(x)|\right)^n\|\tau\|_\infty\\
		&\times \sup_{|t|\le C,|x|\le R+1, |y|\le L }\left|\frac{\phi(t,x-v,y)-\phi(t,x,y)+\partial_v\phi(f(x),x,y)}{|v|}\right|.
	\end{align*}
	The right hand side of this inequality converges uniformly to $0$ for $f\in K$ since $\phi$ is a smooth function, which shows that $z\mapsto \pi(z)\Psi_{\phi\tau}$ is differentiable with respect to the compact-to-bounded topology. 		
\end{proof}

Recall that \autoref{maintheorem:topDegree} shows that every element in $\P_d\LV_{n+d}(\R^n)$ admits an integral representation as in \autoref{theorem:SmoothLVWithDifferentialCycle}. This implies the following result.
\begin{corollary}
	$\P_d\LV^0_{n+d}(\R^n)=\P_d\LV_{n+d}(\R^n)$	
\end{corollary}

\section{Classification results}
		\label{section:classificationResults}	
		\subsection{Translation invariant polynomial local functionals}
		The space $\P_0\LV(\R^n)^{tr}$ was described explicitly in \cite{KnoerrMongeAmpereoperators2024}. Following \cite{KnoerrMongeAmpereoperators2024}, we set $\MAVal(\R^n):=\P_0\LV(\R^n)^{tr}$, $\MAVal_k(\R^n):=\P_0\LV_k(\R^n)^{tr}$ for $0\le k\le n$. The main results of \cite{KnoerrMongeAmpereoperators2024} identify this space with a finite dimensional space of Monge--Amp\`ere type operators. Let $\mathrm{M}(n,k)\subset\Poly(\Sym^2(\R^n))$ denote the subspace spanned by the $k$-minors of symmetric $(n\times n)$-matrix. By definition $\mathrm{M}(n,0)$ consists of constant polynomials.		
			\begin{theorem}[\cite{KnoerrMongeAmpereoperators2024}*{Theorem 1.2}]
				\label{theorem:ClassificationMA}
				For a continuous map $\Psi:\Conv(\R^n,\R)\rightarrow\mathcal{M}(\R^n)$ the following are equivalent:
				\begin{enumerate}
					\item $\Psi\in\MAVal_k(\R^n)$.
					\item There exists a differential form $\tau\in\Lambda^{n-k}(\R^n)^*\otimes \Lambda^{k}((\R^n)^*)^*$ such that for all $f\in\Conv(\R^n,\R)\cap C^2(\R^n)$,
					\begin{align*}
						\Psi(f;B)=\int_{\pi^{-1}(B)\cap \mathrm{graph}(df)}\tau
					\end{align*}
					for all bounded Borel sets $B\subset\R^n$.
					\item There exists a polynomial $P_\Psi\in \mathrm{M}(n,k)$ such that for all $f\in\Conv(\R^n,\R)\cap C^2(\R^n)$,
					\begin{align*}
						\Psi(f;B)=\int_B P_\Psi(D^2f(x))d\vol_n(x)
					\end{align*}
					for all bounded Borel sets $B\subset \R^n$.
				\end{enumerate}
			\end{theorem}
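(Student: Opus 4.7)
The proof splits into the easy chain $(3) \Rightarrow (2) \Rightarrow (1)$ and the substantive direction $(1) \Rightarrow (2)$. For $(3) \Rightarrow (2)$: writing $P \in \mathrm{M}(n,k)$ as a linear combination of $k$-minors of $D^2 f$, one checks that $P(D^2 f) \, dx_1 \wedge \dots \wedge dx_n$ is precisely the pullback along the graph map $F \colon x \mapsto (x, df(x))$ of a suitable constant-coefficient form $\tau \in \Lambda^{n-k}(\R^n)^* \otimes \Lambda^k((\R^n)^*)^*$; the key identity is $F^*(dy_i) = \sum_j \partial_i \partial_j f \, dx_j$, so that $F^*(dx_{J^c} \wedge dy_I) = \pm \det(D^2 f|_{I,J}) \, dx_1 \wedge \dots \wedge dx_n$. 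Reversing the computation yields $(2) \Rightarrow (3)$. For $(2) \Rightarrow (1)$, \autoref{theorem:LocalFunctionalFromDifferentialCycle} immediately produces a continuous local functional $\Phi_\tau$: translation invariance comes from $\tau$ being constant-coefficient, polynomial-of-degree-$0$ from the coefficients being independent of $y$ (so that replacing $df$ by $df + d\ell$ for affine $\ell$ only translates the fiber coordinate by the constant vector $d\ell$ and leaves the pullback unchanged), and $k$-homogeneity from $\tau$ having exactly $k$ factors $dy_i$, each scaling by $t$ under $f \mapsto tf$.

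For the substantive direction $(1) \Rightarrow (2)$, I would proceed via the extended Goodey--Weil distribution. By \autoref{maintheorem:LocalFunctionalValuation}, $\Psi$ is a valuation, so $\widehat{\GW}(\Psi)$ on $(\R^n)^{k+1}$ is defined and, by \autoref{proposition:supportExtendedGW}, supported on the diagonal $\Delta_{k+1}(\R^n)$. Translation invariance of $\Psi$ with respect to the $\pi$-action transfers to invariance of $\widehat{\GW}(\Psi)$ under the diagonal $\R^n$-action on $(\R^n)^{k+1}$, while the bound \eqref{eq:EstimateGW} limits the differential order to $\le 2$ in each of the first $k$ factors and to $0$ in the last. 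The structure theorem for translation-invariant distributions supported on a linear subspace then produces a representation
\begin{equation*}
	\widehat{\GW}(\Psi)[\phi_1 \otimes \dots \otimes \phi_{k+1}] = \int_{\R^n} P\bigl(D^2\phi_1(x), \dots, D^2\phi_k(x)\bigr)\, \phi_{k+1}(x) \, dx
\end{equation*}
for some symmetric multilinear form $P$ on $\Sym^2(\R^n)^{\otimes k}$; here, the polynomial-of-degree-$0$ property (which forces $\bar\mu(\ell, f_2, \dots, f_k) = 0$ on the polarization whenever $\ell$ is affine) eliminates any contributions involving $\phi_j$ or $\nabla \phi_j$ alone, so that only second derivatives survive. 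Applying the polarization formula \eqref{equation:formulaGWElementaryTensor} to $f \in \Conv(\R^n,\R) \cap C^2(\R^n)$ then yields $\Psi(f; B) = \int_B Q(D^2 f(x))\, dx$ for the $k$-homogeneous polynomial $Q(M) := P(M, \dots, M)$ on $\Sym^2(\R^n)$.

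The remaining step, and the main obstacle, is to show $Q \in \mathrm{M}(n,k)$ rather than an arbitrary $k$-homogeneous polynomial on $\Sym^2(\R^n)$. The additional constraint is that $\int_B Q(D^2 f)\, dx$, initially defined only for $f \in C^2$, must extend to a weak*-continuous measure-valued functional on all of $\Conv(\R^n, \R)$. Polynomials in $k$-minors extend continuously via the mixed Monge--Amp\`ere measure or, equivalently, the differential cycle, whereas generic degree-$k$ polynomials do not; the obstruction to such an extension is encoded precisely by the valuation identity on pairs $f, h$ whose pointwise minimum is convex. I would implement this by evaluating $\Psi$ on one-parameter families of piecewise-quadratic convex functions whose fold loci concentrate measure, extracting the algebraic cancellations forced on $Q$ which characterize the span of $k$-minors. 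A more abstract alternative is to combine the already-established $(2) \Rightarrow (1)$ with the kernel description \autoref{theorem:KernelDifferentialCycle}: this gives an injective map $\mathrm{M}(n,k) \hookrightarrow \MAVal_k(\R^n)$, and one then verifies that its image exhausts the space of representable $Q$'s from the previous paragraph, closing the cycle.
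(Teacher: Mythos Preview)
This theorem is not proved in the present paper; it is quoted from \cite{KnoerrMongeAmpereoperators2024}*{Theorem~1.2} and used as a black box in \autoref{section:classificationResults}. There is therefore no proof here against which to compare your proposal.

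On the merits of your sketch: the chain $(3)\Rightarrow(2)\Rightarrow(1)$ is correct and uses exactly the tools assembled in this paper. Your reduction toward $(1)\Rightarrow(3)$---collapsing $\widehat{\GW}(\Psi)$ to a constant-coefficient differential operator via translation invariance and diagonal support, then using $\bar\mu(\ell,f_2,\dots,f_k)=0$ for affine $\ell$ to eliminate lower-order derivative terms---is also sound. The point you should make explicit is that $\GW(\Psi[\phi_{k+1}])$ has compact support (contained in $\supp\phi_{k+1}$ by \autoref{lemma:ConsequenceLocalitySupport}), so it legitimately extends to smooth, in particular affine, test functions, where \autoref{theorem:GW} identifies it with $\bar\mu$. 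This does yield a $k$-homogeneous polynomial $Q$ on $\Sym^2(\R^n)$ representing $\Psi$ on $C^2$ functions.

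The genuine gap is exactly where you place it: forcing $Q\in\mathrm{M}(n,k)$. Approach (a) is not a proof without specifying which piecewise-quadratic families and which algebraic identities you mean to extract. Approach (b) is circular as written: the injections $\mathrm{M}(n,k)\hookrightarrow\MAVal_k(\R^n)$ and $\MAVal_k(\R^n)\hookrightarrow\{\text{degree-}k\text{ polynomials on }\Sym^2(\R^n)\}$ compose to the identity on $\mathrm{M}(n,k)$, but nothing so far bounds the image of the second map from above, so you cannot conclude it equals $\mathrm{M}(n,k)$ without an independent dimension bound on $\MAVal_k(\R^n)$. Invoking \autoref{theorem:IsmorphismPrimitiveFormsMAVal} would close this, but that result is itself imported from \cite{KnoerrMongeAmpereoperators2024} and is essentially equivalent to the hard direction you are trying to prove.
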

			Note that the polynomial $P_\Psi\in \mathrm{M}(n,k)$ is uniquely determined by $\Psi\in \MAVal_k(\R^n)$. We thus obtain a well defined isomorphism of vector spaces
			\begin{align}
				\label{eq:interpretationMAValAsPolynomials}
				\MAVal(\R^n)\cong \mathrm{M}_n:=\bigoplus_{k=0}^n\mathrm{M}(n,k).
			\end{align}
			
			In contrast, the differential form in \autoref{theorem:ClassificationMA} is  not unique, as discussed in \autoref{theorem:KernelDifferentialCycle}. Let $P\Lambda^{n-k,k}\subset \Lambda^{n-k}(\R^n)^*\otimes \Lambda^k((\R^n)^*)^*$ denote the subspace of primitive differential forms, i.e. all $\tau$ such that $\omega_s\wedge\tau=0$, where $\omega_s$ denotes the natural symplectic form on $\R^n\times (\R^n)^*$. 			
			\begin{theorem}[\cite{KnoerrMongeAmpereoperators2024}*{Theorem~6.18}]\label{theorem:IsmorphismPrimitiveFormsMAVal}
				The map 
				\begin{align*}
					P\Lambda^{n-k,k}&\rightarrow\MAVal_k(\R^n)\\
					\tau&\mapsto \Phi_\tau
				\end{align*}
				is bijective.
			\end{theorem}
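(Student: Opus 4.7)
The plan is to combine the classification in \autoref{theorem:ClassificationMA}, the kernel description in \autoref{theorem:KernelDifferentialCycle}, and the Lefschetz decomposition of $n$-forms on the symplectic vector space $\R^n\times(\R^n)^*$. Recall that the standard Lefschetz decomposition for a symplectic vector space of dimension $2n$ yields
\begin{align*}
\Lambda^n(\R^n\times(\R^n)^*)^*=P\Lambda^n\oplus \omega_s\wedge \Lambda^{n-2}(\R^n\times(\R^n)^*)^*,
\end{align*}
and since the operator $L=\omega_s\wedge$ has bidegree $(1,1)$ with respect to the natural splitting into $\Lambda^{n-k}(\R^n)^*\otimes\Lambda^k((\R^n)^*)^*$, this decomposition respects the bigrading. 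In particular, every $\tau\in\Lambda^{n-k}(\R^n)^*\otimes\Lambda^k((\R^n)^*)^*$ can be written uniquely as $\tau=\tau_0+\omega_s\wedge\sigma$ with $\tau_0\in P\Lambda^{n-k,k}$ and $\sigma\in\Lambda^{n-k-1}(\R^n)^*\otimes\Lambda^{k-1}((\R^n)^*)^*$.

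For surjectivity, fix $\Psi\in\MAVal_k(\R^n)$. By \autoref{theorem:ClassificationMA}(2) applied to $\Psi$, there exists $\tau\in\Lambda^{n-k}(\R^n)^*\otimes \Lambda^k((\R^n)^*)^*$ with $\Psi=\Phi_\tau$. Decompose $\tau=\tau_0+\omega_s\wedge\sigma$ as above. Since $\omega_s\wedge\sigma$ is a multiple of the symplectic form (as a form that is constant in the first variable), \autoref{theorem:KernelDifferentialCycle} shows $\Phi_{\omega_s\wedge\sigma}=0$, and consequently $\Phi_{\tau_0}=\Phi_\tau=\Psi$ with $\tau_0\in P\Lambda^{n-k,k}$.

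For injectivity, suppose $\tau\in P\Lambda^{n-k,k}$ satisfies $\Phi_\tau=0$. By \autoref{theorem:KernelDifferentialCycle}, $\tau=\omega_s\wedge\eta$ for some constant form $\eta\in\Lambda^{n-2}(\R^n\times(\R^n)^*)^*$. Projecting to the bidegree $(n-k,k)$ component (which is preserved by wedging with $\omega_s$), we may take $\eta\in\Lambda^{n-k-1}(\R^n)^*\otimes\Lambda^{k-1}((\R^n)^*)^*$. But then $\tau$ lies in both $P\Lambda^{n-k,k}$ and in the image of $L$ restricted to the appropriate bidegree component; by the directness of the Lefschetz decomposition these subspaces intersect trivially, forcing $\tau=0$.

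I do not expect any serious obstacle here: the classification result \autoref{theorem:ClassificationMA} already produces the differential form whose existence is needed, the kernel description \autoref{theorem:KernelDifferentialCycle} handles the quotient by the symplectic ideal, and the remaining issue is purely linear-algebraic and reduces to the Lefschetz decomposition of middle-degree forms on a symplectic vector space. The only point requiring mild care is the compatibility of this decomposition with the bigrading, which follows from the observation that both $L$ and its adjoint (contraction with the Poisson bivector) are homogeneous of bidegree $(\pm 1,\pm 1)$, so that the symplectic $\mathfrak{sl}_2$-representation structure restricts to each bidegree component.
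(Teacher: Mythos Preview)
Your proof is correct. Note, however, that the paper does not actually prove this statement: it is quoted from \cite{KnoerrMongeAmpereoperators2024}*{Theorem~6.18} and used as a black box. There is therefore no ``paper's own proof'' to compare against here.

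That said, your argument is precisely the one the paper itself invokes in neighboring places. In the proof of \autoref{corollary:DefTheta}, the paper establishes injectivity of $\Theta_d$ by appealing to \autoref{theorem:KernelDifferentialCycle} together with the Lefschetz decomposition (citing \cite{HuybrechtsComplexgeometry2005}*{Proposition~1.2.30}), and in the proof of \autoref{theorem:MAValIsotypicDecomposition} the bigraded Lefschetz decomposition is used exactly as you describe. So your reconstruction is entirely in line with how the author reasons about these spaces.

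One small point worth making explicit: in your injectivity step you write that $\tau=\omega_s\wedge\eta$ for some \emph{constant} form $\eta$, whereas \autoref{theorem:KernelDifferentialCycle} a priori only gives $\eta\in C(\R\times\R^n\times(\R^n)^*,\Lambda^{n-2}(\R^n\times(\R^n)^*)^*)$. The passage to a constant $\eta$ is justified because $L=\omega_s\wedge\cdot$ is injective on forms of degree $n-2<n$ (part of the Lefschetz package), so $\tau$ constant forces $\eta$ constant. You allude to this but it would not hurt to spell it out.
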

			\begin{corollary}
				\label{corollary:DefTheta}
				There exists a well defined and injective map
				\begin{align*}
					\Theta_d:\Poly_{d}(\A(n,\R))\otimes \MAVal(\R^n)\rightarrow \P_d\LV(\R^n)^{tr}
				\end{align*}
				such that for $P\in \Poly_d(\A(n,\R))$, $\Psi\in\MAVal(\R^n)$,
				\begin{align}
					\label{eq:FormulaTheta}
					[\Theta_d(P\otimes \Psi)](f;B)=\int_{B} P(f(x),df(x))P_\Psi(D^2f(x))dx
				\end{align}
				for all $f\in\Conv(\R^n,\R)\cap C^2(\R^n)$, $B\subset\R^n$ bounded Borel set.
			\end{corollary}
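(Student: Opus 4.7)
The plan is to build $\Theta_d$ from the differential cycle machinery of \autoref{section:ConstructionLocalFunc}, deduce the integral representation on $C^2$ convex functions, and then transfer the valuation-theoretic properties from the smooth setting to all of $\Conv(\R^n,\R)$ by continuity. For $\Psi\in\MAVal_k(\R^n)$, \autoref{theorem:IsmorphismPrimitiveFormsMAVal} provides a unique primitive form $\tau_\Psi\in P\Lambda^{n-k,k}$ with $\Psi=\Phi_{\tau_\Psi}$; for $P\in\Poly_d(\A(n,\R))$, identified with a polynomial in $(t,y)\in\R\times(\R^n)^*$, I set $\tau_{P,\Psi}(t,x,y):=P(t,y)\,\tau_\Psi$, which is a continuous element of $C(\R\times\R^n\times(\R^n)^*,\Lambda^n(\R^n\times(\R^n)^*)^*)$. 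By \autoref{theorem:LocalFunctionalFromDifferentialCycle} the functional $\Phi_{\tau_{P,\Psi}}$ is a continuous local functional on $\Conv(\R^n,\R)$, and I define $\Theta_d(P\otimes\Psi):=\Phi_{\tau_{P,\Psi}}$. Extending bilinearly via $\MAVal(\R^n)=\bigoplus_k\MAVal_k(\R^n)$ and using the uniqueness of $\tau_\Psi$ in \autoref{theorem:IsmorphismPrimitiveFormsMAVal} shows that this prescription defines a linear map on the tensor product.

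To verify Eq.~\eqref{eq:FormulaTheta}, recall that for $f\in\Conv(\R^n,\R)\cap C^2(\R^n)$, $D(f)$ is integration over the graph of $df$, so $\Phi_{\tau_{P,\Psi}}(f;B)=\int_B P(f(x),df(x))\,F_f^*\tau_\Psi$ for $F_f(x)=(x,df(x))$; the equivalence (ii)$\Leftrightarrow$(iii) in \autoref{theorem:ClassificationMA} then gives $F_f^*\tau_\Psi=P_\Psi(D^2f(x))\,d\vol_n(x)$ and hence Eq.~\eqref{eq:FormulaTheta}. On smooth convex $f$, translation invariance is immediate from this formula via the change of variables $z=x+v$, and adding $\ell(x)=s+\langle v,x\rangle$ leaves $D^2f$ unchanged and inserts the polynomial $P(f(x)+s+\langle v,x\rangle,df(x)+v)$ of degree at most $d$ in $(s,v)$ into the integrand. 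These two properties transfer to all of $\Conv(\R^n,\R)$ by continuity of $\Theta_d(P\otimes\Psi)$ combined with the density of $C^2$ convex functions in $\Conv(\R^n,\R)$ obtained via mollification: translation invariance is a weak$^*$ identity and is preserved under limits, while for any fixed $\phi\in C_c(\R^n)$ the $\C$-valued map $\ell\mapsto[\Theta_d(P\otimes\Psi)](f+\ell;\phi)$ is a pointwise limit of polynomials of degree at most $d$, and hence itself a polynomial of degree at most $d$ since that space is finite-dimensional and thus closed under pointwise convergence. Therefore $\Theta_d$ lands in $\P_d\LV(\R^n)^{tr}$.

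For injectivity, suppose $\Theta_d(\sum_i P_i\otimes\Psi_i)=0$ and use \eqref{eq:interpretationMAValAsPolynomials} to identify $\Psi_i\leftrightarrow P_{\Psi_i}\in\mathrm{M}_n$. Because Eq.~\eqref{eq:FormulaTheta} vanishes on arbitrary bounded Borel $B$ and the integrand is continuous in $x$ for smooth $f$, one has $\sum_i P_i(f(x),df(x))P_{\Psi_i}(D^2f(x))=0$ pointwise for every $f\in\Conv(\R^n,\R)\cap C^2(\R^n)$. Plugging in the convex quadratics $f_{s,w,A}(x)=s+\langle w,x\rangle+\tfrac12\langle Ax,x\rangle$ with $A\succ 0$ and evaluating at $x=0$ yields $\sum_i P_i(s,w)P_{\Psi_i}(A)=0$ for all $(s,w)\in\R\times(\R^n)^*$ and $A\succ 0$; viewed as a polynomial identity in $(s,w,A)$ and extended from the non-empty open cone of positive definite matrices, it holds for all $A\in\Sym^2(\R^n)$. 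Since the multiplication map $\Poly_d(\A(n,\R))\otimes\mathrm{M}_n\hookrightarrow\Poly(\A(n,\R)\times\Sym^2(\R^n))$ is injective (tensor product of polynomials in independent variables), one concludes $\sum_i P_i\otimes P_{\Psi_i}=0$, and hence $\sum_i P_i\otimes\Psi_i=0$ via \eqref{eq:interpretationMAValAsPolynomials}.

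The main obstacle I expect is the transfer of the polynomial-degree bound from $C^2\cap\Conv(\R^n,\R)$ to general convex functions. Polynomiality of the measure-valued map $\ell\mapsto\Psi(f+\ell)$ is a statement in the weak$^*$ topology on $\M(\R^n)$, so one has to argue coordinate-by-coordinate against each $\phi\in C_c(\R^n)$ and invoke the closedness of finite-dimensional polynomial subspaces under pointwise convergence; this is the only step that is not essentially immediate from the construction via the differential cycle.
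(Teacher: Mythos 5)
Your construction of $\Theta_d$ coincides with the paper's: both identify $\MAVal(\R^n)$ with the space of primitive forms via \autoref{theorem:IsmorphismPrimitiveFormsMAVal}, multiply by $P\in\Poly_d(\A(n,\R))$ viewed as a function of $(t,y)$, and apply \autoref{theorem:LocalFunctionalFromDifferentialCycle}; your verification of Eq.~\eqref{eq:FormulaTheta} and of membership in $\P_d\LV(\R^n)^{tr}$ (translation invariance and the degree bound on smooth convex functions, then transfer to all of $\Conv(\R^n,\R)$ by mollification, weak* continuity, and closedness of finite-dimensional polynomial spaces under pointwise limits) is a correct fleshing-out of what the paper dismisses as ``easy to see''. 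Where you genuinely diverge is injectivity. The paper argues inside the differential-form picture: by \autoref{theorem:KernelDifferentialCycle} any element of $\Poly_d(\A(n,\R))\otimes P\Lambda^n$ inducing the zero functional must be a multiple of the symplectic form, and the Lefschetz decomposition forces a primitive-valued form with this property to vanish; this reuses the kernel theorem already established and gives the kernel description without reference to any polynomial model. You instead evaluate the explicit density on the quadratics $f_{s,w,A}$ at $x=0$, extend the resulting identity from the open cone of positive definite matrices to all of $\Sym^2(\R^n)$, and conclude via linear independence of products of polynomials in disjoint variables together with the identification in Eq.~\eqref{eq:interpretationMAValAsPolynomials}; this is more elementary and concrete, but it leans on part (iii) of \autoref{theorem:ClassificationMA} and on the uniqueness of $P_\Psi$, i.e.\ on the polynomial model of $\MAVal(\R^n)$, rather than on the symplectic linear algebra. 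Both routes are valid, and your argument is complete.
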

			\begin{proof}
				Set $P\Lambda^{n}:=\bigoplus_{k=0}^nP\Lambda^{n-k,k}$. We define $\Theta_d$ as the restriction of the map in \autoref{theorem:LocalFunctionalFromDifferentialCycle} to the subspace
				\begin{align*}
					\Poly_{d}(\A(n,\R))\otimes \MAVal(\R^n)\cong \Poly_{d}(\A(n,\R))\otimes P\Lambda^{n}\subset C(\R\times \R^n\times(\R^n)^*,P\Lambda^n),
				\end{align*}
				where we consider $\Poly_d(\A(n,\R))$ as a subset of $C(\R\times (\R^n)^*)$, which we identify with functions on $\R\times \R^n\times(\R^n)^*$ that do not depend on the second variable. Then $\Theta_d(Q)$ defines a continuous local functional for every $Q\in \Poly_d(\Aff(n,\R))\otimes \MAVal(\R^n)$ and satisfies Eq.~\eqref{eq:FormulaTheta}. It is easy to see that this implies $\Theta_d(Q)\in \P_d\LV(\R^n)^{tr}$ for every $Q\in \Poly_d(\Aff(n,\R))\otimes \MAVal(\R^n)$. The fact that $\Theta_d$ is injective follows from \autoref{theorem:KernelDifferentialCycle} and the Lefschetz decomposition, compare \cite{HuybrechtsComplexgeometry2005}*{Proposition~1.2.30}.
			\end{proof}
			
			\autoref{maintheorem:translationInvariantCase} follows from the following result using the isomorphism in Eq.~\eqref{eq:interpretationMAValAsPolynomials}.
			\begin{theorem}
				\label{theorem:ClassificationTranslationInvariant}
				\begin{align*}
					\Theta_d:\Poly_{d}(\A(n,\R))\otimes \MAVal(\R^n)\rightarrow \P_d\LV(\R^n)^{tr}
				\end{align*}
				is an isomorphism of $\GL(n,\R)$-modules.
			\end{theorem}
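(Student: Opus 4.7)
Injectivity and $\GL(n,\R)$-equivariance are nearly immediate: the former is \autoref{corollary:DefTheta}, while the latter follows from a direct change of variables $y = gx$ in Eq.~\eqref{eq:FormulaTheta}, noting that each of $P(f(x), df(x))$, $P_\Psi(D^2 f(x))$, and $dx$ transforms via the natural action of $\GL(n,\R)$ on $\Poly_d(\A(n,\R))$, on $\MAVal(\R^n) \cong \mathrm{M}_n$ (via Eq.~\eqref{eq:interpretationMAValAsPolynomials}), and on Lebesgue measure, respectively. It thus remains to prove surjectivity.

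By \autoref{maintheorem:HomDecompPdLV}, which respects translations, it suffices to treat $\Psi \in \P_d\LV_k(\R^n)^{tr}$ for fixed $k$. I proceed by induction on $d$. The base case $d = 0$ is the identification $\P_0\LV(\R^n)^{tr} = \MAVal(\R^n)$, under which $\Theta_0$ is the identity. For the inductive step, suppose $\Theta_{d-1}$ is surjective and let $\Psi \in \P_d\LV_k(\R^n)^{tr}$. By \autoref{proposition:translativeDecompCompatibleHomDecomp}, and because locality is preserved under the Vandermonde extraction of $\ell$-coefficients, we obtain $Y_j \in \P_{d-j}\LV_{k-j}(\R^n) \otimes \Sym^j(\A(n,\R)^*)_\C$ such that $\Psi(f+\ell;B) = \sum_{j=0}^d Y_j(f;B)[\ell]$. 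Under the identification $\A(n,\R) \cong \R \times (\R^n)^*$ via $\ell \leftrightarrow (c,v) = (\ell(0), d\ell)$, the translation invariance of $\Psi$ yields the covariance law
\begin{align*}
	Y_d(f;B)[c,v] = Y_d(f(\cdot + z); B - z)[c + \langle v, z\rangle, v] \quad \forall\, z \in \R^n,
\end{align*}
reflecting the fact that $\ell \mapsto \ell(\cdot + z)$ is the unipotent transformation $(c, v) \mapsto (c + \langle v, z\rangle, v)$ on $\A(n,\R)$. This action preserves the filtration of $\Sym^d(\A(n,\R)^*)_\C$ by $c$-degree and is trivial on the associated graded.

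The heart of the argument is to construct $Q \in \Sym^d(\A(n,\R)^*)_\C \otimes \MAVal_{k-d}(\R^n)$ such that $Y_d(\Theta_d(Q)) = Y_d(\Psi)$. I build $Q$ iteratively, working downward in $c$-degree. Decompose $Y_d = \sum_{m=0}^d Y_d^{(m)}$ by $c$-degree; substituting $v = 0$ in the covariance law shows $Y_d^{(d)}$ is translation invariant, hence $Y_d^{(d)} \in \MAVal_{k-d}(\R^n)$ by \autoref{theorem:ClassificationMA}. A direct computation with Eq.~\eqref{eq:FormulaTheta} shows that the $c^d$-coefficient of $Y_d(\Theta_d(c^d \otimes Y_d^{(d)}))$ equals $Y_d^{(d)}$, while its lower $c$-coefficients are given by explicit spatial moment functionals whose transformation laws under $f \mapsto f(\cdot+z)$ precisely cancel the failure of translation invariance of $Y_d^{(d-1)}$ predicted by the covariance law. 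Hence the $c^{d-1}$-coefficient of $Y_d(\Psi - \Theta_d(c^d \otimes Y_d^{(d)}))$ is translation invariant, corresponds to an element of $\MAVal_{k-d}(\R^n) \otimes (\R^n)^{**}$, and produces the next summand of $Q$ via monomials of the form $c^{d-1}\langle v, e_i\rangle$. Iterating downward through all $d+1$ graded pieces yields $Q$. Then $\Psi - \Theta_d(Q)$ has vanishing $Y_d$, i.e.\ is polynomial of degree at most $d - 1$, and lies in the image of $\Theta_{d-1} \subseteq \Theta_d$ by the inductive hypothesis, completing the proof.

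The main obstacle is the verification that, at each step of the iteration, the next $c$-coefficient of the residual $Y_d$ becomes translation invariant after the preceding subtraction. This reduces to a careful combination of the binomial expansion of $(c + \langle v, z\rangle)^m$ appearing on the right-hand side of the covariance law with the transformation law for the spatial moments $\int_B \langle v,x\rangle^j P(D^2 f(x))\,dx$ under $f \mapsto f(\cdot + z)$, $B \mapsto B - z$, but it is tractable because the unipotent action is strictly triangular in the $c$-grading, so the cancellations propagate one level at a time.
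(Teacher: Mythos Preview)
Your approach is correct and closely parallel to the paper's, though organized differently. The paper separates the argument into two nested inductions: first a lemma (\autoref{lemma:verticaltranslationInvariant}) establishing surjectivity onto the subspace $W_{d,0}$ of functionals invariant under addition of constants, by induction on $d$ using the top $\lambda$-coefficient for $\lambda\in(\R^n)^*$; then the theorem proper by induction on $l$ over the filtration $W_{d,l}$ by degree of polynomiality in the constant direction, peeling off the top $t$-coefficient at each step. Your single induction on $d$ with an inner descending iteration on $c$-degree interleaves these two passes into one. In both cases the engine is the same: the top coefficient in the current grading is forced to be translation invariant (hence lies in $\MAVal$), one subtracts the corresponding image element under $\Theta_d$, and the degree drops.

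One remark: your discussion of the ``main obstacle'' overstates the difficulty. Since every $\Theta_d(Q_m)$ already lies in $\P_d\LV(\R^n)^{tr}$, the residual $\Psi - \Theta_d(Q_d + \cdots + Q_{m+1})$ is itself translation invariant and hence obeys the same covariance law. As its $c^{m'}$-coefficients vanish for $m'>m$ by construction, comparing $c^m$-coefficients in the covariance law immediately gives translation invariance of the next piece---no binomial bookkeeping or spatial-moment cancellations are needed. The strict triangularity you mention is precisely what makes this immediate rather than merely tractable.
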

			We split the proof of \autoref{theorem:ClassificationTranslationInvariant} into two steps. Let $W_{d,l}\subset\P_d\LV(\R^n)^{tr}$ denote the subspace of all $\Psi\in\P_d\LV(\R^n)^{tr}$ such that
			\begin{align*}
				t\mapsto \Psi(f+t)
			\end{align*}
			is a polynomial of degree at most $0\le l\le d$ in $t\in\R$.
			\begin{lemma}
				\label{lemma:verticaltranslationInvariant}
				The map
				\begin{align*}
					\Theta_d:\Poly_d((\R^n)^*)\otimes\MAVal(\R^n)\mapsto W_{d,0}
				\end{align*}
				is an isomorphism of $\GL(n,\R)$-modules.
			\end{lemma}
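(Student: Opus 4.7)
The plan is to invoke \autoref{corollary:DefTheta} for injectivity, verify $\GL(n,\R)$-equivariance directly from the integral formula \eqref{eq:FormulaTheta}, and prove surjectivity by induction on $d$ via a polynomial decomposition in the linear shift. Injectivity of the restricted map is immediate since $\Poly_d((\R^n)^*) \otimes \MAVal(\R^n) \subset \Poly_d(\A(n,\R)) \otimes \MAVal(\R^n)$. For equivariance, the chain-rule identities $d(f \circ g) = g^T (df \circ g)$ and $D^2(f \circ g) = g^T (D^2 f \circ g)\, g$, combined with the substitution $y = gx$ in \eqref{eq:FormulaTheta}, show that $\pi(g)\Theta_d(P \otimes \Phi)$ equals $\Theta_d$ of the natural transform of $P \otimes \Phi$, where $\GL(n,\R)$ acts on $\Poly_d((\R^n)^*)$ by $(g\cdot P)(v) = P(g^T v)$ and on $\MAVal(\R^n) \cong \mathrm{M}_n$ by $(g \cdot \Phi)(M) = |\det g|^{-1} P_\Phi(g^T M g)$.

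For surjectivity I induct on $d$. The base case $d = 0$ reduces to $\Theta_0 \colon \MAVal(\R^n) \to W_{0,0}$: the degree-zero polynomial condition forces $\Psi(f+\ell) = \Psi(f)$ for all $\ell \in \A(n,\R)$, so $W_{0,0} = \P_0\LV(\R^n)^{tr} = \MAVal(\R^n)$, and $\Theta_0(1 \otimes \Phi) = \Phi$ follows from \autoref{theorem:ClassificationMA} and continuity. For the induction step, fix $\Psi \in W_{d,0}$. Since every homogeneous component in \autoref{maintheorem:HomDecompPdLV} is a fixed linear combination $\sum_j c_{kj} \Psi(jf)$ and $\Psi(j(f+t)) = \Psi(jf + jt) = \Psi(jf)$, the space $W_{d,0}$ splits along the homogeneous decomposition, and I may assume $\Psi$ is $k$-homogeneous. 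Because $\Psi(f+t) = \Psi(f)$, the polynomial $\ell \mapsto \Psi(f+\ell)$ on $\A(n,\R) \cong \R \oplus (\R^n)^*$ factors through the linear part $v := d\ell$, giving
\[
\Psi(f+\ell) = \sum_{m=0}^d Y_m(f)[v]
\]
with $Y_m(f) \in \M(\R^n) \otimes \Sym^m((\R^n)^*)_\C$. By \autoref{proposition:translativeDecompCompatibleHomDecomp} and the inverse Vandermonde formula $Y_m(f)[\ell_0] = \sum_j c_{mj} \Psi(f + j\ell_0)$ for each fixed $\ell_0$, every $Y_m$ inherits local determinacy, translation invariance, and the relation $Y_m(f+t) = Y_m(f)$ from $\Psi$, so $Y_m \in W_{d-m,0} \otimes \Sym^m((\R^n)^*)_\C$; in particular $Y_d \in \MAVal_{k-d}(\R^n) \otimes \Sym^d((\R^n)^*)_\C \subset \Poly_d((\R^n)^*) \otimes \MAVal(\R^n)$.

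The induction closes once I verify that the $d$-homogeneous component in $v$ of $\Theta_d(Y_d)(f+\ell)$ equals $Y_d(f)[v]$. Writing $Y_d = \sum_i p_i \otimes \Phi_i$ with $p_i$ $d$-homogeneous on $(\R^n)^*$ and $\Phi_i \in \MAVal_{k-d}(\R^n)$, formula \eqref{eq:FormulaTheta} gives
\[
\Theta_d(Y_d)(f+\ell;B) = \int_B \sum_i p_i(df(x)+v)\, P_{\Phi_i}(D^2 f(x))\, dx,
\]
and expanding each $p_i(df(x)+v)$ by polarization shows that the top-degree part in $v$ is $\sum_i p_i(v)\, \Phi_i(f;B) = Y_d(f)[v](B)$ on $\Conv(\R^n,\R) \cap C^2(\R^n)$, extending by continuity. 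Uniqueness of the polynomial decomposition forces $\Psi - \Theta_d(Y_d) \in W_{d-1,0}$, and the inductive hypothesis supplies $Q \in \Poly_{d-1}((\R^n)^*) \otimes \MAVal(\R^n)$ with $\Theta_{d-1}(Q) = \Psi - \Theta_d(Y_d)$; since $\Theta_d$ and $\Theta_{d-1}$ agree on this subspace, $\Psi = \Theta_d(Y_d + Q)$. The main technical obstacle will be the transfer of locality, translation invariance, and vertical invariance to every $Y_m$ through the Vandermonde extraction, a somewhat delicate bookkeeping exercise since \autoref{proposition:translativeDecompCompatibleHomDecomp} is formulated only for the polynomial-degree property and the remaining invariances must be supplemented separately.
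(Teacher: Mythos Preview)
Your proposal is correct and follows essentially the same inductive strategy as the paper: extract the top-degree coefficient $Y_d$ of the expansion $\Psi(f+\lambda)=\sum_j Y_j(f)[\lambda]$, identify it as an element of $\MAVal(\R^n)\otimes\Sym^d(\R^n)$, subtract the corresponding image of $\Theta_d$, and descend to degree $d-1$. The only substantive difference is in how translation invariance of $Y_d$ is established: the paper applies translation invariance of $\Psi$ directly to the expansion $\sum_j Y_{j,\Psi}(f(\cdot+x);\phi(\cdot+x))[\lambda(\cdot+x)]=\sum_j Y_{j,\Psi}(f;\phi)[\lambda]$ and compares the top-degree part in $\lambda$, whereas you route everything through the Vandermonde formula $Y_m(f)[\ell_0]=\sum_j c_{mj}\Psi(f+j\ell_0)$ and use that $f(\cdot+x)+j\ell_0$ differs from $(f+j\ell_0)(\cdot+x)$ only by the constant $-j\ell_0(x)$, which vertical invariance kills. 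Both arguments implicitly rely on the $W_{d,0}$ hypothesis to reduce the polynomial in $\ell\in\A(n,\R)$ to one in the linear part $v\in(\R^n)^*$, so the ``delicate bookkeeping'' you flag is genuinely needed but entirely routine. Your reduction to the $k$-homogeneous case is harmless but unnecessary; the paper does not bother with it.
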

			\begin{proof}
				It is easy to see that the map commutes with the action of $\GL(n,\R)$ on both spaces. Due to \autoref{corollary:DefTheta} it is thus sufficient to show that the map is surjective. We use induction on $d$, where the case $d=0$ holds by definition. Assume that the claim holds for all such functionals of degree at most $d-1$. If $\Psi\in W_{d,0}$, then for $\lambda\in (\R^n)^*$, we have
				\begin{align*}
					\Psi(f+\lambda)=\sum_{j=0}^dY_{j,\Psi}(f)[\lambda]
				\end{align*}
				for $f\in\Conv(\R^n,\R)$, where $Y_{j,\Psi}\in \P_{d-j}\LV(\R^n)\otimes \Sym^j(\R^n)$, compare \autoref{proposition:translativeDecompCompatibleHomDecomp}. Since $\Psi$ is translation invariant, we have for $x\in \R^n$, $\phi\in C_c(\R^n)$,
				\begin{align*}
					\sum_{j=0}^dY_{j,\Psi}(f(\cdot+x);\phi(\cdot+x))[\lambda(\cdot+x)]=&\Psi(f(\cdot+x)+\lambda(\cdot+x);\phi(\cdot+x))\\
					=&\Psi(f+\lambda;\phi)=\sum_{j=0}^dY_{j,\Psi}(f;\phi)[\lambda],
				\end{align*}
				so comparing the degrees of homogeneity in $\lambda$, we obtain
				\begin{align*}
					Y_{d,\Psi}(f(\cdot+x);\phi(\cdot+x))[\lambda]=Y_{d,\Psi}(f;\phi)[\lambda],
				\end{align*}
				i.e. $Y_{d,\Psi}\in \P_0\LV(\R^n)^{tr}\otimes \Sym^d(\R^n)$. Since $\P_0\LV(\R^n)^{tr}=\MAVal(\R^n)$, we find an element $Q\in \Sym^d(\R^n)\otimes \MAVal(\R^n)$ such that the highest order term of $\lambda\mapsto [\Theta_d(Q)] (\cdot+\lambda)$ coincides with $Y_{d,\Psi}$. Since $\Theta_d(Q)\in \P_d\LV(\R^n)^{tr}$, $\Psi-\Theta_d(Q)\in \P_{d-1}\LV(\R^n)^{tr}$, to which we can apply the induction hypothesis.
			\end{proof}
			\begin{proof}[Proof of \autoref{theorem:ClassificationTranslationInvariant}]
				It is again easy to see that the map commutes with the action of $\GL(n,\R)$. Since it is injective by \autoref{theorem:KernelDifferentialCycle}, we only need to show that it is surjective. We will show by induction on $0\le l\le d$ that $W_{d,l}$ is contained in the image. Since $W_{d,d}=\P_d\LV(\R^n)^{tr}$, this implies the desired result.\\
				The case $l=0$ is covered by \autoref{lemma:verticaltranslationInvariant}, so let us assume that $l\ge 1$ and that $W_{d,l-1}$ is contained in the image of $\Theta_d$. If $\Psi\in W_{d,l}$, then $t\mapsto \Psi(\cdot+t)$ is a polynomial of degree at most $l$ in $t\in\R$, so we find $Z_j:\Conv(\R^n,\R)\rightarrow\M(\R^n)$, $0\le j\le l$, such that
				\begin{align*}
					\Psi(f+t)=\sum_{j=0}^l t^jZ_j(f)
				\end{align*}
				for all $f\in\Conv(\R^n,\R)$. Using the inverse of the Vandermonde matrix, it is easy to see that $Z_j\in \P_{d-j}\LV(\R^n)^{tr}$ and that $t\mapsto Z_j(\cdot+t)$ is a polynomial of degree at most $l-j$. In particular, $Z_l$ is invariant under the addition of constants, i.e. $Z_l\in W_{d-l,0}$. If $Q$ corresponds to $Z_l$ under the isomorphism $W_{d,0}\cong \Poly_{d-l}((\R^n)^*)\otimes\MAVal_k(\R^n)$ from \autoref{lemma:verticaltranslationInvariant}, then $P \in \Poly_d(\A(n,\R))\otimes\MAVal(\R^n)$ defined by $P(s,v):=s^lQ(v)$ has the property that the highest order term of $t\mapsto [\Theta_d(P)](\cdot+t)$ is given by $Z_l$. Thus $\Psi-\Theta_d(P)\in W_{d,l-1}$, which shows that $\Psi$ is contained in the image of $\Theta_d$.
			\end{proof}
			Note that the result boils down to the following statement.
			\begin{corollary}
				\label{corollary:translationInvGradedAlg}
				The map $\Theta_d$ induces an isomorphism
				\begin{align*}
					\P_d\LV(\R^n)^{tr}/\P_{d-1}\LV(\R^n)^{tr}\cong \Sym^{d}(\A(n,\R)^*)_\C\otimes \MAVal(\R^n)
				\end{align*} of $\GL(n,\R)$-modules.
			\end{corollary}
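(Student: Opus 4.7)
The plan is to derive this corollary directly from \autoref{theorem:ClassificationTranslationInvariant} by comparing the isomorphisms $\Theta_d$ and $\Theta_{d-1}$. The key observation is that the defining formula \eqref{eq:FormulaTheta} for $\Theta_d(P \otimes \Psi)$ does not actually depend on $d$, only on the fact that $P \in \Poly_d(\A(n,\R))$. Consequently, the restriction of $\Theta_d$ to the subspace $\Poly_{d-1}(\A(n,\R)) \otimes \MAVal(\R^n)$ coincides (under the natural inclusion $\P_{d-1}\LV(\R^n)^{tr} \hookrightarrow \P_d\LV(\R^n)^{tr}$) with $\Theta_{d-1}$. Since $\Theta_{d-1}$ is itself an isomorphism onto $\P_{d-1}\LV(\R^n)^{tr}$ by \autoref{theorem:ClassificationTranslationInvariant} applied one degree lower, we obtain
\begin{align*}
    \Theta_d^{-1}\bigl(\P_{d-1}\LV(\R^n)^{tr}\bigr) = \Poly_{d-1}(\A(n,\R)) \otimes \MAVal(\R^n).
\end{align*}

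Next, I would pass to quotients. The bijectivity of $\Theta_d$ together with the identification above yields a $\GL(n,\R)$-equivariant isomorphism
\begin{align*}
    \frac{\Poly_d(\A(n,\R)) \otimes \MAVal(\R^n)}{\Poly_{d-1}(\A(n,\R)) \otimes \MAVal(\R^n)} \;\xrightarrow{\;\sim\;}\; \frac{\P_d\LV(\R^n)^{tr}}{\P_{d-1}\LV(\R^n)^{tr}}.
\end{align*}
On the left hand side, the standard identity $(V \otimes W)/(V' \otimes W) \cong (V/V') \otimes W$ (valid for finite dimensional $W$, or more generally because tensoring with $\MAVal(\R^n)$ is exact) combined with the canonical $\GL(n,\R)$-equivariant identification $\Poly_d(\A(n,\R))/\Poly_{d-1}(\A(n,\R)) \cong \Sym^d(\A(n,\R)^*)_\C$ yields the desired right hand side $\Sym^d(\A(n,\R)^*)_\C \otimes \MAVal(\R^n)$.

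The equivariance with respect to $\GL(n,\R)$ is inherited from $\Theta_d$, which was already noted to be $\GL(n,\R)$-equivariant in the proof of \autoref{theorem:ClassificationTranslationInvariant}, together with the fact that both the splitting $\Poly_d/\Poly_{d-1} \cong \Sym^d(\A(n,\R)^*)_\C$ and the filtration $\P_{d-1}\LV^{tr} \subset \P_d\LV^{tr}$ are $\GL(n,\R)$-stable. There is essentially no obstacle here: the statement is a formal corollary of \autoref{theorem:ClassificationTranslationInvariant} once one verifies the compatibility of $\Theta_d$ and $\Theta_{d-1}$ with the inclusions of filtrations, which is immediate from the explicit formula.
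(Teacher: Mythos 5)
Your argument is correct and matches the paper's intent: the paper states this corollary without proof as an immediate reformulation of \autoref{theorem:ClassificationTranslationInvariant}, and your deduction (using that $\Theta_d$ restricts to $\Theta_{d-1}$ on $\Poly_{d-1}(\A(n,\R))\otimes\MAVal(\R^n)$, so bijectivity in degrees $d$ and $d-1$ identifies the filtration, and then passing to quotients via $\Poly_d/\Poly_{d-1}\cong\Sym^d(\A(n,\R)^*)_\C$) is precisely the routine verification the paper leaves implicit.
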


		\subsection{Rigid motion invariant polynomial local functionals}	
		\label{section:RigidMotionInvariant}
		In order to obtain \autoref{maintheorem:rigidMotionInvariant} from the description of $\P_d\LV(\R^n)^{tr}$, we will decompose $\MAVal_k(\R^n)$ into irreducible subrepresentations of $\SO(n)$. This approach requires that this space is a continuous representation of $\SO(n)$ (with respect to one of our topologies), which follows directly from \autoref{proposition:contRepCompactToCompact} or \autoref{theorem:ClassificationTranslationInvariant}. We briefly recall the necessary representation theoretic notions but refer to \cite{GoodmanWallachRepresentationsinvariantsclassical2000} for the general background.\\
		Since $\SO(n)$ is compact, its irreducible representations are finite dimensional. For $n=2$, the irreducible representations of $\SO(2)$ are indexed by their weight $m\in \mathbb{Z}$. For $n\ge 3$, the irreducible representations of $\SO(n)$ are indexed by their highest weight, that is, by tuples $\lambda=(\lambda_1,\dots,\lambda_{\lfloor\frac{n}{2}\rfloor})\in \mathbb{Z}^{\lfloor\frac{n}{2}\rfloor}$ such that
		\begin{align}
			\label{eq:RangeHighestWeights}
			\begin{cases}
				\lambda_1\ge \lambda_2\ge\dots\ge \lambda_{\lfloor\frac{n}{2}\rfloor} & \text{for odd}~n,\\
				\lambda_1\ge \lambda_2\ge\dots\ge |\lambda_{\lfloor\frac{n}{2}\rfloor}| & \text{for even}~n.
			\end{cases}
		\end{align}
		We use $\Gamma_\lambda$ to denote any isomorphic copy of an irreducible representation with highest weight $\lambda\in\mathbb{Z}^{\lfloor\frac{n}{2}\rfloor}$ for $n\ge 3$, or $\Gamma_m$ for the irreducible representations of $\SO(2)$ with weight $m\in\mathbb{Z}$. The following is a simple adaptation of the results in \cite{AleskerEtAlHarmonicanalysistranslation2011}*{Section~3}. 	
		\begin{theorem}
			\label{theorem:MAValIsotypicDecomposition}
			Let $n\ge 3$ and  $0\le k\le n$. The space $\MAVal_k(\R^n)$ is a direct sum of the irreducible representations of $\SO(n)$ with highest weight $(\lambda_1,\dots,\lambda_{\lfloor\frac{n}{2}\rfloor})$ that precisely satisfy the conditions
			\begin{enumerate}
				\item $\lambda_j=0$ for $j>\max(k,n-k)$,
				\item $|\lambda_j|\ne 1$ for $1\le j\le \lfloor\frac{n}{2}\rfloor$,
				\item $\lambda_1\le 2$.
			\end{enumerate}
			In particular, the space $\MAVal_k(\R^n)$ is multiplicity free under the action of $\SO(n)$.
		\end{theorem}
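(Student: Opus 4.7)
The plan is to use the description of $\MAVal_k(\R^n)$ as a finite-dimensional space of primitive forms on the cotangent space, and to decompose this space under $\SO(n)$ via Hodge duality and the branching rule $\GL(n,\R)\downarrow\SO(n)$, following the strategy of \cite{AleskerEtAlHarmonicanalysistranslation2011}*{Section~3}.

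First, I would invoke \autoref{theorem:IsmorphismPrimitiveFormsMAVal} to obtain an isomorphism $\MAVal_k(\R^n)\cong P\Lambda^{n-k,k}$; that this map is $\SO(n)$-equivariant follows from the definition of the action in Eq.~\eqref{eq:groupAction}, together with the fact that the natural symplectic form $\omega_s$, the projection $\pi\colon T^*\R^n\to\R^n$, and the differential cycle construction are all compatible with the induced $\SO(n)$-action on $T^*\R^n$. This reduces the statement to the $\SO(n)$-decomposition of the finite-dimensional space $P\Lambda^{n-k,k}$.

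Second, I would use the Euclidean identification $(\R^n)^*\cong\R^n$ and Hodge duality $\Lambda^{n-k}\R^n\cong\Lambda^k\R^n$ (both $\SO(n)$-equivariant) to rewrite
\[
\Lambda^{n-k,k}=\Lambda^{n-k}(\R^n)^*\otimes\Lambda^k\R^n\cong\Lambda^k\R^n\otimes\Lambda^k\R^n.
\]
As a $\GL(n,\R)$-module, Pieri's formula gives the multiplicity-free decomposition
\[
\Lambda^k\R^n\otimes\Lambda^k\R^n\cong\bigoplus_{j=0}^{\min(k,n-k)}S^{(2^{k-j},1^{2j})}\R^n,
\]
and the analogous decomposition of $\Lambda^{k+1}\R^n\otimes\Lambda^{k-1}\R^n\cong\Lambda^{n-k-1,k-1}$, together with the injectivity of the Lefschetz operator $L=\omega_s\wedge{\cdot}$ on $\Lambda^{n-k-1,k-1}$ (which holds since $(n-k-1)+(k-1)+1<n$), pins down the $\GL(n,\R)$-content of $P\Lambda^{n-k,k}$ as the complement of $L(\Lambda^{n-k-1,k-1})$ inside $\Lambda^{n-k,k}$.

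Third, I would apply the classical $\GL(n,\R)\downarrow\SO(n)$ branching rule to each surviving Weyl module: the irreducible $S^{(2^{a},1^{b})}\R^n$ decomposes into $\SO(n)$-irreducibles $\Gamma_\mu$ obtained from the shape $(2^{a},1^{b})$ by removing horizontal strips of even length. Collecting these $\mu$ across all surviving shapes produces the list of highest weights appearing in $\MAVal_k(\R^n)$. The main obstacle is the combinatorial verification that this list, after subtracting the Lefschetz image $L(\Lambda^{n-k-1,k-1})$, coincides precisely with the set of $\lambda$ satisfying the three conditions of the theorem, and that each $\Gamma_\lambda$ occurs with multiplicity one. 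This is exactly the bookkeeping carried out in \cite{AleskerEtAlHarmonicanalysistranslation2011}*{Section~3} for $\Val_k^\infty(\R^n)$, and the argument transfers to our setting essentially verbatim because the primitive-form description of $\MAVal_k(\R^n)$ is structurally simpler than the corresponding sphere-bundle description used there. Multiplicity freeness is automatic from the multiplicity-free $\GL(n,\R)$-decomposition combined with the fact that the relevant $\GL$-to-$\SO$ branchings do not collide.
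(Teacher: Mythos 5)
Your argument is essentially the paper's: both reduce via \autoref{theorem:IsmorphismPrimitiveFormsMAVal} to the primitive forms $P\Lambda^{n-k,k}$, use the Lefschetz decomposition together with the injectivity of $L$ in this degree to express the $\SO(n)$-character of $\MAVal_k(\R^n)$ as $\mathrm{char}\bigl(\Lambda^{n-k}(\R^n)^*\otimes\Lambda^{k}((\R^n)^*)^*\bigr)-\mathrm{char}\bigl(\Lambda^{n-k-1}(\R^n)^*\otimes\Lambda^{k-1}((\R^n)^*)^*\bigr)$, and then hand the combinatorics to \cite{AleskerEtAlHarmonicanalysistranslation2011}*{Section~3} -- the paper just quotes the resulting character identity (Corollary~3.4 there) instead of re-deriving it through Hodge duality, Pieri and branching. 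The one inaccuracy in your sketch is the $\GL(n,\R)\downarrow\SO(n)$ branching step: it is not ``removing horizontal strips of even length'' but Littlewood's restriction rule (subtracting partitions with even row lengths via Littlewood--Richardson coefficients, with modification rules required since the shapes $(2^{k-j},1^{2j})$ can have more than $n/2$ rows), which is harmless here only because you ultimately defer that bookkeeping to the same reference the paper cites.
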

		\begin{proof}
			Recall that $P\Lambda^{n-k,k}\subset \Lambda^{n-k}(\R^n)^*\otimes \Lambda^k((\R^n)^*)^*$ denotes the subspace of primitive differential forms, i.e. all $\tau\in \Lambda^{n-k}(\R^n)^*\otimes \Lambda^k((\R^n)^*)^*$ such that $\omega_s\wedge\tau=0$ for the natural symplectic form $\omega_s$ on $\R^n\times(\R^n)^*$. We let $L:\Lambda^*(\R^n\times(\R^n)^*)^*\rightarrow \Lambda^*(\R^n\times(\R^n)^*)^*$ denote the Lefschetz operator, i.e. multiplication with the symplectic form.\\
			By \autoref{theorem:IsmorphismPrimitiveFormsMAVal}, $P\Lambda^{n-k,k}\cong \MAVal_k(\R^n)$ as an $\SO(n)$-module. Moreover, the Lefschetz decomposition (see \cite{HuybrechtsComplexgeometry2005}*{Proposition~1.2.30}) implies that
			\begin{align*}
				&P\Lambda^{n-k,k}\oplus\Im \left(L:\Lambda^{n-k-1}(\R^n)^*\otimes \Lambda^{k-1}((\R^n)^*)^*\rightarrow\Lambda^{n-k}(\R^n)^*\otimes \Lambda^k((\R^n)^*)^*\right)\\
				&=\Lambda^{n-k}(\R^n)^*\otimes \Lambda^k((\R^n)^*)^*,
			\end{align*}
			where the second space is isomorphic to $\Lambda^{n-k-1}(\R^n)^*\otimes \Lambda^{k-1}((\R^n)^*)^*$ as an $\SO(n)$-module since $L$ is injective on this space. If we denote the character of a representation $V$ of $\SO(n)$ by $\mathrm{char}(V)$ and set $E_i=\mathrm{char}(\Lambda^i(\R^n))=\mathrm{char}(\Lambda^i(\R^n)^*)$, this implies
			\begin{align*}
				\mathrm{char}(\MAVal_k(\R^n))=E_{n-k}\cdot E_k-E_{n-k-1}E_{k-1}.
			\end{align*}
			It now follows from \cite[Corollary~3.4]{AleskerEtAlHarmonicanalysistranslation2011} that
			\begin{align*}
				\mathrm{char}(\MAVal_k(\R^n))=\sum_{\lambda}\mathrm{char}\overline{\Gamma}_\lambda,
			\end{align*}
			where 
			\begin{align*}
				\overline{\Gamma}_\lambda=\begin{cases}
					\Gamma_\lambda \oplus \Gamma_{(\lambda_1,\dots,\lambda_{\lfloor \frac{n}{2}\rfloor-1},-\lambda_{\lfloor \frac{n}{2}\rfloor})}, &n~\text{even},~\lambda_{\lfloor \frac{n}{2}\rfloor}\ne 0,\\
					\Gamma_\lambda, &\text{else},
				\end{cases}
			\end{align*}
			and the sum ranges over all $\lfloor n/2\rfloor$-tuples of non-negative integers satisfying Eq.~\eqref{eq:RangeHighestWeights} and
			\begin{enumerate}
				\item $\lambda_1\le 2$,
				\item $\lambda_i\ne 1$ for $1\le i\le \lfloor n/2\rfloor$,
				\item $\lambda_j=0$ for $j>\max(k,n-k)$.
			\end{enumerate}
			Since two representations are isomorphic if and only if their characters coincide, this shows the desired result using the definition of $\overline{\Gamma}_\lambda$.
		\end{proof}
	
		\begin{corollary}
			\begin{enumerate}
				\item $\MAVal_0(\R^2)$ is $\SO(2)$-irreducible with weight $0$.
				\item $\MAVal_2(\R^2)$ is $\SO(2)$-irreducible with weight $0$.
				\item $\MAVal_1(\R^2)$ is a multiplicity free representation of $\SO(2)$ with weights $-2,0,2$.
			\end{enumerate}			
		\end{corollary}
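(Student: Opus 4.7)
The plan is to adapt the character-theoretic argument of \autoref{theorem:MAValIsotypicDecomposition} to the case $n=2$, where the relevant spaces of primitive forms are low-dimensional and can be analysed directly. The starting point is the $\SO(2)$-equivariant isomorphism $\MAVal_k(\R^2)\cong P\Lambda^{2-k,k}$ from \autoref{theorem:IsmorphismPrimitiveFormsMAVal}, which reduces each of the three items to a computation of the $\SO(2)$-character of a space of primitive forms on $\R^2\times(\R^2)^*$.

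For items (1) and (2), I would observe that the Lefschetz operator $L=\omega_s\wedge\cdot$ vanishes trivially on $\Lambda^{2,0}$ and on $\Lambda^{0,2}$, since its image would lie in $\Lambda^{3,1}=0$ or $\Lambda^{1,3}=0$. Hence $P\Lambda^{2,0}=\Lambda^{2,0}$ and $P\Lambda^{0,2}=\Lambda^{0,2}$, both one-dimensional. The $\SO(2)$-action on $\Lambda^2(\R^2)$ factors through $\det\equiv 1$, so these spaces carry the trivial weight-$0$ representation and are automatically $\SO(2)$-irreducible.

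For item (3), the Lefschetz decomposition $\Lambda^{1,1}=P\Lambda^{1,1}\oplus L(\Lambda^{0,0})$ together with the injectivity of $L\colon\Lambda^{0,0}\to\Lambda^{1,1}$ (which is immediate from $\omega_s\neq 0$) yields the identity
\begin{align*}
\mathrm{char}(P\Lambda^{1,1}) = \mathrm{char}(\Lambda^1(\R^2)^*_\C)\cdot\mathrm{char}(\Lambda^1((\R^2)^*)^*_\C) - 1.
\end{align*}
With $q=e^{i\theta}$, both factors on the right equal $q+q^{-1}$, since the standard $\SO(2)$-action on $\R^2$ has weights $\pm 1$ over $\C$, and so $\mathrm{char}(P\Lambda^{1,1})=q^2+1+q^{-2}$. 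This identifies the complexification of $\MAVal_1(\R^2)$ as the multiplicity-free direct sum of the three irreducible $\SO(2)$-representations of weights $-2$, $0$, and $+2$, as claimed.

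No substantial obstacle is expected; the argument simply reinstates the character computation from the proof of \autoref{theorem:MAValIsotypicDecomposition} but bypasses the input from \cite{AleskerEtAlHarmonicanalysistranslation2011}, which is formulated for $n\ge 3$. The main care-point is that for items (1) and (2) the Lefschetz sequence degenerates, so one cannot invoke an injectivity statement for $L$ but must instead exploit the vanishing $\Lambda^3(\R^2)^* = \Lambda^3((\R^2)^*)^* = 0$ directly.
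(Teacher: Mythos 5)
Your proposal is correct and follows essentially the same route as the paper: both reduce to $\MAVal_k(\R^2)\cong P\Lambda^{2-k,k}$ via \autoref{theorem:IsmorphismPrimitiveFormsMAVal} and apply the Lefschetz decomposition, with the endpoint cases $k=0,2$ handled by the degeneration of the Lefschetz piece and $k=1$ by subtracting one copy of the trivial representation from $\Lambda^1(\R^2)^*\otimes\Lambda^1((\R^2)^*)^*$. The only cosmetic difference is that you phrase the $k=1$ count through the character $(q+q^{-1})^2-1=q^2+1+q^{-2}$, whereas the paper counts the four irreducible summands of weights $\pm 2,0,0$ directly.
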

		\begin{proof}
			With the same reasoning as before, the Lefschetz decomposition provides an $\SO(2)$-equivariant isomorphism
			\begin{align*}
				\MAVal_k(\R^2)\oplus \Lambda^{1-k}(\R^2)^*\otimes \Lambda^{k-1}((\R^2)^*)^*\cong \Lambda^{2-k}(\R^2)^*\otimes \Lambda^{k}((\R^2)^*)^*.
			\end{align*}
			For $k=0$ or $k=2$, the second summand vanishes while the right hand side is isomorphic to the trivial representation, which shows the first two claims. For $k=1$, the second space is isomorphic to the trivial representation, while $\Lambda^1(\R^2)^*\cong \Lambda^1((\R^2)^*)^*\cong \C^2$ decomposes into two irreducible representations with weights $1$ and $-1$. Thus, in this case the right hand side is a sum of four irreducible representations: the weights $-2$ and $2$ occur with multiplicity $1$, whereas $0$ occurs with multiplicity $2$. This shows the claim.
			
		\end{proof}
	
		\begin{corollary}
			\label{corollary:dimensionInvariant}
			Let $d\ge 0$, $0\le k\le n$. Then for $n\ge 3$, or $n\ge 2$ and $k\ne 1$,
			\begin{align*}
				\dim (\Sym^d((\R^n)^*)_\C\otimes \MAVal_k(\R^n))^{\SO(n)}=\begin{cases}
					2 & d\ge 2~\text{even}, 1\le k\le n-1,\\
					1 & d\ge 2~\text{even}, k=0~\text{or}~k=n,\\
					1 & d=0,~0\le k\le n,\\
					0 & d~\text{odd}.
				\end{cases}
			\end{align*}
			For $n=2$, and $k=1$, 
				\begin{align*}
				\dim (\Sym^d((\R^2)^*)_\C\otimes \MAVal_1(\R^2))^{\SO(2)}=\begin{cases}
					3& d\ge 2~\text{even},\\
					1& d= 0~\text{even},\\
					0& d~\text{odd}.
				\end{cases}
			\end{align*}
		\end{corollary}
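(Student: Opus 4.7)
The plan is to reduce the statement to a counting problem via Schur's lemma and the isotypical decompositions of both tensor factors. Since $\SO(n)$ is compact, for finite-dimensional $\SO(n)$-modules $V,W$ one has
\begin{align*}
\dim(V\otimes W)^{\SO(n)}=\dim\mathrm{Hom}_{\SO(n)}(V^*,W)=\sum_\lambda m_\lambda(V^*)\,m_\lambda(W),
\end{align*}
where $m_\lambda$ denotes multiplicity. The standard representation of $\SO(n)$ on $\R^n$ is self-dual (via the Euclidean inner product), so $\Sym^d((\R^n)^*)_\C\cong \Sym^d(\R^n)_\C$, and it suffices to compare the irreducible constituents of $\Sym^d(\R^n)_\C$ with those of $\MAVal_k(\R^n)$ (the latter is multiplicity-free by \autoref{theorem:MAValIsotypicDecomposition}).

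Next I recall the spherical harmonic decomposition: for $n\ge3$,
\begin{align*}
\Sym^d(\R^n)_\C\cong \bigoplus_{j=0}^{\lfloor d/2\rfloor}\Gamma_{(d-2j,0,\dots,0)},
\end{align*}
with each summand appearing exactly once; and for $n=2$, $\Sym^d((\R^2)^*)_\C$ decomposes as the multiplicity-free sum of weight spaces with weights $d,d-2,\dots,-d$. In particular, $\Sym^d$ only contains irreducibles whose highest weight $(m,0,\dots,0)$ satisfies $m\equiv d\pmod 2$.

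Now I examine which of these can match an irreducible constituent of $\MAVal_k(\R^n)$. By \autoref{theorem:MAValIsotypicDecomposition}, the irreducibles appearing in $\MAVal_k(\R^n)$ have highest weight with $\lambda_1\le 2$ and $|\lambda_j|\ne 1$; so among weights of the form $(m,0,\dots,0)$, only $m=0$ and $m=2$ are admissible. The trivial representation $\Gamma_{(0,\dots,0)}$ appears for every $0\le k\le n$, while $\Gamma_{(2,0,\dots,0)}$ is a constituent exactly when $1\le k\le n-1$ (for $k\in\{0,n\}$ the space $\MAVal_k(\R^n)$ reduces to the trivial representation, as a direct consequence of the character identity $\mathrm{char}(\MAVal_k)=E_{n-k}E_k-E_{n-k-1}E_{k-1}$). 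Combining these observations yields the dimension count: for $d$ odd, no weight $(d-2j,0,\dots,0)$ is even, so the count is $0$; for $d=0$, only the trivial representation contributes, giving $1$; for $d\ge 2$ even, both $\Gamma_{(0,\dots,0)}$ and $\Gamma_{(2,0,\dots,0)}$ occur in $\Sym^d$, so the count is $2$ when $1\le k\le n-1$ and $1$ when $k\in\{0,n\}$.

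Finally, the case $n=2$, $k=1$ is treated analogously using the preceding corollary, which identifies $\MAVal_1(\R^2)$ with the sum of weight spaces of weights $-2,0,2$: the invariants in $\Sym^d((\R^2)^*)_\C\otimes\MAVal_1(\R^2)$ correspond to pairs of matching weights summing to zero, giving $0$ for $d$ odd, $1$ for $d=0$, and $3$ for $d\ge 2$ even (the weights $-2,0,2$ each appear once in $\Sym^d((\R^2)^*)_\C$). There is no serious obstacle here; the only slightly delicate point is to ensure that the admissible highest weights identified by \autoref{theorem:MAValIsotypicDecomposition} together with the self-duality of the standard representation really restrict the matching irreducibles to those with trivial tail, which follows immediately from the structure of the spherical harmonic decomposition.
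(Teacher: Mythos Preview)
Your proof is correct and follows essentially the same approach as the paper: both use self-duality of $\Sym^d((\R^n)^*)_\C$ to rewrite the invariants as $\mathrm{Hom}_{\SO(n)}(\Sym^d,\MAVal_k)$, invoke the spherical-harmonic decomposition of $\Sym^d$, and then count matches against the isotypic decomposition of $\MAVal_k(\R^n)$ from \autoref{theorem:MAValIsotypicDecomposition} (respectively the preceding corollary for $n=2$) via Schur's lemma. Your version is slightly more explicit in spelling out why only $\Gamma_{(0,\dots,0)}$ and $\Gamma_{(2,0,\dots,0)}$ can match and in handling $k\in\{0,n\}$ directly via the character identity, but the underlying argument is the same.
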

		\begin{proof}
			Assume first that $n\ge 3$.	It is well known that $\Sym^d((\R^n)^*)_\C\cong \Sym^d(\R^n)_\C\cong \bigoplus_{j=0}^{\lfloor\frac{d}{2}\rfloor}\Gamma_{(d-2j,0,\dots,0)}$, compare \cite{FultonHarrisRepresentationtheory1991}*{Section~19.5}. Since $\Sym^d((\R^n)^*)_\C$ is self dual as an $\SO(n)$-module,
			\begin{align*}
				&(\Sym^d((\R^n)^*)_\C\otimes \MAVal_k(\R^n))^{\SO(n)}
				\cong\mathrm{Hom}(\Sym^d((\R^n)^*)_\C,\MAVal_k(\R^n))^{\SO(n)}\\
				&\cong\sum_{j=0}^{\lfloor\frac{d}{2}\rfloor}\mathrm{Hom}(\Gamma_{(d-2j,0\dots,0)},\MAVal_k(\R^n))^{\SO(n)},
			\end{align*}
			and the result follows from \autoref{theorem:MAValIsotypicDecomposition} and Schur's lemma.\\
			For $n= 2$, we have  $\Sym^d((\R^2)^*)_\C\cong \Sym^d(\R^2)_\C\cong \bigoplus_{j=0}^{d}\Gamma_{d-2j}$ and the same argument as before shows the desired result.
		\end{proof}
		
		In the following lemma, we use again the identification $ \R\times(\R^n)^*\cong \A(n,\R)$ given by $(s,v)\mapsto \langle v,\cdot\rangle+s$. Recall also that $\Hess_k=\mathcal{E}^{0,0}_k$, $0\le k\le n$, denotes the $k$th Hessian measure (compare Eq.~\eqref{eq:definitionEabc}), which is an $\SO(n)$-invariant element of $\MAVal_k(\R^n)$.
		\begin{lemma}
			\label{lemma:rigidMotionLinearIndep}
			For $0\le k\le n$, $i\ge 0$ and $1\le l\le n-1$, $j\ge 1$, the elements
			\begin{align*}
				E^{a,i}_k(s,v)=&s^a|v|^{2i}\Hess_k\\
				F^{b,j}_l(s,v)=&s^b|v|^{2(j-1)}\frac{1}{l+1}\frac{d}{dt}\Big|_0\Hess_{l+1}(\cdot+t\langle v,\cdot\rangle^2)
			\end{align*}
			are linearly independent in $\Poly(\A(n,\R))\otimes \MAVal(\R^n)$. 
		\end{lemma}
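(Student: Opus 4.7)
The plan is to use \autoref{theorem:ClassificationMA} together with Eq.~\eqref{eq:interpretationMAValAsPolynomials} to identify $\Poly(\A(n,\R))\otimes\MAVal(\R^n)$ with a subspace of $\Poly(\R\times\R^n\times\Sym^2(\R^n))$ via $P\otimes\Psi\mapsto P(s,v)P_\Psi(M)$, and then to separate a hypothetical vanishing linear combination by multi-degree in $(s,v,M)$. Under this identification, using multilinearity of the mixed discriminant and the computation $D^2\langle v,\cdot\rangle^2=2vv^T$, the two families become
\begin{align*}
	E^{a,i}_k(s,v,M)&=s^a|v|^{2i}\det(M[k],Id[n-k]),\\
	F^{b,j}_l(s,v,M)&=2s^b|v|^{2(j-1)}\det(M[l],vv^T,Id[n-l-1]),
\end{align*}
which are tri-homogeneous of degrees $(a,2i,k)$ and $(b,2j,l)$ in $(s,v,M)$ respectively (the factor $vv^T$ contributes $2$ to the $v$-degree). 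Consequently a vanishing relation must vanish in each tri-degree separately, and the two families interact only when $(b,j,l)=(a,i,k)$.

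In the matching tri-degrees where $k\in\{0,n\}$ or $i=0$, no $F$-term is present (since $F^{b,j}_l$ requires $1\le l\le n-1$ and $j\ge 1$), and $E^{a,i}_k$ is a nonzero polynomial---for instance it takes the value $1$ at $(s,v,M)=(1,e_1,Id)$---so its coefficient must vanish. The essential case is matching tri-degree $(a,2i,k)$ with $1\le k\le n-1$ and $i\ge 1$, where it remains to show that
\begin{align*}
	P(v,M)&:=|v|^{2i}\det(M[k],Id[n-k]),\\
	Q(v,M)&:=2|v|^{2(i-1)}\det(M[k],vv^T,Id[n-k-1])
\end{align*}
are linearly independent as polynomials on $\R^n\times\Sym^2(\R^n)$. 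This I would accomplish by exhibiting two test points whose $2\times 2$ matrix of $(P,Q)$-values is non-singular.

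Concretely, I would fix $v=e_1$ and evaluate at $M_1=Id$ and $M_2=\sum_{j=1}^ke_je_j^T$, the orthogonal projection onto $\operatorname{span}(e_1,\dots,e_k)$. Since $e_1$ is an eigenvector of $M_2$ with eigenvalue $1$, a direct eigenvalue computation yields the factorization $\det(sM_2+te_1e_1^T+Id)=(1+s+t)(1+s)^{k-1}$. Reading off the coefficients via the polarization identity
\begin{align*}
	\det(sA+tB+Id)=\sum_{p+q+r=n}\binom{n}{p,q,r}\det(A[p],B[q],Id[r])\,s^pt^q
\end{align*}
gives $\det(M_2[k],e_1e_1^T,Id[n-k-1])=0$ and $\det(M_2[k],Id[n-k])=1/\binom{n}{k}$, while the analogous computation $\det(sId+te_1e_1^T+Id)=(1+s+t)(1+s)^{n-1}$ at $M_1=Id$ yields $\det(Id[n-1],e_1e_1^T)=1/n$ and $\det(Id[n])=1$. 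The resulting $2\times 2$ evaluation matrix has determinant $-2/(n\binom{n}{k})\ne 0$, which establishes linear independence and completes the proof. The main subtlety is making the polarization identity produce clean coefficients; choosing $M_2$ so that the generating polynomial factors explicitly avoids the combinatorial overhead of the general mixed discriminant expansion.
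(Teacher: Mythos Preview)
Your proof is correct. Both your argument and the paper's begin with the same tri-degree reduction in $(s,v,M)$, so the only work is to separate $|v|^{2i}\det(M[k],Id[n-k])$ from $2|v|^{2(i-1)}\det(M[k],vv^T,Id[n-k-1])$ for $1\le k\le n-1$, $i\ge 1$. At this point the approaches diverge: the paper observes that for each fixed $v\ne 0$ the first expression gives an $\SO(n)$-invariant element of $\MAVal_k(\R^n)$ (a scalar multiple of $\Hess_k$) while the second does not, so no nontrivial linear combination can vanish. You instead carry out an explicit two-point evaluation at $(e_1,Id)$ and $(e_1,M_2)$ and exhibit a nonsingular $2\times 2$ matrix. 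Your route is more hands-on but entirely self-contained---it does not appeal to the $\SO(n)$-module structure and works purely at the level of polynomials on $\Sym^2(\R^n)$, which is arguably cleaner given that the identification $\MAVal(\R^n)\cong\mathrm{M}_n$ has already been invoked. The paper's route is a one-line symmetry observation but tacitly uses that a non-invariant element cannot be a multiple of an invariant one. Both are perfectly adequate for this short lemma.
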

		\begin{proof}
			By comparing the degrees of polynomiality in $s$ and $v$ as well as the degrees of homogeneity, it is sufficient to show that $|v|^2\Hess_k$ and $\frac{d}{dt}\big|_0\Hess_{k+1}(\cdot+t\langle v,\cdot\rangle^2)$ are linearly independent for every $1\le k\le n-1$, which is easily checked since the first defines an $\SO(n)$-invariant element of $\MAVal(\R^n)$ for every fixed $v\in \R^n$, which is not true for the second.
		\end{proof}
	
	\begin{proof}[Proof of \autoref{maintheorem:rigidMotionInvariant}]
		By \autoref{theorem:ClassificationTranslationInvariant} and \autoref{lemma:rigidMotionLinearIndep}, the functionals $\mathcal{E}^{a,b}_c$ and $\mathcal{F}^{a,b}_c$ are linearly independent for the given parameters since they are the images of $E^{a,b}_c$ and $F^{a,b}_c$ under the map $\Theta_d$. It is thus sufficient to show that they span $\P_d\LV(\R^n)$. This follows easily from \autoref{corollary:translationInvGradedAlg} and \autoref{corollary:dimensionInvariant}, since we have isomorphisms of $\SO(n)$-modules 
		\begin{align*}
			\P_d\LV(\R^n)^{tr}/\P_{d-1}\LV(\R^n)^{tr}&\cong \Sym^{d}(\A(n,\R)^*)_\C\otimes \MAVal(\R^n).
		\end{align*}
	\end{proof}
	Let us briefly comment on the case $n=2$. In this case, fix an orientation of $\R^2$ and let $I:\R^2\rightarrow \R^2$ denote the counterclockwise rotation by $\frac{\pi}{2}$. Then
	\begin{align*}
		\tilde{F}^{b,j}_1:=s^b|v|^{2(j-1)}\frac{1}{2}\frac{d}{dt}\Big|_0\Hess_{2}(\cdot+t\langle Iv,\cdot\rangle^2)
	\end{align*}
	provides the elements in $\Poly_d(\A(2,\R))\otimes \MAVal(\R^2)$ corresponding to the additional functionals required by \autoref{corollary:dimensionInvariant}.

\bibliographystyle{abbrv}
\bibliography{../../../library/library.bib}

\Addresses
\end{document}